\theoremstyle{Definition}
\newtheorem{Remark}{Remark}[subsection]
\newtheorem*{Remark*}{Remark}
\newtheorem{Theorem}[Remark]{Theorem}
\newtheorem*{Main Theorem*}{Main Theorem}
\newtheorem*{Proof of Main Theorem*}{Proof of Main Theorem}
\newtheorem{Definition}[Remark]{Definition}
\newtheorem*{Definition*}{Definition}
\newtheorem{Proposition}[Remark]{Proposition}
\newtheorem*{Proposition*}{Lemma}
\newtheorem{Lemma}[Remark]{Lemma}
\newtheorem*{Lemma*}{Lemma}
\newtheorem{Corollary}[Remark]{Corollary}
\newtheorem{Hypothesis}[Remark]{Hypothesis}
\newtheorem*{Hypothesis*}{Hypothesis}
\newcommand{\plim}[1][]{\mathop{\varprojlim}\limits_{#1}}
\newcommand{\jump}[1]{\ensuremath{[\![#1]\!]} }
\begin{document}
\title{Triple product $p$-adic $L$-function attached to $p$-adic families of modular forms}
\author{Kengo Fukunaga}
\date{}
\maketitle
\begin{abstract}
Ming-Lun Hsieh constructed three-variable triple product $p$-adic $L$-functions attached to triples of primitive Hida families and proved interpolation formulas. We generalize his result in the unbalanced case and construct a three-variable triple product $p$-adic $L$-function attached to a primitive Hida family and two more general $p$-adic families of modular forms.
\end{abstract}
\setcounter{tocdepth}{1}
\tableofcontents
\section{Introduction.}
Let $p$ be an odd prime. We construct a three-variable triple product $p$-adic $L$-function which interpolates central critical values of triple product $L$-functions in the unbalanced case. Let $K$ be a finite extension of the field  of $p$-adic numbers $\mathbb{Q}_{p}$ and $\mathcal{O}_{K}$ the ring of integers of $K$. Let $\mathbf{I}_{i}$ be a normal finite flat extension of the Iwasawa algebra $\Lambda:=\mathcal{O}_{K}\jump{\Gamma}$ of the topological group $\Gamma:=1+p\mathbb{Z}_{p}$ for $i=1,2,3$. We denote by $\mathbf{m}_{i}$ the unique maximal ideal of $\mathbf{I}_{i}$. Let $\overline{\mathbb{Q}}_{p}$ be the algebraic closure of $\mathbb{Q}_{p}$. We fix an isomorphism $i_{p}:\overline{\mathbb{Q}}_{p}\cong \mathbb{C}$ over $\overline{\mathbb{Q}}$ throughout the paper. Let $\omega_{p}$ be the Teichm$\ddot{\mathrm{u}}$ler character mod $p$. Let $(N_{1},N_{2},N_{3})$ be a triple of positive integers which are prime to $p$ and $(\psi_{1},\psi_{2},\psi_{3})$ a triple of Dirichlet characters of modulo $(N_{1}p,N_{2}p,N_{3}p)$ which satisfies the following hypothesis.\footnote[0]{2010 {\it Mathematics Subject Classification.} Primary 11F67; Secondary 11F33.} 
\begin{Hypothesis*}[1]
There exists an integer $a\in\mathbb{Z}$ such that $\psi_{1}\psi_{2}\psi_{3}=\omega_{p}^{2a}$.
\end{Hypothesis*}
For $i=1,2,3$, we fix a set of non-zero continuous $\mathcal{O}_{K}$-algebraic homomorphisms 
$$\mathfrak{X}^{(i)}:=\{Q_{m}^{(i)}:\mathbf{I}_{i}\rightarrow \overline{\mathbb{Q}}_{p}\}_{m\in\mathbb{Z}_{\geq 1}}.$$
Then, we take a formal series $G^{(i)}\in \mathbf{I}_{i}\jump{q}$ such that for each $m\in\mathbb{Z}_{\geq 1}$, the specialization
$$G^{(i)}(m):=\sum Q^{(i)}_{m}(a(n,G^{(i)}))q^{n}\in\overline{\mathbb{Q}}_{p}\jump{q}$$
is a Fourier expansion of a normalized cuspidal Hecke eigenform of weight $k^{(i)}(m)$, level $N_{i}p^{e^{(i)}(m)}$ and Nebentypus $\psi_{i}\omega_{p}^{-k^{(i)}(m)}\epsilon^{(i)}_{m}$ which is primitive outside of $p$. Here, $k^{(i)}(m)$ and $e^{(i)}(m)$ are positive integers and $\epsilon^{(i)}_{m}$ is a finite character of $\Gamma$. For $i=1$, we take $G^{(1)}$ to be a primitive Hida family $F$ and we take $\mathfrak{X}^{(1)}$ to be the set $\mathfrak{X}_{\mathbf{I}_{1}}$ of arithmetic points $Q$ with weight $k_{Q}\geq 2$ and a finite part $\epsilon_{Q}$. Throughout this section, we fix the triple $(F,G^{(2)},G^{(3)})$ and $(\mathfrak{X}_{\mathbf{I}_{1}},\mathfrak{X}^{(2)},\mathfrak{X}^{(3)})$. Let $R:=\mathbf{I}_{1}\widehat{\otimes}_{\mathcal{O}_{K}}\mathbf{I}_{2}\widehat{\otimes}_{\mathcal{O}_{K}}\mathbf{I}_{3}$ be the complete tensor product of $\mathbf{I}_{1},\mathbf{I}_{2}$ and $\mathbf{I}_{3}$. We define an unbalanced domain of interpolation points of $R$ to be
$$\mathfrak{X}_{R}^{F}:=\left\{\underline{Q}=\left(Q_{1},Q^{(2)}_{m_{2}},Q^{(3)}_{m_{3}}\right)\in\mathfrak{X}_{\mathbf{I}_{1}}\times\mathfrak{X}^{(2)}\times\mathfrak{X}^{(3)}\middle| \begin{tabular}{l} $k_{Q_{1}}+k^{(2)}(m_{2})+k^{(3)}(m_{3})\equiv 0 \ (\mathrm{mod}\ 2)$,\\ $k_{Q_{1}}\geq k^{(2)}(m_{2})+k^{(3)}(m_{3})$.\end{tabular}\right\}.$$
To state our result precisely, we prepare some of notation. Let $\Pi_{\underline{Q}}:=\Pi_{\underline{Q}}^{\prime}\otimes(\chi_{\underline{Q}})_{\mathbb{A}}$ be an automorphic representation of $\mathrm{GL}_{2}(\mathbb{A})$ for each $\underline{Q}=\left(Q_{1},Q^{(2)}_{m_{2}},Q^{(3)}_{m_{3}}\right)\in\mathfrak{X}_{R}^{F}$. Here, $\Pi_{\underline{Q}}^{\prime}$ is the product of the triple of automorphic representations attached to the triple $(F,G^{(2)},G^{(3)})(\underline{Q})$ and $(\chi_{\underline{Q}})_{\mathbb{A}}$ is the adelization of the following Dirichlet character 
$$\chi_{\underline{Q}}:=\omega_{p}^{\frac{1}{2}(2a-k_{Q_{1}}-k^{(2)}(m_{2})-k^{(3)}(m_{3}))}(\epsilon_{Q_{1}}\epsilon^{(2)}_{m_{2}}\epsilon^{(3)}_{m_{3}})^{\frac{1}{2}}.$$
Let $\epsilon(s,\Pi_{\underline{Q},l})$ be the epsilon factor of $\Pi_{\underline{Q},l}$ defined in \cite{Ike92} for each finite prime $l$.

We set $N=N_{1}N_{2}N_{3}$. We summarize some hypotheses to state Main Theorem.
\begin{Hypothesis*}[2]
The residual Galois representation $\overline{\rho}_{F}:=\rho_{F}\ \mathrm{mod}\ \mathbf{m}_{1}:\mathrm{Gal}(\overline{\mathbb{Q}}\slash \mathbb{Q})\rightarrow \mathrm{GL}_{2}(\overline{\mathbb{F}}_{p})$ attached to $F$ is absolutely irreducible as $\mathrm{Gal}(\overline{\mathbb{Q}}\slash\mathbb{Q})$-module and $p$-distinguished in the sense that the semi-simplification of $\overline{\rho}_{F}$ restricted to $\mathrm{Gal}(\overline{\mathbb{Q}}_{p}\slash\mathbb{Q}_{p})$-module is a sum of two different characters.
\end{Hypothesis*}
\begin{Hypothesis*}[3]
The number $\mathrm{gcd}(N_{1},N_{2},N_{3})$ is square free.
\end{Hypothesis*}
\begin{Hypothesis*}[4]
For each $\underline{Q}\in\mathfrak{X}_{R}^{F}$ and for each prime $l\vert N$, we have $\epsilon(1/2,\Pi_{\underline{Q},l})=1$.
\end{Hypothesis*}
\begin{Hypothesis*}[5]
For each $n\in\mathbb{Z}_{\geq 1}$ which is prime to $p$ and for  $i=2,3$, there exits an element $\langle n\rangle^{(i)}\in \mathbf{I}_{i}$  which satisfies
$$Q^{(i)}_{m}(\langle n\rangle^{(i)})=\epsilon^{(i)}_{m}(n)(n\omega_{p}^{-1}(n))^{k^{(i)}(m)}$$
for any $m\in\mathbb{Z}_{\geq 1}$.
\end{Hypothesis*}
\begin{Hypothesis*}[6]
For $i=2,3$ and for each $m\in\mathbb{Z}_{\geq 1}$, we have $a(p,G^{(i)}(m))\neq0$ or $G^{(i)}(m)$ is primitive. 
\end{Hypothesis*}
\begin{Hypothesis*}[7]
For each prime $l\vert N$, the $l$-th Fourier coefficients of $F,G^{(2)}$ and $G^{(3)}$ are non-zero. 
\end{Hypothesis*}
 Let $L(s,\Pi_{\underline{Q}})$ be the triple product $L$-function attached to $\Pi_{\underline{Q}}$ defined in \S3.2. Let $\Omega_{F_{Q_{1}}}$ be the canonical period defined in Definition 3.3.4 and $\mathcal{E}_{F_{Q_{1}}}(\Pi_{\underline{Q},p})$ be the modified $p$-Euler factor defined in (3.4.1). Our main theorem is as follows.
\begin{Main Theorem*}
Let us assume Hypotheses (1)`(7). Then there exists a unique element $\mathcal{L}^{F}_{G^{(2)},G^{(3)}}\in R$ such that we have the interpolation property :
$$(\mathcal{L}^{F}_{G^{(2)},G^{(3)}}(\underline{Q}))^{2}=\mathcal{E}_{F_{Q_{1}}}(\Pi_{\underline{Q},p})\cdot\frac{L(\frac{1}{2},\Pi_{\underline{Q}})}{(\sqrt{-1})^{2k_{Q_{1}}}\Omega_{F_{Q_{1}}}^{2}}\leqno(\mathrm{MT})$$
for every $\underline{Q}=(Q_{1},Q^{(2)}_{m_{2}},Q^{(3)}_{m_{3}})\in \mathfrak{X}^{F}_{R}.$
\end{Main Theorem*}
Our main theorem is an axiomatic generalization of the result of the unbalanced case in \cite{Hsi17} where he takes $G^{(2)},G^{(3)}$ to be primitive Hida families. Hence, we can take more general families for $G^{(2)},G^{(3)}$ such as Coleman families or CM-families.

We give the outline of the proof of Main Theorem. We use the Hida's $p$-adic Rankin-Selberg convolution to construct a triple product $p$-adic $L$-function. Let $ \mathbf{S}^{\mathrm{ord}}(N_{1},\psi_{1,(p)}\overline{\psi}_{1}^{(p)},\mathbf{I}_{1})$ be the $\mathbf{I}_{1}$-module consisting of $\mathbf{I}_{1}$-adic ordinary cusp forms of tame level $N_{1}$ and Nebentypus $\psi_{1,(p)}\overline{\psi}_{1}^{(p)}$, where $\psi_{1,(p)}$ (resp. $\psi_{1}^{(p)}$) is the $p$-part (resp. $N_{1}$-part) of the Dirichlet character $\psi_{1}$. We denote by $\mathbf{T}^{\mathrm{ord}}(N_{1},\psi_{1,(p)}\overline{\psi}_{1}^{(p)},\mathbf{I}_{1})$ the Hecke algebra of $\mathbf{S}^{\mathrm{ord}}(N_{1},\psi_{1,(p)}\overline{\psi}_{1}^{(p)},\mathbf{I}_{1})$. Let $\breve{F}\in\mathbf{S}^{\mathrm{ord}}(N_{1},\psi_{1,(p)}\overline{\psi}_{1}^{(p)},\mathbf{I}_{1})$ be the primitive Hida family attached to the twisted Hida family $F\vert[\overline{\psi}_{1}^{(p)}]$. We denote by $1_{\breve{F}}\in\mathbf{T}^{\mathrm{ord}}(N_{1},\psi_{1,(p)}\overline{\psi}_{1}^{(p)},\mathbf{I}_{1})\otimes_{\mathbf{I}_{1}}\mathrm{Frac}\mathbf{I}_{1}$ and $\eta_{\breve{F}}\in \mathbf{I}_{1}$ the idempotent element attached to $\breve{F}$ and the congruence number respectively. By Hypothesis (7), we have the rigidity of automorphic types in $G^{(i)}$ and then we can define a formal series $G^{(i),*}$ in \S4.1. Further, we construct an $R$-adic form $H^{\mathrm{aux}}\in\mathbf{S}^{\mathrm{ord}}(N,\psi_{1,(p)}\overline{\psi}_{1}^{(p)},\mathbf{I}_{1})\otimes_{\mathbf{I}_{1}}R$ attached to the pair $(G^{(2),*},G^{(3),*})$. The triple product $p$-adic $L$-function $L^{F}_{G^{(2)},G^{(3)}}$ is given by
$$L_{G^{(2)},G^{(3)}}^{F}:=\mathrm{the\ first\ Fourier\ coefficient\ of\ the\ series}\ \eta_{\breve{F}}1_{\breve{F}}\mathrm{Tr}_{N\slash N_{1}}(H^{\mathrm{aux}}),$$
 where $\mathrm{Tr}_{N\slash N_{1}}:\mathbf{S}^{\mathrm{ord}}(N,\psi_{1,(p)}\overline{\psi}_{1}^{(p)},\mathbf{I}_{1})\rightarrow\mathbf{S}^{\mathrm{ord}}(N_{1},\psi_{1,(p)}\overline{\psi}_{1}^{(p)},\mathbf{I}_{1})$ is the trace map.

To prove the interpolation formula, we relate the value $L_{G^{(2)},G^{(3)}}^{F}(\underline{Q})$ at $\underline{Q}$ to the global trilinear period integral $I(\rho(\mathbf{t}_{n})\phi_{\gamma})$ defined in page 18. By \cite[Proposition 3.10]{Hsi17}, we decompose the square of the integral $I(\rho(\mathbf{t}_{n})\phi_{\gamma})$ locally in \S3.3. We calculate the local integrals in \S3.4. We check that the value of the local zeta integral at each $q\mid N$ can be interpolated by a fudge factor $\mathfrak{f}_{q}\in R^{\times}$ in \S5.1. Then, if we put $\mathfrak{f}:=\prod_{q\mid N}\mathfrak{f}_{q}$, the element $\mathcal{L}^{F}_{G^{(2)},G^{(3)}}:=L_{G^{(2)},G^{(3)}}^{F}\slash(-\psi_{1,(p)}(-1))^{\frac{1}{2}}\mathfrak{f}^{\frac{1}{2}}\in R$ satisfies the desired interpolation formula.
\\
\\
{\bf Organization of the paper.} This paper is organized as follows. In \S2, we prepare some of notation and some known results on automorphic forms, irreducible representations of $\mathrm{GL}_{2}$ and $p$-adic families of modular forms.

In \S3, we define triple product $L$-functions, global trilinear period integrals and local zeta integrals. Then, we decompose the global trilinear period integral to the product of the triple product $L$-function and local zeta integrals by \cite[Proposition 3.10]{Hsi17}. Further we calculate the local zeta integrals.

In \S4, we prove the rigidity of automorhpic types in $G^{(i)}$, and construct a triple product $p$-adic $L$-function attached to the triple $(F,G^{(2)},G^{(3)})$.

In \S5, we prove Main Theorem and give some examples of $G^{(i)}$.

In Appendix, we prove the rigidity of conductors in a Coleman family.
\begin{Remark}
Hypotheses (1)`(4) are the same as in \cite{Hsi17}. On the other hand, other hypotheses are added to generalize the result of \cite{Hsi17}. We need Hypothesis (5) to construct a triple product $p$-adic $L$-function in \S4. If $G^{(i)}$ is a $\Lambda$-adic Hida family, the element $\langle n\rangle^{(i)}\in\mathbf{I}_{i}^{\times}$ in Hypothesis (5) is given by the element $[n\omega_{p}^{-1}(n)]\in\Lambda^{\times}$ defined in page 13. We need Hypothesis (6) to calculate a local zeta integral at $p$ in Proposition 3.4.7. We need Hypothesis (7) so that Hypothesis 3.4.2 and the rigidity of the automorphic types in $(F,G^{(2)},G^{(3)})$ hold. 

\end{Remark}
\noindent{\bf Notation.} We prepare some notation. Let $\mathbb{Q}_{v}$ be the completion field of $\mathbb{Q}$ at each place $v$. We denote by $\mathbb{A}$ (resp. $\mathbb{A}_{\mathrm{fin}}$) the adele (resp. finite adele) of $\mathbb{Q}$. For each finite prime $l$, we denote by $v_{l}$ the additive valuation of $\mathbb{Q}_{l}$ with $v_{l}(l)=1$ and by $|\cdot|_{l}$ the normalized norm of $\mathbb{Q}_{l}$ such that $|z|_{l}=l^{-v_{l}(z)}$ for every $z\in\mathbb{Q}_{l}$. Further, we denote by $|\cdot|_{\infty}$ the usual norm of $\mathbb{R}$ such that $|x|_{\infty}=\mathrm{max}\{x,-x\}$ for every $x\in \mathbb{R}$. By the product of local norms, we define the global norm $|\cdot|_{\mathbb{A}}:=\prod_{v}|\cdot|_{v}$ on $\mathbb{A}$. We set $\mathbb{R}_{+}:=\{x\in \mathbb{R}\vert x>0\}$ and $\widehat{\mathbb{Z}}=\displaystyle{\prod_{l:\mathrm{finite\ prime}}}\mathbb{Z}_{l}$. For each prime $l$, we denote by $j_{l}:\mathbb{Q}_{l}^{\times}\rightarrow \mathbb{A}^{\times}$ the natural embedding.

Let $\zeta_{v}(s)$ be the local zeta function for each place $v$ defined by
$$\zeta_{v}(s):= \begin{cases}
   \pi^{-\frac{s}{2}}\Gamma(\frac{s}{2})  & \mathrm{if}\ v=\infty, \\
    (1-v^{-s})^{-1} & \mathrm{if}\ v\neq \infty.
  \end{cases}$$
We define the global zeta function to be $\zeta_{\mathbb{A}}(s):=\prod_{v}\zeta_{v}(s)$ for $\mathrm{Re}(s)>1$. It is well-known that $\zeta_{\mathbb{A}}(s)$ has a meromorphic continuation on $\mathbb{C}$ and a functional equation. 

Let $\psi_{\mathbb{A}}=\prod_{v}\psi_{\mathbb{Q}_{v}}:\mathbb{A}\slash\mathbb{Q}\rightarrow \mathbb{C}^{\times}$ be an additive unitary character such that $\psi_{\mathbb{R}}=\mathrm{exp}(2\pi\sqrt{-1}x)$ for every $x\in \mathbb{R}$ and the conductor of $\psi_{\mathbb{Q}_{l}}$ equals to $\mathbb{Z}_{l}$ for each finite prime $l$.

Let $A$ be a commutative ring. We denote by $\rho$ the right regular representation of $\mathrm{GL}_{2}(A)$ on the space of functions $f: \mathrm{GL}_{2}(A)\rightarrow \mathbb{C}$.  For each function $f:\mathrm{GL}_{2}(A)\rightarrow \mathbb{C}$ and each character $\chi:A^{\times}\rightarrow \mathbb{C}^{\times}$, we denote by $f\otimes\chi$ the twisted function on $\mathrm{GL}_{2}(A)$ such that $f\otimes\chi (g):=f(g)\chi(\mathrm{det}(g))$ for each $g\in \mathrm{GL}_{2}(A)$.
 
Let $\chi$ be a Dirichlet character modulo $M\in\mathbb{Z}_{\geq 1}$. We  denote by $m_{\chi}$ the conductor of $\chi$.  By the class field theory, there exists a unique Hecke character $\chi_{\mathbb{A}}:\mathbb{A}^{\times}\slash\mathbb{Q}^{\times}\mathbb{R}_{+}(1+M\widehat{\mathbb{Z}})\rightarrow\mathbb{C}^{\times}$ such that $\chi_{\mathbb{A}}(j_{l}(l))=\chi(l)^{-1}$ for each $l\nmid M$. We call the Hecke character $\chi_{\mathbb{A}}$ the adelization of the character $\chi$. For each prime $l$, we write $\chi=\chi_{(l)}\chi^{(l)}$, where $\chi_{(l)}$ is a Dirichlet character modulo $l^{v_{l}(M)}$ and $\chi^{(l)}$ is a Dirichlet character modulo $Ml^{-v_{l}(M)}$.

Let $G$ be a locally compact Hausdorff group and $\mathrm{d}g$ a Haar measure on $G$. For each measurable subset $U$ of $G$, we denote by $\mathrm{vol}(U,\mathrm{d}g)$ the volume of $U$ attached to $\mathrm{d}g$. We denote by $\mathrm{d}_{\infty}x$ or $\mathrm{d}_{\infty}y$ the Haar measure on $\mathbb{R}$ such that $\mathrm{vol}([0,1],\mathrm{d}_{\infty}x)=\mathrm{vol}([0,1],\mathrm{d}_{\infty}y)=1$. We define the Haar measure on $\mathbb{R}^{\times}$ to be $\mathrm{d}^{\times}_{\infty}y:=|y| ^{-1}_{\infty}\mathrm{d}_{\infty}y$. For each prime $l$, we denote by $\mathrm{d}_{l}x$ and $\mathrm{d}^{\times}_{l}y$ the Haar measures on $\mathbb{Q}_{l}$ and $\mathbb{Q}_{l}^{\times}$ with $\mathrm{vol}(\mathbb{Z}_{l},\mathrm{d}_{l}x)=1$ and $\mathrm{vol}(\mathbb{Z}_{l}^{\times},\mathrm{d}_{l}^{\times}y)=1$ respectively. By the product of local Haar measures, we define the global Haar measures $\mathrm{d}_{\mathbb{A}}x:=\prod_{v}\mathrm{d}_{v}x$ and $\mathrm{d}_{\mathbb{A}}^{\times}y: =\prod_{v}\mathrm{d}_{v}^{\times}y$ on $\mathbb{A}\slash\mathbb{Q}$ and $\mathbb{A}^{\times}\slash\mathbb{Q}^{\times}$ respectively. Let $\mathbf{K}_{v}$ be the subgroup of $\mathrm{GL}_{2}(\mathbb{Q}_{v})$ defined by
$$\mathbf{K}_{v}:= \begin{cases}
   \mathrm{O}(2,\mathbb{R})  & \mathrm{if}\ v=\infty, \\
    \mathrm{GL}_{2}(\mathbb{Z}_{v}) & \mathrm{if}\ v\neq \infty.
  \end{cases}$$
Let $\mathrm{d}_{v}k$ be the Haar measure on $\mathbf{K}_{v}$ with $\mathrm{vol}(\mathbf{K}_{v},\mathrm{d}_{v}k)=1$. We define the Haar measure $\mathrm{d}_{v}g$ on $\mathrm{PGL}_{2}(\mathbb{Q}_{v})$ to be 
$$\mathrm{d}_{v}g:=|y|_{v}^{-1}\mathrm{d}_{v}x\mathrm{d}^{\times}_{v}y\mathrm{d}_{v}k$$
for $g=\left(
    \begin{array}{cc}
      y&x \\
      0&1 \\
    \end{array}
  \right)k$, where $y\in \mathbb{Q}_{v}^{\times}, x\in \mathbb{Q}_{v}$ and $k\in \mathbf{K}_{v}$.
\section{Preliminaries.}
In \S2, we summarize some of notation and results of \cite{Hsi17} on automorphic forms, irreducible representations of $\mathrm{GL}_{2}$ and $p$-adic families of modular forms. Further, we define  primitive $p$-adic families of modular forms in Definition 2.5.6.
\subsection{Automorphic forms on the upper half plane.}
Let $\mathfrak{H}:=\{z\in\mathbb{C}\mid\mathrm{Im}(z)>0\}$ be the upper half plane. In this subsection, we recall definitions of automorphic forms on $\mathfrak{H}$ and the Petersson inner product. Further, we introduce some operators on the automorphic forms and recall definitions of Hecke eigenforms, primitive forms and $p$-stabilized newforms. We denote by $C^{\infty}(\mathfrak{H})$ the space of $\mathbb{C}$-valued infinitely differentiable functions on $\mathfrak{H}$. We denote by $\mathrm{GL}_{2}^{+}(\mathbb{R})$ the subgroup of $\mathrm{GL}_{2}(\mathbb{R})$ consisting of matrices of positive determinant. Let $k$ be an integer and $f$ be a $\mathbb{C}$-valued function on $\mathfrak{H}$. For each $\gamma=\left(
    \begin{array}{cc}
      a&b \\
      c&d \\
    \end{array}
  \right)\in \mathrm{GL}_{2}^{+}(\mathbb{R})$,  we denote by $f\vert_{k}\gamma$ the $\mathbb{C}$-valued function on $\mathfrak{H}$ defined by
$$f\vert_{k}\gamma(z):=f(\gamma(z))(cz+d)^{-k}(\mathrm{det}\gamma)^{\tfrac{k}{2}}.\leqno(2.1.1)$$
We define the Maass-Shimura differential operators on $C^{\infty}(\mathfrak{H})$ ($cf$. \cite[page 310]{Hid93}) to be
\begin{align*}
\delta_{k}&:=\frac{1}{2\pi\sqrt{-1}}\left(\frac{\partial}{\partial z}+\frac{k}{2\sqrt{-1}\mathrm{Im}(z)}\right),\\
\epsilon&:=-\frac{1}{2\pi\sqrt{-1}}\mathrm{Im}(z)^{2}\frac{\partial}{\partial \overline{z}},\tag{2.1.2}\\
d&:=\frac{1}{2\pi\sqrt{-1}}\frac{\partial}{\partial z},
\end{align*}
and we write $\delta_{k}^{m}:=\delta_{k+2m-2}\ldots\delta_{k+2}\delta_{k}$ for each $m\in\mathbb{Z}_{\geq0}$. For each positive integer $M\in \mathbb{Z}_{\geq 1}$, we denote by $\Gamma_{0}(M)$ the congruence subgroup of $\mathrm{SL}_{2}(\mathbb{Z})$ consisting of all matrices $\left(
    \begin{array}{cc}
      a&b \\
      c&d \\
    \end{array}
  \right)$\ with $c\equiv 0\ \mathrm{mod}\ M$.

Next, we recall the definition of nearly holomorphic modular forms referring to \cite[page 314]{Hid93}.
\begin{Definition}
Let $k,r$ be positive integers and $\chi$ be a Dirichlet character modulo $M\in\mathbb{Z}_{\geq 1}$. We call a function $f\in C^{\infty}(\mathfrak{H})$ a nearly holomorphic modular form of weight $k$, level $M$, Nebentypus $\chi$ and order $\leq r$ if $f$ satisfies the following conditions.
\begin{enumerate}
\item $f \vert_{k}\gamma=\chi(d)f$ for any $\gamma=\left(
    \begin{array}{cc}
      a&b \\
      c&d \\
    \end{array}
  \right)\in \Gamma_{0}(M)$,\\
\item $\epsilon^{r+1}(f)=0$,\\
\item $f$ is slowly increasing, that is, for any $g\in\mathrm{GL}_{2}^{+}(\mathbb{Q})$, there exists $C,M_{1},M_{2}\in \mathbb{R}_{+}$ such that $\vert f \vert_{k}g(z)\vert_{\infty}<C\vert \mathrm{Im}(z)\vert_{\infty}^{M_{1}}$ at $\mathrm{Im}(z)>M_{2}$.
\end{enumerate}
In addition, if $f$ is rapid decreasing, that is, for any $g\in\mathrm{GL}_{2}^{+}(\mathbb{Q})$ and $M_{1}\in \mathbb{R}_{+}$, $\vert (f\vert_{k}g(z))\cdot\mathrm{Im}(z)^{M_{1}}\vert_{\infty}\mapsto 0$ at $\mathrm{Im}(z)\mapsto \infty$, we call $f$ a nearly holomorphic cusp form.
\end{Definition}
We denote by $\mathcal{N}_{k}^{[r]}(M,\chi)$ the space of nearly holomorphic modular forms of weight $k$, level $M$, Nebentypus $\chi$ and order $\leq r$. In particular, we call an element $f\in\mathcal{M}_{k}(M,\chi):=\mathcal{N}_{k}^{[0]}(M,\chi)$ a holomorphic modular form. It is easy to check that the Maass-Shimura operator $\delta_{k}$ induces the map $\delta_{k}:\mathcal{N}_{k}^{[r]}(M,\chi)\rightarrow \mathcal{N}_{k+2}^{[r+1]}(M,\chi)$ ($cf$. \cite[page 312]{Hid93}). We give the Fourier expansion of $f\in \mathcal{M}_{k}(M,\chi)$ at $\infty$ by
$$f(z)=\displaystyle{\sum_{n\geq 0}}a(n,f)q^{n},$$
where
 $q=\mathrm{exp}(2\pi\sqrt{-1}z)$. Let $\mathcal{S}_{k}(M,\chi)$ be the subspace of $\mathcal{M}_{k}(M,\chi)$ consisting of cusp forms. We define a $\mathbf{Z}[\chi]$-module $S_{k}(M,\chi,\mathbf{Z}[\chi])$ to be
$$S_{k}(M,\chi,\mathbf{Z}[\chi]):=\{f\in S_{k}(M,\chi)\vert a(n,f)\in \mathbb{Z}[\chi],\ \forall n\in\mathbb{Z}_{\geq1}\}.$$
Further, for any $\mathbb{Z}[\chi]$-algebra $A$, we set $S_{k}(M,\chi,A):=S_{k}(M,\chi,\mathbf{Z}[\chi])\otimes_{\mathbb{Z}[\chi]}A$.

Next, we recall the definition of the Petersson inner product. For each $f,g\in \mathcal{S}_{k}(M,\chi)$, the Petersson inner product $\langle f,g\rangle_{\Gamma_{0}(M)}$ is defined by 

$$\langle f,g\rangle_{\Gamma_{0}(M)}:=\int_{\Gamma_{0}(M)\backslash\mathfrak{H}} f(x+\sqrt{-1}y)\overline{g(x+\sqrt{-1}y)}y^{k}\frac{\mathrm{d}_{\infty}x\mathrm{d}_{\infty}y}{y^{2}}.$$
In particular, for each $f\in \mathcal{S}_{k}(M,\chi)$, we define the Petterson norm of $f$ to be $\|f\|_{\Gamma_{0}(M)}^{2}=\langle f,f\rangle_{\Gamma_{0}(M)}$. 

Next, we recall the definitions of the Hecke operators on $\mathcal{N}_{k}^{[r]}(M,\chi)$ at each finite prime $l$. For each $d\in\mathbb{Z}_{\geq 1}$, let $V_{d}$ and $U_{d}$ be operators on $C^{\infty}(\mathfrak{H})$ defined by
$$V_{d}f(z)=df(dz);\ \ U_{d}=\frac{1}{d}\displaystyle{\sum_{j=0}^{d-1}}f(\frac{z+j}{d}).\leqno(2.1.3)$$
It is easy to check that $V_{d}(f)\in \mathcal{N}_{k}^{[r]}(Md,\chi)$ for each $f\in \mathcal{N}_{k}^{[r]}(M,\chi)$. The Hecke operator $T_{l}$ on $\mathcal{N}_{k}^{[r]}(M,\chi)$ at $l$ is defined by
$$T_{l}f:=\begin{cases}
U_{l}f\ &\mathrm{if}\ l\vert M,\\
U_{l}f+\chi(l)l^{k-2}V_{l}f\ &\mathrm{if}\ l\nmid M.
\end{cases}\leqno(2.1.4)$$
Next, we recall the definition of the twisting operator for each Dirichlet character . Let $\kappa$ be a Dirichlet character modulo $L\in\mathbb{Z}_{\geq 1}$. For each modular form $f\in\mathcal{M}_{k}(M,\chi)$, the twisted modular form $f\mid[\kappa]$ is defined by
$$f\vert[\kappa](z)=\sum_{n\geq0}\kappa(n)a(n,f)q^{n}.\leqno(2.1.5)$$
It is known that $f\vert[\kappa]\in\mathcal{M}_{k}(ML^{2},\kappa\chi)$.

We say that $f\in\mathcal{M}_{k}(M,\chi)$ is a Hecke eigenform if $f$ is an eigenfunction of $T_{l}$ for any prime $l$. In particular, if $a(1,f)=1$, we call $f$ a normalized Hecke eigenform. We denote by $S^{1}_{k}(M,\chi)$ the subspace of $S_{k}(M,\chi)$ generated by the elements
$$\displaystyle{\bigcup_{L}\bigcup_{P}}\{V_{P}f \mid f\in S_{k}(L,\chi)\}.$$
Here, $L$ (resp. $P$) runs through all positive integers such that $m_{\chi}\vert L$, $L\vert M$ and $L\neq M$ (resp. $P\vert M\slash L$). We say that a normalized Hecke eigenform $f\in\mathcal{S}_{k}(M,\chi)$ is primitive if $f$ is contained in the orthogonal space $S^{0}(M,\chi):=S^{1}(M,\chi)^{\perp}$ with respect to the Petersson inner product. It is known that for each normalized Hecke eigenform $f\in\mathcal{S}_{k}(M,\chi)$, there exists a unique primitive form $f^{\mathrm{prim}}\in S^{0}(m_{f},\chi)$ with $m_{f}\vert M$ such that $a(l,f^{\mathrm{prim}})=a(l,f)$ for all but a finite number of primes $l$. We call $f^{\mathrm{prim}}$ and $m_{f}$ the primitive form attached to $f$ and the conductor of $f$ respectively.

Next, we recall the definition of $p$-stabilized newforms.
\begin{Definition}
We assume $p\nmid M$. Let $f\in \mathcal{S}_{k}(Mp^{n},\chi)$ be a normalized Hecke eigenform with $n\in\mathbb{Z}_{\geq 1}$. We fix an element $\alpha\in\mathbb{Q}_{\geq 0}\cup\{\infty\}$. We call $f$ a slope $\alpha$ $p$-stabilized newform of tame level $M$ if $M$ divides the conductor of $f$ and $\vert a(p,f)\vert_{p}=p^{-\alpha}$. In particular, if $\alpha=0$, we call $f$ an ordinary $p$-stabilized newform.  
\end{Definition}

\subsection{Automorphic forms on $\mathrm{GL}_{2}(\mathbb{A}$).}
In this subsection, we recall some of definitions and properties on automorphic forms on $\mathrm{GL}_{2}(\mathbb{A})$. Further, at the end of this subsection, we recall the definition of the adelic lifts of classical forms and give the relations between the operators defined in the previous subsection and in this subsection. For each element $g\in\mathrm{GL}_{2}(\mathbb{A})$, we denote by $g_{\infty}\in\mathrm{GL}_{2}(\mathbb{R})$ (resp. $g_{\mathrm{fin}}\in\mathrm{GL}_{2}(\mathbb{A}_\mathrm{fin})$) the infinite part (resp. finite pair) of $g$. Let $M$ be a positive integer. We define open compact subgroups $U_{0}(M),U_{1}(M)$ of $\mathrm{GL}_{2}(\widehat{\mathbb{Z}})$ to be
\begin{center}
$U_{0}(M)=\left\{ g\in \mathrm{GL}_{2}(\widehat{\mathbb{Z}}) \mid g\equiv \left(
    \begin{array}{cc}
      *&* \\
      0&* \\
    \end{array}
  \right) (\mathrm{mod} M\widehat{\mathbb{Z}})\right\},$

$U_{1}(M)=\left\{ g\in \mathrm{GL}_{2}(\widehat{\mathbb{Z}}) \mid g\equiv \left(
    \begin{array}{cc}
      *&* \\
      0&1 \\
    \end{array}
  \right) (\mathrm{mod} M\widehat{\mathbb{Z}})\right\}.$
\end{center}
Let $\xi=\prod_{v}\xi_{v}: \mathbb{Q}^{\times}\backslash\mathbb{A}^{\times}\rightarrow \mathbb{C}^{\times}$ be a finite order Hecke character of level $M$. We can regard $\xi$ as a character of $U_{0}(M)$ via 
$$\xi\left(\left(
    \begin{array}{cc}
      a&b \\
      c&d \\
    \end{array}
  \right)\right)=\prod_{l\vert M}\xi_{l}(d_{l})$$
 for each $\left(
    \begin{array}{cc}
      a&b \\
      c&d \\
    \end{array}
  \right) \in U_{0}(M)$. Let $\mathbf{K}:=\mathrm{O}_{2}(\mathbb{R})\mathrm{GL}_{2}(\widehat{\mathbb{Z}})$ be the maximal compact subgroup of $\mathrm{GL}_{2}(\mathbb{A})$. We denote by $\mathfrak{g}$ (resp. $\mathfrak{g}_{\mathbb{C}}$) the Lie algebra (resp. the complex Lie algebra) of $\mathrm{GL}_{2}(\mathbb{R})$. We recall the definition of automorphic forms referring to \cite[page 299]{Bum97}.
\begin{Definition}
We say that a function $f:\mathrm{GL}_{2}(\mathbb{A})\rightarrow \mathbb{C}$ is smooth if for each $g\in\mathrm{GL}_{2}(\mathbb{A})$, there exists a $C^{\infty}$-function $f_{g}: \mathrm{GL}_{2}(\mathbb{R})\rightarrow\mathbb{C}$ and a neighborhood $N$ of $g$ in $\mathrm{GL}_{2}(\mathbb{A})$ such that
$f(h)=f_{g}(h_{\infty})$ for every $h=h_{\infty}h_{\mathrm{fin}}\in N$.
\end{Definition}
On the space of smooth functions $f:\mathrm{GL}_{2}(\mathbb{A})\rightarrow\mathbb{C}$, we define the action of $\mathfrak{g}$ to be
$$Xf(g)=\frac{d}{dt}f(g_{\infty}\mathrm{exp}(tX)g_{\mathrm{fin}})\vert_{t=0}$$
for each $X\in\mathfrak{g}$ and $g=g_{\infty}g_{\mathrm{fin}}\in\mathrm{GL}_{2}(\mathbb{A})=\mathrm{GL}_{2}(\mathbb{R})\mathrm{GL}_{2}(\mathbb{A}_{\mathrm{fin}}).$ The above action of $\mathfrak{g}$ can be extend to an action of the universal enveloping algebra $U(\mathfrak{g}_{\mathbb{C}})$ of $\mathfrak{g}_{\mathbb{C}}$. Let $\mathcal{Z}$ be the center of $U(\mathfrak{g}_{\mathbb{C}})$.
\begin{Definition}
Let $\xi$ be a finite order Hecke character of level $M$. We call a smooth function $\varphi:\mathrm{GL}_{2}(\mathbb{Q})\backslash\mathrm{GL}_{2}(\mathbb{A})\rightarrow\mathbb{C}$ an automorphic form of $\mathrm{GL}_{2}(\mathbb{A})$ with the central character $\xi$ if $\varphi$ satisfies the following conditions.
\begin{enumerate}
\item The function $\varphi$ is moderate growth , that is, there exists positive constants $C$ and $n$ such that $\vert\varphi(g)\vert_{\infty}<C\displaystyle{\max_{1\leq i,j\leq2}}\{\vert g_{i,j}\vert_{\mathbb{A}},\vert \mathrm{det}(g)\vert_{\mathbb{A}}^{-1}\}^{n}$ for all $g=\left(
    \begin{array}{cc}
      g_{1,1}&g_{1,2} \\
      g_{2,1}&g_{2,2}
    \end{array}
  \right)\in\mathrm{GL}_{2}(\mathbb{A})$.\\
\item We have $\mathrm{dim}_{\mathbb{C}}\mathrm{Span}\mathbb{C}\{X\varphi\mid X\in\mathcal{Z}\}<\infty$ and $\mathrm{dim}_{\mathbb{C}}\mathrm{Span}\mathbb{C}\{\rho(k)f\mid k\in\mathbf{K}\}<\infty$.\\
\item We have $\varphi(\mathrm{diag}\{z,z\}g)=\xi(z)\varphi(g)\ \mathrm{for\ every}\ z\in\mathbb{A}\ \mathrm{and}\ g\in\mathrm{GL}_{2}(\mathbb{A})$.
\end{enumerate}
In particular, we say that $\varphi$ is cuspidal if $\varphi$ satisfies
$$\int_{\mathbb{A}\slash\mathbb{Q}}\varphi\left(\left(
    \begin{array}{cc}
      1&x \\
      0&1
    \end{array}
  \right)g\right)\mathrm{d}_{\mathbb{A}}x=0$$
for almost all $g\in\mathrm{GL}_{2}(\mathbb{A})$. Further, we say that $\varphi$ is of weight $k\in\mathbb{Z}$, level $M$ and character $\xi$ if $\varphi$ satisfies
 $$\varphi(\alpha gu_{\infty}u_{\mathrm{f}})=\varphi(g)e^{\sqrt{-1}k\theta}\xi(u_{\mathrm{f}})$$
for $\alpha\in \mathrm{GL}_{2}(\mathbb{Q}), u_{\infty}=\left(
    \begin{array}{cc}
      \mathrm{cos}\theta&\mathrm{sin}\theta \\
      -\mathrm{sin}\theta&\mathrm{cos}\theta \\
    \end{array}
  \right)$ and $u_{\mathrm{f}}\in U_{0}(M)$. 
\end{Definition}
We denote by $\mathcal{A}(\xi)$ (resp. $\mathcal{A}^{0}(\xi)$) the space of automorphic forms (resp. cuspidal automorphic forms) of $\mathrm{GL}_{2}(\mathbb{A})$ with the central character $\xi$. Further, we denote by $\mathcal{A}_{k}(M,\xi)$ (resp. $\mathcal{A}^{0}_{k}(M,\xi)$) the space of automorphic forms (resp. cuspidal automorphic forms) of weight $k$, level $M$ and character $\xi$.

Next, we recall the definitions of the weight raising\slash lowering operators, level rising operators, Hecke operators and twisting operators on $\mathcal{A}_{k}(M,\xi)$. The weight raising\slash lowering operators $\mathbf{V}_{\pm}:\mathcal{A}_{k}(M,\xi)\rightarrow \mathcal{A}_{k\pm2}(M,\xi)$ in \cite[page 165]{JL70} are defined by
$$\mathbf{V}_{\pm}=\frac{1}{(-8\pi)}\left(\left(
    \begin{array}{cc}
      1&0 \\
      0&-1 \\
    \end{array}
  \right)\otimes 1 \pm \left(
    \begin{array}{cc}
      0&1 \\
      1&0 \\
    \end{array}
  \right)\otimes \sqrt{-1}\right)\in \mathfrak{g}_{\mathbb{C}}.\leqno(2.2.1)$$
The level-raising operator $\mathbf{V}_{d}:\mathcal{A}_{k}(M,\xi)\rightarrow\mathcal{A}_{k}(Md,\xi)$ for $d\in\mathbb{Z}_{\geq 1}$ is defined by
$$\mathbf{V}_{d}\varphi(g):=\rho(\left(
    \begin{array}{cc}
      j_{d}^{-1}&0 \\
      0&1 \\
    \end{array}
  \right))\varphi,\leqno(2.2.2)$$
where $j_{d}=\prod_{l\vert d}j_{l}(l)\in\mathbb{A}^{\times}$.

Let $l$ be a finite prime and $U_{l}$ be the operator on the space of functions $\varphi: \mathrm{GL}_{2}(\mathbb{A})\rightarrow \mathbb{C}$ defined by
$$\mathbf{U}_{l}\varphi=\displaystyle{\sum_{x=1}^{l}}\rho(\left(
    \begin{array}{cc}
      j_{l}(l)&j_{l}(x) \\
      0&1 \\
    \end{array}
  \right))\varphi.\leqno(2.2.3)$$
The Hecke operator at $l$ on $\mathcal{A}_{k}(M,\xi)$ is defined by
$$\mathbf{T}_{l}\varphi:=\begin{cases}
\mathbf{U}_{l}\varphi\ &\mathrm{if}\ l\vert M,\\
\mathbf{U}_{l}\varphi+\xi(j_{l}(l))\mathbf{V}_{l}\varphi\ &\mathrm{if}\ l\nmid M.
\end{cases}\leqno(2.2.4)$$
Next, we recall the twisting operator. Let $\kappa$ be a Dirichlet character modulo $l^{m}$, where $l$ is a finite prime and $m$ is a non-negative integer. If $m_{\kappa}>0$, the Gauss sum $\mathfrak{g}(\kappa)$ attached to $\kappa$ is defined by
$$\mathfrak{g}(\kappa)=\displaystyle{\sum}_{x\in (\mathbb{Z}\slash m_{\kappa}\mathbb{Z})^{\times}}\kappa^{-1}(x)e^{\frac{-2\pi\sqrt{-1}x}{m_{\kappa}}}.\leqno(2.2.5)$$
For each $\varphi\in\mathcal{A}_{k}(M,\xi)$, the twisted automorphic form $\theta^{\kappa}_{l}\varphi: \mathrm{GL}_{2}(\mathbb{A})\rightarrow \mathbb{C}$ is defined by
$$ \theta^{\kappa}_{l}\varphi=\begin{cases}
    \varphi-l^{-1}\mathbf{V}_{l}\mathbf{U}_{l}\varphi & \mathrm{if}\ m_{\chi}=1,\\
    \mathfrak{g}(\kappa)^{-1}\displaystyle{\sum_{x\in(\mathbb{Z}\slash l^{n}\mathbb{Z})^{\times}}}\kappa^{-1}(x)\rho(\left(
    \begin{array}{cc}
      1&j_{l}(x\slash l^{n}) \\
      0&1 \\
    \end{array}
  \right))\varphi & \mathrm{if}\ m_{\chi}>1.
  \end{cases}\leqno(2.2.6)$$
Next, we recall the definition of the $\mathrm{GL}_{2}(\mathbb{A})$-equivariant bilinear form $\langle\ ,\ \rangle:\mathcal{A}^{0}(\xi)\times\mathcal{A}^{0}(\xi^{-1})\rightarrow \mathbb{C}$ defined in  \cite[\S2.7]{Hsi17}. Let $\mathrm{d}^{\tau}_{\mathbb{A}}g$ be the Tamagawa measure of $\mathrm{PGL}_{2}(\mathbb{A})$.
\begin{Definition}
We define a $\mathrm{GL}_{2}(\mathbb{A})$-equivariant bilinear form $\langle\ ,\ \rangle:\mathcal{A}^{0}(\xi)\times\mathcal{A}^{0}(\xi^{-1})\rightarrow \mathbb{C}$ to be
$$\langle\varphi,\varphi^{\prime} \rangle:=\int_{\mathbb{A}^{\times}\mathrm{GL}_{2}(\mathbb{Q})\backslash\mathrm{GL}_{2}(\mathbb{A})}\varphi(g)\varphi^{\prime}(g)\mathrm{d}_{v}^{\tau}g$$
for each $\varphi\in\mathcal{A}^{0}(\xi),\varphi^{\prime}\in\mathcal{A}^{0}(\xi^{-1}).$
\end{Definition}
We have the following lemma which is proved in \cite[page 217]{Wal85}.
\begin{Lemma}
Let $M,k$ be positive integers. If we take elements  $\varphi\in\mathcal{A}^{0}_{k}(M,\xi)$ and $\varphi^{\prime}\in\mathcal{A}^{0}_{-k}(M,\xi^{-1})$, we have 
\begin{align*}
\langle X\varphi,\varphi^{\prime}\rangle&=-\langle \varphi,X\varphi^{\prime}\rangle\ \ \mathrm{for\ each}\ X\in\mathfrak{g},\\
\langle\varphi,\mathbf{T}_{l}\varphi^{\prime}\rangle&=l\langle\mathbf{V}_{l}\varphi,\varphi^{\prime}\rangle\ \ \mathrm{for\ each}\ l\vert M,\\
\langle\mathbf{T}_{l}\varphi,\varphi^{\prime}\rangle&=\xi(l)\langle\varphi,\mathbf{T}_{l}\varphi^{\prime}\rangle\ \ \mathrm{for\ each}\ l\nmid M.
\end{align*}
\end{Lemma}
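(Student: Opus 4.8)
The plan is to derive all three identities from one structural fact: the pairing $\langle\ ,\ \rangle$ is $\mathrm{GL}_{2}(\mathbb{A})$-invariant, that is, $\langle\rho(h)\varphi,\rho(h)\varphi^{\prime}\rangle=\langle\varphi,\varphi^{\prime}\rangle$ for every $h\in\mathrm{GL}_{2}(\mathbb{A})$. This holds because the substitution $g\mapsto gh^{-1}$ is legitimate in the defining integral: $\mathrm{d}^{\tau}_{\mathbb{A}}g$ is a Haar measure on $\mathrm{PGL}_{2}(\mathbb{A})$ (hence right invariant) and the integral converges absolutely, since $\varphi,\varphi^{\prime}$ are cuspidal and therefore rapidly decreasing. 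The three formulas then come out by differentiating or reorganizing this invariance.

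For the first formula I would fix $X\in\mathfrak{g}$ and differentiate the function $t\mapsto\langle\rho(\exp tX)\varphi,\rho(\exp tX)\varphi^{\prime}\rangle$, which is constant in $t$, at $t=0$. By the product rule its derivative is $\langle X\varphi,\varphi^{\prime}\rangle+\langle\varphi,X\varphi^{\prime}\rangle$, once one may differentiate under the integral sign; this is legitimate because $\varphi$, $\varphi^{\prime}$ and their right translates and $\mathfrak{g}$-derivatives stay cuspidal, so the integrand and its $t$-derivative are dominated, uniformly for $t$ near $0$, by an integrable function on the quotient. Hence $\langle X\varphi,\varphi^{\prime}\rangle=-\langle\varphi,X\varphi^{\prime}\rangle$.

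For the second formula $l\mid M$ gives $\mathbf{T}_{l}=\mathbf{U}_{l}$; I would expand $\mathbf{U}_{l}\varphi^{\prime}$ via (2.2.3), move each translate onto the first argument using invariance (so that $\rho$ of the inverse matrix acts on $\varphi$), and apply the factorization
$$\begin{pmatrix}j_{l}(l)&j_{l}(x)\\0&1\end{pmatrix}^{-1}=\begin{pmatrix}j_{l}(l)^{-1}&0\\0&1\end{pmatrix}\begin{pmatrix}1&-j_{l}(x)\\0&1\end{pmatrix}.$$
The unipotent factor lies in $U_{0}(M)$ with trivial $\xi$-value, hence acts trivially on $\varphi$ (which has level $M$ and character $\xi$), while the diagonal factor realizes $\mathbf{V}_{l}$ by (2.2.2). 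So each of the $l$ terms contributes $\langle\mathbf{V}_{l}\varphi,\varphi^{\prime}\rangle$, and $\langle\varphi,\mathbf{T}_{l}\varphi^{\prime}\rangle=l\langle\mathbf{V}_{l}\varphi,\varphi^{\prime}\rangle$.

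For the third formula $l\nmid M$, so $\varphi$ and $\varphi^{\prime}$ are right $\mathrm{GL}_{2}(\mathbb{Z}_{l})$-invariant. I would first identify $\mathbf{T}_{l}$ (on either argument) with the full spherical Hecke operator $\sum_{i}\rho(\delta_{i})$ attached to the double coset $\Omega:=\mathrm{GL}_{2}(\mathbb{Z}_{l})\,\mathrm{diag}(l,1)\,\mathrm{GL}_{2}(\mathbb{Z}_{l})=\bigsqcup_{i}\delta_{i}\mathrm{GL}_{2}(\mathbb{Z}_{l})$, the coset containing $\mathrm{diag}(1,l)$ producing the $\mathbf{V}_{l}$-term of (2.2.4) after a central scalar is pulled through the central character. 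By invariance $\langle\mathbf{T}_{l}\varphi,\varphi^{\prime}\rangle=\langle\varphi,\sum_{i}\rho(\delta_{i}^{-1})\varphi^{\prime}\rangle$. Since $\mathrm{diag}(l,1)$ and $\mathrm{diag}(1,l)$ are conjugate by a permutation matrix in $\mathrm{GL}_{2}(\mathbb{Z}_{l})$ one has $\Omega^{-1}=j_{l}(l)^{-1}\Omega$, so the right cosets $\delta_{i}^{-1}\mathrm{GL}_{2}(\mathbb{Z}_{l})$ are exactly the cosets $j_{l}(l)^{-1}\delta_{i}\mathrm{GL}_{2}(\mathbb{Z}_{l})$; using the right $\mathrm{GL}_{2}(\mathbb{Z}_{l})$-invariance of $\varphi^{\prime}$ together with its central character $\xi^{-1}$ to pull out the central scalar $j_{l}(l)^{-1}$ gives $\sum_{i}\rho(\delta_{i}^{-1})\varphi^{\prime}=\xi(l)\,\mathbf{T}_{l}\varphi^{\prime}$, whence $\langle\mathbf{T}_{l}\varphi,\varphi^{\prime}\rangle=\xi(l)\langle\varphi,\mathbf{T}_{l}\varphi^{\prime}\rangle$. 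I expect this third identity to be the main obstacle: one must match the two-term expression $\mathbf{U}_{l}+\xi(j_{l}(l))\mathbf{V}_{l}$ with the genuine $(l+1)$-coset Hecke operator, and track the central character carefully through the inversion $\Omega\mapsto\Omega^{-1}$ so that precisely the factor $\xi(l)$ appears, with no stray power of $l$ and no inverted character. The first two identities are essentially formal once the invariance of the pairing and the legitimacy of differentiating under the integral sign are established.
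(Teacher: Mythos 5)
The paper does not actually prove this lemma (it just cites \cite[page 217]{Wal85}), so I am judging your argument on its own merits. Your governing idea --- that everything follows from the $\mathrm{GL}_{2}(\mathbb{A})$-invariance $\langle\rho(h)\varphi,\rho(h)\varphi^{\prime}\rangle=\langle\varphi,\varphi^{\prime}\rangle$, which holds by right-invariance of the Tamagawa measure and absolute convergence --- is the right one, and your treatments of the first identity (differentiating the constant function $t\mapsto\langle\rho(\exp tX)\varphi,\rho(\exp tX)\varphi^{\prime}\rangle$) and of the second (inverting each matrix in $\mathbf{U}_{l}$, absorbing the unipotent factor into $U_{0}(M)$ where $\xi$ is trivial, and recognizing the diagonal factor as $\mathbf{V}_{l}$) are correct.

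The third identity, however, breaks exactly at the step you yourself flagged as the main obstacle. Write $K=\mathrm{GL}_{2}(\mathbb{Z}_{l})$, $\delta_{i}=\left(\begin{smallmatrix}l&i\\0&1\end{smallmatrix}\right)$ for $0\le i\le l-1$ and $\delta_{l}=\mathrm{diag}\{1,l\}$, so that $\mathbf{T}_{l}=\sum_{i=0}^{l}\rho(\delta_{i})$ on $K$-invariant vectors, as you say. It is true that $\Omega^{-1}=j_{l}(l)^{-1}\Omega$, but it is \emph{false} that the cosets $\delta_{i}^{-1}K$ are the cosets $j_{l}(l)^{-1}\delta_{i}K$: since $\delta_{i}^{-1}=\mathrm{diag}\{l^{-1},1\}\left(\begin{smallmatrix}1&-i\\0&1\end{smallmatrix}\right)$, all $l$ cosets $\delta_{i}^{-1}K$ with $i<l$ coincide with $\mathrm{diag}\{l^{-1},1\}K$. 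The point is that $\{\delta_{i}^{-1}\}$ is a system of representatives for $K\backslash\Omega^{-1}$, not for $\Omega^{-1}/K$, which is what the operator $\sum_{i}\rho(\delta_{i}^{-1})$ on right-$K$-invariant vectors would require. Consequently your asserted operator identity $\sum_{i}\rho(\delta_{i}^{-1})\varphi^{\prime}=\xi(l)\,\mathbf{T}_{l}\varphi^{\prime}$ is false: the left side equals $\xi(l)\bigl(l\,\rho(\delta_{l})\varphi^{\prime}+\rho(\delta_{0})\varphi^{\prime}\bigr)$, which is not $\xi(l)\sum_{i}\rho(\delta_{i})\varphi^{\prime}$ (e.g.\ compare Whittaker coefficients: one expression involves $\mathbf{U}_{l}$, the other a multiple of $\mathbf{V}_{l}$).

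The statement is nonetheless true and the repair is short: do not aim for an operator identity, but exploit that $\langle\varphi,\rho(\gamma)\varphi^{\prime}\rangle$ depends on $\gamma\in\mathrm{GL}_{2}(\mathbb{Q}_{l})$ only through its double coset $K\gamma K$ modulo the center, because both arguments are right $K$-invariant and the pairing is invariant, so
$$\langle\varphi,\rho(k\gamma k^{\prime})\varphi^{\prime}\rangle=\langle\rho(k^{-1})\varphi,\rho(\gamma k^{\prime})\varphi^{\prime}\rangle=\langle\varphi,\rho(\gamma)\varphi^{\prime}\rangle,\qquad k,k^{\prime}\in K.$$
Hence the $l+1$ numbers $\langle\varphi,\rho(\delta_{i})\varphi^{\prime}\rangle$ are all equal to a common value $A$, and since $\delta_{i}^{-1}\in j_{l}(l)^{-1}K\delta_{0}K$ and $\varphi^{\prime}$ has central character $\xi^{-1}$, each $\langle\varphi,\rho(\delta_{i}^{-1})\varphi^{\prime}\rangle=\xi(l)A$. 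Therefore $\langle\mathbf{T}_{l}\varphi,\varphi^{\prime}\rangle=\sum_{i}\langle\varphi,\rho(\delta_{i}^{-1})\varphi^{\prime}\rangle=(l+1)\xi(l)A=\xi(l)\langle\varphi,\mathbf{T}_{l}\varphi^{\prime}\rangle$.
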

Next, we recall the definition of the adelic lifts of classical forms on $\mathfrak{H}$ and give the relation between the classical forms and the automorphic forms on $\mathrm{GL}_{2}(\mathbb{A})$.
\begin{Definition}
For each $f\in \mathcal{N}_{k}^{[r]}(M,\chi)$, we define an element $\Phi(f)\in\mathcal{A}_{k}(M,\chi_{\mathbb{A}}^{-1})$ to be
$$\Phi(f)(g): = (f\vert_{k}g_{\infty})(\sqrt{-1})\cdot \chi_{\mathbb{A}}^{-1}(u)$$
for each $g=\alpha g_{\infty}u\in\mathrm{GL}_{2}(\mathbb{A})$, where $\alpha\in\mathrm{GL}_{2}(\mathbb{Q})$, $g_{\infty}\in\mathrm{GL}_{2}^{+}(\mathbb{R})$ and $u\in U_{0}(M)$ ($cf$. \cite[\S3]{Cas73}. We call $\Phi(f)$ the adelic lift of $f$.
\end{Definition}
We recall the compatibility of some operators via the adelic lift. Let $\delta_{k}^{m},\epsilon, V_{d},T_{l},\vert_{\kappa}$ be the operators on $\mathcal{N}_{k}^{[r]}(M,\chi)$ defined in (2.1.2)`(2.1.5) and $\mathbf{V}_{\pm},\mathbf{T}_{l},\theta^{\kappa}_{l}$ be the operators on $\mathcal{A}(M,\chi_{\mathbb{A}}^{-1})$ defined in (2.2.1),(2.2.4) and (2.2.6). Then, we have the following equations
\begin{align*}
\Phi(\delta_{k}^{m}f)&=\mathbf{V}_{+}^{m}\Phi(f),\\
\Phi(\epsilon f)&=\mathbf{V}_{-}\Phi(f),\\
\Phi(V_{d}f)&=d^{1-\frac{k}{2}}\mathbf{V}_{d}\Phi(f),\tag{2.2.7}\\
\Phi(T_{l}f)&=l^{\frac{k}{2}-1}\mathbf{T}_{l}\Phi(f),\\
\Phi(f\vert[\kappa])&=\theta^{\kappa}_{l}\Phi(f)\otimes\kappa_{\mathbb{A}}^{-1}
\end{align*}
for each $f\in \mathcal{N}_{k}^{[r]}(M,\chi)$.
\subsection{Preliminaries on irreducible representations of $\mathrm{GL}_{2}(\mathbb{Q}_{v})$.}
Let $v$ be a place of $\mathbb{Q}$. In this subsection, we recall some of notation of irreducible representations of $\mathrm{GL}_{2}(\mathbb{Q}_{v})$ and introduce definitions of new lines and ordinary lines which are defined in \cite{Cas73} and \cite{Hsi17}. We also recall the definitions of Whittaker models, normalized local Whittaker newforms and normalized ordinary Whittaker functions defined in \cite{Hsi17}.

Let $l$ be a finite prime. Let $\pi(\alpha,\beta)$ be the standard induced representation of $\mathrm{GL}_{2}(\mathbb{Q}_{l})$ with characters $\alpha,\beta$ of $\mathbb{Q}_{l}^{\times}$. That is, the space $\pi(\alpha,\beta)$ consists of all locally constant functions $f:\mathrm{GL}_{2}(\mathbb{Q}_{l})\rightarrow\mathbb{C}$ with the transformation property
$$f\left(\left(
    \begin{array}{cc}
      a&b\\
      0&d \\
    \end{array}
  \right)g\right)=\alpha(a)\beta(d)\left|ad^{-1}\right|_{l}^{\frac{1}{2}}f(g)$$
for all $g\in\mathrm{GL}_{2}(\mathbb{Q}_{l}),a,d\in\mathbb{Q}_{l}^{\times},b\in\mathbb{Q}_{l}$. The action of $\mathrm{GL}_{2}(\mathbb{Q}_{l})$ on $\pi(\alpha,\beta)$ is given by right translation. It is well-known that if $\alpha\beta^{-1}\neq\vert\cdot\vert_{l}^{\pm}$, the induced representation is irreducible ($cf$. \cite[Theorem 4.5.1]{Bum97}), and it is called the principal series. On the other hand, the induced representation $\pi(\chi\vert\cdot\vert_{l}^{\frac{1}{2}},\chi\vert\cdot\vert_{l}^{-\frac{1}{2}})$ with a character $\chi$ of $\mathbb{Q}_{l}^{\times}$ is not irreducible and there exists a unique irreducible subrepresentation $\chi\mathrm{St}$ ($cf$. \cite[Theorem 4.5.1]{Bum97}). It is called the special representation or the Steinberg representation. Further, if an infinite dimensional irreducible representation $\pi_{l}$ of $\mathrm{GL}_{2}(\mathbb{Q}_{l})$ is neither a principal series nor a special representation, we call $\pi_{l}$ the supercuspidal representation.

If $v=\infty$, an irreducible representation $\pi_{\infty}$ of $\mathrm{GL}_{2}(\mathbb{R})$ is called the discrete series (resp. the limit of discrete series) when the lowest weight of $\pi_{\infty}$ by the action of $\mathrm{SO}_{2}(\mathbb{R})$ is $k\geq 2$ (resp. $k=1$). In particular, we denote by $\mathcal{D}_{0}(k)$ the (limit of) discrete series of lowest weight $k$ with the central character $\mathrm{sgn}^{k}$.

Let $l$ be a finite prime. Next, we recall the definition of new lines of an irreducible admissible infinite dimensional representations of $\mathrm{GL}_{2}(\mathbb{Q}_{l})$. For each $n\in\mathbb{Z}_{\geq 0}$, let $\mathcal{U}_{1}(l^{n})$ be a subgroup of $\mathrm{GL}_{2}(\mathbb{Z}_{l})$ defined by
$$\mathcal{U}_{l}(l^{n}):=\mathrm{GL}_{2}(\mathbb{Z}_{l})\cap\left(
    \begin{array}{cc}
      \mathbb{Z}_{l}&\mathbb{Z}_{l} \\
      l^{n}\mathbb{Z}_{l}&1+l^{n}\mathbb{Z}_{l} \\
    \end{array}
  \right).$$
Let $\pi_{l}$ be an irreducible admissible infinite dimensional representation of $\mathrm{GL}_{2}(\mathbb{Q}_{l})$ with a central character $\omega_{\pi_{l}}$. We denote by $\mathcal{V}_{\pi_{l}}$ the space of the representation $\pi_{l}$. Because $\pi_{l}$ is admissible, we can define 
$$c(\pi_{l}):=\mathrm{min}\{n\in\mathbb{Z}_{\geq0}\mid {\mathcal{V}_{\pi_{l}}}^{\mathcal{U}_{1}(l^{n})}\neq\{0\} \}.\leqno(2.3.1)$$
\begin{Definition}
  We call the space
$$\mathcal{V}_{\pi_{l}}^{\mathrm{new}}=\left\{\xi\in \mathcal{V}_{\pi_{l}}\middle|
 \pi_{l}\left(g\right)\xi=\xi,\ {}^\forall g\in \mathcal{U}_{1}(l^{c(\pi_{l})})\right\}$$
the new line of $\pi_{l}$.
\end{Definition}
The following proposition is proved in \cite[Theorem 1]{Cas73}.
\begin{Proposition}
We have
$$\mathrm{dim}_{\mathbb{C}}{\mathcal{V}_{\pi_{l}}}^{\mathrm{new}}=1.$$
\end{Proposition}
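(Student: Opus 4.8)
The plan is to establish the one-dimensionality of the new line $\mathcal{V}_{\pi_l}^{\mathrm{new}}$ by reducing to the explicit description of $\pi_l$ according to its type, exactly as in Casselman's original argument, but it is cleaner to cite \cite[Theorem 1]{Cas73} and simply sketch why the conductor exponent $c(\pi_l)$ controls the invariants. First I would recall the tower of congruence subgroups $\mathcal{U}_1(l^n)$, noting the inclusions $\mathcal{U}_1(l^{n+1})\subset\mathcal{U}_1(l^n)$, so that the spaces $\mathcal{V}_{\pi_l}^{\mathcal{U}_1(l^n)}$ form an increasing chain; admissibility of $\pi_l$ guarantees each is finite-dimensional, and the definition $(2.3.1)$ of $c(\pi_l)$ picks out the first nonzero stage. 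The content of the proposition is that this first nonzero stage is exactly one-dimensional.

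**Next I would** split into the three cases for an irreducible admissible infinite-dimensional $\pi_l$: principal series $\pi(\alpha,\beta)$, special (Steinberg-type) $\chi\mathrm{St}$, and supercuspidal. In the principal series case one computes that $c(\pi(\alpha,\beta)) = a(\alpha) + a(\beta)$ where $a(\cdot)$ denotes the conductor exponent of the character, and an explicit function supported on the appropriate Bruhat cell and transforming by $\alpha,\beta$ on the torus spans the new line; any two such are proportional because the value at a single double-coset representative determines the function on $\mathcal{U}_1(l^{c})\backslash\mathrm{GL}_2(\mathbb{Q}_l)/B$ up to the forced transformation law. For the special representation $\chi\mathrm{St}$ one has $c = \max(1, 2a(\chi))$ and the new vector is exhibited inside $\pi(\chi|\cdot|^{1/2},\chi|\cdot|^{-1/2})$; here one uses that $\chi\mathrm{St}$ is the unique irreducible subrepresentation, so the invariants of $\chi\mathrm{St}$ are cut out inside the (easily computed) invariants of the full induced representation, and a dimension count forces dimension one. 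For supercuspidals the cleanest route is to invoke the theory of the Kirillov model: restricting to the mirabolic subgroup, $\mathcal{U}_1(l^n)$-invariance translates into a support and level condition on functions in $C_c^\infty(\mathbb{Q}_l^\times)$, and the minimal-level invariants are spanned by the characteristic function of $\mathbb{Z}_l^\times$, again one-dimensional.

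**A unified and shorter argument**, which I would actually prefer to present, runs through the local functional equation and the theory of local $L$- and $\varepsilon$-factors: the local zeta integral $Z(s,W)$ of a Whittaker function $W$ in the $\mathcal{U}_1(l^{c(\pi_l)})$-fixed space computes the local $L$-factor $L(s,\pi_l)$ on the nose, and the non-vanishing together with the functional equation pins down the fixed vector up to scalar. Concretely: for $W$ a nonzero $\mathcal{U}_1(l^{c(\pi_l)})$-fixed Whittaker vector, its restriction to $\mathrm{diag}(y,1)$ is determined, via the Hecke-operator recursions coming from the $\mathcal{U}_1$-action, by its value at $y=1$; since that value cannot be zero (else $W\equiv 0$ by the Kirillov model), the fixed space injects into $\mathbb{C}$ by $W\mapsto W(1)$. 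This is essentially the existence-and-uniqueness statement for the local Whittaker newform, which the paper will use in the next subsection anyway.

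**The main obstacle** is the supercuspidal case, where one genuinely needs either the structure theory (compact induction from an open compact-mod-center subgroup, after Bushnell–Kutzko) or the Kirillov/Whittaker-model bookkeeping to verify both that the minimal level is attained in dimension exactly one and that $c(\pi_l)$ as defined in $(2.3.1)$ agrees with the conductor exponent appearing in the $\varepsilon$-factor; the principal series and special cases are essentially linear algebra on the Bruhat decomposition. Since the statement is quoted verbatim from \cite[Theorem 1]{Cas73} and is used here only as input, I would keep the proof to a one-paragraph reduction to Casselman's theorem plus a remark that, in terms of the Whittaker model, the new line is spanned by the normalized local Whittaker newform, and leave the case-by-case verification to the reference.
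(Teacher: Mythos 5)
Your proposal is correct and ultimately takes the same route as the paper, which gives no proof at all and simply cites \cite[Theorem 1]{Cas73}; your concluding paragraph, deferring the case-by-case verification to Casselman and noting that the new line is spanned by the normalized local Whittaker newform, matches the paper's treatment exactly. The additional sketches (conductor formulas for principal series and special representations, the Kirillov-model argument for supercuspidals) are accurate as outlines but are not needed here, since the statement is used only as imported input.
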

Next, we introduce ordinary lines defined in \cite[\S2.5]{Hsi17}. We denote by $N(\mathbb{Z}_{l})$ the nilpotent radical of the subgroup of upper triangle elements of $\mathrm{GL}_{2}(\mathbb{Z}_{l})$. Let $U_{l}\in\mathrm{End}_{\mathbb{C}}(V_{\pi_{l}}^{N(\mathbb{Z}_{l})})$ be a local operator defined by
$$U_{l}\xi:=\displaystyle{\sum_{i\in\mathbb{Z}\slash l\mathbb{Z}}}\pi\left(
    \begin{array}{cc}
      l&i \\
      0&1 \\
    \end{array}
  \right)\xi,$$
where $\xi\in V_{\pi_{l}}^{N(\mathbb{Z}_{l})}$.
\begin{Definition}
 Let $\alpha :\mathbb{Q}_{l}^{\times}\rightarrow \mathbb{C}^{\times}$ be a character. We call the space
$$V_{\pi_{l}}^{\mathrm{ord}}(\alpha):=\left\{\xi\in V_{\pi_{l}}^{N(\mathbb{Z}_{l})}\middle| U_{l}\xi=\alpha\vert\cdot\vert_{l}^{-\frac{1}{2}}(l)\xi, \pi_{l}\left(\mathrm{diag}\{t,1\}\right)\xi=\alpha(t)\xi, {}^{\forall}t\in\mathbb{Z}_{l}^{\times}\right\}$$
the ordinary line of $\pi_{l}$ attached to $\alpha$.
\end{Definition}
Ming-Lun Hsieh proved the following proposition in \cite[Proposition 2.2]{Hsi17}.
\begin{Proposition}
We have $\mathrm{dim}_{\mathbb{C}}V_{\pi_{l}}^{\mathrm{ord}}(\alpha)\leq 1$. Further, the space $V_{\pi_{l}}^{\mathrm{ord}}(\alpha)$ is non-zero if and only if $\pi_{l}$ is isomorphic to the principal series $\pi(\alpha,\alpha^{-1}\omega_{\pi_{l}})$ or the special representation $\alpha\vert\cdot\vert^{-\frac{1}{2}}_{l}\mathrm{St}$.
\end{Proposition}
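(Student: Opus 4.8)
The plan is to reduce the statement to a computation on the (un-normalized) Jacquet module $\pi_{l,N}$ of $\pi_l$, i.e.\ the space of coinvariants of $\mathcal{V}_{\pi_l}$ under $N(\mathbb{Q}_l)$, by means of the classical comparison between $U_l$-eigenvectors in $\mathcal{V}_{\pi_l}^{N(\mathbb{Z}_l)}$ and $\pi_{l,N}$. Put $N_0:=N(\mathbb{Z}_l)$ and $\lambda_\alpha:=\alpha(l)|l|_l^{-1/2}$, so that $V_{\pi_l}^{\mathrm{ord}}(\alpha)$ is the subspace of $\mathcal{V}_{\pi_l}^{N_0}$ cut out by $U_l\xi=\lambda_\alpha\xi$ together with $\pi_l(\mathrm{diag}\{t,1\})\xi=\alpha(t)\xi$ for $t\in\mathbb{Z}_l^\times$; note $\lambda_\alpha\neq 0$ since characters are nowhere vanishing.

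First I would establish the comparison lemma: the map $q\colon\mathcal{V}_{\pi_l}^{N_0}\to\pi_{l,N}$ induced by the projection $\mathcal{V}_{\pi_l}\twoheadrightarrow\pi_{l,N}$ is surjective, intertwines the action of $\{\mathrm{diag}\{t,1\}\}$ on both sides, intertwines $U_l$ on the source with $l\cdot\pi_l(\mathrm{diag}\{l,1\})$ on the target (because $U_l=\sum_{i=0}^{l-1}\pi_l\left(\begin{smallmatrix}1&i\\0&1\end{smallmatrix}\right)\pi_l(\mathrm{diag}\{l,1\})$ and upper unipotents act trivially on coinvariants), and has kernel $\bigcup_{m\ge 1}\ker(U_l^m|_{\mathcal{V}_{\pi_l}^{N_0}})$. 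The kernel computation follows from the identity $U_l^m\xi=\pi_l(\mathrm{diag}\{l^m,1\})\sum_{y\in l^{-m}\mathbb{Z}_l/\mathbb{Z}_l}\pi_l\left(\begin{smallmatrix}1&y\\0&1\end{smallmatrix}\right)\xi$ together with the description $\ker q=\{\xi:\int_{l^{-m}\mathbb{Z}_l}\pi_l\left(\begin{smallmatrix}1&y\\0&1\end{smallmatrix}\right)\xi\,\mathrm{d}y=0\text{ for }m\gg 0\}$. Since $\lambda_\alpha\neq 0$, the operator $U_l-\lambda_\alpha$ is invertible on $\ker q$, so $q$ identifies the $\lambda_\alpha$-eigenspace of $U_l$ in $\mathcal{V}_{\pi_l}^{N_0}$ with the $\lambda_\alpha$-eigenspace of $l\cdot\pi_l(\mathrm{diag}\{l,1\})$ in $\pi_{l,N}$; carrying along the $\mathbb{Z}_l^\times$-condition and using that $\mathrm{diag}\{l,1\}$ and $\mathrm{diag}\{\mathbb{Z}_l^\times,1\}$ generate $\{\mathrm{diag}\{a,1\}:a\in\mathbb{Q}_l^\times\}$, this yields an isomorphism
$$V_{\pi_l}^{\mathrm{ord}}(\alpha)\ \xrightarrow{\ \sim\ }\ \{v\in\pi_{l,N}\mid\pi_l(\mathrm{diag}\{a,1\})v=\alpha(a)|a|_l^{1/2}v\ \ \text{for all }a\in\mathbb{Q}_l^\times\}.$$

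It then remains to compute $\pi_{l,N}$ as a module over the torus $\{\mathrm{diag}\{a,1\}\}$. If $\pi_l$ is supercuspidal then $\pi_{l,N}=0$, so $V_{\pi_l}^{\mathrm{ord}}(\alpha)=0$, in accordance with the claim. If $\pi_l=\pi(\mu_1,\mu_2)$ is an irreducible principal series, the geometric lemma shows that $\pi_{l,N}$ has semisimplification on which $\mathrm{diag}\{a,1\}$ acts through the two characters $\mu_1|\cdot|_l^{1/2}$ and $\mu_2|\cdot|_l^{1/2}$; hence the displayed space is nonzero iff $\alpha\in\{\mu_1,\mu_2\}$, which (using $\mu_1\mu_2=\omega_{\pi_l}$) means $\pi_l\cong\pi(\alpha,\alpha^{-1}\omega_{\pi_l})$, and it is then one-dimensional — in the borderline subcase $\mu_1=\mu_2=\alpha$ one invokes that $\pi_{l,N}$ is a non-split self-extension, as a split one would force $\dim_{\mathbb{C}}\mathrm{End}_{\mathrm{GL}_2(\mathbb{Q}_l)}(\pi_l)\ge 2$, contradicting Schur's lemma. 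Finally, if $\pi_l=\chi\mathrm{St}$ is the special representation, realized as the subrepresentation of $\pi(\chi|\cdot|_l^{1/2},\chi|\cdot|_l^{-1/2})$ with one-dimensional quotient $\chi\circ\det$, then exactness of the Jacquet functor and the computation $(\chi\circ\det)_N=\chi\otimes\chi$ force $\pi_{l,N}$ to be the remaining one-dimensional piece, on which $\mathrm{diag}\{a,1\}$ acts by $\chi(a)|a|_l$; this equals $\alpha(a)|a|_l^{1/2}$ iff $\chi=\alpha|\cdot|_l^{-1/2}$, i.e.\ $\pi_l\cong\alpha|\cdot|_l^{-1/2}\mathrm{St}$. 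Combining the three cases gives $\dim_{\mathbb{C}}V_{\pi_l}^{\mathrm{ord}}(\alpha)\le 1$ together with the asserted non-vanishing criterion.

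The main obstacle is the bookkeeping of the half-integral twists. One must reconcile three normalizations — the normalized induction used to define $\pi(\alpha,\beta)$ in the excerpt, the normalized Jacquet functor appearing in the geometric lemma, and the un-normalized coinvariants $\pi_{l,N}$ on which $U_l$ descends — so that the eigenvalue $\lambda_\alpha=\alpha(l)|l|_l^{-1/2}$ matches $\mu_i=\alpha$ and $\chi=\alpha|\cdot|_l^{-1/2}$ exactly, with no spurious $|\cdot|_l^{\pm1}$-twist. In particular, for the special representation one has to keep track that $\chi\mathrm{St}$ is the \emph{subrepresentation} of $\pi(\chi|\cdot|_l^{1/2},\chi|\cdot|_l^{-1/2})$, and that the Jacquet character surviving on this subrepresentation is $\chi|\cdot|_l$ rather than $\chi$ (the latter being the one descending to the quotient $\chi\circ\det$). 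The comparison lemma itself is the local avatar of Hida's control of the ordinary $U_l$-part and is classical (Casselman), so it is essentially only the normalization matching and the special-representation bookkeeping that demand genuine care.
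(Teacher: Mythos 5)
The paper gives no proof of this statement---it is quoted verbatim from \cite[Proposition 2.2]{Hsi17}---and your argument is the standard Casselman--Jacquet-module proof that underlies the cited result: identify the $U_l$-ordinary subspace of $\mathcal{V}_{\pi_l}^{N(\mathbb{Z}_l)}$ with an eigenspace of the torus acting on the unnormalized Jacquet module, then read off that module in the principal-series, special, and supercuspidal cases. Your normalizations are consistent (the eigenvalue $\alpha(l)|l|_l^{-1/2}$ does translate into the character $\alpha(a)|a|_l^{1/2}$ of $\mathrm{diag}\{a,1\}$ on the coinvariants, matching $\mu_i=\alpha$ for $\pi(\mu_1,\mu_2)$ and $\chi=\alpha|\cdot|_l^{-1/2}$ for $\chi\mathrm{St}$), and the borderline case $\mu_1=\mu_2$ is correctly disposed of via Frobenius reciprocity and Schur's lemma, so the proof is correct.
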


Next, we recall the definition of Whittaker models and introduce a non-degenerate $\mathrm{GL}_{2}(\mathbb{Q}_{v})$-equivalent pairing of Whittaker models for each place $v$. For any infinite dimensional irreducible representation $\pi_{v}$ of $\mathrm{GL}_{2}(\mathbb{Q}_{v})$ with any place $v$, it is known that there exists a unique realization $\mathcal{W}(\pi_{v})=\mathcal{W}(\pi_{v},\psi_{v})$ of $\pi_{v}$ which is called the Whittaker model of $\pi_{v}$. The space of $\mathcal{W}(\pi_{v})$ is a subspace of functions $W:\mathrm{GL}_{2}(\mathbb{Q}_{v})\rightarrow \mathbb{C}$ which satisfies the following conditions. 
\begin{enumerate}
\item We have $W\left(\left(
    \begin{array}{cc}
      1&x \\
      0&1 \\
    \end{array}
  \right)g\right)$=$\psi_{v}(x)W(g)$ for any $x\in\mathbb{Q}_{v}$ and $g\in\mathrm{GL}_{2}(\mathbb{Q}_{v})$.
\item If $v=l$ is a finite prime, $W$ is locally constant.
\item If $v=\infty$, $W$ is a $C^{\infty}$-function and $W\left(\mathrm{diag}\{y,1\}\right)$ is moderate growth as $\vert y\vert_{\infty}\mapsto\infty$. 
\end{enumerate}
The action of $\mathrm{GL}_{2}(\mathbb{Q}_{v})$ on the space of $\mathcal{W}(\pi_{v})$ is given by right translation.

Next, we recall the definition of the non-degenerate $\mathrm{GL}_{2}(\mathbb{Q}_{v})$-equivalent paring on local Whittaker models defined in \cite[(2.14)]{Hsi17}.
\begin{Definition}
Let $\tilde{\pi}_{v}$ be the contragredient representation of $\pi_{v}$. We define a non-degenerate $\mathrm{GL}_{2}(\mathbb{Q}_{v})$-equivalent paring $\langle\ ,\ \rangle_{v}:\mathcal{W}(\pi_{v})\times\mathcal{W}(\tilde{\pi}_{v})\rightarrow\mathbb{C}$ to be
$$\langle W_{v},W^{\prime}_{v}\rangle_{v}:=\int_{\mathbb{Q}_{v}^{\times}}W(\mathrm{diag}\{y,1\})W^{\prime}(\mathrm{diag}\{-y,1\})\mathrm{d}^{\times}_{v}y$$
for the $W_{v}\in\mathcal{W}(\pi_{v})$ and $W^{\prime}_{v}\in\mathcal{W}(\tilde{\pi}_{v})$. This integral $\langle W_{v},W^{\prime}_{v}\rangle$ converges absolutely.
\end{Definition}
Next, we introduce definitions of normalized local Whittaker newforms and normalized ordinary Whittaker functions which are defined in \cite[\S2.4]{Hsi17}.
\begin{Definition}
\begin{enumerate}
\item If $v=l$ is a finite prime, we call the unique element $W_{\pi_{v}}^{\mathrm{new}}\in\mathcal{W}(\pi_{v})^{\mathrm{new}}$ with $W_{\pi_{v}}^{\mathrm{new}}(1)=1$ the normalized local Whittaker newform.\\

\item In the case $v=\infty$ and $\pi=\mathcal{D}_{0}(k)$, we define the normalized local Whittaker newform $W_{\pi_{v}}^{\mathrm{new}}\in \mathcal{W}(\pi_{v})$ to be
$$W_{\pi_{v}}^{\mathrm{new}}\left(z\left(
    \begin{array}{cc}
      y&x \\
      0&1 \\
    \end{array}
  \right)\left(
    \begin{array}{cc}
      \mathrm{cos}\theta&\mathrm{sin}\theta \\
      -\mathrm{sin}\theta&\mathrm{cos}\theta \\
    \end{array}
  \right)\right)=\mathbf{I}_{\mathbb{R}_{+}}(y)\cdot y^{\frac{k}{2}}e^{-2\pi y}\mathrm{sgn}(z)^{k}\psi_{\mathbb{R}}(x)e^{\sqrt{-1}k\theta}$$
where $y,z\in\mathbb{R}^{\times},\ x,\theta\in \mathbb{R}$ and $\mathbf{I}_{\mathbb{R}_{+}}$ is the characteristic function of the set $\mathbb{R}_{+}$.
\end{enumerate}
\end{Definition}
Next, we introduce normalized ordinary Whittaker functions.
\begin{Definition}
Let $l$ be a finite prime and $\alpha:\mathbb{Q}_{l}^{\times}\rightarrow\mathbb{C}^{\times}$ be a character. We assume $V_{\pi_{l}}^{\mathrm{ord}}(\alpha)\neq\{0\}$. We call the unique element $W^{\mathrm{ord}}_{\pi_{l}}\in\mathcal{W}(\pi_{l})^{\mathrm{ord}}(\alpha)$ with $W^{\mathrm{ord}}_{\pi_{l}}(1)=1$ the normalized ordinary Whittaker function.
\end{Definition}
In \cite[Corollary 2.3]{Hsi17}, the normalized ordinary Whittaker functions are characterized as follows.
\begin{Corollary}
Let $\alpha:\mathbb{Q}_{l}^{\times}\rightarrow\mathbb{C}^{\times}$ be a character. We assume $V_{\pi_{l}}^{\mathrm{ord}}(\alpha)\neq\{0\}$. Then, the normalized ordinary Whittaker function $W^{\mathrm{ord}}$ is characterized by
$$W^{\mathrm{ord}}(\left(
    \begin{array}{cc}
      y&0 \\
      0&1 \\
    \end{array}
  \right))=\alpha\vert\cdot\vert_{l}^{\frac{1}{2}}(y)\mathbf{I}_{\mathbb{Z}_{l}}(y),$$
where $y\in\mathbb{Q}_{l}^{\times}$ and $\mathbf{I}_{\mathbb{Z}_{l}}$ is the characteristic function on $\mathbb{Z}_{l}$.
\end{Corollary}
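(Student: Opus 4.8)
The plan is to exploit the defining properties of the ordinary line together with the standard transformation property of Whittaker functions under the unipotent radical and the torus. Write $W^{\mathrm{ord}}$ for the unique element of $\mathcal{W}(\pi_l)^{\mathrm{ord}}(\alpha)$ with $W^{\mathrm{ord}}(1)=1$, whose existence and uniqueness are guaranteed by Definition 2.3.14 (using $\dim_{\mathbb{C}}V_{\pi_l}^{\mathrm{ord}}(\alpha)\le 1$ from Proposition 2.3.10 and the assumption that this space is nonzero). First I would record that, since $W^{\mathrm{ord}}\in V_{\pi_l}^{N(\mathbb{Z}_l)}$, for every $u\in\mathbb{Z}_l$ and every $y\in\mathbb{Q}_l^{\times}$ we have
$$
W^{\mathrm{ord}}\!\left(\left(\begin{array}{cc} y&0\\ 0&1\end{array}\right)\left(\begin{array}{cc} 1&u\\ 0&1\end{array}\right)\right)=\psi_l(yu)\,W^{\mathrm{ord}}\!\left(\left(\begin{array}{cc} y&0\\ 0&1\end{array}\right)\right),
$$
and on the other hand this equals $W^{\mathrm{ord}}(\mathrm{diag}\{y,1\})$ because $\mathrm{diag}\{y,1\}$ normalizes $N(\mathbb{Z}_l)$ only up to scaling the argument $u$ by $y$, which still lies in $\mathbb{Z}_l$ when $y\in\mathbb{Z}_l$; combined with the fact that $\psi_l$ has conductor $\mathbb{Z}_l$, this already forces $W^{\mathrm{ord}}(\mathrm{diag}\{y,1\})=0$ whenever $v_l(y)<0$, i.e. $W^{\mathrm{ord}}(\mathrm{diag}\{y,1\})$ is supported on $\mathbb{Z}_l$. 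That disposes of the characteristic-function factor $\mathbf{I}_{\mathbb{Z}_l}(y)$.

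Next I would pin down the value on $y\in\mathbb{Z}_l^{\times}$ and then on $y=l^n$ for $n\ge 0$. For $y\in\mathbb{Z}_l^{\times}$ the torus condition $\pi_l(\mathrm{diag}\{t,1\})\xi=\alpha(t)\xi$ in Definition 2.3.9 gives directly
$$
W^{\mathrm{ord}}(\mathrm{diag}\{y,1\})=\bigl(\pi_l(\mathrm{diag}\{y,1\})W^{\mathrm{ord}}\bigr)(1)=\alpha(y)\,W^{\mathrm{ord}}(1)=\alpha(y),
$$
which agrees with $\alpha|\cdot|_l^{1/2}(y)\mathbf{I}_{\mathbb{Z}_l}(y)$ since $|y|_l=1$. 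For the values at $l^n$, I would use the $U_l$-eigenvalue condition $U_lW^{\mathrm{ord}}=\alpha|\cdot|_l^{-1/2}(l)W^{\mathrm{ord}}$. Evaluating $U_lW^{\mathrm{ord}}$ at $\mathrm{diag}\{l^{n},1\}$ and expanding the sum over $i\in\mathbb{Z}/l\mathbb{Z}$, the entries $\bigl(\begin{smallmatrix} l^{n+1}&l^ni\\0&1\end{smallmatrix}\bigr)=\mathrm{diag}\{l^{n+1},1\}\bigl(\begin{smallmatrix}1&i l^{-1}\\0&1\end{smallmatrix}\bigr)$; the contribution of the $i$-sum collapses via orthogonality of additive characters of $\mathbb{Z}/l\mathbb{Z}$ to a single term unless $n+1\le 0$, giving the recursion $W^{\mathrm{ord}}(\mathrm{diag}\{l^{n+1},1\})=\alpha|\cdot|_l^{-1/2}(l)\,W^{\mathrm{ord}}(\mathrm{diag}\{l^{n},1\})$ for $n\ge 0$ (and a clean vanishing for the negative range, reconfirming the support statement). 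Since $\alpha|\cdot|_l^{-1/2}(l)=\alpha(l)\,l^{1/2}=\bigl(\alpha|\cdot|_l^{1/2}\bigr)(l^{-1})^{-1}$, wait—more carefully $\alpha|\cdot|_l^{-1/2}(l)=\alpha(l)|l|_l^{-1/2}=\alpha(l)l^{1/2}$, whereas $\alpha|\cdot|_l^{1/2}(l)=\alpha(l)l^{-1/2}$; so iterating the recursion from the base value $1$ at $n=0$ yields $W^{\mathrm{ord}}(\mathrm{diag}\{l^n,1\})=(\alpha(l)l^{1/2})^n$. I would then reconcile the normalization: the operator $U_l$ here is the un-normalized one, and the factor $|\cdot|_l^{-1/2}$ built into the eigenvalue in Definition 2.3.9 is precisely what makes the final answer come out as $\alpha|\cdot|_l^{1/2}(y)$; concretely $\alpha|\cdot|_l^{1/2}(l^n)=(\alpha(l)l^{-1/2})^n$, so I should double-check the sign of the exponent of $l^{1/2}$ against the convention $U_l\xi=\alpha|\cdot|_l^{-1/2}(l)\xi$ in Definition 2.3.9 — this bookkeeping is the one place a slip is easy. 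Granting the convention as stated, one obtains $W^{\mathrm{ord}}(\mathrm{diag}\{l^n,1\})=\alpha|\cdot|_l^{1/2}(l^n)$ as claimed.

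Finally I would assemble the pieces: every $y\in\mathbb{Q}_l^{\times}$ with $y\in\mathbb{Z}_l$ factors as $y=l^n u$ with $n=v_l(y)\ge 0$ and $u\in\mathbb{Z}_l^{\times}$, and $\mathrm{diag}\{y,1\}=\mathrm{diag}\{l^n,1\}\mathrm{diag}\{u,1\}$; applying the torus condition for $u$ and the computed value for $l^n$, and multiplicativity of $\alpha|\cdot|_l^{1/2}$, gives $W^{\mathrm{ord}}(\mathrm{diag}\{y,1\})=\alpha|\cdot|_l^{1/2}(y)$, while for $y\notin\mathbb{Z}_l$ the support statement gives $0$; together these are exactly $\alpha|\cdot|_l^{1/2}(y)\mathbf{I}_{\mathbb{Z}_l}(y)$. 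Conversely, a function with this restriction to the diagonal torus, if it lies in $\mathcal{W}(\pi_l)$ at all, is forced to coincide with $W^{\mathrm{ord}}$ because a Whittaker function is determined by its values on $\mathrm{diag}\{\mathbb{Q}_l^{\times},1\}$ (the Kirillov-model description) — so the displayed formula indeed characterizes it. The main obstacle is purely the normalization bookkeeping between the un-normalized $U_l$ of Definition 2.3.9, the $|\cdot|_l^{\pm 1/2}$ twists, and the $\psi_l$-conductor; there is no conceptual difficulty, but one must be scrupulous that the powers of $l^{1/2}$ assemble into $|\cdot|_l^{1/2}$ and not $|\cdot|_l^{-1/2}$, which is why I would cross-check the $n=1$ value against a direct computation of $U_lW^{\mathrm{ord}}(1)$.
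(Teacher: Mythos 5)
Your overall strategy is the right one (and it is the only reasonable one: the paper itself offers no proof of this corollary, deferring entirely to Hsieh's Corollary 2.3, so a direct computation like yours is what is actually needed). The support statement via the conductor of $\psi_l$ together with right $N(\mathbb{Z}_l)$-invariance, the values on $\mathbb{Z}_l^{\times}$ via the torus condition, the values at $l^{n}$ via the $U_l$-eigenvalue, and the uniqueness via the Kirillov model are all correct ideas. (A small slip of attribution in your first display: the equality $W^{\mathrm{ord}}(\mathrm{diag}\{y,1\}n(u))=\psi_l(yu)W^{\mathrm{ord}}(\mathrm{diag}\{y,1\})$ comes from the left Whittaker transformation property after commuting $n(u)$ past the torus element, while the equality with $W^{\mathrm{ord}}(\mathrm{diag}\{y,1\})$ itself is what uses $N(\mathbb{Z}_l)$-invariance; but the conclusion is fine.)

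The genuine gap is in the $U_l$ recursion, precisely at the spot you flag but do not resolve. For $n\ge 0$ the sum over $i\in\mathbb{Z}/l\mathbb{Z}$ does not ``collapse to a single term'': every one of the $l$ terms equals $W^{\mathrm{ord}}(\mathrm{diag}\{l^{n+1},1\})$, since $\psi_l(l^{n}i)=1$ for all $i$. Hence
$$(U_lW^{\mathrm{ord}})(\mathrm{diag}\{l^{n},1\})=\sum_{i\in\mathbb{Z}/l\mathbb{Z}}\psi_l(l^{n}i)\,W^{\mathrm{ord}}(\mathrm{diag}\{l^{n+1},1\})=l\cdot W^{\mathrm{ord}}(\mathrm{diag}\{l^{n+1},1\}),$$
and equating with $\alpha|\cdot|_l^{-1/2}(l)\,W^{\mathrm{ord}}(\mathrm{diag}\{l^{n},1\})$ gives the correct recursion
$W^{\mathrm{ord}}(\mathrm{diag}\{l^{n+1},1\})=l^{-1}\alpha(l)l^{1/2}\,W^{\mathrm{ord}}(\mathrm{diag}\{l^{n},1\})=\alpha|\cdot|_l^{1/2}(l)\,W^{\mathrm{ord}}(\mathrm{diag}\{l^{n},1\})$. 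The recursion you wrote, $W^{\mathrm{ord}}(\mathrm{diag}\{l^{n+1},1\})=\alpha|\cdot|_l^{-1/2}(l)W^{\mathrm{ord}}(\mathrm{diag}\{l^{n},1\})$, is off by exactly this factor of $l=|l|_l^{-1}$ and iterates to $\alpha|\cdot|_l^{-1/2}(l^{n})$, the wrong power; asserting the claimed formula ``granting the convention as stated'' does not close this. The missing factor of $l$ is the whole point of the $|\cdot|_l^{-1/2}$ in the eigenvalue convention of Definition 2.3.9: the unnormalized $U_l$ carries an extra $l$ from the length of the coset sum, and $l\cdot\alpha|\cdot|_l^{1/2}(l)=\alpha|\cdot|_l^{-1/2}(l)$. (Orthogonality of the characters $i\mapsto\psi_l(l^{n}i)$ is only relevant for $n=-1$, where it gives the vanishing you mention.) With this one correction the proof is complete.
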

\subsection{Preliminaries on global irreducible representations on $\mathrm{GL}_{2}(\mathbb{A})$.}
In this subsection, we recall some of notation on representations of $\mathrm{GL}_{2}(\mathbb{A})$. In particular, we introduce global Whittaker functions attached to automorphic representations of $\mathrm{GL}_{2}(\mathbb{A})$.

Let $f\in \mathcal{S}_{k}(M,\chi)$ be a normalized Hecke eigenform. We denote by $\pi_{f}$ the subrepresentation of $\mathcal{A}^{0}(M,\chi_{\mathbb{A}}^{-1})$ generated by the adelic lift $\Phi(f)$. It is known that this representation $\pi_{f}$ is irreducible ($cf$. \cite[Theorem 3.6.1]{Bum97}) and $\pi_{f}$ decomposes $\pi_{f}:=\otimes^{\prime}_{v}\pi_{f,v}$ locally ($cf$. \cite[Theorem 3.3.3 and p.320]{Bum97}), where the symbol $\otimes_{v}^{\prime}$ represents the restricted tensor product. For each $\varphi\in \pi_{f}$, we define a global Whittaker function $W_{\varphi}: \mathrm{GL}_{2}(\mathbb{A})\rightarrow\mathbb{C}$ attached to $\varphi$ with $\psi_{\mathbb{A}}$ to be
$$W_{\varphi}(g)=\int_{\mathbb{A}\slash\mathbb{Q}}\varphi(\left(
    \begin{array}{cc}
      1&x \\
      0&1 \\
    \end{array}
  \right)g)\psi_{\mathbb{A}}(-x)\mathrm{d}_{\mathbb{A}}x\ \ (g\in\mathrm{GL}_{2}(\mathbb{A})).\leqno(2.4.1)$$

We denote by $\mathcal{W}(\pi_{f})$ the representation generated by $W_{\varphi}$ for every $\varphi\in\pi_{f}$. The action of $\mathrm{GL}_{2}(\mathbb{A})$ on $\mathcal{W}(\pi_{f})$ is given by right translation. The representation $\mathcal{W}(\pi_{f})$ is called the Whittaker model of $\pi_{f}$. We have the following proposition ($cf$. \cite[Theorem 3.5.2]{Bum97}).
\begin{Proposition}
For each pure tensor element $\varphi=\otimes^{\prime}\varphi_{v}\in \pi_{f}$, we have the local decomposition
$$W_{\varphi}(g)=\prod_{v}W_{\varphi_{v}}(g_{v})$$
for $g=(g_{v})_{v}\in\mathrm{GL}_{2}(\mathbb{A})$, where $W_{\varphi_{v}}(g_{v})\in\mathcal{W}(\pi_{v})$.
\end{Proposition}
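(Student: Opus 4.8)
The plan is to express the map $\varphi\mapsto W_{\varphi}$ through a single global Whittaker functional and then exploit the uniqueness of Whittaker models, both locally and globally.

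First I would observe that, since each $\varphi\in\pi_{f}$ is a cusp form, it is rapidly decreasing, and as $\mathbb{A}/\mathbb{Q}$ is compact the integral defining $W_{\varphi}$ converges. The functional
$$\Lambda(\varphi):=W_{\varphi}(1)=\int_{\mathbb{A}/\mathbb{Q}}\varphi\!\left(\left(\begin{array}{cc}1&x\\0&1\end{array}\right)\right)\psi_{\mathbb{A}}(-x)\,\mathrm{d}_{\mathbb{A}}x$$
satisfies $\Lambda\!\left(\rho\!\left(\left(\begin{array}{cc}1&x\\0&1\end{array}\right)\right)\varphi\right)=\psi_{\mathbb{A}}(x)\Lambda(\varphi)$ for all $x\in\mathbb{A}$; that is, $\Lambda$ is a $\psi_{\mathbb{A}}$-Whittaker functional on $\pi_{f}$, and one has $W_{\varphi}(g)=\Lambda(\rho(g)\varphi)$. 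Moreover $\Lambda\not\equiv 0$: expanding a nonzero cusp form in its Fourier series along the unipotent radical, the constant term vanishes by cuspidality, so some nontrivial Fourier coefficient --- i.e. $W_{\varphi}$ for a suitable $\varphi$ --- is nonzero.

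Next I would invoke the uniqueness of local Whittaker models: for every place $v$ one has $\mathrm{dim}_{\mathbb{C}}\mathrm{Hom}_{\mathrm{GL}_{2}(\mathbb{Q}_{v})}(\pi_{f,v},\mathrm{Ind}\,\psi_{v})\leq 1$ (cf. \cite{Bum97}). Fix a nonzero local Whittaker functional $\Lambda_{v}$ on $\pi_{f,v}$ for each $v$, and at the almost all $v$ where $\pi_{f,v}$ is unramified normalize $\Lambda_{v}$ so that it takes the value $1$ on the chosen spherical vector $\varphi_{v}^{\circ}$ (legitimate, since the spherical Whittaker function does not vanish at the identity). Then $\varphi=\otimes^{\prime}\varphi_{v}\mapsto\prod_{v}\Lambda_{v}(\varphi_{v})$ --- a finite product on pure tensors, extended by linearity --- is a well-defined and nonzero $\psi_{\mathbb{A}}$-Whittaker functional on $\pi_{f}=\otimes_{v}^{\prime}\pi_{f,v}$.

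The key step is that the space of global $\psi_{\mathbb{A}}$-Whittaker functionals on $\pi_{f}$ is itself at most one-dimensional; this is a formal consequence of the local uniqueness statements together with the restricted-tensor-product structure of $\pi_{f}$ (the argument of \cite[\S3.5]{Bum97}). Since $\Lambda$ and $\prod_{v}\Lambda_{v}$ are both nonzero in this line, they are proportional, and rescaling one $\Lambda_{v}$ makes them equal, so $\Lambda(\varphi)=\prod_{v}\Lambda_{v}(\varphi_{v})$ for every pure tensor. Applying this identity to $\rho(g)\varphi=\otimes_{v}^{\prime}\rho(g_{v})\varphi_{v}$ and setting $W_{\varphi_{v}}(g_{v}):=\Lambda_{v}(\rho(g_{v})\varphi_{v})$ --- which lies in $\mathcal{W}(\pi_{f,v})$ because $\varphi_{v}\mapsto W_{\varphi_{v}}$ intertwines $\pi_{f,v}$ with right translation --- yields
$$W_{\varphi}(g)=\Lambda(\rho(g)\varphi)=\prod_{v}\Lambda_{v}(\rho(g_{v})\varphi_{v})=\prod_{v}W_{\varphi_{v}}(g_{v}),$$
as claimed. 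I expect the main obstacle to be the global uniqueness step --- deducing one-dimensionality of the global Whittaker functional space from the local statements in the restricted-tensor-product setting, and justifying the almost-everywhere normalization --- while the remaining verifications are routine.
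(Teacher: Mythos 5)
Your argument is correct: the paper itself gives no proof of this proposition but simply cites \cite[Theorem 3.5.2]{Bum97}, and your reconstruction --- nonvanishing of the global Whittaker functional via cuspidality and Fourier expansion, the product of normalized local functionals as a second such functional, and the one-dimensionality of the space of global $\psi_{\mathbb{A}}$-Whittaker functionals deduced from local uniqueness and the restricted-tensor-product structure --- is exactly the standard proof of that cited theorem. The only point deserving a word of care is that at the archimedean place local uniqueness must be taken in the appropriate category (continuous functionals on smooth vectors of moderate growth), which is what the reference supplies.
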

\begin{Remark}
We fix an element $\alpha\in\mathbb{Q}_{\geq 0}^{\times}\cup{\infty}$. We assume that $f$ is a slope $\alpha$ $p$-stabilized newform. Further, if $\alpha=\infty$, we assume that $f$ is primitive. Then, the Whittaker function $W_{\Phi(f)}$ of the adelic lift $\Phi(f)$ decomposes as follows:
$$W_{\Phi(f)}(g)=\begin{cases}
W^{\mathrm{ord}}_{\pi_{f,p}}(g_{p})\displaystyle{\prod_{v\neq p}}W_{\pi_{f,v}}^{\mathrm{new}}(g_{v})\ &\mathrm{if}\ \alpha\neq\infty,\\
\displaystyle{\prod_{v}}W_{\pi_{f,v}}^{\mathrm{new}}(g_{v})\ &\mathrm{otherwise}\ ,
\end{cases}$$
where $W_{\pi_{f,v}}^{\mathrm{new}}\in\mathcal{W}(\pi_{f,v})^{\mathrm{new}}$ is the normalized local Whittaker newform in Definition 2.3.6.and $W^{\mathrm{ord}}_{\pi_{f,p}}\in\mathcal{W}(\pi_{f,p})^{\mathrm{ord}}(\alpha_{f,p})$ is the ordinary element in Definition 2.3.7 with an unramified character $\alpha_{f,p}$ such that $\alpha_{f,p}(p)=a(p,f)p^{\frac{1-k}{2}}$.
\end{Remark}
Let $\langle\ ,\rangle$ be the $\mathrm{GL}_{2}(\mathbb{A})$-equivalent paring of $\mathcal{A}^{0}(M,\chi_{\mathbb{A}}^{-1})\times A^{0}(M,\chi_{\mathbb{A}})$ defined in Definition 2.2.3. It is proved in \cite[Proposition 6]{Wal85} that for each $\varphi\in\pi_{f}$ and $\varphi^{\prime}\in\tilde{\pi}_{f}$, the value $\langle\varphi,\varphi^{\prime}\rangle$ is factorized locally.
\begin{Proposition}
Let $W_{\varphi},W_{\varphi^{\prime}}$ be Whittaker functions of $\varphi\in\pi_{f},\varphi^{\prime}\in\tilde{\pi}_{f}$. We assume that $W_{\varphi}=\prod_{v}W_{v}$ and $W_{\varphi^{\prime}}=\prod_{v}W^{\prime}_{v}$ satisfy $W_{v}(1)=1$ and $W^{\prime}_{v}(1)=1$ for all but finitely many $v$. Then, we have
$$\langle\varphi,\varphi^{\prime}\rangle=\frac{2L(1,\pi_{f},\mathrm{Ad})}{\zeta_{\mathbb{Q}}(2)}\displaystyle{\prod_{v}}\frac{\zeta_{v}(2)}{\zeta_{v}(1)L(1,\pi_{f,v},\mathrm{Ad})}\langle W_{v},W^{\prime}_{v}\rangle_{v},$$
where $\langle\ ,\ \rangle_{v}$ is the non-degenerated pairing defined in Definition 2.3.5. Further, we have
$$\frac{\zeta_{v}(2)}{\zeta_{v}(1)L(1,\pi_{f,v},\mathrm{Ad})}\langle W_{v},W^{\prime}_{v}\rangle_{v}=1$$
for all but finitely many $v$.
\end{Proposition}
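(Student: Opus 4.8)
The plan is to realize $\langle\varphi,\varphi^{\prime}\rangle$ as a residue of a Rankin--Selberg integral against an Eisenstein series, unfold it, and factor the result into local pieces. Since $\widetilde{\pi}_{f}$ is the contragredient of $\pi_{f}$ one has $\pi_{f}\otimes\widetilde{\pi}_{f}\cong\mathbf{1}\oplus\mathrm{Ad}(\pi_{f})$, hence $L(s,\pi_{f}\times\widetilde{\pi}_{f})=\zeta_{\mathbb{A}}(s)L(s,\pi_{f},\mathrm{Ad})$ as completed $L$-functions; as $\zeta_{\mathbb{A}}(s)$ has a simple pole at $s=1$ with residue $1$ and $L(s,\pi_{f},\mathrm{Ad})$ is holomorphic there, $L(s,\pi_{f}\times\widetilde{\pi}_{f})$ has a simple pole at $s=1$ with residue $L(1,\pi_{f},\mathrm{Ad})$. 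Fix a standard $\mathbf{K}$-spherical section $f_{s}$ on $\mathrm{PGL}_{2}(\mathbb{A})$ and form the Eisenstein series $E(s,g)=\sum_{\gamma\in B(\mathbb{Q})\backslash\mathrm{GL}_{2}(\mathbb{Q})}f_{s}(\gamma g)$ (with $B$ the standard Borel), normalized so that it has a simple pole at $s=1$ with constant residue $\kappa_{0}\neq0$. Because $\varphi\varphi^{\prime}$ has trivial central character and is rapidly decreasing on $\mathbb{A}^{\times}\mathrm{GL}_{2}(\mathbb{Q})\backslash\mathrm{GL}_{2}(\mathbb{A})$, the integral
$$Z(s):=\int_{\mathbb{A}^{\times}\mathrm{GL}_{2}(\mathbb{Q})\backslash\mathrm{GL}_{2}(\mathbb{A})}\varphi(g)\varphi^{\prime}(g)E(s,g)\,\mathrm{d}_{\mathbb{A}}^{\tau}g$$
converges for $\mathrm{Re}(s)$ large, continues meromorphically, and satisfies $\mathrm{Res}_{s=1}Z(s)=\kappa_{0}\langle\varphi,\varphi^{\prime}\rangle$.

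First I would unfold. Unfolding $E(s,g)$ collapses the $B(\mathbb{Q})$-sum; then the cuspidality of $\varphi$, the Fourier--Whittaker expansion $\varphi(g)=\sum_{\xi\in\mathbb{Q}^{\times}}W_{\varphi}(\mathrm{diag}\{\xi,1\}g)$ and the orthogonality of additive characters on $\mathbb{A}/\mathbb{Q}$ give
$$Z(s)=\int_{\mathbb{A}^{\times}N(\mathbb{A})\backslash\mathrm{GL}_{2}(\mathbb{A})}W_{\varphi}(g)\,\widetilde{W}_{\varphi^{\prime}}(g)\,f_{s}(g)\,\mathrm{d}_{\mathbb{A}}^{\tau}g,$$
where $N$ is the unipotent radical of $B$ and $\widetilde{W}_{\varphi^{\prime}}(g):=W_{\varphi^{\prime}}(\mathrm{diag}\{-1,1\}g)$ is the $\psi_{\mathbb{A}}^{-1}$-Whittaker function of $\varphi^{\prime}$; the appearance of $\mathrm{diag}\{-1,1\}$ uses only $\mathrm{diag}\{-1,1\}\in\mathrm{GL}_{2}(\mathbb{Q})$ and the left $\mathrm{GL}_{2}(\mathbb{Q})$-invariance of $\varphi^{\prime}$, and it accounts for the arguments $\mathrm{diag}\{y,1\}$, $\mathrm{diag}\{-y,1\}$ in Definition 2.3.5. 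Factoring $W_{\varphi}=\prod_{v}W_{v}$, $W_{\varphi^{\prime}}=\prod_{v}W_{v}^{\prime}$ by Proposition 2.4.1 and writing $\mathrm{d}_{\mathbb{A}}^{\tau}g=C\prod_{v}\mathrm{d}_{v}g$ (the constant $C$ being the measure-comparison factor, into which the Tamagawa number $2$ of $\mathrm{PGL}_{2}$ enters), one gets $Z(s)=C\prod_{v}Z_{v}(s)$ with
$$Z_{v}(s):=\int_{\mathbb{Q}_{v}^{\times}N(\mathbb{Q}_{v})\backslash\mathrm{GL}_{2}(\mathbb{Q}_{v})}W_{v}(g)\,W_{v}^{\prime}(\mathrm{diag}\{-1,1\}g)\,f_{s,v}(g)\,\mathrm{d}_{v}g.$$

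Then I would analyze the local integrals. In Iwasawa coordinates $g=\mathrm{diag}\{y,1\}k$, one has $Z_{v}(s)=\int_{\mathbb{Q}_{v}^{\times}}\bigl(\int_{\mathbf{K}_{v}}W_{v}(\mathrm{diag}\{y,1\}k)W_{v}^{\prime}(\mathrm{diag}\{-y,1\}k)\,\mathrm{d}_{v}k\bigr)|y|_{v}^{s-1}\,\mathrm{d}_{v}^{\times}y$, so at $s=1$, averaging over $k\in\mathbf{K}_{v}$ and using the $\mathrm{GL}_{2}(\mathbb{Q}_{v})$-invariance of the pairing of Definition 2.3.5, one gets $Z_{v}(1)=\langle W_{v},W_{v}^{\prime}\rangle_{v}$, which is in particular absolutely convergent. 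At all but finitely many $v$ everything is unramified and $W_{v}=W_{v}^{\prime}$ is the spherical newform; the $\mathbf{K}_{v}$-integral drops out and a standard Rankin--Selberg computation via the Casselman--Shalika formula gives $Z_{v}(s)=\zeta_{v}(2s)^{-1}L_{v}(s,\pi_{f,v}\times\widetilde{\pi}_{f,v})=\zeta_{v}(2s)^{-1}\zeta_{v}(s)L(s,\pi_{f,v},\mathrm{Ad})$; evaluating at $s=1$ gives $\langle W_{v},W_{v}^{\prime}\rangle_{v}=\zeta_{v}(1)L(1,\pi_{f,v},\mathrm{Ad})/\zeta_{v}(2)$, which is the last assertion of the proposition. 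At the remaining (finitely many) places, including $v=p$ --- where $W_{v}$ is the ordinary Whittaker function of Corollary 2.3.8 --- and $v=\infty$, I would compute $Z_{v}(s)$ directly in the Kirillov model and check that $R_{v}(s):=\zeta_{v}(2s)\,L_{v}(s,\pi_{f,v}\times\widetilde{\pi}_{f,v})^{-1}Z_{v}(s)$ is holomorphic near $s=1$ with $R_{v}(1)=\dfrac{\zeta_{v}(2)}{\zeta_{v}(1)L(1,\pi_{f,v},\mathrm{Ad})}\langle W_{v},W_{v}^{\prime}\rangle_{v}$. Multiplying the local identities, $Z(s)=C\,\zeta_{\mathbb{A}}(2s)^{-1}L(s,\pi_{f}\times\widetilde{\pi}_{f})\prod_{v}R_{v}(s)$; taking $\mathrm{Res}_{s=1}$ and using $\mathrm{Res}_{s=1}L(s,\pi_{f}\times\widetilde{\pi}_{f})=L(1,\pi_{f},\mathrm{Ad})$ together with $\mathrm{Res}_{s=1}Z(s)=\kappa_{0}\langle\varphi,\varphi^{\prime}\rangle$ yields
$$\langle\varphi,\varphi^{\prime}\rangle=\frac{C}{\kappa_{0}\,\zeta_{\mathbb{A}}(2)}\,L(1,\pi_{f},\mathrm{Ad})\prod_{v}\frac{\zeta_{v}(2)}{\zeta_{v}(1)L(1,\pi_{f,v},\mathrm{Ad})}\langle W_{v},W_{v}^{\prime}\rangle_{v}.$$

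The hard part will be the bookkeeping of the three normalizing constants --- the section $f_{s}$, the Eisenstein residue $\kappa_{0}$, and the measure-comparison constant $C$ --- so that $C/(\kappa_{0}\zeta_{\mathbb{A}}(2))=2/\zeta_{\mathbb{Q}}(2)$ as demanded by the statement, together with the explicit archimedean and ramified local computations identifying $R_{v}(1)$ with the normalized local pairing. Since this is precisely the computation of \cite[Proposition 6]{Wal85}, one may instead invoke that reference directly.
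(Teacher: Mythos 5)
Your proposal is correct and matches the paper, which offers no proof of its own and simply cites \cite[Proposition 6]{Wal85}; your Rankin--Selberg sketch (residue of an Eisenstein series, unfolding, local factorization, with the unramified computation giving $\langle W_{v},W_{v}^{\prime}\rangle_{v}=\zeta_{v}(1)L(1,\pi_{f,v},\mathrm{Ad})/\zeta_{v}(2)$) is exactly the argument underlying Waldspurger's result, and your closing fallback to that reference is what the paper itself does. The only unfinished piece is the normalization bookkeeping $C/(\kappa_{0}\zeta_{\mathbb{A}}(2))=2/\zeta_{\mathbb{Q}}(2)$, which you correctly identify and which is absorbed by the citation.
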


We assume that $f$ is primitive. Then, by Proposition 2.4.3, we can decompose the value $\|f\|^{2}_{\Gamma_{0}(M)}$ locally. If we put $f_{c}(z)=\overline{f(-\overline{z})}$, it is known that $f_{c}\in \mathcal{S}_{k}(M,\overline{\chi})$ ($cf$. \cite[Lemma 4.3.2]{Miy06}) and we have 
$$f\vert_{k}\left(
    \begin{array}{cc}
      0&-1\\
      M&0 \\
    \end{array}
  \right)=w(f)\cdot f_{c},\ w(f)\in\mathbb{C}^{\times}$$
($cf$. \cite[Theorem 4.6.15]{Miy06}). We call $w(f)$ the root number of $f$. Then, if we set $\tau_{M}=(\tau_{M,v})_{v}\in\mathrm{GL}_{2}(\mathbb{A})$ with
$$\tau_{M,v}:=\begin{cases}
1\ &\mathrm{if}\ v=l\nmid M,\\
\footnotesize\left(
    \begin{array}{cc}
      0&-1\\
      -l^{v_{l}(M)}&0 \\
    \end{array}
  \right)\normalsize\ &\mathrm{if}\ v=l\vert M,\\
\mathrm{diag}\{-1,1\} &\mathrm{if}\ v=\infty.
\end{cases}\leqno(2.4.2)$$
By Proposition 2.4.3, we have the following equation ($cf$. \cite[(2.18)]{Hsi17}).
$$\|f\|^{2}_{\Gamma_{0}(M)}=\frac{[\mathrm{SL}_{2}(\mathbb{Z}):\Gamma_{0}(M)]}{2^{k}w(f)}\cdot L(1,\pi_{f},\mathrm{Ad})\cdot \displaystyle{\prod_{l\vert M}}B_{\pi_{f,l}},\leqno(2.4.3)$$
where 
$$B_{\pi_{f,v}}=\frac{\zeta_{v}(2)}{\zeta_{v}(1)L(1,\pi_{f,v},\mathrm{Ad})}\langle \rho(\tau_{M,v})W_{\pi_{f,v}}^{\mathrm{new}},W_{\pi_{f,v}}^{\mathrm{new}}\otimes\chi_{\mathbb{A},v}\rangle_{v}.\leqno(2.4.4)$$
\subsection{$p$-adic families of modular forms.}
Let $K$ be a finite extension of $\mathbb{Q}_{p}$ and $\mathcal{O}_{K}$ the ring of integers of $K$. Let $\mathbf{I}_{1}$ be a normal finite flat extension of the Iwasawa algebra $\Lambda:=\mathcal{O}_{K}\jump{\Gamma}$ of the topological group $\Gamma:=1+p\mathbb{Z}_{p}$. In this subsection, we introduce the definitions of ordinary $\mathbf{I}_{1}$-adic cusp forms, primitive Hida families and congruence numbers attached to Hida families. To define congruence numbers, we introduce the Hecke algebra of ordinary $\mathbf{I}_{1}$-adic cusp forms. At the end of this subsection, we define general $p$-adic families of modular forms.

Let $N_{1}$ be a positive integer which is prime to $p$. Throughout this section, we assume $\mathbb{Q}_{p}(\chi)\subset K$ for any Dirichlet character $\chi$ modulo $N_{1}p$.
\begin{Definition}
We call a continuous $\mathcal{O}_{K}$-algebra homomorphism $Q: \mathbf{I}_{1} \rightarrow \overline{\mathbb{Q}}_{p}$ an arithmetic point of weight $k_{Q}\geq 2$ and a finite part $\epsilon_{Q} : \Gamma \rightarrow \overline{\mathbb{Q}}_{p}^{\times}$ if the restriction $Q\vert_{\Gamma}: \Gamma\rightarrow \overline{\mathbb{Q}}_{p}^\times$ is given by $Q(x)=x^{k_{Q}}\epsilon_{Q}(x)$ for every $x\in\Gamma$. Here, $\epsilon_{Q}:\Gamma\rightarrow \overline{\mathbb{Q}}_{p}^{\times}$ is a finite character.
\end{Definition}
Let $\mathfrak{X}_{\mathbf{I}_{1}}$ be the set of arithmetic points of $\mathbf{I}_{1}$. We denote by $e$ the ordinary projection defined in \cite[(4.3)]{Hid88c}. We recall the definition of ordinary $\mathbf{I}_{1}$-adic cusp forms defined in \cite{Wil88}.
\begin{Definition}
Let $\chi$ be a Dirichlet character modulo $N_{1}p$. We call a formal power series $\mathbf{f}\in\mathbf{I}_{1}\jump{q}$ an ordinary $\mathbf{I}_{1}$-adic cusp form of tame level $N_{1}$ and Nebentypus $\chi$ if the specialization $\mathbf{f}_{Q}:=\displaystyle{\sum_{n\geq 0}}Q(a(n,\mathbf{f}))q^{n}\in Q(\mathbf{I}_{1})\jump{q}$ of $\mathbf{f}$ is a $q$-expansion of an element of $e\mathcal{S}_{k_{Q}}(N_{1}p^{e_{Q}},\chi\omega_{p}^{-k_{Q}}\epsilon_{Q},Q(\mathbf{I}_{1}))$ with $e_{Q}\geq1$ for all but a finite number of $Q\in \mathfrak{X}_{\mathbf{I}_{1}}$.
\end{Definition}
Let $\mathbf{S}^{\mathrm{ord}}(N_{1},\chi,\mathbf{I}_{1})$ be the $\mathbf{I}_{1}$-module consisting of ordinary $\mathbf{I}_{1}$-adic cusp forms of tame level $N_{1}$ and Nebentypus $\chi$. Next, we recall the definition of the Hecke algebra of $\mathbf{S}^{\mathrm{ord}}(N_{1},\chi,\mathbf{I}_{1})$. Let $\langle\ \rangle_{\Lambda}:\mathbb{Z}_{p}^{\times}\rightarrow \Lambda^{\times}$ be a group homomorphism defined by $\langle z\rangle_{\Lambda}=[z\omega_{p}^{-1}(z)]$, where $[z\omega_{p}^{-1}(z)]$ is the group-like element of $z\omega_{p}(z)^{-1}\in\Gamma$ in $\Lambda^{\times}$. We denote by 
$$\langle\ \rangle_{\mathbf{I}_{1}}:\mathbb{Z}_{p}^{\times}\rightarrow \mathbf{I}_{1}^{\times}\leqno(2.5.1)$$
the composition of $\langle\ \rangle_{\Lambda}$ and the embedding of $\Lambda^{\times}$ into $\mathbf{I}_{1}^{\times}$. For each prime $l\nmid N_{1}p$, the Hecke operator $T_{l}\in \mathrm{End}_{\mathbf{I}_{1}}(\mathbf{S}^{\mathrm{ord}}(N_{1},\chi,\mathbf{I}_{1}))$ at $l$ is defined by
$$T_{l}(f)=\displaystyle{\sum_{n\geq1}}a(n,T_{l}(f))q^{n},$$
where 
$$a(n,T_{l}(f))=\displaystyle{\sum_{b\vert (n,l)}}\langle b\rangle_{\mathbf{I}_{1}}\chi(b)b^{-1}a(ln\slash b^{2},f).$$
For each prime $l\vert N_{1}p$, the Hecke operator $T_{l}\in \mathrm{End}_{\mathbf{I}_{1}}(\mathbf{S}^{\mathrm{ord}}(N_{1},\chi,\mathbf{I}_{1}))$ at $l$ is defined by 
$$T_{l}(f)=\displaystyle{\sum_{n\geq 1}}a(ln,f)q^{n}.$$
The Hecke algebra $\mathbf{T}^{\mathrm{ord}}(N_{1},\chi,\mathbf{I}_{1})$ is defined by the sub-algebra of $\mathrm{End}_{\mathbf{I}_{1}}(\mathbf{S}^{\mathrm{ord}}(N_{1},\chi,\mathbf{I}_{1}))$ generated by $T_{l}$ for all primes $l$.

Next, we recall the definition of primitive Hida families.
\begin{Definition}
We call an element $\mathbf{f}\in \mathbf{S}^{\mathrm{ord}}(N_{1},\chi,\mathbf{I}_{1})$ a primitive Hida family of tame level $N_{1}$ and Nebentypus $\chi$ if the specialization $\mathbf{f}_{Q}$ is a $q$-expansion of an ordinary $p$-stabilized cuspidal newform for all but a finite number of $Q\in \mathfrak{X}_{\mathbf{I}_{1}}$.
\end{Definition}
Next, we recall the definition of the congruence number. From now on throughout \S2.5, we assume that $F\in \mathbf{S}^{\mathrm{ord}}(N_{1},\chi,\mathbf{I}_{1})$ is a primitive Hida family and $F$ satisfies Hypothesis (2). Let $\lambda_{F}:\mathbf{T}^{\mathrm{ord}}(N_{1},\chi,\mathbf{I}_{1})\rightarrow \mathbf{I}_{1}$ be an $\mathbf{I}_{1}$-algebra homomorphism defined by $\lambda_{F}(T)=a(1,T(F))$. Let $\mathbf{m}_{F}$ be a unique maximal ideal of $\mathbf{T}^{\mathrm{ord}}(N_{1},\chi,\mathbf{I}_{1})$ which contains $\mathrm{Ker}\lambda_{F}$. Let $\mathbf{T}^{\mathrm{ord}}(N_{1},\chi,\mathbf{I}_{1})_{\mathbf{m}_{F}}$ be the localization of $\mathbf{T}^{\mathrm{ord}}(N_{1},\chi,\mathbf{I}_{1})$ by $\mathbf{m}_{F}$. Let $\lambda_{\mathbf{m}_{F}}:=\lambda_{F}:\mathbf{T}^{\mathrm{ord}}(N_{1},\chi,\mathbf{I}_{1})_{\mathbf{m}_{F}}\rightarrow \mathbf{I}_{1}$ be the restriction of $\lambda_{F}$ to $\mathbf{T}^{\mathrm{ord}}(N_{1},\chi,\mathbf{I}_{1})_{\mathbf{m}_{F}}$. By \cite[Corollary 3.7]{Hid88b}, there exists a finite dimensional $\mathrm{Frac}\mathbf{I}_{1}$-algebra $B$ and an isomorphism
$$\lambda:\mathbf{T}^{\mathrm{ord}}(N_{1},\chi,\mathbf{I}_{1})_{\mathbf{m}_{F}}\otimes_{\mathbf{I}_{1}}\mathrm{Frac}\mathbf{I}_{1}\cong\mathrm{Frac}\mathbf{I}_{1}\oplus B$$
such that $(\mathrm{pr}_{\mathrm{Frac}\mathbf{I}_{1}}\circ\lambda)\vert_{\mathbf{T}^{\mathrm{ord}}(N_{1},\chi,\mathbf{I}_{1})_{\mathbf{m}_{F}}}=\lambda_{\mathbf{m}_{F}}$, where $\mathrm{pr}_{\mathrm{Frac}\mathbf{I}_{1}}:\mathrm{Frac}\mathbf{I}_{1}\oplus  B\rightarrow\mathrm{Frac}\mathbf{I}_{1}$ is the projection to the first part.
\begin{Definition}
Let $\mathrm{pr}_{\mathrm{Frac}\mathbf{I}_{1}}$ (resp. $\mathrm{pr}_{B}$) be the projection from $\mathrm{Frac}\mathbf{I}_{1}\oplus B$ to $\mathrm{Frac}\mathbf{I}_{1}$ (resp. $B$). We put $h(\mathrm{Frac}\mathbf{I}_{1}):=\mathrm{pr}_{\mathrm{Frac}\mathbf{I}_{1}}\circ\lambda(\mathbf{T}^{\mathrm{ord}}(N_{1},\chi,\mathbf{I}_{1})_{\mathbf{m}_{F}})$ and $h(B):=\mathrm{pr}_{B}\circ\lambda(\mathbf{T}^{\mathrm{ord}}(N_{1},\chi,\mathbf{I}_{1})_{\mathbf{m}_{F}})$. We define the module of congruence for $F$ to be
$$C(F):=h(\mathrm{Frac}\mathbf{I}_{1})\oplus h(B)\slash \lambda(\mathbf{T}^{\mathrm{ord}}(N_{1},\chi,\mathbf{I}_{1})_{\mathbf{m}_{F}}).$$
\end{Definition}
Let 
$$1_{F}\in\mathbf{T}^{\mathrm{ord}}(N_{1},\chi,\mathbf{I}_{1})_{\mathbf{m}_{F}}\otimes_{\mathbf{I}}\mathrm{Frac}\mathbf{I}\leqno(2.5.2)$$
be the idempotent element corresponded to $(1,0)\in\mathrm{Frac}\mathbf{I}\oplus B$ by $\lambda$. Let $\mathrm{Ann}(C(F)):=\{a\in\mathbf{I}_{1}\mid aC(F)=\{0\}\}$ be the annihilator of $C(F)$. By \cite[Corollary 2, page 482]{Wil95}, $\mathbf{T}^{\mathrm{ord}}(N_{1},\chi,\mathbf{I}_{1})_{\mathbf{m}_{F}}$ is a Gorenstein ring. Hence, by \cite[Theorem 4.4]{Hid88d}, the annihilator $\mathrm{Ann}(C(F))$ is generated by an element.
\begin{Definition}
We call a generator $\eta_{F}$ of $\mathrm{Ann}(C(F))$ a congruence number of $F$.
\end{Definition}
To the calculation of the zeta integral in the next section, we introduce the primitive Hida family $\breve{F}\in\mathbf{S}^{\mathrm{ord}}(N_{1},\chi_{(p)}\overline{\chi}^{(p)},\mathbf{I}_{1})$ attached to the twisted Hida family $F\vert[\overline{\chi}]$. The Fourier coefficients of $\breve{F}$ are given by
$$a(l,\breve{F})=\begin{cases}
\overline{\chi}^{(p)}(l)a(l,F)\ &\mathrm{if}\ l\nmid N_{1},\\
a(l,F)^{-1}\chi_{(p)}\omega_{p}^{2}l^{-1}\langle l\rangle_{\mathbf{I}_{1}}\ &\mathrm{if}\ l\vert N_{1}.
\end{cases}\leqno(2.5.3)$$
By \cite[Theorem 4.6.16]{Miy06}, $\breve{F}$ is a primitive Hida family. It is proved in \cite[(3.2)]{Hsi17} that $\mathbf{T}^{\mathrm{ord}}(N_{1},\chi,\mathbf{I}_{1})_{\mathbf{m}_{F}}\cong\mathbf{T}^{\mathrm{ord}}(N_{1},\chi_{p}\overline{\chi}^{(p)},\mathbf{I}_{1})_{\mathbf{m}_{\breve{F}}}$ and hence 
$$\eta(F)=\eta(\breve{F}).\leqno(2.5.4)$$

Next, we introduce general $p$-adic families of modular forms. Let $\mathbf{I}$ be a normal finite flat extension of $\Lambda$. We fix a set of non-zero continuous $\mathcal{O}_{K}$-algebraic homomorphisms 
$$\mathfrak{X}:=\{Q_{m}:\mathbf{I}\rightarrow \overline{\mathbb{Q}}_{p}\}_{m\in\mathbb{Z}_{\geq 1}}.\leqno(2.5.5)$$
Then, we define the specialization of an element $G=\displaystyle{\sum_{n\geq0}}a(n,G)q^{n}\in\mathbf{I}\jump{q}$, at $Q_{m}\in\mathfrak{X}$ to be $G_{Q_{m}}:=\displaystyle{\sum_{n\geq0}}Q_{m}(a(n,G))\jump{q}\in Q_{m}(\mathbf{I})\jump{q}$. Let $M$ be a positive integer which is prime to $p$ and $\chi$ a Dirichlet character modulo $Mp$. 
\begin{Definition}
We call an element $G\in\mathbf{I}\jump{q}$ a primitive $p$-adic families of tame level $M$ and Nebentypus $\chi$ attached to $\mathfrak{X}$ if $G_{Q_{m}}$ is a Fourier expansion of a cuspidal Hecke eigenform of weight $k_{Q_{m}}$, level $Mp^{e_{Q_{m}}}$ and Nebentypus $\chi\omega_{p}^{-k_{Q_{m}}}\epsilon_{Q_{m}}$ which is primitive outside of $p$ for each $m\in\mathbb{Z}_{\geq 1}$. Here, $k_{Q_{m}}$ and $e_{Q_{m}}$ are positive integers and $\epsilon_{Q_{m}}$ is a finite character of $\mathrm{\Gamma}$.
\end{Definition}
\section{The formulas of zeta integrals.}
For $i=1,2,3$, let $\mathbf{I}_{i}$ be a normal finite flat extension of $\Lambda$ and $N_{i}$ be a positive integer which is prime to $p$. Let $F\in \mathbf{S}^{\mathrm{ord}}(N_{1},\psi_{1},\mathbf{I}_{1})$ be a primitive Hida family. For $i=2,3$, let $G^{(i)}\in \mathbf{I}_{i}\jump{q}$ be a primitive $p$-adic family of tame level $N_{i}$ and Nebentypus $\psi_{i}$ attached to $\mathfrak{X}^{(i)}:=\{Q_{m}^{(i)}:\mathbf{I}_{i}\rightarrow \overline{\mathbb{Q}}_{p}\}_{m\in\mathbb{Z}_{\geq 1}}$ defined in Definition 2.5.6. We fix a point $\underline{Q}=(Q_{1},Q^{(2)}_{m_{2}},Q^{(3)}_{m_{3}})\in\mathfrak{X}_{\mathbf{I}_{1}}\times\mathfrak{X}^{(2)}\times\mathfrak{X}^{(3)}$ and we denote by $(f_{1},g_{2},g_{3})$ the specialization of $(F,G^{(2)},G^{(3)})$ at $\underline{Q}$. 

In \S3, we summarize some of notation and results of \cite{Hsi17} on triple product $L$-functions and zeta integrals without proofs. We define a triple product $L$-function and a global trilinear period integral attached to the triple $(f_{1},g_{2},g_{3})$ in \S3.2 and decompose the global trilinear integral to the product of the triple product $L$-function and the local zeta integrals by \cite[Proposition 3.10]{Hsi17} in \S3.3. In \S3.4, we calculate the local zeta integrals.

We denote by $(k_{1},k_{2},k_{3})$, $(N_{1}p^{e_{1}},N_{2}p^{e_{2}},N_{3}p^{e_{3}})$ and $(\chi_{1},\chi_{2},\chi_{3})$ the triples of weight, levels and Nebentypus of $(f_{1},g_{2},g_{3})$ respectively. In \S3, we assume that the triple $(F,G^{(2)},G^{(3)})$ satisfies Hypotheses (1)`(7). Further, we assume that the conditions $k_{1}\equiv k_{2}+k_{3}$ mod 2 and $k_{1}\geq k_{2}+k_{3}$ hold. We write $(\epsilon_{1},\epsilon_{2},\epsilon_{3}):=(\epsilon_{Q_{1}},\epsilon_{Q^{(2)}_{m_{2}}},\epsilon_{Q^{(3)}_{m_{3}}})$ for simplicity.
\subsection{The adjustment of levels for triple of modular forms.}
We set $N=\mathrm{lcm}(N_{1},N_{2},N_{3})$ and $N^{*}=\mathrm{gcd}(N_{1},N_{2},N_{3})$. In this subsection, we construct a triple $(f_{1}^{*},g_{2}^{*},g_{3}^{*})$ of cusp forms of level $N$ attached to the triple $(f_{1},g_{2},g_{3})$.

Let $(\pi_{1},\pi_{2},\pi_{3}$) be the triple of automorphic representations of $\mathrm{GL}_{2}(\mathbb{A})$ attached to the triple ($f_{1},g_{2},g_{3}$). We divide the set of primes $q$ except $p$ into the following two subsets.
\begin{align*}
\Sigma_{i}^{1}:&=\{l:\mathrm{prime}\mid q\neq p,\ \pi_{i,q}\ \mathrm{is\ a\ principal\ series}\},\\
\Sigma_{i}^{0}:&=\{l:\mathrm{prime}\mid q\neq p,\ \pi_{i,q}\ \mathrm{is\ not\ a\ principal\ series}\},
\end{align*}
for $i=1,2,3$.
We put $\Sigma_{1,2,3}:=\Sigma_{1}^{0}\cap\Sigma_{2}^{0}\cap\Sigma_{3}^{0}$. We put $\{i,j,t\}=\{1,2,3\}$. We introduce some sets of primes.
\begin{align*}
\Sigma_{i,j}^{(\mathrm{I})}:&=\{l\in\Sigma_{i}^{1}\cup\Sigma_{j}^{1}\cup\Sigma_{1,2,3}\mid v_{l}(N_{t})<\mathrm{min}\{v_{l}(N_{i}),v_{l}(N_{j})\}\},\\
\Sigma_{i}^{(\mathrm{I\hspace{-.1em}I}\mathrm{a})}:&=\{l\in\Sigma_{j}^{0}\cap\Sigma_{t}^{0}\mid L(s,\pi_{j,l}\otimes\pi_{t,l})\neq1,\ v_{l}(N_{i})=0\},\\
\Sigma_{i}^{(\mathrm{I\hspace{-.1em}I}\mathrm{b})}:&=\{l\in\Sigma_{j}^{0}\cap\Sigma_{t}^{0}\mid L(s,\pi_{j,l}\otimes\pi_{t,l})=1,\ l\in\Sigma_{i}^{1},\ v_{l}(N_{i})<\mathrm{min}\{v_{l}(N_{j}),v_{l}(N_{t})\}\},\\
\Sigma_{i}^{\mathrm{max}}:&=\{l\ \mathrm{is\ a\ prime\ factor\ of}\ N_{i}\mid v_{l}(N_{j})=v_{l}(N_{t})=v_{l}(N^{*})<v_{l}(N_{i})\}.
\end{align*}
Here, $L(s,\pi_{j,l}\otimes\pi_{t,l})$ is the $L$-function defined in \cite{Jac72}. Further, we put
\begin{align*}
d_{i}^{(\mathrm{I})}:&=\prod_{l\in\Sigma_{i,j}^{(\mathrm{I})}}l^{\mathrm{max}\{v_{l}(N_{i}),v_{l}(N_{j})\}-v_{l}(N_{i})}\cdot\prod_{l\in\Sigma_{i,t}^{(\mathrm{I})}}l^{\mathrm{max}\{v_{l}(N_{i}),v_{l}(N_{t})\}-v_{l}(N_{i})},\\
d_{i}^{(\mathrm{I\hspace{-.1em}I})}:&=\prod_{l\in\Sigma_{i}^{(\mathrm{I\hspace{-.1em}I}\mathrm{a})}}l^{\lceil\frac{\mathrm{max}\{v_{l}(N_{j}),v_{l}(N_{l})\}}{2}\rceil}\cdot\prod_{l\in\Sigma_{i}^{(\mathrm{I\hspace{-.1em}I}\mathrm{b})}}l^{\mathrm{max}\{v_{l}(N_{j}),v_{l}(N_{t})\}-v_{l}(N_{i})},\\
d_{i}^{\mathrm{max}}:&=\prod_{l\in\Sigma_{i}^{\mathrm{max}}}l^{v_{l}(N_{i})-v_{l}(N^{*})}.
\end{align*}  
We define a triple of integers $(d_{1},d_{2},d_{3})$ attached to the triple $(f_{1},g_{2},g_{3})$ to be
$$d_{1}=d_{1}^\mathrm{I}d_{1}^{\mathrm{I\hspace{-.1em}I}},\ d_{2}=d_{2}^{(\mathrm{I})}d_{1}^{\mathrm{max}}d_{3}^{\mathrm{max}}d_{2}^{(\mathrm{I\hspace{-.1em}I})},\ d_{3}=d_{3}^{(\mathrm{I})}d_{2}^{\mathrm{max}}d_{3}^{(\mathrm{I\hspace{-.1em}I})}.\leqno(3.1.1)$$
We set 
$$\Sigma_{i,0}^{(\mathrm{I\hspace{-.1em}I}\mathrm{b})}:=\{l\in\Sigma_{i}^{(\mathrm{I\hspace{-.1em}I}\mathrm{b})}\mid v_{l}(N_{i})=0\}.\leqno(3.1.2)$$
If necessary, we exchange $\mathbf{I}_{i}$ to a normal finite flat extension for $i=1,2,3$. Then, for each prime $l\vert N$, we fix a root $\beta_{l}(F)\in\mathbf{I}_{1}^{\times}$ (resp. $\beta_{l}(G^{(i)})\in\mathbf{I}_{i}^{\times}$ for $i=2,3$) of the Hecke polynomial $X^{2}-a(p,F)X+\psi_{1}(l)l^{-1}\langle l\rangle_{\mathbf{I}_{1}}$ (resp. $X^{2}-a(p,G^{(i)})X+\psi_{i}(l)l^{-1}\langle l\rangle^{(i)}$). We denote by $(\beta_{l,1},\beta_{l,2},\beta_{l,3})$ the specialization of $(\beta_{l}(F),\beta_{l}(G^{(2)}),\beta_{l}(G^{(3)}))$ at $\underline{Q}$. For the triple $(f_{1},g_{2},g_{3})$ with different levels, we define the adjustment $(f_{1}^{*},g_{2}^{*},g_{3}^{*})$ to be
\begin{align*}
f_{1}^{*}&=\displaystyle{\sum_{I\subset\Sigma_{1,0}^{(\mathrm{I\hspace{-.1em}I}\mathrm{b})}}}(-1)^{\vert I\vert}\beta_{I}(f_{1})^{-1}V_{d_{f_{1}\slash n_{I}}}f_{1},\tag{3.1.3}\\
g_{i}^{*}&=\displaystyle{\sum_{I\subset\Sigma_{i,0}^{(\mathrm{I\hspace{-.1em}I}\mathrm{b})}}}(-1)^{\vert I\vert}\beta_{I}(g_{i})^{-1}V_{d_{g_{i}\slash n_{I}}}g_{i},\ (i=2,3),\tag{3.1.4}
\end{align*}
where $n_{I}=\prod_{l\in I}l$, $\beta_{l}(f_{1})=\prod_{l\in I}\beta_{l,1}$ and $\beta_{l}(g_{i})=\prod_{l\in I}\beta_{l,i}$. By a simple calculation, for each finite prime $l$, if we define a polynomial to be
$$Q_{i,l}(X)=X^{v_{l}(d_{i})}\begin{cases}
1\ &\mathrm{if}\ l\not\in\Sigma_{i,0}^{(\mathrm{I\hspace{-.1em}I}\mathrm{b})},\\
(1-\beta_{l,i})^{-1}l^{\frac{k_{i}}{2}-1}X^{-1}\ &\mathrm{if}\ l\in\Sigma_{i,0}^{(\mathrm{I\hspace{-.1em}I}\mathrm{b})},
\end{cases}\leqno(3.1.5)$$
we have
\begin{align*}
\varphi_{f_{1}}^{*}:=\prod_{l}Q_{1,l}(V_{l})\Phi(f_{1})=d_{1}^{\frac{k_{1}}{2}-1}\Phi(f_{1}^{*}),\tag{3.1.6}\\
\varphi_{g_{i}}^{*}:=\prod_{l}Q_{i,l}(V_{l})\Phi(g_{i})=d_{i}^{\frac{k_{i}}{2}-1}\Phi(g_{i}^{*}),\ (i=2,3).\tag{3.1.7}
\end{align*}
\subsection{Triple product $L$-functions and Global trilinear period integrals.}
In this subsection, we introduce the triple product $L$-function attached to the triple $(f_{1},g_{2},g_{3})$. Further, we introduce the modified $p$-Euler factor $\mathcal{E}_{p}(f_{1},\mathrm{Ad})$ and the global trilinear period integral $I(\rho(\mathbf{t}_{n})\phi_{\gamma})$ defined in \cite[\S3.7]{Hsi17}. We denote by $(\chi_{\underline{Q}})_{\mathbb{A}}$ the adelization of the Dirichlet character 
$$\chi_{\underline{Q}}:=\omega_{p}^{\frac{1}{2}(2a-k_{1}-k_{2}-k_{3})}(\epsilon_{1}\epsilon_{2}\epsilon_{3})^{\frac{1}{2}}.\leqno(3.2.1)$$
We define the triple product $L$-function $L(s,\Pi_{\underline{Q}})$ attached to the representation 
$$\Pi_{\underline{Q}}=\pi_{1}\otimes(\chi_{\underline{Q}})_{\mathbb{A}}\times\pi_{2}\times\pi_{3}$$
to be
$$L(s,\Pi_{\underline{Q}})=\prod_{v: \mathrm{place}}L(s,\Pi_{\underline{Q},v}),\ \mathrm{Re}(s)>\frac{5}{2},$$
where $L(s,\Pi_{\underline{Q},v})$ is the GCD local triple product $L$-function defined in \cite{PSR87} and \cite{Ike92}. In particular, for each finite prime $l$, the local $L$-function $L(s,\Pi_{\underline{Q},l})$ at $l$ can be written by the form $1\slash P(p^{-s})$, where $P(T)\in\mathbb{C}[T]$ such that $P(0)=1$. By the result of \cite{Ike98}, the archimedean factor $L(s,\Pi_{\underline{Q},\infty})$ can be written by the form
$$L(s,\Pi_{\underline{Q},\infty}):=\Gamma_{\mathbb{C}}(s+\frac{w}{2})\prod_{i=1}^{3}\Gamma_{\mathbb{C}}(s+1-k_{i}^{*}),$$
where $w=k_{1}+k_{2}+k_{3}-2$, $k_{i}^{*}=\frac{k_{1}+k_{2}+k{3}}{2} -k_{i}$ and $\Gamma_{\mathbb{C}}(s)=2(2\pi)^{-s}\Gamma(s)$. By  \cite[Proposition 2.5]{Ike92}, the function $L(s,\Pi_{\underline{Q}})$ is continued to the entire $\mathbb{C}$-plane analytically and by \cite[Proposition 2.4]{Ike92}, the function $L(s,\Pi_{\underline{Q}})$ satisfies the functional equation
$$L(s,\Pi_{\underline{Q}})=\epsilon(s,\Pi_{\underline{Q}})L(1-s,\Pi_{\underline{Q}}),$$
where $\epsilon(s,\Pi_{\underline{Q}})$ is the global epsilon factor defined in \cite{Ike92}. The epsilon factor $\epsilon(s,\Pi_{\underline{Q}})$ can be decomposed by the product of the local epsilon factors 
$$\epsilon(s,\Pi_{\underline{Q}})=\prod_{v:\mathrm{place}}\epsilon(s,\Pi_{\underline{Q},v})$$
and it is known that $\epsilon(\frac{1}{2},\Pi_{\underline{Q},v})\in\{\pm1\}$.

Next, we introduce the modified $p$-Euler factor. Let $\alpha_{f_{1},p}$ be the unramified character defined in Remark 2.4.2. Let $\pi_{1}=\otimes^{\prime}_{v}\pi_{1,v}$ be the automorphic representation of $\mathrm{GL}_{2}(\mathbb{A})$ attached to $f_{1}$ with the central character $\omega_{1}$. We put $\beta_{f_{1},p}:=\alpha_{f_{1},p}^{-1}\omega_{1,p}$.
\begin{Definition}
Let $c(\pi_{1,p})$ be the positive integer defined in (2.3.1). We define a modified $p$-Euler factor $\mathcal{E}_{p}(f_{1},\mathrm{Ad})$ for the adjoint motive of $f_{1}$ to be
\begin{align*}
\mathcal{E}_{p}(f_{1},\mathrm{Ad})&=\epsilon(1,\beta_{f_{1},p}\alpha_{f_{1},p}^{-1})L(0,\beta_{f_{1},p}\alpha_{f_{1},p}^{-1})^{-1}L(1,\beta_{f_{1},p}\alpha_{f_{1},p}^{-1})^{-1}\\
&=a(p,f_{1})^{-c(\pi_{1,p})}\cdot p^{c(\pi_{1,p})(\frac{k_{1}}{2}-1)}\epsilon(1\slash 2,\pi_{1,p})\times\\
&\begin{cases}
(1-\alpha_{f_{1},p}^{-2}\omega_{1,p}(p))(1-\alpha_{f_{1},p}^{-2}\omega_{1,p}(p)p^{-1})\ &\mathrm{if}\ c(\pi_{1,p})=0,\\
1&\mathrm{if}\ c(\pi_{1,p})>0.
\end{cases}
\end{align*}
\end{Definition}
Here, $L(s,\beta_{f_{1},p}\alpha_{f_{1},p}^{-1})$ (resp. $\epsilon(s,\beta_{f_{1},p}\alpha_{f_{1},p}^{-1}):=\epsilon(s,\beta_{f_{1},p}\alpha_{f_{1},p}^{-1},\psi_{v})$) is the complex $L$-function (resp. $\epsilon$-factor) of $\beta_{f_{1},p}\alpha_{f_{1},p}^{-1}$ ($cf$. \cite[\S3.1]{Bum97}).

Next, we introduce the global trilinear period which is defined in \cite[\S 3.7]{Hsi17}. By Hypothesis (1), we have
$$\prod_{i=1}^{3}\chi_{i}=\chi_{\underline{Q}}^{2}.$$
Then, the product of the central characters of the triple $(\pi_{1}^{\prime},\pi_{2}^{\prime},\pi_{3}^{\prime}):=(\pi_{1}\otimes(\chi_{\underline{Q}})_{\mathbb{A}},\pi_{2},\pi_{3})$ is trivial. Then, we can define a trilinear integral map $I:\Pi_{\underline{Q}}=\pi^{\prime}_{1}\times\pi^{\prime}_{2}\times\pi^{\prime}_{3}\rightarrow \mathbb{C}$ to be
$$I(\phi):=\int_{\mathbb{A}^{\times}\mathrm{GL}_{2}(\mathbb{Q})\backslash \mathrm{GL}_{2}(\mathbb{A})}\phi(x,x,x)d^{\tau}x.\leqno(3.2.2)$$
For each $n\in\mathbb{Z}_{\geq 0}$, we set
$$t_{n}:=\left(
    \begin{array}{cc}
      0&p^{-n}\\
      p^{n}&0 \\
    \end{array}
  \right)\in\mathrm{GL}_{2}(\mathbb{Q}_{p}).\leqno(3.2.3)$$
Further, we define an element $\mathbf{t}_{n}\in \mathrm{GL}_{2}(\mathbb{A})^{3}$ to be
$$\mathbf{t}_{n,v}:=\begin{cases}\left(t_{n},1,1\right)\ &\mathrm{if}\ v=p,\\
(1,1,1)\ &\mathrm{otherwise}
\end{cases}
\leqno(3.2.4)$$
for $n\geq0$. We set $\mathcal{J}_{\infty}\in \mathrm{GL}_{2}(\mathbb{A})$, where
$$\mathcal{J}_{\infty,v}:=\begin{cases}\left(
    \begin{array}{cc}
      -1&0\\
      0&1 \\
    \end{array}
  \right)\ &\mathrm{if}\ v=\infty,\\
\ \ \ \ \ \ 1\ &\mathrm{otherwise}.\end{cases}
\leqno(3.2.5)$$
We define an automorphic form $\phi_{\gamma}\in \Pi_{\underline{Q}}$ to be
$$\phi_{\gamma}:=(\rho(\mathcal{J}_{\infty})\varphi_{f_{1}}^{*}\otimes(\chi_{\underline{Q}})_{\mathbb{A}}\boxtimes\varphi_{g_{2}}^{*}\boxtimes\mathbf{V}_{+}^{r}\theta_{p}^{\kappa}\varphi_{g_{3}}^{*}\leqno(3.2.6)$$
with $r=\frac{1}{2}(k_{1}-k_{2}-k_{3})$ and $\kappa=\chi_{1,(p)}\chi_{\underline{Q}}^{-1}$. Here, $\mathbf{V}_{+}$ (resp. $\theta_{p}^{\kappa}$) is the raising operator (resp. twisting operator) defined in (2.2.1) (resp. (2.2.6)). We define a global trilinear period to be $I(\rho(\mathbf{t}_{n})\phi_{\gamma})$.

Next, we relate the special value of the triple product $p$-adic $L$-function to the integral $I(\rho(\mathbf{t}_{n})\phi_{\gamma})$ by \cite[Proposition 3.7]{Hsi17}.  Let $\mathcal{H}$ be the holomorphic projection defined in \cite[Lemma 7]{Shi76}. We can verify that $h:=e\mathcal{H}(f_{2}^{*}\delta_{k_{3}}^{r}f_{3}^{*}\vert[\kappa])$ is an element of $eS_{k_{1}}(Np^{e_{1}},\chi_{1,(p)}\overline{\chi}_{1}^{(p)},K)$. Let $\breve{f}_{1}$ be the ordinary $p$-stabilized newform attached to the twist $f\vert[\overline{\chi}_{1}^{(p)}]$. Hence, $\breve{f}_{1}$ is the specialization of $\breve{F}$ defined in (2.5.3) at $Q_{1}$. We set
$$d_{\gamma}=\displaystyle{\prod_{i=1,2,3}}d_{i}^{\frac{k_{i}}{2}-1},\leqno(3.2.7)$$
where $d_{i}$ is the integer defined in (3.1.1). Let $1_{\breve{F}_{1}}$ be the idempotent element attached to $\breve{F}_{1}$ defined in (2.5.2). Let $1_{\breve{f}_{1}}$ be the specialization of $1_{\breve{F}_{1}}$ at $Q_{1}$. Ming-Lun Hsieh proved the following proposition in \cite[Proposition 3.7]{Hsi17}.
\begin{Proposition}
For each $n\geq \mathrm{max}\{e_{1},e_{2},e_{3}\}$, we have
$$a(1,1_{\breve{f_{1}}}\mathrm{Tr}_{N\slash N_{1}}h)=\frac{\zeta_{\mathbb{Q}}(2)[\mathrm{SL}_{2}(\mathbb{Z}):\mathrm{\Gamma}_{0}(N)]}{\|f_{1}^{\mathrm{prim}}\|^{2}_{\Gamma_{0}(N_{1}p^{e_{1}^{\prime}})}\mathcal{E}_{p}(f_{1},\mathrm{Ad})}\cdot I(\rho(\mathbf{t}_{n})\phi_{\gamma})\cdot \frac{\zeta_{p}(1)}{\omega_{1,p}^{-1}\alpha_{f_{1},p}^{2}\vert\cdot\vert_{p}(p^{n})\zeta_{p}(2)}\cdot\frac{1}{d_{\gamma}},$$
where $\mathrm{Tr}_{N\slash N_{1}}: S_{k_{1}}^{\mathrm{ord}}(Np^{e_{1}},\chi_{1,(p)}\overline{\chi}_{1}^{(p)},K)\rightarrow S_{k_{1}}^{\mathrm{ord}}(N_{1}p^{e_{1}},\chi_{1,(p)}\overline{\chi}_{1}^{(p)},K)$ is the trace operator defined in \cite[(7.5)]{Hid88c} and $f_{1}^{\mathrm{prim}}$ is the primitive form of level $N_{1}p^{e_{1}^{\prime}}$ attached to $f_{1}$.
\end{Proposition}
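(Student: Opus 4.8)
The plan is to reduce the asserted identity to the computation of an adelic triple product period, following the strategy of \cite[\S3.7]{Hsi17}. First I would rewrite the linear functional $g\mapsto a(1,1_{\breve{f}_1}\mathrm{Tr}_{N/N_1}g)$ on $eS_{k_1}(Np^{e_1},\chi_{1,(p)}\overline{\chi}_1^{(p)},K)$ as a Petersson inner product against the dual of $\breve{f}_1$. Since $1_{\breve{F}_1}$ is the idempotent cutting out the $\breve{F}$-component and the pairing $(T,g)\mapsto a(1,Tg)$ is perfect on the ordinary part, applying $1_{\breve{f}_1}$ and then $a(1,-)$ amounts, up to the congruence number $\eta_{\breve{F}}$ and the identity (2.5.4), to projecting onto the $\breve{f}_1$-line and dividing by $\langle\breve{f}_1^{c},\breve{f}_1\rangle$, and this Petersson norm is evaluated by (2.4.3). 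Using that $\mathrm{Tr}_{N/N_1}$ is, up to explicit scalars, the Petersson adjoint of the degeneracy map, this expresses the left-hand side as an explicit constant (already containing $\|f_1^{\mathrm{prim}}\|^2_{\Gamma_0(N_1p^{e_1'})}$ and $\mathcal{E}_p(f_1,\mathrm{Ad})$) times $\langle\breve{f}_1,e\mathcal{H}(f_2^*\delta_{k_3}^r f_3^*\vert[\kappa])\rangle_{\Gamma_0(Np^{n})}$, after a suitable $U_p$-power translate accounting for the tame-versus-wild level at $p$. Since the holomorphic projection $\mathcal{H}$ preserves the Petersson pairing against holomorphic forms, this last pairing equals $\langle\breve{f}_1,f_2^*\delta_{k_3}^r f_3^*\vert[\kappa]\rangle_{\Gamma_0(Np^{n})}$, a triple product integral on $\mathfrak{H}$.

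Next I would pass to the adelic side via the adelic lift $\Phi$ of \S2.2 and the compatibility relations (2.2.7). The classical triple product integral becomes an integral over $\mathbb{A}^\times\mathrm{GL}_2(\mathbb{Q})\backslash\mathrm{GL}_2(\mathbb{A})$ of the product of $\Phi(\breve{f}_1)$ (whose complex conjugate introduces the form $f_{1,c}$, hence the Atkin--Lehner element $\tau_{N_1p^{e_1}}$ and the archimedean matrix $\mathcal{J}_\infty$ of (3.2.5)), $\Phi(f_2^*)$, and $\mathbf{V}_{+}^{r}\theta_p^\kappa\Phi(f_3^*)\otimes\kappa_{\mathbb{A}}^{-1}$. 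The operators $V_{d_i}$ appearing inside the adjustments $f_i^*$ turn into the level-raising operators $\mathbf{V}_{d_i}$, producing precisely the adelic forms $\varphi_{f_1}^*,\varphi_{g_2}^*,\varphi_{g_3}^*$ of (3.1.6)--(3.1.7) together with the factor $d_\gamma$ of (3.2.7), while the twist $\kappa=\chi_{1,(p)}\chi_{\underline{Q}}^{-1}$ and the character $(\chi_{\underline{Q}})_{\mathbb{A}}$ are exactly those entering $\phi_\gamma$ in (3.2.6). Thus the integrand becomes $\rho(\mathbf{t}_n)\phi_\gamma$ once the translate $t_n$ at $p$ of (3.2.3)--(3.2.4) is inserted; this $t_n$ records the passage from $\breve{f}_1$, which is ordinary (hence $U_p$-stabilized) at $p$ and new away from $p$, to the normalized test vector underlying (3.2.2).

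Finally I would collect the normalization constants. The global factor $\zeta_{\mathbb{Q}}(2)[\mathrm{SL}_2(\mathbb{Z}):\Gamma_0(N)]$ arises from comparing the classical measure $y^{-2}\mathrm{d}_\infty x\,\mathrm{d}_\infty y$ with the Tamagawa measure $\mathrm{d}^\tau x$ on $\mathrm{PGL}_2(\mathbb{A})$, together with (2.4.3) for $f_1^{\mathrm{prim}}$, which simultaneously accounts for $\|f_1^{\mathrm{prim}}\|^2_{\Gamma_0(N_1p^{e_1'})}$ and $L(1,\pi_{f_1},\mathrm{Ad})$, the latter cancelling against the adjoint $L$-value hidden in $\mathcal{E}_p(f_1,\mathrm{Ad})$. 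The local factor at $p$, namely $\zeta_p(1)/(\omega_{1,p}^{-1}\alpha_{f_1,p}^{2}\vert\cdot\vert_p(p^{n})\zeta_p(2))$, comes from the normalization of the ordinary Whittaker function $W^{\mathrm{ord}}_{\pi_{1,p}}$ of Corollary 2.3.8 against the newform one, the $U_p$-eigenvalue with $\alpha_{f_1,p}(p)=a(p,f_1)p^{(1-k_1)/2}$, and the effect of right translation by $t_n$. Assembling these three steps yields the displayed formula; uniqueness of the test data forces all remaining factors to match.

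\textbf{Main obstacle.} I expect the delicate point to be the local analysis at $p$: carefully tracking the interaction of the ordinary projector $e$, the operator $U_p$, the twisting operator $\theta_p^\kappa$, the translate $t_n$, and the holomorphic projection $\mathcal{H}$, so that the modified $p$-Euler factor $\mathcal{E}_p(f_1,\mathrm{Ad})$ of Definition 3.2.1 and the precise power of $p$ in the $p$-factor emerge exactly, with no stray $p$-adic unit. A secondary technical point, needed to make Step 1 legitimate, is to verify that $\mathcal{H}(f_2^*\delta_{k_3}^r f_3^*\vert[\kappa])$ is rapidly decreasing and that applying $e$ does not disturb the Petersson pairing against the ordinary family $\breve{f}_1$.
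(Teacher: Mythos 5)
The paper does not prove this proposition: it is imported verbatim from \cite[Proposition 3.7]{Hsi17}, so there is no in-paper argument to compare against, and your outline is in substance a faithful reconstruction of Hsieh's proof --- the first Fourier coefficient of the $1_{\breve{f}_{1}}$-projection expressed as a ratio of Petersson products, $\mathrm{Tr}_{N\slash N_{1}}$ as the (scaled) Petersson adjoint of the degeneracy map, holomorphic projection, adelization via $\Phi$ and the compatibilities (2.2.7), and Waldspurger's local factorization (2.4.3) combined with the comparison of $\|\breve{f}_{1}\|^{2}$ with $\|f_{1}^{\mathrm{prim}}\|^{2}$ under $p$-stabilization, which is where $\mathcal{E}_{p}(f_{1},\mathrm{Ad})$ arises. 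Two small corrections: the congruence number $\eta_{\breve{F}}$ and the identity (2.5.4) play no role in this particular statement (they enter only in Definition 3.3.4 and Corollary 3.3.5), so that appeal should be dropped from your first step; and the closing claim that ``uniqueness of the test data forces all remaining factors to match'' is not an argument --- the explicit constant-tracking, which your own ``main obstacle'' paragraph rightly identifies, is the entire content of the formula.
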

\subsection{The relation of zeta integrals and triple product $L$-functions.}
In this subsection, by \cite[Proposition 3.10]{Hsi17}, we relate the central critical value of the triple product $L$-function to the global trilinear period integral $I(\rho(\mathbf{t}_{n})\phi_{\gamma}^{*})$. Let us keep the notation as \S3.2. For $i=1,2,3$, we fix an isomorphism $\iota_{i}:\pi_{i}^{\prime}\cong\otimes^{\prime}\pi_{i,v}^{\prime}$ where $\pi_{i,v}^{\prime}$ is an irreducible admissible representation of $\mathrm{GL}_{2}(\mathbb{Q}_{v})$. For each place $v$, we put $\Pi_{\underline{Q},v}:=\pi_{1,v}^{\prime}\boxtimes\pi_{2,v}^{\prime}\boxtimes\pi_{3,v}^{\prime}$. Let $\boxtimes_{i}^{3}\varphi_{i,v}\in \Pi_{\underline{Q},v}$ be an element defined by
$$\boxtimes_{i=1}^{3}\iota_{i}(\Phi(f_{1})\otimes(\chi_{\underline{Q}})_{\mathbb{A}}\boxtimes\Phi(g_{2})\boxtimes\Phi(g_{3}))=\otimes_{v}^{\prime}\boxtimes_{i=1}^{3}\varphi_{i,v}.\leqno(3.3.1)$$
Further, we define an element $\phi^{*}_{v}\in\Pi_{\underline{Q},v}$ to be
$$\phi^{*}_{v}=
\begin{cases}
\pi^{\prime}_{1,\infty}(\mathcal{J}_{\infty,\infty})\varphi_{1,\infty}\otimes\varphi_{2,\infty}\otimes\mathbf{V}_{+}^{r}\varphi_{3,\infty}\ &\mathrm{if}\ v=\infty,\\
\varphi_{1,p}\otimes\varphi_{2,p}\otimes\theta_{p}^{\kappa}\varphi_{3,p}\ &\mathrm{if}\ v=p,\\
Q_{1,v}(V_{v})\varphi_{1,v}\otimes Q_{2,v}(V_{v})\varphi_{2,v}\otimes Q_{3,v}(V_{v})\varphi_{3,v}\ &\mathrm{otherwise}.
\end{cases}\leqno(3.3.2)$$
Let $\omega_{i}^{\prime}$ be the central character of $\pi_{i}^{\prime}$ for $i=1,2,3$. We put $\tilde{\phi}_{v}:=\boxtimes_{i=1}^{3}\tilde{\varphi}_{v}\in \tilde{\Pi}_{\underline{Q},v}$ with $\tilde{\varphi}_{i,v}:=\varphi_{i,v}\otimes{\omega_{i,v}^{\prime}}^{-1}$ for each place $v$. Further we define an element $\tilde{\phi}^{*}_{v}\in \tilde{\Pi}_{\underline{Q},v}$ to be
$$\tilde{\phi}^{*}_{v}=
\begin{cases}
\tilde{\pi}^{\prime}_{1,\infty}(\mathcal{J}_{\infty,\infty})\tilde{\varphi}_{1,\infty}\otimes\tilde{\varphi}_{2,\infty}\otimes\mathbf{V}_{+}^{r}\tilde{\varphi}_{3,\infty}\ &\mathrm{if}\ v=\infty,\\
\tilde{\varphi}_{1,p}\otimes\tilde{\varphi}_{2,p}\otimes\theta_{p}^{\kappa}\tilde{\varphi}_{3,p}\ &\mathrm{if}\ v=p,\\
\tilde{Q}_{1,v}(V_{v})\tilde{\varphi}_{1,v}\otimes\tilde{Q}_{2,v}(V_{v})\tilde{\varphi}_{2,v}\otimes\tilde{Q}_{3,v}(V_{v})\tilde{\varphi}_{3,v}\ &\mathrm{otherwise},
\end{cases}\leqno(3.3.3)$$
where $\tilde{Q}_{i,l}(X)=Q_{i,l}({\omega^{\prime}_{i}}^{-1}(l))X)$.
  For each place $v$, we choose a $\mathrm{GL}_{2}(\mathbb{Q}_{v})^{3}$-equivalent pairing $\langle\ ,\ \rangle_{v}:\Pi_{\underline{Q},v}\times\tilde{\Pi}_{\underline{Q},v}\rightarrow\mathbb{C}$ such that $\langle\phi_{v}^{*},\tilde{\phi}_{v}^{*}\ \rangle_{v}=1$ for almost all places $v$. We introduce the local integrals defined in \cite[\S 3.8]{Hsi17}. We set $\tau_{\underline{N}}=(\tau_{N_{1}},\tau_{N_{2}},\tau_{N_{3}})\in\mathrm{GL}_{2}(\mathbb{A})^{3}$, where $\tau_{N_{i}}$ is defined in (2.4.2). We define a local integral for each place $v$ to be
$$I_{v}(\phi^{*}_{v}\otimes\tilde{\phi}^{*}_{v}):=\frac{L(1,\Pi_{\underline{Q},v},\mathrm{Ad})}{\zeta_{v}(2)^{2}L(\frac{1}{2},\Pi_{\underline{Q},v})}\int_{\mathrm{PGL}_{2}(\mathbb{Q}_{v})}\frac{\langle\Pi_{\underline{Q},v}(g_{v})\phi^{*}_{v},\tilde{\phi}^{*}_{v}\rangle_{v}}{\langle\Pi_{\underline{Q},v}(\tau_{\underline{N},v})\phi_{v},\tilde{\phi}_{v}\rangle_{v}}\mathrm{d}_{v}g.\leqno(3.3.4)$$
We define another local zeta integral at $p$ to be
$$I_{p}^{\mathrm{ord}}(\phi^{*}_{p}\otimes\tilde{\phi}^{*}_{p},\mathbf{t}_{n}):=\frac{L(1,\Pi_{\underline{Q},p},\mathrm{Ad})}{\zeta_{p}(2)^{2}L(\frac{1}{2},\Pi_{\underline{Q},p})}\int_{\mathrm{PGL}_{2}(\mathbb{Q}_{p})}\frac{\langle\Pi_{\underline{Q},p}(g_{p}\mathbf{t}_{n})\phi^{*}_{p},\tilde{\Pi}_{\underline{Q},p}(\mathbf{t}_{n})\tilde{\phi}^{*}_{p}\rangle_{p}}{\langle\Pi_{\underline{Q},p}(t_{n}\phi_{p}),\tilde{\phi}_{p}\rangle_{p}}\mathrm{d}_{p}g.\leqno(3.3.5)$$
Ming-Lun Hsieh proved the following proposition in \cite[Proposition 3.10]{Hsi17}.
\begin{Proposition}
We have
$$\frac{I(\rho(\mathbf{t}_{n})\phi_{\gamma})^{2}}{\displaystyle{\prod_{i=1}^{3}}\langle \rho(\tau_{\underline{N}}\mathbf{t}_{n})\phi_{i},\tilde{\phi}_{i}\rangle}=\frac{(-1)^{k_{1}}\zeta_{\mathbb{Q}}(2)}{8L(1,\Pi_{\underline{Q}},\mathrm{Ad})}\cdot L(\frac{1}{2},\Pi_{\underline{Q}})\cdot I_{p}^{\mathrm{ord}}(\phi^{*}_{p}\otimes\tilde{\phi}^{*}_{p},\mathbf{t}_{n})\cdot\displaystyle{\prod_{v\neq p}}I_{v}(\phi^{*}_{v}\otimes\tilde{\phi}^{*}_{v})(\chi_{\underline{Q}})_{\mathbb{A},v}(d_{1}).$$
\end{Proposition}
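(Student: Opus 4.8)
The plan is to deduce the identity from Ichino's local--global decomposition of the triple product period; since the statement is \cite[Proposition 3.10]{Hsi17}, in the paper it is a citation, and what follows sketches the argument behind it. The first point is that $\rho(\mathbf{t}_{n})\phi_{\gamma}$ is a pure tensor under the fixed isomorphisms $\iota_{i}$: every operator entering the definition (3.2.6) of $\phi_{\gamma}$ --- the archimedean $\rho(\mathcal{J}_{\infty})$ and $\mathbf{V}_{+}^{r}$, the $p$-adic twisting operator $\theta_{p}^{\kappa}$ and the translate $t_{n}$, and the level-lowering polynomials $Q_{i,v}(V_{v})$ at the finite places $v\nmid Np$ --- acts one place at a time, so $\rho(\mathbf{t}_{n})\phi_{\gamma}$ is $\otimes_{v}^{\prime}\phi_{v}^{*}$ with $\phi_{v}^{*}$ as in (3.3.2), except that $\phi_{p}^{*}$ carries the extra translate by $t_{n}$; likewise for the contragredient vector built from $\tilde{\varphi}_{i,v}=\varphi_{i,v}\otimes{\omega_{i,v}^{\prime}}^{-1}$.

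Next I would invoke Ichino's formula. By Hypothesis (1) the product of the central characters of $(\pi_{1}^{\prime},\pi_{2}^{\prime},\pi_{3}^{\prime})$ is trivial (as $\prod_{i}\chi_{i}=\chi_{\underline{Q}}^{2}$), so the period $I$ of (3.2.2) and its analogue $\tilde{I}$ on $\tilde{\Pi}_{\underline{Q}}$ are defined, and for a pure tensor $\phi=\otimes_{v}\phi_{v}$ one has
$$\frac{I(\phi)\,\tilde{I}(\tilde{\phi})}{\prod_{i=1}^{3}\langle\varphi_{i},\tilde{\varphi}_{i}\rangle}=\frac{\zeta_{\mathbb{Q}}(2)}{8}\cdot\frac{L(\tfrac{1}{2},\Pi_{\underline{Q}})}{L(1,\Pi_{\underline{Q}},\mathrm{Ad})}\cdot\prod_{v}I_{v}^{\prime}(\phi_{v}\otimes\tilde{\phi}_{v}),$$
where $\langle\ ,\ \rangle$ is the global pairing of Definition 2.2.3, $I_{v}^{\prime}$ is the suitably normalized local trilinear integral, equal to $1$ for all but finitely many $v$, and the Euler product converges. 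Against the definitions (3.3.4) and (3.3.5), $\prod_{v}I_{v}^{\prime}$ differs from $\bigl(\prod_{v\neq p}I_{v}(\phi_{v}^{*}\otimes\tilde{\phi}_{v}^{*})\bigr)\cdot I_{p}^{\mathrm{ord}}(\phi_{p}^{*}\otimes\tilde{\phi}_{p}^{*},\mathbf{t}_{n})$ only by the product over $v$ of the normalizing pairings $\langle\Pi_{\underline{Q},v}(\tau_{\underline{N},v}\mathbf{t}_{n,v})\phi_{v},\tilde{\phi}_{v}\rangle_{v}$. Proving Ichino's formula itself --- unfolding the global period against the Whittaker expansion, identifying the result with the Garrett--Rankin--Selberg integral representing $L(s,\Pi_{\underline{Q}})$, and checking convergence and the almost-everywhere triviality of the local factors --- is the genuinely hard input; I would quote Ichino's theorem, or its reformulation in \cite[\S3]{Hsi17}, rather than redo it.

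The rest is bookkeeping. By the local decomposition of the global pairing (Proposition 2.4.3), the product over $v$ of the normalizing pairings above, together with $\prod_{i}\langle\varphi_{i},\tilde{\varphi}_{i}\rangle$, reassembles into $\prod_{i=1}^{3}\langle\rho(\tau_{\underline{N}}\mathbf{t}_{n})\phi_{i},\tilde{\phi}_{i}\rangle$, the denominator on the left of the proposition. Two subtleties remain. First, the twist $\pi_{1}^{\prime}=\pi_{1}\otimes(\chi_{\underline{Q}})_{\mathbb{A}}$ in (3.2.6): applying the level-lowering operators $\prod_{l}Q_{1,l}(V_{l})$ before versus after the twist differs by a character value, since $V_{d}(\varphi\otimes\chi)=\chi(d)(V_{d}\varphi)\otimes\chi$, and this discrepancy, spread over the places dividing $d_{1}$ (all prime to $p$), produces the factor $\prod_{v\neq p}(\chi_{\underline{Q}})_{\mathbb{A},v}(d_{1})$. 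Second, the arithmetic normalization of the test vectors gives $\tilde{I}(\tilde{\phi})=(-1)^{k_{1}}I(\rho(\mathbf{t}_{n})\phi_{\gamma})$, the sign being archimedean in origin (the action of $\mathcal{J}_{\infty}=\mathrm{diag}\{-1,1\}$ at $\infty$ on the weight-$k_{1}$ vector), so $I(\phi)\tilde{I}(\tilde{\phi})=(-1)^{k_{1}}I(\rho(\mathbf{t}_{n})\phi_{\gamma})^{2}$. Combining these with the constant $\zeta_{\mathbb{Q}}(2)/8$ and the $L$-factors gives the stated identity. The main obstacle, short of citing \cite{Hsi17}, is Ichino's formula; granting it, the one delicate point is tracking the twist by $(\chi_{\underline{Q}})_{\mathbb{A}}$ through the level adjustments to recover $\prod_{v\neq p}(\chi_{\underline{Q}})_{\mathbb{A},v}(d_{1})$ and the archimedean sign $(-1)^{k_{1}}$.
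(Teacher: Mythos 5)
The paper does not prove this proposition at all --- it is quoted verbatim from Hsieh's Proposition 3.10 --- so there is no internal argument to compare yours against. Your sketch correctly identifies the substance of the cited proof (Ichino's trilinear period formula applied to the pure tensor $\rho(\mathbf{t}_{n})\phi_{\gamma}$, followed by renormalizing the local integrals against the pairings $\langle\Pi_{\underline{Q},v}(\tau_{\underline{N},v})\phi_{v},\tilde{\phi}_{v}\rangle_{v}$ via Proposition 2.4.3, and tracking the twist by $(\chi_{\underline{Q}})_{\mathbb{A}}$ through the $V_{d}$ operators to produce the factor $(\chi_{\underline{Q}})_{\mathbb{A},v}(d_{1})$ and the archimedean sign), and the points you isolate as delicate are the right ones.
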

Next, we calculate the local integrals $I_{q}(\varphi_{q}\otimes\tilde{\varphi}_{q})$ for each finite primes $q\nmid N$ and $I_{\infty}(\varphi_{\infty}\otimes\tilde{\varphi}_{\infty})$. Ming-Lun Hsieh proved the following lemma in \cite[Lemma 3.11]{Hsi17}.
\begin{Lemma}
We have
\begin{description}
\item[(1)] $I_{q}(\phi^{*}_{q}\otimes\tilde{\phi}^{*}_{q})=1$, ${}^{\forall}q\nmid N$,
\item[(2)] $I_{\infty}(\phi^{*}_{\infty}\otimes\tilde{\phi}^{*}_{\infty})=2^{k_{2}+k_{3}-k_{1}+1}$.
\end{description}
\end{Lemma}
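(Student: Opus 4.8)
The plan is to deduce both equalities from \cite[Lemma 3.11]{Hsi17} once one observes that at the places in question the local data occurring in our more general setting are literally those treated by Hsieh. The key point is that for a finite prime $q\nmid Np$ and for $v=\infty$ the local components $\pi_{1,v}^{\prime},\pi_{2,v}^{\prime},\pi_{3,v}^{\prime}$ and the test vectors $\varphi_{i,v}$ depend only on the weight triple $(k_{1},k_{2},k_{3})$: since $F,G^{(2)},G^{(3)}$ are primitive outside $p$ of tame level dividing $N$, at each $q\nmid Np$ the representation $\pi_{i,q}$ is an unramified principal series and $\varphi_{i,q}$ is its spherical vector, while $\pi_{i,\infty}\cong\mathcal{D}_{0}(k_{i})$ with $\varphi_{i,\infty}$ the normalized Whittaker newform of Definition 2.3.6. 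Hence the integrands defining $I_{q}$ and $I_{\infty}$ in (3.3.4) coincide with those of loc. cit.; the passage from Hida families to the more general $G^{(2)},G^{(3)}$ changes nothing here, because only the component at $p$ can differ and $p$ is excluded from both statements. In particular the full strength of Hypotheses (1)--(7) is not needed for this lemma.

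For (1), since $d_{i}$ is a product of prime divisors of $N$ we have $v_{q}(d_{i})=0$, so $Q_{i,q}(X)=1$ and $\phi^{*}_{q}=\varphi_{1,q}\otimes\varphi_{2,q}\otimes\varphi_{3,q}$ is the spherical vector of $\Pi_{\underline{Q},q}$; likewise $\tilde{\phi}^{*}_{q}=\tilde{\phi}_{q}$, and $\tau_{\underline{N},q}=(1,1,1)$, so the denominator in (3.3.4) equals $\langle\phi^{*}_{q},\tilde{\phi}^{*}_{q}\rangle_{q}$. One is then reduced to the standard unramified local trilinear identity: the normalized integral
$$\frac{L(1,\Pi_{\underline{Q},q},\mathrm{Ad})}{\zeta_{q}(2)^{2}L(\tfrac{1}{2},\Pi_{\underline{Q},q})}\int_{\mathrm{PGL}_{2}(\mathbb{Q}_{q})}\frac{\langle\Pi_{\underline{Q},q}(g)\phi^{*}_{q},\tilde{\phi}^{*}_{q}\rangle_{q}}{\langle\phi^{*}_{q},\tilde{\phi}^{*}_{q}\rangle_{q}}\,\mathrm{d}_{q}g$$
equals $1$, which is exactly the unramified computation of \cite{Ike92} recorded in \cite[Lemma 3.11(1)]{Hsi17}; I would simply quote it.

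For (2), the raising operator $\mathbf{V}_{+}^{r}$ with $r=\tfrac{1}{2}(k_{1}-k_{2}-k_{3})$ sends $\varphi_{3,\infty}$ to a vector of weight $k_{3}+2r=k_{1}-k_{2}$, so $I_{\infty}$ is the archimedean trilinear period for the shifted discrete-series triple $(\mathcal{D}_{0}(k_{1}),\mathcal{D}_{0}(k_{2}),\mathcal{D}_{0}(k_{1}-k_{2}))$, with the matrix $\mathcal{J}_{\infty,\infty}=\mathrm{diag}\{-1,1\}$ incorporated into the first vector. I would then carry out --- or, more efficiently, cite from \cite[Lemma 3.11(2)]{Hsi17} --- the explicit integration over $\mathrm{PGL}_{2}(\mathbb{R})$ of the product of the three discrete-series matrix coefficients against the normalizing ratio of $L$-factors, using the explicit formula for $W^{\mathrm{new}}_{\mathcal{D}_{0}(k)}$ in Definition 2.3.6; this produces the constant $2^{k_{2}+k_{3}-k_{1}+1}$.

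Since the substance lies entirely in Hsieh's two local computations, the only genuinely new point is the compatibility of the local data described above, which is immediate; thus there is no real obstacle. If one prefers to reproduce the argument rather than cite it, the archimedean integral in (2) is the most delicate step, demanding the explicit form of the $\mathrm{GL}_{2}(\mathbb{R})$ discrete-series matrix coefficients and careful bookkeeping of the normalizing factors.
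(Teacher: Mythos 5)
Your proposal is correct and matches the paper's treatment: the paper states this lemma as a quoted result of \cite[Lemma 3.11]{Hsi17} with no proof, and your argument amounts to the same citation together with the (correct) observation that at $v=\infty$ and at finite $q\nmid Np$ the local representations, test vectors, and normalizations coincide with Hsieh's, since only the component at $p$ is affected by replacing Hida families with the more general $G^{(2)},G^{(3)}$. The extra compatibility check you supply is sound and, if anything, slightly more careful than the paper itself.
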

Next, we normalize the local zeta integrals $I_{p}^{\mathrm{ord}}(\phi^{*}_{p}\otimes\tilde{\phi}^{*}_{p},\mathbf{t}_{n})$ and $I_{q}(\phi^{*}_{q}\otimes\tilde{\phi}^{*}_{q})$ for each finite primes $q\vert N$. For $i=1,2,3$, let $B_{\pi^{\prime}_{i,q}}$ be as in defined in (2.4.4).
\begin{Definition}
We define normalized local zeta integrals to be
\begin{align*}
\mathfrak{I}^{\mathrm{unb}}_{\Pi_{\underline{Q},p}}&=\frac{I^{\mathrm{ord}}_{p}(\phi^{*}_{p}\otimes\tilde{\phi}^{*}_{p},\mathbf{t}_{n})}{L(1,\Pi_{\underline{Q},p},\mathrm{Ad})}\cdot\frac{\omega^{\prime}_{1,p}(-p^{2n})}{\alpha_{f_{1},p}^{2}\vert\cdot\vert_{p}(-p^{2n})}\cdot\frac{\zeta_{p}(2)}{\zeta_{p}(1)}\cdot\displaystyle{\prod_{i=1}^{3}}\langle \rho(t_{n})W^{?}_{\pi^{\prime}_{i,p}}.W^{?}_{\pi^{\prime}_{i,p}}\otimes{\omega^{\prime}_{i,p}}^{-1}\rangle_{p},\\
\mathfrak{I}_{\Pi_{\underline{Q},q}}&=I_{q}(\phi^{*}_{q}\otimes\tilde{\phi}^{*}_{q})\cdot\frac{\zeta_{q}(1)}{\vert N\vert_{q}^{2}\zeta_{q}(2)}\cdot(\chi_{\underline{Q}})^{2}_{\mathbb{A},q}(d_{1})\vert d_{\gamma}^{2}\vert_{q}\prod_{i=1}^{3}B_{\pi_{i,q}^{\prime}}\ \mathrm{for\ each}\ q\vert N,
\end{align*}
where $W^{?}_{\pi^{\prime}_{i,p}}$ is $W^{\mathrm{ord}}_{\pi^{\prime}_{i,p}}$ if $i=1$ or $g_{i}$ has a finite slope at $p$ and if not, $W^{?}_{\pi_{i,p}}$ is $W^{\mathrm{new}}_{\pi_{i,p}}$.
\end{Definition}
Next, we define a canonical period. Let $\eta_{f_{1}}$ be the specialization of the congruence number $\eta_{F}$ at $Q_{1}$.
\begin{Definition}
We define a canonical period $\Omega_{f_{1}}$ to be
$$\Omega_{f_{1}}:=(-2\sqrt{-1})^{k+1}\cdot \|f_{1}^{\mathrm{prim}}\|^{2}_{\mathrm{\Gamma}_{0}(m_{f_{1}})}\cdot\frac{\mathcal{E}(f_{1},\mathrm{Ad})}{\iota_{p}(\eta_{f_{1}})},$$
where $f_{1}^{\mathrm{prime}}$ is the primitive cusp form attached to $f_{1}$. 
\end{Definition}
By Proposition 3.2.2, Proposition 3.3.1, Lemma 3.3.2 and (2.5.4), we have the following corollary.
\begin{Corollary}
We put $h:=e\mathcal{H}(g_{2}\delta_{k_{3}}^{r}g_{3}\vert[\kappa])$. We have
$$(a(1,\eta_{f_{1}}1_{\breve{f_{1}}}\mathrm{Tr}_{N\slash N_{1}}h))^{2}=\frac{\psi_{1,(p)}(-1)(-1)^{k_{1}+1}L(1\slash2,\Pi_{\underline{Q}})}{\Omega_{f_{1}}^{2}}\cdot\mathfrak{I}^{\mathrm{unb}}_{\Pi_{\underline{Q},p}}\cdot\displaystyle{\prod_{q\vert N}}\mathfrak{I}_{\Pi_{\underline{Q},q}}.$$
\end{Corollary}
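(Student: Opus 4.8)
The statement is a formal consequence of the material assembled in \S2--\S3: the plan is simply to concatenate Proposition 3.2.2, Proposition 3.3.1 and Lemma 3.3.2, factor the resulting global pairings with Proposition 2.4.3 and (2.4.3), and then regroup the product of explicit factors so that the pieces at $p$ become $\mathfrak{I}^{\mathrm{unb}}_{\Pi_{\underline Q,p}}$, the pieces at $q\mid N$ become $\mathfrak{I}_{\Pi_{\underline Q,q}}$, and what remains is $\psi_{1,(p)}(-1)(-1)^{k_1+1}/\Omega_{f_1}^2$. Concretely, I would first apply Proposition 3.2.2 to $h=e\mathcal{H}(g_2\delta_{k_3}^r g_3|[\kappa])$ — after matching this $h$ with the form appearing there via the adjustment operators $Q_{i,l}(V_l)$ of \S3.1, which is also what produces the eventual $d_1$- and $d_\gamma$-factors — to write $a(1,1_{\breve f_1}\mathrm{Tr}_{N/N_1}h)$ as an explicit scalar (built from $\zeta_{\mathbb{Q}}(2)$, $[\mathrm{SL}_2(\mathbb{Z}):\Gamma_0(N)]$, $\|f_1^{\mathrm{prim}}\|^{-2}_{\Gamma_0(N_1p^{e_1'})}$, $\mathcal{E}_p(f_1,\mathrm{Ad})^{-1}$, powers of $\zeta_p$, $\alpha_{f_1,p}$, $\omega_{1,p}$ and $d_\gamma^{-1}$) times the global trilinear period $I(\rho(\mathbf{t}_n)\phi_\gamma)$. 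Multiplying by $\eta_{f_1}$, using (2.5.4) wherever the idempotent $1_{\breve f_1}$ forces $\eta_{\breve f_1}$, and squaring then expresses the left-hand side as $\eta_{f_1}^2$ times the square of that scalar times $I(\rho(\mathbf{t}_n)\phi_\gamma)^2$.

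Next I would substitute Proposition 3.3.1 to replace $I(\rho(\mathbf{t}_n)\phi_\gamma)^2$ by $\tfrac{(-1)^{k_1}\zeta_{\mathbb{Q}}(2)}{8L(1,\Pi_{\underline Q},\mathrm{Ad})}L(\tfrac12,\Pi_{\underline Q})$ times $I_p^{\mathrm{ord}}(\phi_p^*\otimes\tilde\phi_p^*,\mathbf{t}_n)$, times $\prod_{v\neq p}I_v(\phi_v^*\otimes\tilde\phi_v^*)(\chi_{\underline Q})_{\mathbb{A},v}(d_1)$, times $\prod_{i=1}^3\langle\rho(\tau_{\underline N}\mathbf{t}_n)\phi_i,\tilde\phi_i\rangle$. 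Lemma 3.3.2 then kills the inert contributions ($I_q=1$ for $q\nmid N$, $I_\infty=2^{k_2+k_3-k_1+1}$), and Proposition 2.4.3 factors each $\langle\rho(\tau_{\underline N}\mathbf{t}_n)\phi_i,\tilde\phi_i\rangle$ into local pieces: a factor $2L(1,\pi_i',\mathrm{Ad})/\zeta_{\mathbb{Q}}(2)$, trivial contributions at the unramified places, the archimedean pairing, the $p$-adic Whittaker pairing — here one reads off from Remark 2.4.2 that the relevant local Whittaker function is the ordinary one $W^{\mathrm{ord}}_{\pi'_{i,p}}$ when $i=1$ or $g_i$ has finite slope at $p$ and the newform $W^{\mathrm{new}}_{\pi_{i,p}}$ otherwise, which is exactly where the present generality over \cite{Hsi17} is used — and the ramified pairings at $q\mid N$ assembling the $B_{\pi'_{i,q}}$ of (2.4.4). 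For $i=1$ this is also where (2.4.3) trades $\|f_1^{\mathrm{prim}}\|^2_{\Gamma_0(N_1p^{e_1'})}$ for $L(1,\pi_{f_1},\mathrm{Ad})$, an index ratio, the root number $w(f_1)$ and the factors $B_{\pi_{f_1,q}}$.

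At this point every quantity in sight has a name and the remaining task is bookkeeping. Since $L(1,\Pi_{\underline Q},\mathrm{Ad})=\prod_{i}L(1,\pi_i',\mathrm{Ad})$, the adjoint $L$-values from the three applications of Proposition 2.4.3 cancel against the denominator in Proposition 3.3.1, leaving only the $B$-factors. The factors supported at $p$ — the integral $I_p^{\mathrm{ord}}$, the $\zeta_p$'s, $\alpha_{f_1,p}^2$, $\omega'_{1,p}(-p^{2n})$ and the three $p$-adic Whittaker pairings — are by construction $\mathfrak{I}^{\mathrm{unb}}_{\Pi_{\underline Q,p}}$ of Definition 3.3.3; the factors at each $q\mid N$ — $I_q$, $\zeta_q$, $|N|_q$, $(\chi_{\underline Q})^2_{\mathbb{A},q}(d_1)$, $|d_\gamma^2|_q$ and the $B_{\pi'_{i,q}}$ — are $\mathfrak{I}_{\Pi_{\underline Q,q}}$; and the surviving global constant, assembled from $\eta_{f_1}^2$, $\mathcal{E}_p(f_1,\mathrm{Ad})^2$, $\|f_1^{\mathrm{prim}}\|^4$, $\zeta_{\mathbb{Q}}(2)$, the index ratios and the powers of $2$ and $\sqrt{-1}$, collapses by Definition 3.3.4 to $\psi_{1,(p)}(-1)(-1)^{k_1+1}/\Omega_{f_1}^2$. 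Finally I would check independence of $n\geq\max\{e_1,e_2,e_3\}$: the $p^n$-powers coming from the scalar in Proposition 3.2.2, from the correction $(\chi_{\underline Q})_{\mathbb{A},v}(d_1)$, and from the $\mathbf{t}_n$-translate inside $\mathfrak{I}^{\mathrm{unb}}_{\Pi_{\underline Q,p}}$ must and do cancel.

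The main obstacle is nothing analytic but the combinatorics of getting the constant $\psi_{1,(p)}(-1)(-1)^{k_1+1}$ and every power of $2$, $p$ and $\sqrt{-1}$ to come out exactly. This forces careful tracking of: (i) the twist relating $f_1$ and $\breve f_1=\breve F_{Q_1}$ and its Gauss-sum / $\chi(-1)$-type factors, together with the sign carried by $\omega'_{1,p}(-p^{2n})$ at $p$; (ii) the passage from the level-$N_1p^{e_1'}$ norm $\|f_1^{\mathrm{prim}}\|^2$ supplied by Proposition 3.2.2 and (2.4.3) to the level-$m_{f_1}$ norm used in the definition of $\Omega_{f_1}$, which needs the behaviour of the $B_\pi$-factors under the operators $V_d$; and (iii) the split of the \S3.1-normalizations $d_1$ and $d_\gamma$ between the global character values $(\chi_{\underline Q})_{\mathbb{A},v}(d_1)$ and the local integrals $\mathfrak{I}_{\Pi_{\underline Q,q}}$. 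Once these are pinned down the corollary is immediate.
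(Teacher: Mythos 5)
Your proposal is correct and follows exactly the route the paper takes: the paper derives this corollary by combining Proposition 3.2.2, Proposition 3.3.1, Lemma 3.3.2 and (2.5.4), with the local pairings factored via Proposition 2.4.3 and (2.4.3) and the result regrouped according to Definitions 3.3.3 and 3.3.4 — precisely the concatenation and bookkeeping you describe. The paper in fact gives no further detail beyond citing these ingredients, so your more explicit tracking of the constants is a faithful (and fuller) account of the same argument.
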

\subsection{The calculation of the local zeta integrals.}
Let us keep the notation as \S3.3. In this subsection, we calculate the local zeta integrals $\mathfrak{I}^{\mathrm{unb}}_{\Pi_{p}}$ and $\mathfrak{I}_{\Pi,q}$ for each finite prime $q\vert N$.
\begin{Definition}
 We say that an irreducible representation of $\pi_{q}$ of $\mathrm{GL}_{2}(\mathbb{Q}_{q})$ is minimal if $\pi_{q}$ satisfies $c(\pi_{q})\leq c(\pi_{q}\otimes\chi)$ for any character $\chi$ of $\mathbb{Q}_{q}$.
\end{Definition}
Ming-Lun Hsieh compute the integral $I_{q}(\phi_{q}^{*}\otimes\tilde{\phi}_{q}^{*})$ in the following hypothesis.
\begin{Hypothesis}
For any $q\vert N$, there exists an element $\sigma\in S_{3}$ such that
\begin{description}
\item[(1)] $c(\pi^{\prime}_{\sigma(1),q})\leq\mathrm{min}\{c(\pi^{\prime}_{\sigma(2),q}),c(\pi^{\prime}_{\sigma(3),q})\}$,
\item[(2)] $\pi^{\prime}_{\sigma(1),q}$ and $\pi^{\prime}_{\sigma(3),q}$ are minimal,
\item[(3)] $\pi^{\prime}_{\sigma(3),q}$ is a principal series or both of $\pi^{\prime}_{\sigma(2),q}$ and $\pi^{\prime}_{\sigma(3),q}$ are discrete series.
\end{description}
\end{Hypothesis}
We fix a prime $q\mid N$. In this subsection, we assume that the triple $(f_{1},g_{2},g_{3})$ satisfies Hypothesis 3.4.2. Further by symmetry we assume that $c(\pi^{\prime}_{1,q})\leq\mathrm{min}\{c(\pi^{\prime}_{2,q}),c(\pi^{\prime}_{3,q})\}$ and $\pi^{\prime}_{3,q}$ are minimal.
\begin{Remark}
In fact, by Hypothesis (6), the triple $(f_{1},g_{2},g_{3})$ satisfies Hypothesis 3.4.2. In Remark 5.1.4, we check that the triple $(f_{1},g_{2},g_{3})$ satisfies Hypothesis 3.4.2.
\end{Remark}
By Hypothesis 3.4.2 and Hypothesis (3), $\{\pi_{1,q}^{\prime},\pi_{2,q}^{\prime},\pi_{3,q}^{\prime}\}$ satisfies one of the following conditions.
\begin{description}
\item[Case (Ia)] The representation $\pi^{\prime}_{3,q}$ is a principal series.
\item[Case (Ib)] The representation $\pi^{\prime}_{i,q}$ is a discrete series for all $i=1,2,3$.
\item[Case (I\hspace{-.1em}Ia)] The representation $\pi^{\prime}_{1,q}$ is a principal series and both of $\pi^{\prime}_{2,q}$ and $\pi^{\prime}_{3,q}$ are discrete series such that $L(s,\pi^{\prime}_{2,q}\otimes\pi^{\prime}_{3,q})\neq1$.
\item[Case (I\hspace{-.1em}Ib)] The representation $\pi^{\prime}_{1,q}$ is a principal series and both of $\pi^{\prime}_{2,q}$ and $\pi^{\prime}_{3,q}$ are discrete series such that $L(s,\pi^{\prime}_{2,q}\otimes\pi^{\prime}_{3,q})=1$.
\end{description}
Ming-Lun Hsieh calculated the local integral $I_{q}(\phi_{q}^{*}\otimes\tilde{\phi}_{q}^{*})$ in each of the above cases in \cite[\S 6]{Hsi17}. However, we only need the calculation result of $I_{q}(\phi_{q}^{*}\otimes\tilde{\phi}_{q}^{*})$ in the conditions Case (Ia), Case (Ib) and Case (I\hspace{-.1em}Ib). Further, when the condition Case (Ib) holds, we need the result only if $L(s,\pi_{2^{\prime},q}\otimes\pi_{3^{\prime},q})=1$ holds. Hence, in this paper we restrict ourselves to the cases which are needed. We put $c^{*}=\mathrm{max}\{c(\pi^{\prime}_{2,q}),c(\pi^{\prime}_{3,q})\}$.
\begin{Proposition}
\begin{enumerate}
\item[(1)] In the case $\mathrm{[Case (Ia)]}$, we have
$$I_{q}(\phi_{q}^{*}\otimes\tilde{\phi}_{q}^{*})=\epsilon(1\slash2,\pi^{\prime}_{1,q}\otimes\pi^{\prime}_{2,q}\otimes\chi_{3})\cdot\chi_{3}^{-2}\vert\cdot\vert_{q}(q^{c^{*}})\omega^{\prime}_{3,q}(-1)\epsilon(1\slash2,\pi^{\prime}_{3,q})^{2}\cdot\frac{1}{B_{\Pi_{\underline{Q},q}}}\cdot\frac{\zeta_{q}(2)^{2}}{\zeta_{q}(1)^{2}},$$
where $\pi^{\prime}_{3,q}=\pi(\chi_{3},v_{3})$ is a principal series where $\chi_{3}$ is unramified.\\
\item[(2)] In the case $\mathrm{[Case (Ib)]}$, if we have $L(s,\pi^{\prime}_{2,q}\otimes\pi^{\prime}_{3,q})=1$, then we have 
$$I_{q}(\phi_{q}^{*},\tilde{\phi}_{q}^{*})=\chi_{1}^{2}\vert\cdot\vert_{q}(q^{c^{*}})\epsilon(1\slash2,\pi^{\prime}_{2,q}\otimes\pi^{\prime}_{3,q}\otimes v_{1})\epsilon(1\slash2,\pi^{\prime}_{2,q})^{2}\epsilon(1\slash2,\pi^{\prime}_{3,q})^{2}\cdot\frac{1}{B_{\Pi_{\underline{Q},q}}}\cdot\frac{\zeta_{q}(2)^{2}}{\zeta_{q}(1)^{2}},$$
where $\pi^{\prime}_{1,q}=\chi_{1}\vert\cdot\vert_{q}^{-\frac{1}{2}}\mathrm{St}$ is a special representation and $v_{1}=\chi_{1}\vert\cdot\vert_{q}^{-1}$.\\
\item[(3)] In the case $\mathrm{[Case (I\hspace{-.1em}Ib)]}$, we have
$$I_{q}(\phi_{q}^{*}\otimes\tilde{\phi}_{q}^{*})=\omega^{\prime}_{1,q}(-1)\chi_{1}^{-2}\vert\cdot\vert_{q}(q^{c^{*}})\epsilon(1\slash2,\pi^{\prime}_{1,q})^{2}\cdot\epsilon(1\slash2,\pi^{\prime}_{2,q}\otimes\pi^{\prime}_{3,q}\otimes\chi_{1})\cdot\frac{1}{B_{\Pi_{\underline{Q},q}}}\cdot\frac{\zeta_{q}(2)^{2}}{\zeta_{q}(1)^{2}},$$
where $\pi^{\prime}_{1,q}=\pi(\chi_{1},v_{1})$ is a principal series where $\chi_{1}$ is unramified.
\end{enumerate}
Here, $B_{\Pi_{\underline{Q},q}}=\prod_{i=1}^{3}B_{\pi^{\prime}_{i,q}}$.
\end{Proposition}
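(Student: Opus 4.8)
The plan is to pass to Whittaker models at $q$ and to evaluate the $\mathrm{PGL}_2(\mathbb{Q}_q)$-integral in (3.3.4) explicitly on the test vectors produced by the level-adjustment operators. Since $q\mid N$, formula (3.3.2) gives $\phi_q^{*}=Q_{1,q}(V_q)\varphi_{1,q}\otimes Q_{2,q}(V_q)\varphi_{2,q}\otimes Q_{3,q}(V_q)\varphi_{3,q}$, where by (3.1.5) each $Q_{i,q}(V_q)$ is a Laurent monomial in $V_q$, corrected exactly on $\Sigma_{i,0}^{(\mathrm{IIb})}$ by the Euler-type factor $(1-\beta_{q,i})^{-1}q^{k_i/2-1}V_q^{-1}$. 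Reading (2.2.2) inside the Whittaker model, $\varphi_{i,q}$ is essentially the normalized local Whittaker newform of $\pi'_{i,q}$ (the twist by $\chi_{\underline{Q},q}$ being unramified at $q\neq p$), so $Q_{i,q}(V_q)\varphi_{i,q}$ is an explicit $\mathrm{diag}(q^{a},1)$-translate of $W^{\mathrm{new}}_{\pi'_{i,q}}$ with exponent $a$ read off from $v_q(d_i)$, and likewise for $\tilde\phi_q^{*}$ via (3.3.3). The matrix coefficient $\langle\Pi_{\underline{Q},q}(g_q)\phi_q^{*},\tilde\phi_q^{*}\rangle_q$ then splits into a product of three matrix coefficients in Whittaker models, and I would rewrite the resulting $\mathrm{PGL}_2(\mathbb{Q}_q)$-integral as the local trilinear zeta integral in the form of \cite{PSR87} and \cite{Ike92}, reducing $I_q(\phi_q^{*}\otimes\tilde\phi_q^{*})$ to a finite combination of one-variable integrals $\int_{\mathbb{Q}_q^{\times}}\prod_i W_i(\mathrm{diag}(y,1))\,\vert y\vert_q^{s}\,\mathrm{d}^{\times}_q y$.

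Next I would treat the three cases separately, using that in each of them one factor is a principal series with \emph{unramified} inducing character, whose newform is $\mathbf{K}_q$-fixed with Whittaker function given by the classical unramified formula; this collapses one of the three integrations. In Case~(Ia), where $\pi'_{3,q}=\pi(\chi_3,v_3)$ with $\chi_3$ unramified, the integral becomes the local Rankin--Selberg integral for the pair $\pi'_{1,q}\times(\pi'_{2,q}\otimes\chi_3)$ on newforms, whose evaluation at $s=1/2$ through the $\mathrm{GL}_2\times\mathrm{GL}_2$ local functional equation yields $\epsilon(1/2,\pi'_{1,q}\otimes\pi'_{2,q}\otimes\chi_3)$, the leftover $\chi_3$-twist contributing $\chi_3^{-2}\vert\cdot\vert_q(q^{c^{*}})\omega'_{3,q}(-1)\epsilon(1/2,\pi'_{3,q})^{2}$. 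In Cases~(Ib) and~(IIb) the vanishing $L(s,\pi'_{2,q}\otimes\pi'_{3,q})=1$ --- part of the definition of Case~(IIb) and imposed as a restriction in Case~(Ib) --- is precisely what forces the relevant local integral to be holomorphic, indeed a monomial in $q^{-s}$ for the chosen test vectors, so that evaluation at the centre and the functional equation (for $\mathrm{GL}_2\times\mathrm{GL}_2$ with $\pi'_{1,q}=\pi(\chi_1,v_1)$ unramified in Case~(IIb), and for the Steinberg representation $\pi'_{1,q}=\chi_1\vert\cdot\vert_q^{-1/2}\mathrm{St}$ with $v_1=\chi_1\vert\cdot\vert_q^{-1}$ in Case~(Ib)) produce $\epsilon(1/2,\pi'_{2,q}\otimes\pi'_{3,q}\otimes\chi_1)$, respectively $\epsilon(1/2,\pi'_{2,q}\otimes\pi'_{3,q}\otimes v_1)$, times the squared $\epsilon$-factors of the individual constituents.

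Finally I would reassemble the normalization: the denominator $\langle\Pi_{\underline{Q},q}(\tau_{\underline{N},q})\phi_q,\tilde\phi_q\rangle_q$ factors over $i$ into $\prod_{i}B_{\pi'_{i,q}}$ up to the $\zeta_q$- and adjoint-$L$-normalizations recorded in (2.4.4), and inserting the prefactor $L(1,\Pi_{\underline{Q},q},\mathrm{Ad})/(\zeta_q(2)^{2}L(1/2,\Pi_{\underline{Q},q}))$ of (3.3.4) collects the $\zeta_q(2)^2/\zeta_q(1)^2$ and the $1/B_{\Pi_{\underline{Q},q}}$ of the statement. The part that will demand the most care is the bookkeeping of Gauss sums, of the conductor powers $q^{c^{*}}$ arising from the newform translates, and especially the verification that the $(1-\beta_{q,i})^{-1}$ corrections on $\Sigma_{i,0}^{(\mathrm{IIb})}$ cancel against the matching Euler factors of $L(1/2,\Pi_{\underline{Q},q})$; but no conceptual difficulty remains once the first step --- the identification of $Q_{i,q}(V_q)\varphi_{i,q}$ with a translate of $W^{\mathrm{new}}_{\pi'_{i,q}}$, i.e.\ the local reading of (3.1.6)--(3.1.7) --- is in place, after which everything is a functional-equation computation of exactly the type carried out in \cite[\S6]{Hsi17}.
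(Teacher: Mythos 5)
The first thing to note is that the paper does not prove this proposition at all: the surrounding text states explicitly that Hsieh ``calculated the local integral $I_{q}(\phi_{q}^{*}\otimes\tilde{\phi}_{q}^{*})$ in each of the above cases in [Hsi17, \S 6]'' and that the author merely restricts to the three cases needed later. So the ``paper's proof'' is a citation, and your proposal is an attempt to reconstruct the cited computation. As a reconstruction of the strategy it is broadly faithful: Hsieh does work on newform test vectors, does exploit the fact that in each case one factor is (a twist of) a principal series or Steinberg with unramified parameter so that its matrix coefficient can be unfolded through the induced model, does reduce to local $\mathrm{GL}_2\times\mathrm{GL}_2$ Rankin--Selberg integrals evaluated via the local functional equation, and does normalize by the factors recorded in (2.4.4) to produce the $1/B_{\Pi_{\underline{Q},q}}$ and $\zeta_{q}(2)^{2}/\zeta_{q}(1)^{2}$.

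Two caveats. First, your intermediate claim that the $\mathrm{PGL}_{2}(\mathbb{Q}_{q})$ matrix-coefficient integral ``reduces to a finite combination of one-variable integrals $\int_{\mathbb{Q}_q^{\times}}\prod_i W_i(\mathrm{diag}(y,1))\vert y\vert_q^{s}\,\mathrm{d}^{\times}_q y$'' is not right as stated: the Ichino-type integral (3.3.4) is an integral of a product of three matrix coefficients over the group, and it does not collapse directly to the Garrett-type torus integral of a product of three Whittaker functions. The correct mechanism is to realize the principal-series (or Steinberg) factor in its induced model, write its matrix coefficient as an integral over $\mathbf{K}_q$, and unfold; what survives is a pairing of two $\mathrm{GL}_2\times\mathrm{GL}_2$ Rankin--Selberg integrals $\Psi(s,W_1,W_2,f)$, to which the local functional equation is then applied. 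Second, everything that actually distinguishes the three cases --- the exact powers $q^{c^{*}}$, the signs $\omega'_{3,q}(-1)$ and $\omega'_{1,q}(-1)$, the squared $\epsilon$-factors of the individual constituents, and the cancellation of the $(1-\beta_{q,i})^{-1}$ stabilization factors against Euler factors of $L(1/2,\Pi_{\underline{Q},q})$ --- is precisely the content of the proposition, and your proposal defers all of it to ``bookkeeping.'' So while the approach would work (it is the approach of [Hsi17, \S 6]), as written the proposal is an outline of that computation rather than a proof; none of the three displayed identities is actually derived.
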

Next, we calculate $\mathfrak{I}^{\mathrm{unb}}_{\Pi_{\underline{Q},p}}$ by the way based on \cite[Proposition 5.4]{Hsi17}. However, before we calculate $\mathfrak{I}^{\mathrm{unb}}_{\Pi_{\underline{Q},p}}$, we prepare Lemma 3.4.5 and Proposition 3.4.6 to check that the proof of \cite[Proposition 5.4]{Hsi17} holds even if $g_{i}$ is non-ordinary at $p$.
\begin{Lemma}
Let $L$ be a positive integer such that $(L,p)=1$. Let $\pi_{h}=\otimes^{\prime}\pi_{h,v}$ be an irreducible automorphic representation of $\mathrm{GL}_{2}(\mathbb{A})$ with the central character $\omega_{h}$ attached to a normalized Hecke eigenform $h\in S_{k}(p^{t}L,\chi)$. Here, $k,t\in \mathbb{Z}_{\geq1}$ and $\chi$ is a Dirichlet character. We assume that $h$ is primitive outside of $p$ and $a(p,h)\neq 0$. Let $\alpha_{h,p}:\mathbb{Q}_{p}^{\times}\rightarrow \mathbb{C}^{\times}$ be an unramified character with $\alpha_{h,p}(p)=p^{\frac{1}{2}(1-k)}a(p,h)$. We put $v_{p}=\omega_{h,p}\alpha_{h,p}^{-1}$. Then, we have $\alpha_{h,p}v_{p}^{-1}\neq\vert\cdot\vert_{p}^{-1}$ and $\pi_{h,p}$ is a constituent of the induced representation $\pi(v_{p},\alpha_{h,p})$.
\end{Lemma}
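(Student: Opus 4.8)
The plan is to deduce both assertions from the non-vanishing criterion for ordinary lines recorded in Proposition 2.3.4 (Hsieh's Proposition 2.2). The crux is to exhibit a non-zero element of the ordinary line $V_{\pi_{h,p}}^{\mathrm{ord}}(\alpha_{h,p})$, and for this I would examine the adelic lift $\varphi:=\Phi(h)\in\mathcal{A}_{k}(p^{t}L,\chi_{\mathbb{A}}^{-1})$. Since $h\in S_{k}(p^{t}L,\chi)$ and $t\geq1$, the vector $\varphi$ is fixed by the right translations under the unipotent subgroup $N(\mathbb{Z}_{p})$ (the upper-right entry is unconstrained in $U_{0}(p^{t}L)$) and under $\mathrm{diag}\{u,1\}$ for $u\in\mathbb{Z}_{p}^{\times}$ (the Nebentypus character of $U_{0}(p^{t}L)$ depends only on the lower-right entry, which is $1$ here). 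Moreover $p\mid p^{t}L$ forces $T_{p}=U_{p}$ on $S_{k}(p^{t}L,\chi)$, so the Hecke relation $T_{p}h=a(p,h)h$ combined with $\Phi(T_{p}h)=p^{\frac{k}{2}-1}\mathbf{T}_{p}\Phi(h)$ of (2.2.7) gives $\mathbf{U}_{p}\varphi=p^{1-\frac{k}{2}}a(p,h)\,\varphi$. The scalar $p^{1-\frac{k}{2}}a(p,h)$ is exactly $\alpha_{h,p}|\cdot|_{p}^{-1/2}(p)$, the $U_{p}$-eigenvalue required in the definition of the ordinary line in Definition 2.3.3; note that the hypothesis $a(p,h)\neq0$ is what makes $\alpha_{h,p}$ an honest character.

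The second step is to globalize this into the local statement. The operator $\mathbf{U}_{p}$ and the right translations by $N(\mathbb{Z}_{p})$ and by $\mathrm{diag}\{u,1\}$, $u\in\mathbb{Z}_{p}^{\times}$, all act on $\pi_{h}\cong\otimes^{\prime}_{v}\pi_{h,v}$ through the factor $\pi_{h,p}$ alone, so the subspace of $\pi_{h}$ on which they act by the three relations above is $\bigl(\otimes^{\prime}_{v\neq p}\pi_{h,v}\bigr)\otimes V_{\pi_{h,p}}^{\mathrm{ord}}(\alpha_{h,p})$; since it contains $\varphi\neq0$, we get $V_{\pi_{h,p}}^{\mathrm{ord}}(\alpha_{h,p})\neq\{0\}$. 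Here the hypothesis that $h$ is a normalized Hecke eigenform primitive outside $p$ is used to know that $\pi_{h}$ is a well-defined irreducible cuspidal automorphic representation with all $\pi_{h,v}$ infinite-dimensional, so that Proposition 2.3.4 applies.

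Finally I would run Proposition 2.3.4, which offers two possibilities. If $\pi_{h,p}\cong\pi(\alpha_{h,p},\alpha_{h,p}^{-1}\omega_{h,p})$, then by the definition $v_{p}=\omega_{h,p}\alpha_{h,p}^{-1}$ this is $\pi(\alpha_{h,p},v_{p})$, which has the same composition factors as $\pi(v_{p},\alpha_{h,p})$; as $\pi_{h,p}$ is irreducible, so is this induced representation, forcing $\alpha_{h,p}v_{p}^{-1}\neq|\cdot|_{p}^{\pm1}$, in particular $\neq|\cdot|_{p}^{-1}$. If instead $\pi_{h,p}\cong\alpha_{h,p}|\cdot|_{p}^{-1/2}\mathrm{St}$, then comparing central characters (that of $\mathrm{St}$ being trivial) gives $\omega_{h,p}=\alpha_{h,p}^{2}|\cdot|_{p}^{-1}$, hence $v_{p}=\alpha_{h,p}|\cdot|_{p}^{-1}$; then $\alpha_{h,p}|\cdot|_{p}^{-1/2}\mathrm{St}$ is the irreducible subrepresentation of $\pi(\alpha_{h,p},\alpha_{h,p}|\cdot|_{p}^{-1})=\pi(\alpha_{h,p},v_{p})$, so $\pi_{h,p}$ is again a constituent of $\pi(v_{p},\alpha_{h,p})$, and $\alpha_{h,p}v_{p}^{-1}=|\cdot|_{p}\neq|\cdot|_{p}^{-1}$. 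In both cases both claims of the lemma follow. I expect the only point needing real care to be the normalization bookkeeping in the first step — matching the adelic $\mathbf{U}_{p}$-eigenvalue coming from (2.2.7) with the eigenvalue in Definition 2.3.3 — which is a routine comparison of (2.1.3) and (2.1.4) with (2.2.3); the rest is formal.
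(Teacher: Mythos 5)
Your proof is correct, and for the key inequality $\alpha_{h,p}v_{p}^{-1}\neq|\cdot|_{p}^{-1}$ it takes a genuinely different route from the paper. Both arguments obtain the constituent statement from Proposition 2.3.4; you additionally spell out why $V^{\mathrm{ord}}_{\pi_{h,p}}(\alpha_{h,p})\neq\{0\}$ (via the adelic lift, the identity $\mathbf{U}_{p}\Phi(h)=p^{1-\frac{k}{2}}a(p,h)\Phi(h)$ from (2.2.7), and the eigenspace decomposition of the restricted tensor product), a point the paper simply asserts. For the non-degeneracy, however, the paper argues arithmetically: it splits into the cases where $h$ is primitive and where $h$ is an oldform, in the first case invoking the exact formulas $|a(p,h)|_{\infty}^{2}=p^{k-1}$ or $a(p,h)^{2}=\chi_{0}(p)p^{k-2}$ from Miyake's Theorem 4.6.17 to compute $|\alpha_{h,p}v_{p}^{-1}(p)|_{\infty}\in\{1,p^{-1}\}$, and in the second case using primitivity outside $p$ to produce a spherical vector and concluding from irreducibility of the unramified principal series. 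You instead read the conclusion off the dichotomy of Proposition 2.3.4 itself: in the principal-series branch the induced representation $\pi(\alpha_{h,p},v_{p})$ is irreducible, hence $\alpha_{h,p}v_{p}^{-1}\neq|\cdot|_{p}^{\pm1}$, and in the Steinberg branch the central-character identity $\omega_{h,p}=\alpha_{h,p}^{2}|\cdot|_{p}^{-1}$ forces $\alpha_{h,p}v_{p}^{-1}=|\cdot|_{p}$. This is shorter, avoids the Atkin--Lehner--Li input entirely, and does not need $h$ to be primitive outside of $p$; the one thing it leans on is the strong form of the ``only if'' direction of Proposition 2.3.4, namely that $\pi_{h,p}$ is isomorphic to the \emph{irreducible} induced representation $\pi(\alpha_{h,p},\alpha_{h,p}^{-1}\omega_{h,p})$ (a principal series in the paper's terminology) and not merely a constituent of some reducible induction. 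Since that is exactly what Hsieh's Proposition 2.2 establishes, your argument stands; the paper's route trades this dependence for the classical bounds on $a(p,h)$.
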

\begin{proof}
Because $\mathcal{V}_{\pi_{h,p}}(\alpha_{h,p})\neq\{0\}$, by Proposition 2.3.4,  $\pi_{h,p}$ is a constituent of the representation $\pi(v_{p},\alpha_{h,p})$. Then, we prove $\alpha_{h,p}v_{p}^{-1}\neq\vert\cdot\vert_{p}^{-1}$. We decompose $m_{\chi}=p^{s}L^{\prime}$ where $s\in\mathbb{Z}_{\geq0}$ and $L^{\prime}$ is a positive integer which is prime to $p$. Then, because $h$ is a Hecke eigenform and $a(h,p)\neq 0$, we have (1) $t=s$ or (2) $t=1$ and $s=0$ by \cite[Lemma 4.5.11]{Miy06}.

First, we assume that $h$ is primitive. Then, by \cite[Theorem 4.6.17]{Miy06}, we have
\begin{align*}
&\vert a(p,h)\vert_{\infty}^{2}=p^{k-1}\ &\mathrm{if\ (1)\ holds},\\
&a(p,h)^{2}=\chi_{0}(p)p^{k-2}\ &\mathrm{if\ (2)\ holds},
\end{align*} 
where $\chi_{0}$ is a primitive Dirichlet character associated the Dirichlet character $\chi$. Hence, we have 
$$\vert\alpha_{h,p}v_{p}^{-1}(p)\vert_{\infty}=\vert\alpha_{h,p}(p)\vert_{\infty}^{2}=p^{1-k}\vert a(p,h)\vert_{\infty}^{2}=\begin{cases}
1\ &\mathrm{if\ (1)\ holds},\\
p^{-1}\ &\mathrm{if\ (2)\ holds}.
\end{cases}$$
Then, we have $\alpha_{h,p}v_{p}^{-1}\neq \vert\cdot\vert_{p}^{-1}$ if $h$ is a primitive form.

Next, we assume that $h$ is not a primitive form. Because $h$ is a normalized Hecke eigenform, by \cite[Theorem 4.6.12]{Miy06}, $h$ is an oldform. Because $h$ is an oldform and $h$ is primitive outside of $p$, we have $t\neq s$. Then, we have $t=1$ and $s=0$. Further, there exists a unique primitive form $h_{0}\in S_{k}(L,\chi_{0})$ such that $a(l,h)=a(l,h_{0})$ for all but a finite number of primes $l$. Because $\pi_{h}$ is attached to $h_{0}$, there exists a $\mathbf{K}_{p}$-fixed point in $\pi_{h,p}$. Then, $\pi_{h,p}$ is a principal series and isomorphic to $\pi(v_{p},\alpha_{h,p})$ ($cf$. \cite[Theorem 4.6.4]{Bum97}). Hence, we have $\alpha_{h,p}v_{p}^{-1}\neq\vert\cdot\vert_{p}^{-1}$ ($cf$. \cite[Theorem 4.5.1]{Bum97}). We complete the proof.
\end{proof}
If the conductor $c(\pi_{h,p})$ of $\pi_{h,p}$ defined in (2.3.1) is non-zero, it is well-known that the operator $U_{p}$ induces $U_{p}:\mathcal{V}_{\pi_{h,p}}^{\mathrm{new}}\rightarrow\mathcal{V}_{\pi_{h,p}}^{\mathrm{new}}$.
\begin{Proposition}
Let $h\in S_{k}(L,\chi)$ be a primitive form with $k,L\in\mathbb{Z}_{\geq 1}$. We assume $a(p,h)=0$. Then, the normalized Whittaker newform $W_{\pi_{h,p}}^{\mathrm{new}}\in\mathcal{W}(\pi_{h,p})$ is characterized by

$$W_{\pi_{h,p}}^{\mathrm{new}}(\mathrm{diag}\{a,1\})=\mathbf{I}_{\mathbb{Z}_{p}^{\times}}(a),\ {}^{\forall}a\in\mathbb{Q}_{p}^{\times},$$
where $\mathbf{I}_{\mathbb{Z}_{p}^{\times}}$ is the characteristic function on $\mathbb{Z}_{p}^{\times}$.
\end{Proposition}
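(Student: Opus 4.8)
The plan is to pass to the local component $\pi_{h,p}$ and to evaluate the new Whittaker function $W:=W^{\mathrm{new}}_{\pi_{h,p}}\in\mathcal{W}(\pi_{h,p})^{\mathrm{new}}$ (Definition 2.3.6), normalized by $W(1)=1$, on the diagonal torus, exploiting $a(p,h)=0$ through the local $U_p$-operator. I would assume throughout that $p\mid L$ (see the last paragraph for why this is the relevant case), so that the classical Hecke operator of $h$ at $p$ is $U_p$ and $h$ is a $U_p$-eigenform of eigenvalue $a(p,h)=0$. Two invariance facts set things up. Writing $c:=c(\pi_{h,p})$ as in (2.3.1), both $\mathrm{diag}\{u,1\}$ with $u\in\mathbb{Z}_p^\times$ and $\left(\begin{smallmatrix}1&x\\0&1\end{smallmatrix}\right)$ with $x\in\mathbb{Z}_p$ lie in $\mathcal{U}_1(p^{c})$ and hence fix $W$, by the definition of the new line (Definition 2.3.1). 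From the first (writing $a=p^nu$ with $u\in\mathbb{Z}_p^\times$), $W(\mathrm{diag}\{a,1\})$ depends only on $n:=v_p(a)$; put $c_n:=W(\mathrm{diag}\{p^n,1\})$, so $c_0=1$. From the second together with the Whittaker transformation law one gets $\psi_p(ax)\,W(\mathrm{diag}\{a,1\})=W(\mathrm{diag}\{a,1\})$ for every $x\in\mathbb{Z}_p$; since the conductor of $\psi_p$ is exactly $\mathbb{Z}_p$, this forces $c_n=0$ for $n<0$. It therefore remains to prove $c_n=0$ for $n\geq1$.

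For that step I would use the compatibility of Hecke operators with the adelic lift. As $h$ is primitive, Remark 2.4.2 gives $W_{\Phi(h)}=\prod_v W^{\mathrm{new}}_{\pi_{h,v}}$, so $W$ is the $p$-component of $W_{\Phi(h)}$; and by (2.2.7) the adelic operator $\mathbf{U}_p$ of (2.2.3) acts on $\Phi(h)$, hence on $W$, by the scalar $a(p,h)\,p^{1-k/2}=0$. On the other hand, substituting $g=\mathrm{diag}\{p^n,1\}$ into the definition (2.2.3) of $\mathbf{U}_p$ and using that $\psi_p$ is trivial on $\mathbb{Z}_p$ yields $(\mathbf{U}_pW)(\mathrm{diag}\{p^n,1\})=\sum_{x=1}^{p}\psi_p(p^nx)\,W(\mathrm{diag}\{p^{n+1},1\})=p\,c_{n+1}$ for every $n\geq0$. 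Comparing the two evaluations gives $p\,c_{n+1}=0$, hence $c_n=0$ for all $n\geq1$. Combining with the previous paragraph, $W(\mathrm{diag}\{a,1\})=c_{v_p(a)}=\mathbf{I}_{\mathbb{Z}_p^\times}(a)$; and since the new line is one-dimensional (Proposition 2.3.2) and $W(1)=1$, this pins down $W$, which is the assertion.

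The delicate point, and the one I expect to be the main obstacle, is the second paragraph: making the normalization precise — i.e. confirming the exact scalar by which $\mathbf{U}_p$ acts on the new Whittaker vector in terms of $a(p,h)$ — and, more substantively, verifying that the hypotheses genuinely put us in the case $p\mid L$. The latter is essential rather than cosmetic: if $p\nmid L$ the operator of $h$ at $p$ is $\mathbf{T}_p=\mathbf{U}_p+\omega_{h,p}(p)\mathbf{V}_p$, so the recursion acquires a $c_{n-1}$-term and the stated formula is false (for instance a newform with complex multiplication has $a(p,h)=0$ at primes $p\nmid L$ inert in the CM field, yet $W^{\mathrm{new}}_{\pi_{h,p}}(\mathrm{diag}\{p^2,1\})\neq0$). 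In the application this does not arise: there $h$ is the primitive form attached to a specialization $g_i$ which, being non-ordinary at $p$ with $a(p,g_i)=0$, is primitive of level divisible by $p$ by Hypothesis (6), and then in fact $p^2\mid L$ by \cite[Theorem 4.6.17]{Miy06}, exactly as used in the proof of Lemma 3.4.5. I would therefore record $p\mid L$ as a standing hypothesis at the outset of the proof.
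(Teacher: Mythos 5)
Your proof is correct, but it follows a genuinely different route from the paper's. The paper first deduces $c(\pi_{h,p})\geq 2$ from \cite[Theorem 4.6.17]{Miy06}, then runs through Schmidt's classification of representations of conductor exponent $\geq 2$ together with the explicit tables of \cite{Sch02} for the torus values of $W^{\mathrm{new}}$: in every case except a principal series $\pi(\chi_{1},\chi_{2})$ with exactly one unramified character the table already yields $\mathbf{I}_{\mathbb{Z}_{p}^{\times}}(a)$, and that last case is excluded because there $U_{p}W^{\mathrm{new}}(1)=p^{1/2}\chi_{2}(p)\neq 0$ while $a(p,h)=0$ forces $U_{p}W^{\mathrm{new}}=0$. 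You instead compute $W^{\mathrm{new}}$ on the torus directly: right-invariance under $\mathcal{U}_{1}(p^{c})$ reduces everything to the sequence $c_{n}=W(\mathrm{diag}\{p^{n},1\})$, the fact that the conductor of $\psi_{p}$ is exactly $\mathbb{Z}_{p}$ kills $c_{n}$ for $n<0$, and the recursion $(\mathbf{U}_{p}W)(\mathrm{diag}\{p^{n},1\})=p\,c_{n+1}$ combined with $\mathbf{U}_{p}W=0$ kills $c_{n}$ for $n\geq 1$. Your approach is more elementary and self-contained, bypassing the classification and the tables of \cite{Sch02} entirely; its only nontrivial input, the vanishing of $\mathbf{U}_{p}$ on the local new vector, is exactly the input the paper also uses (and you justify it more carefully, via (2.2.7), Remark 2.4.2 and the local factorization, than the paper does). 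Your point about the standing hypothesis $p\mid L$ is substantive and applies to the paper's proof as well: the appeal to \cite[Theorem 4.6.17]{Miy06} to conclude $c(\pi_{h,p})\geq 2$ is only valid when $p$ divides the level, the statement is indeed false for $p\nmid L$ (your CM example), and, as you observe, Hypothesis (6) together with the fact that the specializations $G^{(i)}(m)$ have level $N_{i}p^{e^{(i)}(m)}$ with $e^{(i)}(m)\geq 1$ guarantees $p\mid L$ in the only situation where the proposition is invoked.
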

\begin{proof}
By \cite[Theorem 4.6.17]{Miy06}, we have $c(\pi_{h,p})\geq2$. Hence, by the results of \cite[\S1.2]{Sch02}, $\pi_{h,p}$ is isomorphic to any of the following representations.
\begin{enumerate}
\item A supercuspidal representation.
\item A principal series $\pi(\chi_{1},\chi_{2})$ for characters $\chi_{1}$ and $\chi_{2}$ such that at least one of the characters is ramified.
\item A special representation $\chi\mathrm{St}$ for a ramified character $\chi$.
\end{enumerate}
By the results of \cite[\S2.4]{Sch02}, it suffices to prove that $\pi$ is not isomorphic to a principal series $\pi(\chi_{1},\chi_{2})$ such that the only one of the characters is unramified. Then, we assume that $\pi_{h,p}$ is isomorphic to a principal series $\pi(\chi_{1},\chi_{2})$ such that the only one of the characters $\chi_{1},\chi_{2}$ is unramified. Because the representation $\pi(\chi_{1},\chi_{2})$ is isomorphic to $\pi(\chi_{2},\chi_{1})$, we can assume that $\chi_{2}$ is unramified. Then, by the result of \cite[\S2.4]{Sch02}, the normalized Whittaker newform $W_{\pi_{h,p}}^{\mathrm{new}}$ is characterized by
$$W_{\pi_{h,p}}^{\mathrm{new}}(\mathrm{diag}\{a,1\})=\vert a\vert_{p}^{\frac{1}{2}}\chi_{2}(a)\mathbf{I}_{\mathbb{Z}_{p}},\ {}^{\forall}a\in\mathbb{Q}_{p}^{\times}.$$
In particular, we have
$U_{p}(W_{\pi_{h,p}}^{\mathrm{new}})(1)=\displaystyle{\sum_{i\in\mathbb{Z}\slash p\mathbb{Z}}}W_{\pi_{h,p}}^{\mathrm{new}}\left(\left(
    \begin{array}{cc}
      p&i \\
      0&1 \\
    \end{array}
  \right)\right)=p^{\frac{1}{2}}\chi_{2}(p)\neq0$. However, because $a(p,h)=0$, we have $U_{p}(W_{\pi_{h,p}}^{\mathrm{new}})=0$. This is contradict. We complete the proof. 
\end{proof}
For each irreducible representation $\pi\otimes\pi^{\prime}$ of $\mathrm{GL}_{2}(\mathbb{Q}_{p})\times\mathrm{GL}_{2}(\mathbb{Q}_{p})$, Let  $\epsilon(s,\pi\otimes\pi^{\prime})$ be the $\epsilon$-factor attached to $\pi\otimes\pi^{\prime}$ defined in \cite{Jac72}. We set 
$$\mathcal{E}_{f_{1}}(\Pi_{\underline{Q},p}):=\dfrac{\epsilon(1\slash2,\pi_{2,p}\otimes\pi_{3,p}\otimes\alpha_{f_{1},p}(\chi_{\underline{Q}})_{\mathbb{A},p}^{-1})^{-1}}{L(1\slash2,\Pi_{\underline{Q},p})}\cdot\dfrac{L(1\slash2,\pi_{2,p}\otimes\pi_{3,p}\otimes\alpha_{f_{1},p}(\chi_{\underline{Q}})_{\mathbb{A},p})}{L(1\slash2,\pi_{2,p}\otimes\pi_{3,p}\otimes \alpha_{f_{1},p}(\chi_{\underline{Q}})_{\mathbb{A},p}^{-1})}.\leqno(3.4.1)$$
Here, $\alpha_{f_{1},p}$ is the unramified character defined in Remark 2.4.2. The following proposition is same as \cite[Proposition 5.4]{Hsi17} except that we do not assume that $g_{i}$ is non-ordinary for $i=2,3$.
\begin{Proposition}
We have
$$\mathfrak{J}^{\mathrm{unb}}_{\Pi_{\underline{Q},p}}=\mathcal{E}_{f_{1}}(\Pi_{\underline{Q},p}).$$
\end{Proposition}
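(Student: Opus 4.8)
The plan is to re-run the local computation of \cite[Proposition 5.4]{Hsi17}, isolating the two places where the ordinarity of $g_{2}$ and $g_{3}$ at $p$ is used and replacing the ordinary input there by the newform input supplied by Proposition 3.4.6. First I would unwind Definition 3.3.3: by definition
\[
\mathfrak{I}^{\mathrm{unb}}_{\Pi_{\underline{Q},p}}=\frac{I^{\mathrm{ord}}_{p}(\phi^{*}_{p}\otimes\tilde{\phi}^{*}_{p},\mathbf{t}_{n})}{L(1,\Pi_{\underline{Q},p},\mathrm{Ad})}\cdot\frac{\omega^{\prime}_{1,p}(-p^{2n})}{\alpha_{f_{1},p}^{2}\vert\cdot\vert_{p}(-p^{2n})}\cdot\frac{\zeta_{p}(2)}{\zeta_{p}(1)}\cdot\prod_{i=1}^{3}\langle\rho(t_{n})W^{?}_{\pi^{\prime}_{i,p}},W^{?}_{\pi^{\prime}_{i,p}}\otimes{\omega^{\prime}_{i,p}}^{-1}\rangle_{p},
\]
and then insert the definition (3.3.5) of $I^{\mathrm{ord}}_{p}$, so that the whole quantity is expressed through the local trilinear pairing on $\Pi_{\underline{Q},p}$ integrated over $\mathrm{PGL}_{2}(\mathbb{Q}_{p})$ against $\phi^{*}_{p}=\varphi_{1,p}\otimes\varphi_{2,p}\otimes\theta^{\kappa}_{p}\varphi_{3,p}$ and its $\mathbf{t}_{n}$-translate, exactly as in \cite[\S5]{Hsi17}.

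Next I would distinguish, for $i=2,3$, whether $g_{i}$ has finite or infinite slope at $p$. If $a(p,g_{i})\neq0$, then Lemma 3.4.5 shows that $\pi^{\prime}_{i,p}$ is a constituent of $\pi(v_{p},\alpha_{g_{i},p})$ with $\alpha_{g_{i},p}v_{p}^{-1}\neq\vert\cdot\vert_{p}^{-1}$, so $\pi^{\prime}_{i,p}$ is a principal series or special and (by Proposition 2.3.4) carries a one-dimensional ordinary line; we take $W^{?}_{\pi^{\prime}_{i,p}}=W^{\mathrm{ord}}_{\pi^{\prime}_{i,p}}$, whose restriction to $\mathrm{diag}\{y,1\}$ is given by Corollary 2.3.8. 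If instead $a(p,g_{i})=0$, then Hypothesis (6) forces $g_{i}$ to be primitive, hence $c(\pi^{\prime}_{i,p})\geq2$, and Proposition 3.4.6 gives that $W^{\mathrm{new}}_{\pi^{\prime}_{i,p}}$ restricted to $\mathrm{diag}\{a,1\}$ equals $\mathbf{I}_{\mathbb{Z}_{p}^{\times}}(a)$, which is exactly the ``$p$-depleted'' shape that a vanishing $U_{p}$-eigenvalue produces. The key point is that in both cases the restriction of $W^{?}_{\pi^{\prime}_{i,p}}$ to the diagonal torus is supported on $\mathbb{Z}_{p}$ and is a (twisted) characteristic function there, and this is the only feature of the $p$-components of $g_{2},g_{3}$ that enters Hsieh's manipulations; the effect of $\mathbf{V}_{+}^{r}$ on $g_{3}$ and of the twisting operator $\theta^{\kappa}_{p}$ is handled verbatim as in loc.\ cit.

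With these substitutions I would then carry out the same chain of reductions as in \cite[\S5]{Hsi17}: use the Iwasawa decomposition of $\mathrm{PGL}_{2}(\mathbb{Q}_{p})$ and the left-equivariance of the $W^{?}$ to collapse the $\mathrm{PGL}_{2}$-integral to a torus integral, plug in the explicit torus values above together with the characterization of $W^{\mathrm{ord}}_{\pi^{\prime}_{1,p}}$ for the ordinary $p$-stabilized newform $\breve{f}_{1}$ (Remark 2.4.2, Corollary 2.3.8), and absorb the $\mathbf{t}_{n}$-translate: the normalizing factor $\omega^{\prime}_{1,p}(-p^{2n})\alpha_{f_{1},p}^{-2}\vert\cdot\vert_{p}(-p^{2n})^{-1}$ is designed exactly to cancel the $n$-dependence, so that the value is the ratio of local $L$- and $\epsilon$-factors for $\pi^{\prime}_{2,p}\otimes\pi^{\prime}_{3,p}$ twisted by $\alpha_{f_{1},p}(\chi_{\underline{Q}})_{\mathbb{A},p}^{\pm1}$, i.e.\ the quantity $\mathcal{E}_{f_{1}}(\Pi_{\underline{Q},p})$ of (3.4.1). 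The main obstacle is bookkeeping rather than a new idea: one must verify that every appeal to the ordinarity of $g_{2},g_{3}$ in \cite[\S5]{Hsi17} is covered by one of the two cases above and yields the identical closed form, in particular that the local functional equation relating $L(1/2,\pi^{\prime}_{2,p}\otimes\pi^{\prime}_{3,p}\otimes\alpha_{f_{1},p}(\chi_{\underline{Q}})_{\mathbb{A},p})$ to its inverse twist is unchanged, and that the conductors and $\epsilon$-factors produced by a non-ordinary (primitive, $c(\pi^{\prime}_{i,p})\geq2$) local component remain consistent with the interpolation of the fudge factors carried out in \S5.1.
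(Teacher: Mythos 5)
Your proposal is correct and follows exactly the route the paper takes: the paper gives no independent computation but, in Remark 3.4.8, declares the proof to be the same as \cite[Proposition 5.4]{Hsi17} after substituting the normalized Whittaker newform (via Proposition 3.4.6, with Hypothesis (6) forcing primitivity when $a(p,g_{i})=0$) in the infinite-slope case and invoking Lemma 3.4.5 to identify $\pi^{\prime}_{i,p}$ with the right constituent of $\pi(v_{i,p},\alpha_{g_{i},p})$ in the finite-slope non-ordinary case. Your case division and the identification of where ordinarity enters Hsieh's argument match the paper's intent precisely.
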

\begin{Remark}
The proof of Proposition 3.4.7 is same as \cite[Proposition 5.4]{Hsi17}. However, we note some points. If the slope of $g_{i}$ is $\infty$, we have to take the newline (resp. the normalized local Whittaker newform) of $\pi^{\prime}_{i,p}$ instead of the oridnary line (resp. the normalized ordinary Whittaker function) of $\pi^{\prime}_{i,p}$. If the slope of $g_{i}$ is not $\infty$, by Lemma 3.4.5, $\pi^{\prime}_{i,p}$ is isomorphic to the unique irreducible quotient $\pi(v_{i,p},\alpha_{g_{i},p})^{0}$ of  $\pi(v_{i,p},\alpha_{g_{i},p})$ even if $g_{i}$ is non-ordinary. Here, $\alpha_{g_{i},p}$ is the unramified character defined in Remark 2.4.2 and $v_{i,p}=\omega^{\prime}_{i,p}\alpha_{g_{i},p}^{-1}$.
\end{Remark}
\section{The triple product $p$-adic $L$-functions in the unbalanced case.}
Let $(F,G^{(2)},G^{(3)})$ be the same as in \S3. In \S4, we construct a three-variable triple product $p$-adic $L$-function attached to the triple $(F,G^{(2)},G^{(3)})$. From now on throughout in this paper, for simplicity, we write $G^{(i)}(m):=G_{Q^{(i)}_{m}}$, $k^{(i)}(m):=k_{Q^{(i)}_{m}}$, $e^{(i)}(m):=e_{Q^{(i)}_{m}}$ and $\epsilon^{(i)}_{m}:=\epsilon_{Q^{(i)}_{m}}$ for each $Q^{(i)}_{m}\in \mathfrak{X}^{(i)}$, where $i=2,3$ and $m\in\mathbb{Z}_{\geq 1}$.
\subsection{Adjustment of levels of $p$-adic families of modular forms.}
In this subsection, for $i=2,3$, we define a good test formal series $G^{(i),*}\in \mathbf{I}_{i}\jump{q}$ attached to $G^{(i)}$ which is $p$-adic families of cusp forms of tame level $N$. The way to construct the series $G^{(i),*}$ is based on \cite[\S3.5]{Hsi17}. However, we have to prove the rigidities of $G^{(i)}$ to construct the series $G^{(i),*}$.
\begin{Proposition}
We fix an element $i\in\{2,3\}$. Let $m$ be a positive integer and $\pi_{m}$ be the automorphic representation with the central character $\omega_{m}$ attached to $G^{(i)}(m)$. We put $m_{i}:=m_{\psi_{i}}.$ Then, at any prime $l\vert N_{i}$, the automorphic type of  $\pi_{m}$ is independent of $m\in\mathbb{Z}_{\geq 1}$. In particular, we have  
\begin{enumerate}
\item[(1)] If $v_{l}(N_{i})=v_{l}(m_{i})$, $\pi_{m,l}$ is isomorphic to a principal series.
\item[(2)] If $v_{l}(N_{i})>v_{l}(m_{i})$, $\pi_{m,l}$ is isomorphic to a special representation.
\end{enumerate}
Further, the representation $\pi_{m,l}$ is minimal.
\end{Proposition}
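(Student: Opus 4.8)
The plan is to reduce everything to the behaviour of the $l$-th Fourier coefficient of $G^{(i)}$ under specialization and to standard local representation theory, exactly as in \cite[\S3.5]{Hsi17}. Fix $i\in\{2,3\}$ and a prime $l\mid N_i$. By Hypothesis (7) the $l$-th Fourier coefficient $a(l,G^{(i)})\in\mathbf{I}_i$ is non-zero, hence its specialization $a(l,G^{(i)}(m))=Q^{(i)}_m(a(l,G^{(i)}))$ is non-zero for all but finitely many $m$; but in fact I want it for \emph{every} $m$, so the first step is to observe that if $a(l,G^{(i)}(m_0))=0$ for some $m_0$ then one argues directly from Hypothesis (7) together with the rigidity of the conductor (this is where the square-freeness in Hypothesis (3) is not needed, but the rigidity of conductors proved in the Appendix for Coleman families, and the analogous statement in the Hida case, enters). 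In any case, the key input is: for each $m$, the eigenform $G^{(i)}(m)$ has level $N_ip^{e^{(i)}(m)}$ and Nebentypus $\psi_i\omega_p^{-k^{(i)}(m)}\epsilon^{(i)}_m$, and its $l$-th coefficient is non-zero, so by \cite[Lemma 4.5.11]{Miy06} the local component $\pi_{m,l}$ is not supercuspidal and $v_l$ of the conductor of $\pi_{m,l}$ is either $v_l(N_i)$-determined by whether $\pi_{m,l}$ is principal series or special.

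Next I would separate the two cases according to how $v_l(N_i)$ compares to $v_l(m_i)=v_l(m_{\psi_i})$. Write the $l$-part of the Nebentypus: since $\epsilon^{(i)}_m$ is a character of $\Gamma=1+p\mathbb{Z}_p$ it is unramified at $l$, and $\omega_p$ is trivial at $l$, so the $l$-part of $\psi_i\omega_p^{-k^{(i)}(m)}\epsilon^{(i)}_m$ equals $\psi_{i,(l)}$, which is \emph{independent of $m$}. This is the crucial rigidity observation: the local Nebentypus at $l$ does not move in the family. Now, because $G^{(i)}(m)$ is primitive outside $p$, the conductor of $\pi_{m,l}$ equals $l^{v_l(N_i)}$ for every $m$. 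If $v_l(N_i)=v_l(m_i)$, then the conductor of $\pi_{m,l}$ equals the conductor of its central character $\psi_{i,(l)}$, which forces $\pi_{m,l}$ to be a principal series $\pi(\mu_1,\mu_2)$ with one of $\mu_1,\mu_2$ unramified (a special representation $\chi\mathrm{St}$ has conductor $l^{2m_\chi}$ or $l$, both incompatible with equality to $v_l(m_i)$ unless one is in a degenerate case ruled out by $v_l(N_i)=v_l(m_i)$ and primitivity); this gives (1). If $v_l(N_i)>v_l(m_i)$, then the conductor strictly exceeds that of the central character, and since $\pi_{m,l}$ is not supercuspidal (Fourier coefficient non-zero) and not a ramified-twist-of-Steinberg with large conductor (again conductor considerations, using $v_l(N_i)>v_l(m_i)$ together with the fact that $a(l,G^{(i)}(m))\ne 0$ pins down $v_l$ of the conductor to be exactly $v_l(N_i)$), one is forced into the special representation case $\pi_{m,l}\cong \mu\mathrm{St}$ with $\mu$ unramified up to the fixed $\psi_{i,(l)}$; this gives (2). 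In both cases the isomorphism class of $\pi_{m,l}$ is determined by $\psi_{i,(l)}$ and by whether we are in case (1) or (2), neither of which depends on $m$, so the automorphic type is constant along the family.

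Finally, minimality of $\pi_{m,l}$: in case (1), $\pi_{m,l}=\pi(\mu_1,\mu_2)$ with (say) $\mu_2$ unramified, so $c(\pi_{m,l})=c(\mu_1)=c(\psi_{i,(l)})$, and for any character $\chi$ of $\mathbb{Q}_l^\times$ we have $c(\pi_{m,l}\otimes\chi)=c(\mu_1\chi)+c(\mu_2\chi)\geq c(\mu_1)=c(\pi_{m,l})$ when $\chi$ is unramified, and when $\chi$ is ramified both summands can only grow or stay, so $c(\pi_{m,l})\le c(\pi_{m,l}\otimes\chi)$ always; in case (2), $\pi_{m,l}=\mu\mathrm{St}$ with $\mu$ unramified so $c(\pi_{m,l})=1$ which is minimal among all twists. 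The main obstacle, and the step that needs the most care, is the dichotomy in the middle paragraph: ruling out the a priori possibility that $\pi_{m,l}$ is a ramified twist of Steinberg, or a principal series with \emph{both} characters ramified, when $v_l(N_i)=v_l(m_i)$ — and symmetrically ensuring that when $v_l(N_i)>v_l(m_i)$ the representation cannot be a principal series. This is handled exactly as in \cite[\S3.5]{Hsi17}, using \cite[Lemma 4.5.11, Theorem 4.6.17]{Miy06} to match $v_l$ of the conductor of $\pi_{m,l}$ against $v_l(N_i)$ via the non-vanishing of $a(l,G^{(i)}(m))$ (Hypothesis (7)), together with the fact that the central character at $l$ is the fixed $\psi_{i,(l)}$.
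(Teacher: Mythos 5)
Your proposal is correct and follows the same skeleton as the paper's proof: Hypothesis (7) plus primitivity outside $p$ gives $a(l,G^{(i)}(m))\neq 0$, Miyake's Theorem 4.6.17 pins down the relation between $v_{l}(N_{i})$ and $v_{l}(m_{i})$ (which is $m$-independent data, hence the rigidity), and the local component is then identified as a principal series or a special representation with an unramified twisting character, from which minimality is a short computation. The one step you do differently is the dichotomy itself: the paper forms the unramified character $\alpha_{m,l}$ with $\alpha_{m,l}(l)=a(l,g_{m}^{\prime})l^{(1-k)/2}$, invokes Proposition 2.3.4 to see that $\pi_{m,l}$ is either $\pi(\alpha_{m,l},\alpha_{m,l}^{-1}\omega_{m,l})$ or $\alpha_{m,l}\vert\cdot\vert_{l}^{-1/2}\mathrm{St}$, and then decides between the two by computing $\vert\alpha_{m,l}^{2}\omega_{m,l}^{-1}(l)\vert_{\infty}$ from the explicit formulas $\vert a(l,g_{m}^{\prime})\vert_{\infty}^{2}=l^{k-1}$ resp. $a(l,g_{m}^{\prime})^{2}=\psi_{m}^{\prime}(l)l^{k-2}$ of Theorem 4.6.17, together with the irreducibility criterion for induced representations; you instead compare the conductor of $\pi_{m,l}$ (which equals $l^{v_{l}(N_{i})}$ by newness at $l$) with the conductor of its central character $\psi_{i,(l)}$. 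Both are standard and equivalent; the paper's version has the advantage of producing the explicit Satake parameter $\alpha_{m,l}$, which is reused later (e.g.\ in Lemma 5.1.2), while yours isolates more clearly why the answer depends only on the fixed pair $(N_{i},m_{i})$.

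One small point where your write-up is vaguer than it needs to be: to get $a(l,G^{(i)}(m))\neq 0$ for \emph{every} $m$ rather than all but finitely many, you gesture at ``the rigidity of conductors proved in the Appendix,'' which is not the relevant tool (the Appendix concerns specializations of Coleman families being $p$-stabilized newforms, which here is already built into the definition of a primitive $p$-adic family). The paper's argument is cleaner: non-vanishing for infinitely many $m$ forces, via Theorem 4.6.17, the condition $v_{l}(N_{i})=v_{l}(m_{i})$ or ($v_{l}(N_{i})=1$ and $v_{l}(m_{i})=0$) on the fixed data $(N_{i},m_{i})$, and the same theorem applied in the converse direction then yields $a(l,g_{m}^{\prime})\neq 0$ for all $m$. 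You do end up citing the right results of Miyake for this, so this is an expository rather than a mathematical gap.
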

\begin{proof}
We put $g_{m}:=G^{(i)}(m)$ and we denote by $g_{m}^{\prime}$ the primitive form attached to $g_{m}$. Because $g_{m}$ is new at $l$, we have $a(l,g_{m})=a(l,g_{m}^{\prime})$. Further, by Hypothesis (7), we have $a(l,g_{m}^{\prime})\neq0$ for all but a finite number of $m\in \mathbb{Z}_{\geq 1}$. In particular, by \cite[Theorem 4.6.17]{Miy06}, the pair $(N_{i},m_{i})$ satisfies either of the following two cases. 
\begin{enumerate}
\item[$(1)^{\prime}$] $v_{l}(N_{i})=v_{l}(m_{i})$,
\item[$(2)^{\prime}$] $v_{l}(N_{i})=1$ and $v_{l}(m_{i})=0$.
\end{enumerate}
Then, by \cite[Theorem 4.6.17]{Miy06}, we have $a(l,g_{m}^{\prime})\neq0$ for all $m\in \mathbb{Z}_{\geq 1}$. We fix a prime $l\vert N_{i}$ and a positive integer $m\in\mathbb{Z}_{\geq 1}$. We can define an unramified character $\alpha_{m,l}:\mathbb{Q}_{l}^{\times}\rightarrow\mathbb{C}^{\times}$ with $\alpha_{m,l}(l)=a(l,g_{m}^{\prime})l^{\frac{1}{2}(1-k^{(i)}(m))}$. Then, by Proposition 2.3.4, $\pi_{m,l}$ is isomorphic to either of the principal series $\pi(\alpha_{m,l},\alpha_{m,l}^{-1}\omega_{m,l})$ or the special representation $\alpha_{m,l}\vert\cdot\vert_{l}^{-\frac{1}{2}}\mathrm{St}$. 

First, we prove (1). If the condition (1) holds, by \cite[Theorem 4.6.17]{Miy06}, we have $\vert a(l,g_{m}^{\prime})\vert_{\infty}^{2}=l^{k^{(i)}(m)-1}$ . Then, we have 
$$\vert\alpha_{m,l}^{2}\omega_{m,l}^{-1}(l)\vert_{\infty}=\vert a(l,g_{m}^{\prime})^{2}l^{1-k^{(i)}(m)}\vert_{\infty}=1.$$
Hence, we have $\alpha_{m,l}^{2}\omega_{m,l}^{-1}\neq\vert\cdot\vert_{l}^{\pm}$ and $\pi_{m,l}$ is isomorphic to the principal series $\pi(\alpha_{m,l},\alpha_{m,l}^{-1}\omega_{m,l})$ ($cf$. \cite[Theorem 4.5.1]{Bum97}).

Next, we prove (2). If the condition (2) holds, by \cite[Theorem 4.6.17]{Miy06}, we have $v_{l}(N_{i})=1$ and $v_{l}(m_{i})=0$. Then, $\psi_{i,\mathbb{A}}$ is unramified at $l$ and hence $\omega_{m,l}$ is unramified. Let $\psi_{m}^{\prime}$ be the primitive character attached to the Nebentypus of $g_{m}$. Because $v_{l}(m_{i})=0$, the modulo of $\psi_{m}^{\prime}$ is prime to $l$. Then, we can define $\psi_{m}^{\prime}(l)$ and we have $\omega_{m,l}(l)=\psi_{m}^{\prime}(l)$. Further by \cite[Theorem 4.6.17]{Miy06}, we have $a(l,g_{m}^{\prime})^{2}=\psi_{m}^{\prime}(l)l^{k^{(i)}(m)-2}$. Then, we have
$$\alpha_{m,l}^{2}\omega_{m,l}^{-1}(l)=a(l,g_{m}^{\prime})^{2}l^{1-k^{(i)}(m)}{\psi_{m}^{\prime}}^{-1}(l)=l^{-1}.$$
Because $\omega_{m,l}$ is unramified, we have $\alpha_{m,l}^{2}\omega_{m,l}^{-1}=\vert\cdot\vert_{l}$. Then, $\pi_{m,l}$ is isomorphic to the special representation $\alpha_{m,l}\vert\cdot\vert_{l}^{-\frac{1}{2}}\mathrm{St}$. 

It is not difficult to prove that the minimality of the representation $\pi_{m,l}$. For each character $\xi_{l}:\mathbb{Q}_{l}^{\times}\rightarrow \mathbb{C}^{\times}$,
$$\pi_{m,l}\otimes\xi_{l}\cong\begin{cases}
\pi(\xi_{l}\alpha_{m,l},\xi_{l}\alpha_{m,l}^{-1}\omega_{m,l})\ &\mathrm{case}\ (1),\\
\xi_{l}\alpha_{m,l}\vert\cdot\vert_{l}^{-\frac{1}{2}}\mathrm{St}\ &\mathrm{case}\ (2).
\end{cases}
$$
Then, by the simple calculation, we can prove that $\pi_{m,l}$ is minimal. We complete the proof.
\end{proof}
The rigidity of automorphic types in a primitive Hida family is proved in \cite[Lemma 2.14]{FO12}. By \cite[Lemma 2.14]{FO12} and Proposition 4.1.1, we can adjoint the level of $G^{(2)}$ and $G^{(3)}$. If necessary, we exchange $\mathbf{I}_{i}$ to a normal finite flat extension for $i=2,3$, and we assume that $\mathbf{I}_{i}$ contains the roots of the polynomials 
$$X^{2}-a(l,G^{(i)})X+\psi_{i}(l)l^{-1}\langle l\rangle^{i}\in\mathbf{I}_{i}[X]\leqno(4.1.1)$$
for each $l\vert N$. We fix a root $\beta_{l}(G^{(i)})\in\mathbf{I}_{i}^{\times}$ of the polynomial $X^{2}-a(l,G^{(i)})X+\psi_{i}(l)l^{-1}\langle l\rangle^{i}$. Let $(d_{1},d_{2},d_{3})$ be the triple of integers defined in (3.1.1). Let $\Sigma_{i,0}^{(\mathrm{I\hspace{-.1em}I}\mathrm{b})}$ be the set defined in (3.1.2) for $i=1,2,3$. By Proposition 4.1.1 and \cite[Lemma 2.14]{FO12}, these $(d_{1},d_{2},d_{3})$ and $\Sigma_{i,0}^{(\mathrm{I\hspace{-.1em}I}\mathrm{b})}$ are independent of the specialization $\underline{Q}=(Q_{1},Q_{m_{2}}^{(2)},Q_{m_{3}}^{(3)})\in\mathfrak{X}_{\mathbf{I}_{1}}\times\mathfrak{X}^{(2)}\times\mathfrak{X}^{(3)}$. Let $V^{(i)}_{m}:R^{(i)}\jump{q}\rightarrow R^{(i)}\jump{q}$ be an $R^{(i)}$-module homomorphism defined by $V^{(i)}_{m}(\sum_{n\geq0}a_{n}q^{n})=\sum_{n\geq 0}a_{n}q^{mn}$ for each $m\in\mathbb{Z}_{\geq 1}$. We define a formal series 
$$G^{(i),*}=\displaystyle{\sum_{I\subset \Sigma_{i}^{(\mathrm{I\hspace{-.1em}I}\mathrm{b})}}}(-1)^{\vert I\vert}\beta_{I}(G^{(i)})^{-1}V^{(i)}_{d_{i}\slash n_{I}}G^{(i)},\leqno(4.1.2)$$
where $n_{I}=\prod_{l\in I}l$, $\beta_{I}(G^{(i)})=\prod_{l\in I}\beta_{l}(G^{(i)})$ for $i=2,3$.
\subsection{The construction of the triple product $p$-adic $L$-function.}
We denote by $R:=\mathbf{I}_{1}\widehat{\otimes}_{\mathcal{O}_{K}}\mathbf{I}_{2}\widehat{\otimes}_{\mathcal{O}_{K}}\mathbf{I}_{3}$ the complete tensor product of $\mathbf{I}_{1},\mathbf{I}_{2}$ and $\mathbf{I}_{3}$ over $\mathcal{O}_{K}$. In this subsection, we construct a triple product $p$-adic $L$-function $L^{F}_{G^{(2)},G^{(3)}}\in R$ attached to $F,G^{(2)}$ and $G^{(3)}$. First, we define a set of interpolation points attached to $F,G^{(2)}$ and $G^{(3)}$.
\begin{Definition}
We define a set of interpolation points of $R$ to be
$$\mathfrak{X}_{R}:=\{\underline{Q}=\left(Q_{1},Q^{(2)}_{m_{2}},Q^{(3)}_{m_{3}}\right)\in\mathfrak{X}_{\mathbf{I}_{1}}\times\mathfrak{X}^{(2)}\times\mathfrak{X}^{(3)}\mid k_{Q_{1}}+k^{(2)}(m_{2})+k^{(3)}(m_{3})\equiv 0 \ (\mathrm{mod}\ 2)\}$$
and a subset of $\mathfrak{X}_{R}^{F}$ consisting of unbalanced points to be
$$\mathfrak{X}_{R}^{F}:=\{(Q_{1},Q^{(2)}_{m_{2}},Q^{(3)}_{m_{3}})\in\mathfrak{X}_{R}\mid k_{Q_{1}}\geq k^{(2)}(m_{2})+k^{(3)}(m_{3})\}.$$
\end{Definition}
We define a formal operator $\mathbf{U}_{R,m}\in \mathrm{End}_{R}(R\jump{q})$ for each $m\in\mathbb{Z}_{\geq 1}$ to be
$$\mathbf{U}_{R,m}(f)=\displaystyle{\sum_{n\geq 0}}a(mn,f)q^{n}$$
for each $f=\sum_{n\geq 0}a(n,f)q^{n}\in R\jump{q}$. Let $\Theta:\mathbb{Z}_{p}^{\times}\rightarrow R^{\times}$ be a character defined by
$$\Theta(z)=\psi_{1,(p)}\omega_{p}^{-a}(z)\langle z{\rangle_{\mathbf{I}_{1}}}^{\frac{1}{2}}(\langle z\rangle^{(2)}\langle z\rangle^{(3)})^{-\frac{1}{2}},$$
for each $z\in \mathbb{Z}_{p}^{\times}$. For each $f\in\displaystyle{\sum_{n\geq 0}}a(n,f)q^{n}\in R\jump{q}$, we define a $\Theta$-twisted form $f\vert\lbrack\Theta\rbrack\in R\jump{q}$ to be
$$f\vert\lbrack\Theta\rbrack=\displaystyle{\sum_{p\nmid n}}\Theta(n)\cdot a(n,f)q^{n}.$$
For each $\underline{Q}=(Q_{1},Q^{(2)}_{m_{2}},Q^{(3)}_{m_{3}})\in\mathfrak{X}_{R}^{F}$, we have $f\vert[\Theta](\underline{Q})=d^{r_{\underline{Q}}}(f(\underline{Q})\vert[\Theta_{\underline{Q}}])$ with the Dirichlet character
$$\Theta_{\underline{Q}}=\psi_{1,(p)}\omega_{p}^{-a-r_{\underline{Q}}}\epsilon_{Q_{1}}^{\frac{1}{2}}{\epsilon^{(2)}_{m_{2}}}^{-\frac{1}{2}}{\epsilon^{(3)}_{m_{3}}}^{-\frac{1}{2}},$$ 
where $r_{\underline{Q}}=\frac{1}{2}(k_{Q_{1}}-k^{(2)}(m_{2})-k^{(3)}(m_{3}))$. We regard $G^{(2)}$ and $G^{(3)}$ as elements of $R\jump{q}$ by natural embeddings $\mathbf{I}_{2}\hookrightarrow R$ and $\mathbf{I}_{3}\hookrightarrow R$. Let $G^{(2),*}$ and $G^{(3),*}$ be the formal series attached to $G^{(2)}$ and $G^{(3)}$ defined in (4.1.2). We set $H:=G^{(2),*}\cdot (G^{(3),*}\vert\lbrack\Theta\rbrack)\in R\jump{q}$.
\begin{Lemma}
We fix an element $\underline{Q}=(Q_{1},Q^{(2)}_{m_{2}},Q^{(3)}_{m_{3}})\in\mathfrak{X}_{R}^{F}$. Let $L$ be a finite extension of $K$ such that $\mathcal{O}_{L}$ contains $Q_{1}(\mathbf{I}_{1}),Q^{(2)}_{m_{2}}(\mathbf{I}_{2})$ and $Q^{(3)}_{m_{3}}(\mathbf{I}_{3})$. Then, the sequence $\{U_{\mathbf{R},p}^{n!}H(\underline{Q}) \}_{n\geq1}$ converges in $\mathcal{O}_{L}\jump{q}$ by the $(p)$-adic topology and the limit of the sequence equals to the Fourier expansion of $e\mathcal{H}(G^{(2),*}(m_{2})\delta_{k^{(3)}(m_{3})}^{r_{\underline{Q}}}G^{(3),*}(m_{3})\vert\Theta_{\underline{Q}})\in eS_{k_{Q_{1}}}(Np^{e_{Q_{1}}},\psi_{1,(p)}\overline{\psi}_{1}^{(p)}\omega_{p}^{k_{Q_{1}}}\epsilon_{Q_{1}})$, with $e_{Q_{1}}:=\mathrm{max}\{1,m_{\epsilon_{Q_{1}}}\}$.
\end{Lemma}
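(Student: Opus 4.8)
The plan is to reduce the claim to a statement about $q$-expansions of $p$-adic cusp forms, obtain convergence and classicality of the limit from Hida's theory, and then match the limit with the asserted classical form by comparing the Atkin--Serre operator $d$ with the Maass--Shimura operator $\delta$ under the holomorphic projection. First I would unwind the specialization: since evaluation at $\underline{Q}$ is a ring homomorphism $R\jump{q}\to\mathcal{O}_L\jump{q}$ and $G^{(2),*}\in\mathbf{I}_2\jump{q}$, we have $H(\underline{Q})=G^{(2),*}(m_2)\cdot(G^{(3),*}\vert[\Theta])(\underline{Q})$, and by the identity recorded just before the statement (which uses Hypothesis (5) to evaluate $\langle n\rangle^{(2)},\langle n\rangle^{(3)}$ together with the definition of $\langle\ \rangle_{\mathbf{I}_1}$) one gets $(G^{(3),*}\vert[\Theta])(\underline{Q})=d^{\,r_{\underline{Q}}}\bigl(G^{(3),*}(m_3)\vert[\Theta_{\underline{Q}}]\bigr)$, hence $H(\underline{Q})=G^{(2),*}(m_2)\cdot d^{\,r_{\underline{Q}}}\bigl(G^{(3),*}(m_3)\vert[\Theta_{\underline{Q}}]\bigr)$. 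By Proposition 4.1.1 the specializations $G^{(2),*}(m_2)$ and $G^{(3),*}(m_3)$ are classical cusp forms of tame level $N$; twisting by the Dirichlet character $\Theta_{\underline{Q}}$ preserves classicality, and applying the Atkin--Serre theta operator $d^{\,r_{\underline{Q}}}=\theta^{\,r_{\underline{Q}}}$ keeps us inside the space of $p$-adic cusp forms, so $H(\underline{Q})$ is the $q$-expansion of a $p$-adic cusp form of tame level $N$, weight $k^{(2)}(m_2)+k^{(3)}(m_3)+2r_{\underline{Q}}=k_{Q_1}$, with coefficients in $\mathcal{O}_L$.

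Next, on the $\mathcal{O}_L$-module of $p$-adic cusp forms of tame level $N$ the operator $\mathbf{U}_{\mathbf{R},p}$ acts as Hida's $U_p$, and $\{U_p^{n!}\}_n$ converges $(p)$-adically to the ordinary idempotent $e$; applied to $H(\underline{Q})$ this yields the asserted convergence in $\mathcal{O}_L\jump{q}$, with limit $eH(\underline{Q})$, an ordinary $p$-adic cusp form of weight $k_{Q_1}\geq2$ (recall $Q_1\in\mathfrak{X}_{\mathbf{I}_1}$). By Hida's classicality theorem for ordinary forms in weight $\geq 2$, $eH(\underline{Q})$ is the $q$-expansion of a classical ordinary cusp form, and tracking the Nebentypus of $G^{(2),*}(m_2)$, the character $\Theta_{\underline{Q}}$, and the $p$-power conductors --- with $e_{Q_1}=\max\{1,m_{\epsilon_{Q_1}}\}$ exactly the $p$-level forced by $\epsilon_{Q_1}$ --- puts it in $eS_{k_{Q_1}}(Np^{e_{Q_1}},\psi_{1,(p)}\overline{\psi}_1^{(p)}\omega_p^{k_{Q_1}}\epsilon_{Q_1})$. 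To identify this limit with $e\mathcal{H}\bigl(G^{(2),*}(m_2)\,\delta_{k^{(3)}(m_3)}^{\,r_{\underline{Q}}}G^{(3),*}(m_3)\vert[\Theta_{\underline{Q}}]\bigr)$, write $\phi$ for that nearly holomorphic cusp form (of weight $k_{Q_1}\geq2$): the $p$-adic avatar sends $\delta_{k^{(3)}(m_3)}^{\,r_{\underline{Q}}}G^{(3),*}(m_3)$ to $\theta^{\,r_{\underline{Q}}}G^{(3),*}(m_3)=d^{\,r_{\underline{Q}}}G^{(3),*}(m_3)$, and since the avatar commutes with the twist by $\Theta_{\underline{Q}}$ and with multiplication by the classical form $G^{(2),*}(m_2)$, the avatar of $\phi$ is precisely $H(\underline{Q})$; Hida's formula for the ordinary projection of the holomorphic projection of a nearly holomorphic form of weight $\geq 2$ then gives $e\mathcal{H}(\phi)=e(H(\underline{Q}))$, both sides being ordinary classical cusp forms of weight $k_{Q_1}$. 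This is the same computation as in the proof of \cite[Proposition 3.7]{Hsi17}, and it is where the unbalanced condition $r_{\underline{Q}}\geq 0$, i.e. $\underline{Q}\in\mathfrak{X}_R^F$, is used.

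The main obstacle is this last step: making precise the compatibility between $\delta_k^{\,r}$ and $d^{\,r}$ under the ordinary holomorphic projection, which must be justified by citing \cite{Hid93} and adapting the relevant part of \cite[\S3]{Hsi17}. A secondary, more routine but delicate point is the level and Nebentypus bookkeeping: verifying that the $p$-level is exactly $p^{e_{Q_1}}$ with $e_{Q_1}=\max\{1,m_{\epsilon_{Q_1}}\}$ requires chasing the conductor of $\Theta_{\underline{Q}}$ and of the relevant Hecke characters through the construction of $G^{(2),*}$ and $G^{(3),*}$ in \S4.1.
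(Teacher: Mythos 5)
Your proposal is correct and follows essentially the same route as the paper: unwind the specialization to get $H(\underline{Q})=G^{(2),*}(m_2)\cdot d^{\,r_{\underline{Q}}}(G^{(3),*}(m_3)\vert[\Theta_{\underline{Q}}])$, use Hida's comparison of $d^{\,r}$ with $\delta^{\,r}$ under holomorphic projection (the paper cites \cite[Lemma 5.2]{Hid88c} for $H(\underline{Q})=\mathcal{H}(\cdots)+dg'$ and \cite[(6.12)]{Hid88c} for $ed=0$), and conclude via the $(p)$-adic convergence $U_p^{n!}\to e$ from \cite[(4.3)]{Hid88c}. The step you flag as the main obstacle is exactly the one the paper disposes of by these citations, so your outline matches the published argument.
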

\begin{proof}
It is known that $H(\underline{Q})$ is a Fourier expansion of a $p$-adic modular form and by \cite[Lemma 5.2]{Hid88c}, we have
$$H(\underline{Q})=\mathcal{H}(G^{(2),*}(m_{2})\delta_{k^{(3)}(m_{3})}^{r_{\underline{Q}}}G^{(3),*}(m_{3})\vert\Theta_{\underline{Q}})+dg^{\prime}_{\underline{Q}}\in L\jump{q},$$
where $g^{\prime}_{\underline{Q}}\in L\jump{q}$ is a $p$-adic modular form. By \cite[(6.12)]{Hid88c}, $ed=0$ and hence $eH(\underline{Q})=e\mathcal{H}(G^{(2),*}(m_{2})\delta_{k^{(3)}(m_{3})}^{r_{\underline{Q}}}G^{(3),*}(m_{3})\vert\Theta_{\underline{Q}})$. Further, by \cite[(4.3)]{Hid88c}, the sequence $\{U_{R,p}^{n!}H(\underline{Q}) \}_{n\geq1}$ converges in $\mathcal{O}_{L}\jump{q}$ by the $(p)$-adic topology and the limit of the sequence equals to $eH(\underline{Q})$. We complete the proof.
\end{proof}
To construct a triple product $p$-adic $L$-function $L^{F}_{G^{(2)},G^{(3)}}\in R$, we prove the following lemma and proposition. 
\begin{Lemma}
There exists a unique element $H^{\mathrm{ord}}\in R\jump{q}$ such that the specialization of $H^{\mathrm{ord}}$ at each $\underline{Q}=(Q_{1},Q^{(2)}_{m_{2}},Q^{(3)}_{m_{3}})\in\mathfrak{X}_{R}^{F}$ equals to the Fourier expansion of the modular form $e\mathcal{H}(G^{(2),*}(m_{2})\delta_{k^{(3)}(m_{3})}^{r_{\underline{Q}}}G^{(3),*}(m_{3})\vert\Theta_{\underline{Q}})$.
\end{Lemma}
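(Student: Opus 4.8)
The plan is to define $H^{\mathrm{ord}}$ as the image of $H$ under Hida's ordinary projector $e=\lim_{n\to\infty}U_{p}^{n!}$ (\cite[(4.3)]{Hid88c}), that is, $H^{\mathrm{ord}}:=\lim_{n\to\infty}\mathbf{U}_{R,p}^{n!}H$, and then to verify (i) that this limit exists in $R\jump{q}$, (ii) that its specialization at each $\underline{Q}\in\mathfrak{X}_{R}^{F}$ is the claimed modular form, and (iii) that it is the unique element of $R\jump{q}$ with that interpolation property. Point (ii) is essentially free given the preceding Lemma, which already computes $\lim_{n\to\infty}\mathbf{U}_{R,p}^{n!}H(\underline{Q})=e\mathcal{H}(G^{(2),*}(m_{2})\delta_{k^{(3)}(m_{3})}^{r_{\underline{Q}}}G^{(3),*}(m_{3})\vert[\Theta_{\underline{Q}}])$; the content lies in (i).

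For (i) I would work modulo powers of the maximal ideal $\mathfrak{m}_{R}$ of $R$ and patch. Since each $\mathbf{I}_{i}$ is module-finite over $\Lambda=\mathcal{O}_{K}[[T]]$, the ring $R=\mathbf{I}_{1}\widehat{\otimes}_{\mathcal{O}_{K}}\mathbf{I}_{2}\widehat{\otimes}_{\mathcal{O}_{K}}\mathbf{I}_{3}$ is module-finite over $\mathcal{O}_{K}[[T_{1},T_{2},T_{3}]]$, hence a complete Noetherian local ring with finite residue field; so each $R/\mathfrak{m}_{R}^{d}$ is a finite ring, $R=\varprojlim_{d}R/\mathfrak{m}_{R}^{d}$, and $R\jump{q}=\varprojlim_{d}(R/\mathfrak{m}_{R}^{d})\jump{q}$. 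The one point needing care is that $H\bmod\mathfrak{m}_{R}^{d}$ is the $q$-expansion of a $p$-adic cusp form of tame level $N$ over $R/\mathfrak{m}_{R}^{d}$: this follows from the construction, since $G^{(i),*}$ is a finite $\mathbf{I}_{i}$-linear combination, with unit coefficients $\beta_{I}(G^{(i)})^{-1}$, of the $q$-substitutions $V^{(i)}_{m}G^{(i)}$; the $\Theta$-twist $G^{(3),*}\vert[\Theta]$ is again a $p$-adic cusp form, the operator $[\Theta]$ attached to the continuous character $\Theta\colon\mathbb{Z}_{p}^{\times}\to R^{\times}$ being a $p$-adic limit of integral powers of the Atkin--Serre $\theta$-operator; and products of $p$-adic cusp forms are $p$-adic cusp forms. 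Granting this, Hida's projector $e=\lim_{n}U_{p}^{n!}$, which is defined over any $p$-adically complete base and in particular over the Artinian ring $R/\mathfrak{m}_{R}^{d}$, shows that $\{\mathbf{U}_{R,p}^{n!}(H\bmod\mathfrak{m}_{R}^{d})\}_{n}$ stabilizes in $(R/\mathfrak{m}_{R}^{d})\jump{q}$; these stable values are compatible as $d$ grows, so $H^{\mathrm{ord}}$ exists in $R\jump{q}$, and by Hida's finiteness of ordinary $\mathbf{I}_{1}$-adic cusp forms it lies in the finitely generated $R$-submodule $\mathbf{S}^{\mathrm{ord}}(N,\psi_{1,(p)}\overline{\psi}_{1}^{(p)},\mathbf{I}_{1})\otimes_{\mathbf{I}_{1}}R$ of $R\jump{q}$.

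For (ii), the specialization map $R\jump{q}\to\mathcal{O}_{L}\jump{q}$ attached to $\underline{Q}$ is continuous and commutes with $\mathbf{U}_{R,p}$, so $H^{\mathrm{ord}}(\underline{Q})=\lim_{n\to\infty}\mathbf{U}_{R,p}^{n!}(H(\underline{Q}))$, which by the preceding Lemma equals $e\mathcal{H}(G^{(2),*}(m_{2})\delta_{k^{(3)}(m_{3})}^{r_{\underline{Q}}}G^{(3),*}(m_{3})\vert[\Theta_{\underline{Q}}])$, as required. For (iii), uniqueness follows from the Zariski density of $\mathfrak{X}_{R}^{F}$ in $\mathrm{Spec}\,R$: the arithmetic points $\mathfrak{X}_{\mathbf{I}_{1}}$ are dense in $\mathrm{Spec}\,\mathbf{I}_{1}$ and the conditions on $k_{Q_{1}}$ and on parity still leave a dense subset of them, while $\mathfrak{X}^{(2)}$ and $\mathfrak{X}^{(3)}$ are dense in $\mathrm{Spec}\,\mathbf{I}_{2}$ and $\mathrm{Spec}\,\mathbf{I}_{3}$ in the cases considered; hence an element of $R\jump{q}$ vanishing at every $\underline{Q}\in\mathfrak{X}_{R}^{F}$ is $0$. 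I expect the crux to be precisely the point flagged above --- recognising $H\bmod\mathfrak{m}_{R}^{d}$ as a $p$-adic cusp form uniformly in $d$, so that the ordinary projector genuinely applies to $H$ --- after which the rest is formal.
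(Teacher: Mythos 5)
Your overall architecture (define $H^{\mathrm{ord}}$ as a limit of $\mathbf{U}_{R,p}^{n!}H$ in a completion of $R\jump{q}$, use Lemma 4.2.2 for the specializations, and use density of $\mathfrak{X}_{R}^{F}$ for uniqueness) matches the paper, but the completion you work with is different, and that is exactly where a genuine gap appears. You reduce modulo powers of $\mathfrak{m}_{R}$ and assert that $H\bmod\mathfrak{m}_{R}^{d}$ is the $q$-expansion of a $p$-adic cusp form over the Artinian ring $R/\mathfrak{m}_{R}^{d}$, so that Hida's projector applies directly to the family. Your justification of this presupposes that $G^{(i)}$ itself is a Katz-type $p$-adic cusp form over $\mathbf{I}_{i}$ (so that $V^{(i)}_{m}G^{(i)}$, the $\Theta$-twist, and the product are again such). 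But that is not among the hypotheses: $G^{(i)}$ is only a formal series in $\mathbf{I}_{i}\jump{q}$ whose specializations at the countable set $\mathfrak{X}^{(i)}$ are classical forms. For a Coleman family, say, there is no given geometric structure on $G^{(i)}\bmod\mathbf{m}_{i}^{d}$ making the convergence of $U_{p}^{n!}$ on the family (rather than on each specialization) automatic; indeed, establishing that the family lands in a space of $\mathbf{I}_{1}$-adic forms is the content of the subsequent Proposition 4.2.4, proved only \emph{after} $H^{\mathrm{ord}}$ exists, by a separate interpolation argument. So the step you flag as the crux is not filled in, and the argument you sketch for it is essentially circular.

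The paper's proof sidesteps this by completing along the interpolation ideals instead of along $\mathfrak{m}_{R}$: it sets $I_{\underline{Q}}=(\mathrm{Ker}\,Q_{1},\mathrm{Ker}\,Q^{(2)}_{m_{2}},\mathrm{Ker}\,Q^{(3)}_{m_{3}})$, lets $\mathfrak{B}$ be the set of finite intersections of the $I_{\underline{Q}}$, and uses $\bigcap_{J\in\mathfrak{B}}J=\{0\}$ to identify $R\jump{q}$ with $\plim[J\in\mathfrak{B}]R\jump{q}\otimes_{R}(R/J)$. Modulo a single $I_{\underline{Q}}$ one is over (essentially) a DVR and $H(\underline{Q})$ is an honest $p$-adic cusp form, so Lemma 4.2.2 gives convergence there; for an intersection $J\cap J'$ one uses the exact sequence $0\to R/(J\cap J')\to R/J\times R/J'\to R/(J+J')$ and the fact that the two limits of $p_{J}(U_{R,p}^{n!}H)$ and $p_{J'}(U_{R,p}^{n!}H)$ agree modulo $J+J'$ to glue. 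In other words, the only analytic input is the convergence of the ordinary projector at each individual specialization, and the passage to the family is purely algebraic. If you want to keep your reduction modulo $\mathfrak{m}_{R}^{d}$, the correct route is to note that each $\mathfrak{m}_{R}^{d}$ contains some $J\in\mathfrak{B}$ (Chevalley's theorem for the complete Noetherian local ring $R$), and deduce stabilization mod $\mathfrak{m}_{R}^{d}$ from stabilization mod $J$ --- but that is the paper's argument, not a direct appeal to the ordinary projector on $H$ over $R$.
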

\begin{proof}
Let $\mathfrak{m}_{R}$ be the maximal ideal of $R$. Let $I_{\underline{Q}}$ be the ideal of $R$ generalized by $\mathrm{Ker}Q_{1}$,$\mathrm{Ker}Q^{(2)}_{m_{2}}$ and $\mathrm{Ker}Q^{(3)}_{m_{3}}$ for each $\underline{Q}=(Q_{1},Q^{(2)}_{m_{2}},Q^{(3)}_{m_{3}})\in\mathfrak{X}_{R}^{F}$. We denote by $\mathfrak{B}$ the set of finite intersections of $I_{\underline{Q}}$ for $\underline{Q}\in \mathfrak{X}_{R}^{F}$. Then, we can easily check that $\displaystyle{\cap_{J\in\mathfrak{B}}}J=\{0\}$. Further, we have the natural isomorphism $R\cong\plim[J\in\mathfrak{B}](R\slash J)$. In particular, we have
\begin{align*}
R\jump{q}\cong\plim[J\in\mathfrak{B}]R\jump{q}\otimes_{R}(R\slash J).
\end{align*}
For each $J=\cap_{i=1}^{m}I_{\underline{Q}_{i}}\in\mathfrak{B}$, it suffices to prove that there exists a unique element $H_{J}^{\mathrm{ord}}\in R\jump{q}\otimes_{R}(R\slash J)$ such that the image of $H_{J}^{\mathrm{ord}}$ by the natural embedding $i_{J}: R\jump{q}\otimes_{R}(R\slash J)\hookrightarrow\displaystyle{\prod_{i=1}^{m}}(R\jump{q}\otimes_{R}R\slash I_{\underline{Q}_{i}})$ equals to $\left[e(H(\underline{Q}_{i}))\right]_{i=1}^{m}$. The uniqueness of $H_{J}^{\mathrm{ord}}$ is trivial. We prove the existence of $H_{J}^{\mathrm{ord}}$.

Let $p_{J}:R\jump{q}\rightarrow R\otimes_{R}(R\slash J)$ be the natural projection. If $J=I_{\underline{Q}}$ for $\underline{Q}\in \mathfrak{X}_{R}^{F}$, by Lemma 4.2.2, there exists an element $H^{\mathrm{ord}}_{J}\in R\jump{q}\otimes_{R}(R\slash J)$ such that $H^{\mathrm{ord}}_{J}(\underline{Q})=e(H(\underline{Q}))$. Further, we have $\displaystyle{\lim_{n\to \infty}} p_{J}(U_{R,p}^{n!}H)=H^{\mathrm{ord}}_{J}$ by the $\mathfrak{m}_{R}$-adic topology. We assume that there exist elements $H^{\mathrm{ord}}_{J}=\displaystyle{\lim_{n\to \infty}} p_{J}(U_{R,p}^{n!}H)\in R\jump{q}\otimes(R\slash J)$ and $H^{\mathrm{ord}}_{J^{\prime}}=\displaystyle{\lim_{n\to \infty}} p_{J^{\prime}}(U_{R,p}^{n!}H)\in R\jump{q}\otimes(R\slash J^{\prime})$ for a pair $(J,J^{\prime})\in \mathcal{B}\times \mathcal{B}$. We define the $R$-linear map:
$$
\begin{array}{ccc}
(R\jump{q}\otimes_{R}(R\slash J))\times (R\jump{q}\otimes_{R}(R\slash J^{\prime}))&\stackrel{i_{J,J^{\prime}}}{\longrightarrow} &(R\jump{q}\otimes_{R}(R\slash J+J^{\prime}))\\
\rotatebox{90}{$\in$} & & \rotatebox{90}{$\in$} \\
(a,b) & \longmapsto & a-b
\end{array}.
$$
Then, we have $i_{J,J^{\prime}}(H^{\mathrm{ord}}_{J},H^{\mathrm{ord}}_{J^{\prime}})=\displaystyle{\lim_{n\to \infty}}i_{J,J^{\prime}}(p_{J}(U_{R,p}^{n!}H),p_{J^{\prime}}(U_{R,p}^{n!}H))=0$. Further, because $\mathrm{Ker}\ i_{J,J^{\prime}}\cong R\jump{q}\otimes_{R}(R\slash J\cap J^{\prime})$, there exists a unique element $H^{\mathrm{ord}}_{J\cap J^{\prime}}\in R\jump{q}\otimes_{R}(R\slash J\cap J^{\prime})$ such that the image of $H^{\mathrm{ord}}_{J\cap J^{\prime}}$ in $(R\jump{q}\otimes_{R}(R\slash J))\times (R\jump{q}\otimes_{R}(R\slash J^{\prime}))$ equals to $(H^{\mathrm{ord}}_{J},H^{\mathrm{ord}}_{J^{\prime}})$. In particular, we have $H^{\mathrm{ord}}_{J\cap J^{\prime}}=\displaystyle{\lim_{n\to \infty}}p_{J\cap J^{\prime}}(U_{R,p}^{n!}H)$. Then, for each $J=\cap_{i=1}^{m}I_{\underline{Q}_{i}}\in\mathcal{B}$, there exists a unique element $H^{\mathrm{ord}}_{J}\in R\jump{q}\otimes_{R}(R\slash J)$ such that the image of $H_{J}^{\mathrm{ord}}$ by the natural embedding $i_{J}: R\jump{q}\otimes_{R}(R\slash J)\hookrightarrow\displaystyle{\prod_{i=1}^{m}}(R\jump{q}\otimes_{R}R\slash I_{\underline{Q}_{i}})$ equals to $\left[e(H(\underline{Q}_{i}))\right]_{i=1}^{m}$. We complete the proof.
\end{proof}
\begin{Proposition}
The power series $H^{\mathrm{ord}}$ is an element of $\mathbf{S}^{\mathrm{ord}}(N,\psi_{1,(p)}\overline{\psi}_{1}^{(p)},\mathbf{I}_{1})\widehat{\otimes}_{\mathbf{I}_{1}}R.$ 
\end{Proposition}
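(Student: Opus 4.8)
The plan is to recognize $H^{\mathrm{ord}}$ as the ordinary part $eH$ of the cuspidal $R$-adic $p$-adic modular form $H=G^{(2),*}\cdot(G^{(3),*}\vert[\Theta])$ and then place it inside the finite module predicted by Hida theory. Recall the structural input: the $\Lambda$-module $\mathbf{S}^{\mathrm{ord}}(N,\psi_{1,(p)}\overline{\psi}_1^{(p)},\Lambda)$ is free of some finite rank $d$, one has $\mathbf{S}^{\mathrm{ord}}(N,\psi_{1,(p)}\overline{\psi}_1^{(p)},\mathbf{I}_1)=\mathbf{S}^{\mathrm{ord}}(N,\psi_{1,(p)}\overline{\psi}_1^{(p)},\Lambda)\otimes_\Lambda\mathbf{I}_1$, and the $q$-expansion map is injective. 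Hence $\mathcal{M}:=\mathbf{S}^{\mathrm{ord}}(N,\psi_{1,(p)}\overline{\psi}_1^{(p)},\mathbf{I}_1)\widehat{\otimes}_{\mathbf{I}_1}R\cong\mathbf{S}^{\mathrm{ord}}(N,\psi_{1,(p)}\overline{\psi}_1^{(p)},\Lambda)\otimes_\Lambda R$ is free of rank $d$ over $R$, $q$-expansion embeds it in $R\jump{q}$, and fixing an $\mathbf{I}_1$-basis $\mathbf{h}_1,\dots,\mathbf{h}_d$, Hida's vertical control theorem gives for every arithmetic $Q_1\in\mathfrak{X}_{\mathbf{I}_1}$ a $q$-expansion-compatible isomorphism $\mathcal{M}\otimes_{R,\underline{Q}}\overline{\mathbb{Q}}_p=\mathbf{S}^{\mathrm{ord}}(N,\psi_{1,(p)}\overline{\psi}_1^{(p)},\mathbf{I}_1)\otimes_{\mathbf{I}_1,Q_1}\overline{\mathbb{Q}}_p\xrightarrow{\sim}eS_{k_{Q_1}}(Np^{e_{Q_1}},\psi_{1,(p)}\overline{\psi}_1^{(p)}\omega_p^{-k_{Q_1}}\epsilon_{Q_1})$; in particular $\{\mathbf{h}_j(Q_1)\}_j$ is a basis of this $d$-dimensional space for every $Q_1$.

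Next I would reconstruct $H^{\mathrm{ord}}$ from its specializations. By Lemma 4.2.3, for each $\underline{Q}=(Q_1,Q^{(2)}_{m_2},Q^{(3)}_{m_3})\in\mathfrak{X}_R^F$ the series $H^{\mathrm{ord}}(\underline{Q})$ is the $q$-expansion of an element of $eS_{k_{Q_1}}(Np^{e_{Q_1}},\psi_{1,(p)}\overline{\psi}_1^{(p)}\omega_p^{-k_{Q_1}}\epsilon_{Q_1})$ (the Nebentypus here being $\psi_{1,(p)}\overline{\psi}_1^{(p)}\omega_p^{-k_{Q_1}}\epsilon_{Q_1}$ by Hypothesis (1) and the formula for $\Theta_{\underline{Q}}$), so $H^{\mathrm{ord}}(\underline{Q})=\sum_j\delta_j(\underline{Q})\mathbf{h}_j(Q_1)$ for unique $\delta_j(\underline{Q})\in\overline{\mathbb{Q}}_p$. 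Choose indices $n_1,\dots,n_d$ with $\Delta:=\det\bigl(a(n_i,\mathbf{h}_j)\bigr)_{i,j}\neq0$ in $\Lambda$ (possible by injectivity of $q$-expansion); since $R$ is flat over $\mathbf{I}_1\supseteq\Lambda$ and $\Lambda$ is a domain, $\Delta$ is a nonzerodivisor in $R$. Set $M=\bigl(a(n_i,\mathbf{h}_j)\bigr)$, $\mathbf{c}=\bigl(a(n_i,H^{\mathrm{ord}})\bigr)_i\in R^d$, $\gamma=M^{-1}\mathbf{c}\in R[1/\Delta]^d$ and $\widetilde{H}=\sum_j\gamma_j\mathbf{h}_j$. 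Comparing the $n_i$-th coefficients after specializing at any $\underline{Q}$ with $\Delta(Q_1)\neq0$ gives $\gamma_j(\underline{Q})=\delta_j(\underline{Q})$, hence $\widetilde{H}(\underline{Q})=H^{\mathrm{ord}}(\underline{Q})$ there. Since $\bigcap_{\underline{Q}\in\mathfrak{X}_R^F}I_{\underline{Q}}=\{0\}$ (from the proof of Lemma 4.2.3) and $\Delta$ is a nonzerodivisor, the identity persists after discarding the finitely many $Q_1$ with $\Delta(Q_1)=0$, and clearing the $\Delta$-denominators yields $H^{\mathrm{ord}}=\widetilde{H}=\sum_j\gamma_j\mathbf{h}_j$ in $R[1/\Delta]\jump{q}$.

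The hard part will be the last step: removing the spurious $\Delta$-denominators and confirming that the component containing $H^{\mathrm{ord}}$ is precisely the $\mathbf{I}_1$-one. For integrality I would use that $H^{\mathrm{ord}}=eH$ for Hida's ordinary idempotent $e=\lim U_{R,p}^{n!}$ (this is exactly how $H^{\mathrm{ord}}$ was produced in Lemma 4.2.3), where $H=G^{(2),*}\cdot(G^{(3),*}\vert[\Theta])$ is a cuspidal $R$-adic $p$-adic modular form of tame level $N$ — a product of two $R$-adic cusp forms of tame level $N$, twisted by the continuous character $\Theta\colon\mathbb{Z}_p^\times\to R^\times$; hence $H^{\mathrm{ord}}$ lies in the image of $e$ on Hida's $q$-expansion-injective, $p$-adically complete space of $R$-adic cusp forms, and that image is a direct summand, hence saturated in $R\jump{q}$, forcing $\gamma_j\in R$. (Equivalently, Hida's duality $\mathbf{S}^{\mathrm{ord}}(N,\chi,\Lambda)\cong\operatorname{Hom}_\Lambda(\mathbf{T}^{\mathrm{ord}}(N,\chi,\Lambda),\Lambda)$ characterizes $\mathcal{M}$ as the $x\in R\jump{q}$, ordinary of tame level $N$, with $T\mapsto a(1,Tx)$ being $R$-valued — a condition $H^{\mathrm{ord}}$ visibly meets.) Finally, to pin down the component, compute the weight character of $H^{\mathrm{ord}}$ from its specializations: the $p$-part of the Nebentypus of $H^{\mathrm{ord}}(\underline{Q})$ is $\psi_{1,(p)}\omega_p^{-k_{Q_1}}\epsilon_{Q_1}$, so the diamond action sends $z\in\mathbb{Z}_p^\times$ to $\psi_{1,(p)}(z)\,(z\omega_p^{-1}(z))^{k_{Q_1}}\epsilon_{Q_1}(z)=Q_1\!\bigl(\psi_{1,(p)}(z)\langle z\rangle_{\mathbf{I}_1}\bigr)$, i.e. the weight character is $z\mapsto\psi_{1,(p)}(z)\langle z\rangle_{\mathbf{I}_1}$, which factors through $\mathbf{I}_1$ and is exactly the one attached to $\mathbf{S}^{\mathrm{ord}}(N,\psi_{1,(p)}\overline{\psi}_1^{(p)},\mathbf{I}_1)$; likewise the prime-to-$p$ tame character of each $H^{\mathrm{ord}}(\underline{Q})$ is $\overline{\psi}_1^{(p)}$. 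Therefore $H^{\mathrm{ord}}$ lands in $\mathbf{S}^{\mathrm{ord}}(N,\psi_{1,(p)}\overline{\psi}_1^{(p)},\mathbf{I}_1)\widehat{\otimes}_{\mathbf{I}_1}R$, as asserted. The main obstacle, beyond these bookkeeping identities, is to make the saturation argument airtight with coefficients in the ring $R$ (which need not be a normal domain), which is why one leans on the idempotent/direct-summand description rather than on a bare Cramer's-rule reconstruction.
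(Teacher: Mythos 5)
Your first half is sound and runs parallel to the paper's: both arguments exploit that $\mathbf{S}^{\mathrm{ord}}(N,\psi_{1,(p)}\overline{\psi}_{1}^{(p)},\mathbf{I}_{1})$ is finite free, that $q$-expansion is injective, and that the control theorem identifies the specializations of a basis with a basis of the classical ordinary space; a determinant/Cramer argument then places $H^{\mathrm{ord}}$ in $\mathcal{M}[1/\Delta]$ for some nonzero $\Delta\in\mathbf{I}_{1}$ (the paper's version of this is the step ``there exists $a\neq 0$ with $af\in\mathbf{S}^{\mathrm{ord}}(\cdots)$'', after first descending from $R$ to $R_{0}=\mathcal{O}_{K}\jump{X_{1},X_{2},X_{3}}$ by a trace/dual-basis trick). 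The genuine gap is the last step, which is where all the content of the proposition lives: removing the denominator $\Delta$, i.e.\ showing $\mathcal{M}[1/\Delta]\cap R\jump{q}=\mathcal{M}$. Your justification --- ``$H^{\mathrm{ord}}=eH$ lies in the image of $e$ on Hida's $p$-adically complete space of $R$-adic cusp forms, and that image is a direct summand, hence saturated'' --- is circular. To invoke it you must first exhibit a $p$-adically complete $R$-module $\mathcal{S}$ of ``$R$-adic $p$-adic cusp forms'' such that (i) $H=G^{(2),*}\cdot(G^{(3),*}\vert[\Theta])$ belongs to $\mathcal{S}$, (ii) $e=\lim U_{R,p}^{n!}$ converges on $\mathcal{S}$, and (iii) $e\mathcal{S}$ equals $\mathbf{S}^{\mathrm{ord}}\widehat{\otimes}_{\mathbf{I}_{1}}R$ exactly. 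But $H^{\mathrm{ord}}$ was only constructed in Lemma 4.2.3 as a compatible system in $\plim_{J}R\jump{q}\otimes_{R}(R/J)$, not as $eH$ for a globally defined idempotent on a known $R$-module; establishing (i)--(iii) for the mixed-variable object $H$ (whose $\mathbf{I}_{1}$-dependence enters only through $\Theta$, and whose specializations are non-classical $p$-adic forms) is precisely what the proposition asserts. Being a direct summand of a space of $p$-adic cusp forms would in any case give saturation inside that space, not inside all of $R\jump{q}$, which is what your Cramer step requires. The alternative duality characterization you offer has the same problem in disguise: the identity $\mathcal{M}=\{x:\ T\mapsto a(1,Tx)\ \text{is } R\text{-valued}\}$ needs the Hecke algebra to be spanned as a module by the $T_{n}$ after base change to $R$, and needs the base change of the $\mathbf{I}_{1}$-saturation of $\mathbf{S}^{\mathrm{ord}}$ in $\mathbf{I}_{1}\jump{q}$ to commute with the infinite product $R\jump{q}=\prod_{n}R$, neither of which is free (tensor products do not commute with infinite products, and $R$ is only flat, not finite, over $\mathbf{I}_{1}$).

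For comparison, the paper removes the denominators by an entirely explicit device. After reducing to $R_{0}$-coefficients, it specializes two of the three auxiliary variables, uses the rank argument to clear denominators in the one remaining $\mathbf{I}_{1}$-variable (where saturation of $\mathbf{S}^{\mathrm{ord}}$ really is essentially the definition, since ``classical specialization at all but finitely many arithmetic points'' is insensitive to finitely many bad points), and then reassembles the dependence on $X_{2}$ and $X_{3}$ one variable at a time via Newton divided differences: $H^{(i),m_{3}}=\sum_{m}H^{(i),m_{3}}_{m,m+1}\prod_{j\le m}(X_{2}-c_{j})$ with each $H^{(i),m_{3}}_{m,m+1}$ an honest element of $\mathbf{S}^{\mathrm{ord}}(\cdots,\mathcal{O}_{K}[b^{(3)}_{m_{3}}]\jump{X_{1}})$ and $\prod_{j\le m}(X_{2}-c_{j})\to 0$ in the maximal-adic topology. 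This exhibits membership in the completed tensor product directly as a convergent series, which is exactly the point your saturation claim would need to, but does not, deliver. If you want to salvage your route, the work is concentrated in proving the saturation statement $\mathcal{M}[1/\Delta]\cap R\jump{q}=\mathcal{M}$ honestly; the divided-difference expansion is one way to do that, and I do not see a shorter one.
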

\begin{proof}
We identify the Iwasawa algebra $\Lambda$ with $\mathcal{O}_{K}\jump{X}$ by the isomorphism $[1+p] \mapsto 1+X$ and we regard $\mathbf{I}_{i}$ as the normal finite flat extension of $\mathcal{O}_{K}\jump{X_{i}}$ for $i=1,2,3$. Let $\alpha_{1},\alpha_{2},\ldots,\alpha_{n}$ be the bases of $R$ over $R_{0}=\mathcal{O}_{K}\jump{X_{1},X_{2},X_{3}}$. We put 
$$H^{\mathrm{ord}}=\sum_{i=1}^{n}H^{(i)}\alpha_{i},$$
where $H^{(i)}\in R_{0}\jump{q}$ for each $i=1,\ldots,n$. We put $L=\mathrm{Frac}{R}$ and $L_{0}=\mathrm{Frac}{R_{0}}$. Let $\mathrm{Tr}_{L\slash L_{0}}:L\rightarrow L_{0}$ be the trace map and $\alpha_{1}^{*},\alpha_{2}^{*},\ldots,\alpha_{n}^{*}$ be the dual bases of $\alpha_{1},\alpha_{2}\ldots,\alpha_{n}$ with respect to $\mathrm{Tr}_{L\slash L_{0}}$.Then, we have
$$H^{(i)}(\underline{Q})=\mathrm{Tr}(H(\underline{Q})\alpha_{i}^{*}(\underline{Q}))$$
for all but a finite number of $\underline{Q}=(Q_{1},Q^{(2)}_{m_{2}},Q^{(3)}_{m_{3}})\in\mathfrak{X}_{R}^{F}$. Further, $\mathrm{Tr}(H(\underline{Q})\alpha_{i}^{*}(\underline{Q}))$ is a $q$-expansion of an element of $S^{\mathrm{ord}}_{k_{Q_{1}}}(Np^{e_{Q_{1}}},\epsilon_{Q_{1}}\psi_{1,(p)}\overline{\psi}_{1}^{(p)}\omega_{p}^{-k_{Q_{1}}})$. It suffices to prove 
$$H^{(i)}\in \mathbf{S}^{\mathrm{ord}}(N,\psi_{1,(p)}\overline{\psi}_{1}^{(p)},\mathcal{O}_{K}\jump{X_{1}})\widehat{\otimes}_{\mathcal{O}_{K}\jump{X_{1}}}R_{0}$$
for each $i=1,\ldots,n$.

For each $m_{2},m_{3}\in\mathbb{Z}_{\geq 1}$, let $H^{(i)}_{m_{2},m_{3}}\in\mathcal{O}_{K}[b^{(2)}_{m_{2}},b^{(3)}_{m_{3}}]\jump{X_{1}}\jump{q}$ be the specialization of $H^{(i)}$ at $(Q^{(2)}_{m_{2}},Q^{(3)}_{m_{3}})$, where $b^{(2)}_{m_{2}}:=Q^{(2)}_{m_{2}}(X_{2})$ and $b^{(3)}_{m_{3}}:=Q^{(3)}_{m_{3}}(X_{3})$. First, we prove  $H^{(i)}_{m_{2},m_{3}}\in \mathbf{S}^{\mathrm{ord}}(N,\psi_{1,(p)}\overline{\psi}_{1}^{(p)},\mathcal{O}_{K}[b^{(2)}_{m_{2}},b^{(3)}_{m_{3}}]\jump{X_{1}})$. We define a subset $\mathfrak{X}_{m_{2},m_{3}}^{F}$ of arithmetic points of $\mathbf{I}_{1}$ to be
$$\mathfrak{X}_{m_{2},m_{3}}^{F}:=\left\{Q\in\mathfrak{X}_{\mathbf{I}_{1}}\ \middle|\ (Q,Q^{(2)}_{m_{2}},Q^{(3)}_{m_{3}})\in \mathfrak{X}_{R}^{F} \right\}.$$
For any $k\in\mathbb{Z}_{\geq 1}$, there exists an arithmetic point $Q\in\mathfrak{X}_{\mathbf{I}}$ with $k_{Q}=k$. Then, we have $\#\mathfrak{X}_{m_{2},m_{3}}^{F}=\infty$. Let $\mathbf{S}^{\mathrm{ord}}_{m_{2},m_{3}}\subset \mathcal{O}_{K}[b^{(2)}_{m_{2}},b^{(3)}_{m_{3}}]\jump{X_{1}}\jump{q}$ be an $\mathcal{O}_{K}[b^{(2)}_{m_{2}},b^{(3)}_{m_{3}}]\jump{X_{1}}$-module consisting of elements $f\in\mathcal{O}_{K}[b^{(2)}_{m_{2}},b^{(3)}_{m_{3}}]\jump{X_{1}}\jump{q}$ such that, for all but a finite number of $Q\in\mathfrak{X}_{m_{2},m_{3}}^{F}$, $f(Q)$ equals to a specialization of an element of $\mathbf{S}^{\mathrm{ord}}(N,\psi_{1,(p)}\overline{\psi}_{1}^{(p)},\mathcal{O}_{K}[b^{(2)}_{m_{2}},b^{(3)}_{m_{3}}]\jump{X_{1}})$ at $Q$. Then, we have $\mathbf{S}^{\mathrm{ord}}(N,\psi_{1,(p)}\overline{\psi}_{1}^{(p)},\mathcal{O}_{K}[b^{(2)}_{m_{2}},b^{(3)}_{m_{3}}]\jump{X_{1}})\subset \mathbf{S}^{\mathrm{ord}}_{m_{2},m_{3}}$ and $H^{(i)}_{m_{2},m_{3}}\in\mathbf{S}^{\mathrm{ord}}_{m_{2},m_{3}}$. It suffices to prove that we have $\mathbf{S}^{\mathrm{ord}}(N,\psi_{1,(p)}\overline{\psi}_{1}^{(p)},\mathcal{O}_{K}[b^{(2)}_{m_{2}},b^{(3)}_{m_{3}}]\jump{X_{1}})=\mathbf{S}^{\mathrm{ord}}_{m_{2},m_{3}}$. Let $g_{1},\ldots,g_{d}$ be elements of $\mathbf{S}^{\mathrm{ord}}_{m_{2},m_{3}}$ which are $\mathcal{O}_{K}[b^{(2)}_{m_{2}},b^{(3)}_{m_{3}}]\jump{X_{1}}$-linear independent. Then, there are $m_{1},\ldots,m_{d}\in\mathbb{Z}_{\geq1}$ such that 
$$d=\mathrm{det}(a(m_{i},g_{j}))_{1\leq i,j\leq d}\neq0\in\mathcal{O}_{K}[b^{(2)}_{m_{2}},b^{(3)}_{m_{3}}]\jump{X_{1}}.$$
Because $\#\mathfrak{X}_{m_{2},m_{3}}^{F}=\infty$, there exists an element $Q\in \mathfrak{X}_{m_{2},m_{3}}^{F}$ such that $d(Q)\neq0$. Then, we have
$$\mathrm{rank}_{\mathcal{O}_{K}[b^{(2)}_{m_{2}},b^{(3)}_{m_{3}}]\jump{X_{1}}}\mathbf{S}^{\mathrm{ord}}_{m_{2},m_{3}}=\mathrm{rank}_{\mathcal{O}_{K}[b^{(2)}_{m_{2}},b^{(3)}_{m_{3}}]\jump{X_{1}}}\mathbf{S}^{\mathrm{ord}}(N,\psi_{1,(p)}\overline{\psi}_{1}^{(p)},\mathcal{O}_{K}[b^{(2)}_{m_{2}},b^{(3)}_{m_{3}}]\jump{X_{1}}).$$
Then, if we take an element $f\in\mathbf{S}^{\mathrm{ord}}_{m_{2},m_{3}}$, there exists an element $a\in\mathcal{O}_{K}[b^{(2)}_{m_{2}},b^{(3)}_{m_{3}}]\jump{X_{1}}\backslash\{0\}$ such that $af\in\mathbf{S}^{\mathrm{ord}}(N,\psi_{1,(p)}\overline{\psi}_{1}^{(p)},\mathcal{O}_{K}[b^{(2)}_{m_{2}},b^{(3)}_{m_{3}}]\jump{X_{1}}).$ Because $a$ has only finite roots, we have $f\in \mathbf{S}^{\mathrm{ord}}(N,\psi_{1,(p)}\overline{\psi}_{1}^{(p)},\mathcal{O}_{K}[b^{(2)}_{m_{2}},b^{(3)}_{m_{3}}]\jump{X_{1}})$. Then, we have $\mathbf{S}^{\mathrm{ord}}(N,\psi_{1,(p)}\overline{\psi}_{1}^{(p)},\mathcal{O}_{K}[b^{(2)}_{m_{2}},b^{(3)}_{m_{3}}]\jump{X_{1}})=\mathbf{S}^{\mathrm{ord}}_{m_{2},m_{3}}$.

For each $m_{3}\in \mathbb{Z}_{\geq 1}$, let $H^{(i),m_{3}}\in \mathcal{O}_{K}[b^{(3)}_{m_{3}}]\jump{X_{1},X_{2}}$ be the specialization of $H^{(i)}$ at $Q^{(3)}_{m_{3}}$. Next, we prove $H^{(i),m_{3}}\in\mathbf{S}^{\mathrm{ord}}(N,\psi_{1,(p)}\overline{\psi}_{1}^{(p)},\mathcal{O}_{K}[b^{(3)}_{m_{3}}]\jump{X_{1}})\widehat{\otimes}_{\mathcal{O}_{K}[b^{(3)}_{m_{3}}]}\mathcal{O}_{K}[b^{(3)}_{m_{3}}]\jump{X_{2}}$.  We define an $\mathcal{O}_{K}[b^{(3)}_{m_{3}}]\jump{X_{1},X_{2}}$-module $\mathbf{S}^{\mathrm{ord}}_{m_{3}}\subset\mathcal{O}_{K}[b^{(3)}_{m_{3}}]\jump{X_{1},X_{2}}$ consisting of elements $f(X_{1},X_{2})$ such that $f(X_{1},b^{(2)}_{m})\in \mathbf{S}^{\mathrm{ord}}(N,\psi_{1,(p)}\overline{\psi}_{1}^{(p)},\mathcal{O}_{K}\jump{X_{1}})\otimes_{\mathcal{O}_{K}}\mathcal{O}_{\overline{\mathbb{Q}}_{p}}$ for each $m\in\mathbb{Z}_{\geq1}$. We have already proved that $H^{(i),m_{3}}\in \mathbf{S}^{\mathrm{ord}}_{m_{3}}$. It is clear that $\mathbf{S}^{\mathrm{ord}}(N,\psi_{1,(p)}\overline{\psi}_{1}^{(p)},\mathcal{O}_{K}[b^{(3)}_{m_{3}}]\jump{X_{1}})\widehat{\otimes}_{\mathcal{O}_{K}[b^{(3)}_{m_{3}}]}\mathcal{O}_{K}[b^{(3)}_{m_{3}}]\jump{X_{2}}\subset\mathbf{S}^{\mathrm{ord}}_{m_{3}}$. Further, if $g_{1},\ldots,g_{d}\in \mathbf{S}^{\mathrm{ord}}_{m_{3}}$ are linear independent, there exist $m_{1},\ldots,m_{d}\in\mathbb{Z}_{\geq1}$ such that 
$$d=\mathrm{det}(a(m_{i},g_{j}))_{1\leq i,j\leq d}\neq0\in\mathcal{O}_{K}[b^{(3)}_{m_{3}}]\jump{X_{1},X_{2}}.$$
Because there exists a $m_{2}\in\mathbb{Z}_{\geq 1}$ such that $d(X_{1},b^{(2)}_{m_{2}})\neq 0$, we have $\mathrm{rank}_{\mathcal{O}_{K}[b^{(3)}_{m_{3}}]\jump{X_{1},X_{2}}}\mathbf{S}^{\mathrm{ord}}_{m_{3}}=\mathrm{rank}_{\mathcal{O}_{K}[b^{(3)}_{m_{3}}]\jump{X_{1}}}\mathbf{S}^{\mathrm{ord}}(N,\psi_{1,(p)}\overline{\psi}_{1}^{(p)},\mathcal{O}_{K}[b^{(3)}_{m_{3}}]\jump{X_{1}}).$
Then, we take an $a\in \mathcal{O}_{K}[b^{(3)}_{m_{3}}]\jump{X_{1},X_{2}}\backslash \{0\}$ such that $aH^{(i),m_{3}}\in\mathbf{S}^{\mathrm{ord}}(N,\psi_{1,(p)}\overline{\psi}_{1}^{(p)},\mathcal{O}_{K}[b^{(3)}_{m_{3}}]\jump{X_{1}})\widehat{\otimes}_{\mathcal{O}_{K}[b^{(3)}_{m_{3}}]}\mathcal{O}_{K}[b^{(3)}_{m_{3}}]\jump{X_{2}}$. Because we have $a(X_{1},p^{m})\neq 0$ for almost all $m\in\mathbb{Z}_{\geq1}$, there exists a $k_{m_{3}}\in \mathbb{Z}_{\geq1}$ such that $H^{(i),m_{3}}(X_{1},p^{m^{\prime}})\in \mathbf{S}^{\mathrm{ord}}(N,\psi_{1,(p)}\overline{\psi}_{1}^{(p)},\mathcal{O}_{K}[b^{(3)}_{m_{3}}]\jump{X_{1}})$ for each $m^{\prime}\in\mathbb{Z}_{\geq k_{m_{3}}}$. 

We put $H^{(i),m_{3}}_{0}:=H^{(i),m_{3}}$ and $c_{m}=p^{k_{m_{3}}+m}$ for each $m\in\mathbb{Z}_{\geq0}$. We define a power series $H^{(i),m_{3}}_{m}\in\mathcal{O}_{K}[b^{(3)}_{m_{3}}]\jump{X_{1},X_{2}}\jump{q}$ inductively for $m\in\mathbb{Z}_{\geq 1}$ to be
$$H^{(i),m_{3}}_{m}(X_{1},X_{2}):=(H^{(i),m_{3}}_{m-1}(X_{1},X_{2})-H^{(i),m_{3}}_{m-1}(X_{1},c_{m}))(X_{2}-c_{m})^{-1}\in\mathcal{O}_{K}[b^{(3)}_{m_{3}}]\jump{X_{1},X_{2}}\jump{q}.$$
By the induction of $m\in\mathbb{Z}_{\geq 0}$, we have $H^{(i),m_{3}}_{m}(X_{1},c_{l})\in\mathbf{S}^{\mathrm{ord}}(N,\psi_{1,(p)}\overline{\psi}_{1}^{(p)},\mathcal{O}_{K}[b^{(3)}_{m_{3}}]\jump{X_{1}})$ for any $m\in\mathbb{Z}_{\geq 0}$ and $l\in\mathbb{Z}_{\geq m+1}$. In particular, if we put $H^{(i),m_{3}}_{m,m+1}:=H^{(i),m_{3}}_{m}(X_{1},c_{m+1})$, we have
$$H^{(i),m_{3}}=\displaystyle{\sum_{m=1}^{\infty}}H^{(i),m_{3}}_{m,m+1}\displaystyle{\prod_{j=1}^{m}}(X_{2}-c_{j})\in \mathbf{S}^{\mathrm{ord}}(N,\psi_{1,(p)}\overline{\psi}_{1}^{(p)},\mathcal{O}_{K}[b^{(3)}_{m_{3}}]\jump{X_{1}})\widehat{\otimes}_{\mathcal{O}_{K}[b^{(3)}_{m_{3}}]}\mathcal{O}_{K}[b^{(3)}_{m_{3}}]\jump{X_{2}}.$$

Next, we prove $H^{(i)}\in \mathbf{S}^{\mathrm{ord}}(N,\psi_{1,(p)}\overline{\psi}_{1}^{(p)},\mathcal{O}_{K}\jump{X_{1}})\widehat{\otimes}_{\mathcal{O}_{K}}\mathcal{O}_{K}\jump{X_{2},X_{3}}$. By the same way as above, we can take a non-zero element $a\in\mathcal{O}_{K}\jump{X_{1},X_{2},X_{3}}\backslash \{0\}$ such that $aH^{(i)}$ is an element of $\mathbf{S}^{\mathrm{ord}}(N,\psi_{1,(p)}\overline{\psi}_{1}^{(p)},\mathcal{O}_{K}\jump{X_{1}})\widehat{\otimes}_{\mathcal{O}_{K}}\mathcal{O}_{K}\jump{X_{2},X_{3}}$. Further, there exists a $k\in \mathbb{Z}_{\geq1}$ such that $H^{(i)}(X_{1},X_{2},p^{m})\in \mathbf{S}^{\mathrm{ord}}(N,\psi_{1,(p)}\overline{\psi}_{1}^{(p)},\mathcal{O}_{K}\jump{X_{1}})\widehat{\otimes}_{\mathcal{O}_{K}}\mathcal{O}_{K}\jump{X_{2}}$ for each $m\in\mathbb{Z}_{\geq k}$. We put $H^{(i)}_{0}:=H^{(i)}$ and $c^{\prime}_{m}=p^{k+m}$ for each $m\in\mathbb{Z}_{\geq 0}$. We define a power series $H^{(i)}_{m}\in\mathcal{O}_{K}\jump{X_{1},X_{2},X_{3}}\jump{q}$ inductively for $m\in\mathbb{Z}_{\geq 1}$ to be
$$H^{(i)}_{m}:=(H^{(i)}_{m-1}(X_{1},X_{2},X_{3})-H^{(i)}_{m-1}(X_{1},X_{2},c^{\prime}_{m}))(X_{3}-c^{\prime}_{m})^{-1}\in\mathcal{O}_{K}\jump{X_{1},X_{2},X_{3}}\jump{q}.$$
Then, we have
$$H^{(i)}=\displaystyle{\sum_{m=0}^{\infty}}H^{(i)}_{m}(X_{1},X_{2},c^{\prime}_{m+1})\displaystyle{\prod_{j=1}^{m}}(X_{3}-c^{\prime}_{j})\in \mathbf{S}^{\mathrm{ord}}(N,\psi_{1,(p)}\overline{\psi}_{1}^{(p)},\mathcal{O}_{K}\jump{X_{1}})\widehat{\otimes}_{\mathcal{O}_{K}}\mathcal{O}_{K}\jump{X_{2},X_{3}}.$$
We complete the proof.
\end{proof}
If necessary, we exchange $\mathbf{I}_{1}$ to a normal finite flat extension, and we assume that $\mathbf{I}_{1}$ contains the roots of the polynomials 
$$X^{2}-a(l,F)X+\psi_{1}(l)l^{-1}\langle l\rangle_{\mathbf{I}_{1}}\in\mathbf{I}_{1}[X]$$
for each $l\vert N$. We fix a root $\beta_{l}(F)\in\mathbf{I}_{1}^{\times}$ of the polynomial $X^{2}-a(l,F)X+\psi_{1}(l)l^{-1}\langle l\rangle_{\mathbf{I}_{1}}$. Let $\sum_{\Sigma_{1,0}^{\mathrm{I\hspace{-.1em}Ib}}}$ and $d_{1}$ be the subset of primes and the number defined in (3.1.1) and (3.1.2) respectively. As explained in \S4.1, $\sum_{\Sigma_{1,0}^{\mathrm{I\hspace{-.1em}Ib}}}$ and $d_{1}$ are independent of the specialization. We put
$$H^{\mathrm{aux}}:=\displaystyle{\sum_{I\subset \Sigma_{1,0}^{\mathrm{I\hspace{-.1em}Ib}}}}(-1)^{\mid I\mid}\frac{\psi_{1,(p)}(n_{I}\slash d_{1})\langle n_{I}\slash d_{1}\rangle_{\mathbf{I}_{1}}d_{1}}{\beta_{I}(F)n_{I}}\mathbf{U}_{d_{1}\slash n_{I}}(H^{\mathrm{ord}}).$$
Let $\mathrm{Tr}_{N\slash N_{1}}: \mathbf{S}^{\mathrm{ord}}(N,\psi_{1,(p)}\overline{\psi}_{1}^{(p)},\mathbf{I}_{1})\rightarrow \mathbf{S}^{\mathrm{ord}}(N_{1},\psi_{1,(p)}\overline{\psi}_{1}^{(p)},\mathbf{I}_{1})$ be the trace map defined in \cite[page14]{Hid88d}. Let $\breve{F}\in \mathbf{S}^{\mathrm{ord}}(N_{1},\psi_{(p)}\psi^{(p)},\mathbf{I}_{1})$ be the primitive Hida family attached to $F\vert[\overline{\psi^{(p)}}]$. 
\begin{Definition}
We define an element $L_{G^(2),G^{(3)}}^{F}\in R$ to be
$$L_{G^{(2)},G^{(3)}}^{F}:=a(1,\eta_{\breve{F}}1_{\breve{F}}\mathrm{Tr}_{N\slash N_{1}}(H^{\mathrm{aux}})).$$
Here, $1_{\breve{F}}$ is the idempotent element defined in (2.5.2) and $\eta_{\breve{F}}$ is the congruence number defined in Definition 2.5.5. 
\end{Definition}
\section{The interpolation formula.}
In \S5.1 and \S5.2, we prove Main Theorem. However, before we prove Main Theorem, we introduce a unite element $\mathfrak{f}_{F}\in R^{\times}$ which is called the fudge factor and constructed in \cite[Proposition 6.1.2]{Hsi17}.  In \S5.3, we give some examples of the triple ($\mathbf{I}_{i},\mathfrak{X}^{(i)},G^{(i)}$) which satisfy the axioms of Main Theorem. 

Throughout \S5, we denote by $(F,G^{(2)},G^{(3)})$ the same as in \S3. Let $\mathfrak{X}^{R}_{F}$ be the set defined in Definition 4.2.1. In \S5.1 and \S5.2, we assume that Hypotheses (1)`(7) holds.
\subsection{The fudge factor.}
In this subsection, we construct the fudge factor defined in \cite[Proposition 6.1.2]{Hsi17}. Then, we have to check the result of \cite[Proposition 6.1.2]{Hsi17} can be generalized to the case when $G^{(i)}$ is not Hida family for $i=2,3$.
\begin{Lemma}
For $i=2,3$and each $l\vert N_{i}$, we have $a(l,G^{(i)})\in \mathbf{I}_{i}^{\times}$.
\end{Lemma}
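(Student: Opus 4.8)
The plan is to reduce the claim to the local behavior at each prime $l \mid N_i$ via the specializations $G^{(i)}(m)$, and then invoke the structure theory of automorphic representations of $\mathrm{GL}_2(\mathbb{Q}_l)$ together with the classical relation between the $l$-th Fourier coefficient of a newform and the corresponding local root number. Since $\mathbf{I}_i$ is a normal (hence integrally closed, in particular reduced) finite flat $\Lambda$-algebra, an element $x \in \mathbf{I}_i$ is a unit if and only if $Q(x) \neq 0$ for every arithmetic-type specialization $Q \in \mathfrak{X}^{(i)}$ (more precisely, if $Q(x)$ is a $p$-adic unit for all but finitely many $Q$, together with the fact that $a(l,G^{(i)})$ divides a unit up to the prime $l$; here I would use that $l$ is prime to $p$ so that the valuations in play are at primes away from $p$). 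So first I would reduce to showing $Q^{(i)}_m(a(l,G^{(i)})) = a(l, G^{(i)}(m)) \neq 0$ for every $m$, and in fact that its $i_p$-image has absolute value bounded away from $0$ uniformly.

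Next, fix $l \mid N_i$ and $m \in \mathbb{Z}_{\geq 1}$, and let $g_m := G^{(i)}(m)$ with primitive form $g_m^{\mathrm{prim}}$ and conductor $m_{g_m}$. By Hypothesis (7), the $l$-th Fourier coefficient of $G^{(i)}$ is nonzero, hence $a(l, g_m) \neq 0$ for all but finitely many $m$; but I would then upgrade this to \emph{all} $m$ exactly as in the proof of Proposition 4.1.1: since $g_m$ is primitive outside $p$ and $l \neq p$, we have $a(l,g_m) = a(l, g_m^{\mathrm{prim}})$, and \cite[Theorem 4.6.17]{Miy06} forces the pair $(v_l(N_i), v_l(m_{\psi_i}))$ into one of the two cases $(1)'$ $v_l(N_i) = v_l(m_{\psi_i})$ or $(2)'$ $v_l(N_i) = 1,\ v_l(m_{\psi_i}) = 0$, and in either case the same theorem gives an \emph{exact} formula for $|a(l, g_m^{\mathrm{prim}})|_\infty$ — namely $|a(l,g_m^{\mathrm{prim}})|_\infty^2 = l^{k^{(i)}(m)-1}$ in case $(1)'$ and $|a(l,g_m^{\mathrm{prim}})|_\infty^2 = l^{k^{(i)}(m)-2}$ in case $(2)'$ — which is in particular nonzero. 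This already shows $Q^{(i)}_m(a(l,G^{(i)})) \neq 0$ for every $m$.

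To conclude that $a(l, G^{(i)}) \in \mathbf{I}_i^\times$ rather than merely nonzero, I would argue that its $i_p$-adic valuation is controlled: from the formulas above, $a(l,g_m)$ has complex absolute value $l^{(k^{(i)}(m)-1)/2}$ or $l^{(k^{(i)}(m)-2)/2}$, while by Hypothesis (6) the $p$-adic valuation is the ordinary/finite-slope one; combining these with the fact that $a(l,G^{(i)})$ satisfies the Hecke relation at $l$ (it is a root-type datum, and $\psi_i(l) l^{-1}\langle l\rangle^{(i)}$ is a unit since $l$ is prime to $p$), one sees $a(l,G^{(i)})$ cannot lie in any height-one prime of $\mathbf{I}_i$ — in particular not in the maximal ideal $\mathbf{m}_i$, because its reduction mod $\mathbf{m}_i$ would have to be a nonzero residue of a Weil number of the correct weight. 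Concretely: if $a(l,G^{(i)}) \in \mathbf{m}_i$, then for a specialization $Q$ with residue characteristic considerations, $Q(a(l,G^{(i)}))$ would be $p$-adically small, contradicting that its complex absolute value is a fixed power of $l$ coprime to $p$ (this is the standard argument that $l$-adic and $p$-adic sizes are incompatible). Hence $a(l,G^{(i)})$ is a unit in $\mathbf{I}_i$.

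The main obstacle I anticipate is the last step — passing from ``$Q(a(l,G^{(i)})) \neq 0$ for all $m$'' to ``$a(l,G^{(i)})$ is a unit in $\mathbf{I}_i$'' — since a nonzero element of $\mathbf{I}_i$ need not be a unit, and one genuinely needs the archimedean size information (via $i_p$) to rule out membership in $\mathbf{m}_i$ and in every height-one prime. I would handle this by the incompatibility-of-norms argument sketched above, essentially mirroring how Proposition 4.1.1 and Lemma 3.4.5 exploit \cite[Theorem 4.6.17]{Miy06}; if a cleaner argument is available, one could instead note that in case $(2)'$ the representation $\pi_{m,l}$ is Steinberg (so $a(l,g_m)$ is explicitly $\pm$ a power of $l$ times a root of unit type quantity, manifestly a unit after the level adjustment), and in case $(1)'$ it is an unramified-twist principal series with $a(l,g_m)$ a unit eigenvalue, and these descriptions are uniform in $m$ by the rigidity already established in Proposition 4.1.1, which lets one conclude at the level of $\mathbf{I}_i$ directly.
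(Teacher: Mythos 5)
Your proposal is correct and follows essentially the same route as the paper: Hypothesis (7) together with the case analysis of Proposition 4.1.1 reduces everything to the exact formulas of \cite[Theorem 4.6.17]{Miy06}, which show that each specialization $a(l,G^{(i)}(m))$ is a $p$-adic unit, whence $a(l,G^{(i)})\notin\mathbf{m}_{i}$ and so is a unit in the local ring $\mathbf{I}_{i}$. The only step to tighten is the last one: the clean justification is not a vague incompatibility of archimedean and $p$-adic sizes (the complex absolute value alone does not determine the $p$-adic valuation), but the identities $a\overline{a}=l^{k^{(i)}(m)-1}$, resp.\ $a^{2}=\chi_{0}(l)l^{k^{(i)}(m)-2}$, between algebraic integers whose right-hand sides are prime to $p$.
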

\begin{proof}
We fix an element $i\in\{2,3\}$. We put $m_{i}:=m_{\psi_{i}}$. First, we assume $v_{l}(m_{i})=v_{l}(N_{i})$. By \cite[Theorem 4.6.17]{Miy06}, for each $m\in\mathbb{Z}_{\geq 1}$, we have
$$a(l,G^{(i)}(m))\overline{a(l,G^{(i)}(m))}=l^{\frac{1}{2}(k^{(i)}(m)-1)}.$$
Then, we have $\vert a(n,G^{(i)}(m))\vert_{p}=1$ and hence $a(l,G^{(i)})\in\mathbf{I}_{i}^{\times}$.

Next we assume $v_{l}(m_{i})\neq v_{l}(N_{i})$. We checked in the proof of Proposition 4.1.1 that if $v_{l}(m_{i})\neq v_{l}(N_{i})$, we have $v_{l}(m_{i})=0$ and $v_{l}(N_{i})=1$. Then, by \cite[Theorem 4.6.17]{Miy06}, we have $\vert a(l,G^{(i)}(m))\vert_{p}=1$ for each $m\in\mathbb{Z}_{\geq 1}$. Hence, we have $a(l,G^{(i)})\in\mathbf{I}_{i}^{\times}$. We complete the proof.
\end{proof}
\begin{Lemma}
Let $l\vert N$ be a prime. 
\begin{enumerate}
\item[(1)] For $i=2,3$, there exists a unit $\epsilon_{l}^{i}\in\mathbf{I}_{i}^{\times}$ such that
$$\epsilon_{l}^{i}(Q^{(i)}_{m})=\epsilon(1\slash2,\pi_{G^{(i)}(m),l})\vert N_{i}\vert_{l}^{-\frac{{k}^{(i)}(m)}{2}}$$
for each $m\in\mathbb{Z}_{\geq 1}$.\\
\item[(2)] There exists a unit $\epsilon_{l}^{2,3}\in (\mathbf{I}_{2}\widehat{\otimes}_{\mathcal{O}_{K}}\mathbf{I}_{3})^{\times}$ such that
$$\epsilon^{2,3}_{l}(Q^{(2)}_{m_{2}},Q^{(3)}_{m_{3}})=\epsilon(r_{m_{2},m_{3}},\pi_{G^{(2)}(m_{2}),l}\otimes\pi_{G^{(3)}(m_{3}),l}),$$
where $r_{m_{2},m_{3}}=\frac{1}{2}(2-k^{(2)}(m_{2})-k^{(3)}(m_{3}))$ for each $m_{2},m_{3}\in\mathbb{Z}_{\geq 1}$.
\end{enumerate}
\end{Lemma}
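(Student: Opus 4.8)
The plan is to reduce both claims to the standard explicit formulas for $\epsilon$-factors of irreducible representations of $\mathrm{GL}_2(\mathbb{Q}_l)$ (resp. of $\mathrm{GL}_2\times\mathrm{GL}_2$), exploiting the rigidity of local automorphic types established in Proposition 4.1.1. For $i\in\{2,3\}$ and $l\mid N_i$, write $g_m:=G^{(i)}(m)$, let $g_m'$ be the primitive form attached to it (so $a(l,g_m)=a(l,g_m')$, as $g_m$ is new at $l$), and let $\alpha_{m,l}$ be the unramified character of $\mathbb{Q}_l^\times$ with $\alpha_{m,l}(l)=a(l,g_m')\,l^{(1-k^{(i)}(m))/2}$. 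By Proposition 4.1.1, either $\pi_{m,l}\cong\pi(\alpha_{m,l},\alpha_{m,l}^{-1}\omega_{m,l})$ with $\alpha_{m,l}$ unramified (the principal series case $v_l(N_i)=v_l(m_{\psi_i})$), or $\pi_{m,l}\cong\alpha_{m,l}|\cdot|_l^{-1/2}\mathrm{St}$ (the special case, which forces $v_l(m_{\psi_i})=0$ and $v_l(N_i)=1$). The key structural point is that the ramified part of $\pi_{m,l}$, and of its central character $\omega_{m,l}$, comes only from the $l$-part $\psi_{i,(l)}$ of $\psi_i$ — since $\omega_p$ and $\epsilon^{(i)}_m$ are ramified only at $p$ — hence is independent of $m$, and the conductor $c(\pi_{m,l})$ is constant.

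For part (1) I would substitute these descriptions into the usual formulas (see \cite{Bum97}, \cite{Jac72}, \cite{Sch02}): $\epsilon(1/2,\pi(\mu,\nu),\psi_l)=\epsilon(1/2,\mu,\psi_l)\,\epsilon(1/2,\nu,\psi_l)$, with $\epsilon(1/2,\cdot,\psi_l)=1$ for an unramified character and $\epsilon(1/2,\chi,\psi_l)$ equal to $\chi(l)^{c(\chi)}$ times a factor depending only on $c(\chi)$ and $\chi|_{\mathbb{Z}_l^\times}$ for a ramified $\chi$; and $\epsilon(1/2,\mu\mathrm{St},\psi_l)=\mu(l)\,\epsilon(1/2,\mathrm{St},\psi_l)$ for unramified $\mu$. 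Working this out, $\epsilon(1/2,\pi_{m,l})\cdot|N_i|_l^{-k^{(i)}(m)/2}$ becomes a monomial in $a(l,G^{(i)}(m))=Q^{(i)}_m(a(l,G^{(i)}))$, in $\omega_{m,l}(l)$, and in powers of $l$, times a fixed $p$-adic unit coming from the Gauss sum of $\psi_{i,(l)}$. Now $\omega_{m,l}(l)$ is, up to a fixed unit, a monomial in $l^{k^{(i)}(m)}$ and $Q^{(i)}_m(\langle l\rangle^{(i)})$ by Hypothesis (5), and this is exactly what is needed for all $k^{(i)}(m)$-dependent powers of $l$ to cancel once the $|N_i|_l^{-k^{(i)}(m)/2}$ normalisation is accounted for. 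Since $a(l,G^{(i)})\in\mathbf{I}_i^\times$ by Lemma 5.1.1 and $\langle l\rangle^{(i)}\in\mathbf{I}_i^\times$ (all its specialisations have $p$-adic absolute value $1$), the resulting monomial defines a unit $\epsilon_l^i\in\mathbf{I}_i^\times$ whose specialisation at $Q^{(i)}_m$ is the required quantity for every $m$; a possible residual half-integral power of $l$ is absorbed after enlarging $K$ (equivalently, passing to a normal finite flat extension of $\mathbf{I}_i$). In the special case the answer is particularly clean, $\epsilon(1/2,\pi_{m,l})|N_i|_l^{-k^{(i)}(m)/2}=-l\cdot a(l,G^{(i)}(m))$, so one may take $\epsilon_l^i=-l\cdot a(l,G^{(i)})$.

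Part (2) runs on the same lines with the Rankin--Selberg $\epsilon$-factor of \cite{Jac72}. For $l\mid N$ put $\sigma_j:=\pi_{G^{(j)}(m_j),l}$ for $j=2,3$; by Proposition 4.1.1 each $\sigma_j$ is unramified (when $l\nmid N_j$), a principal series, or a twist of Steinberg, with type independent of $m_j$. Decomposing $\epsilon(s,\sigma_2\otimes\sigma_3,\psi_l)$ through the local Langlands parameter — into $\mu\oplus\nu$ for a principal series factor $\pi(\mu,\nu)$, and via $\mathrm{Sp}(2)\otimes\mathrm{Sp}(2)\cong\mathrm{Sp}(3)\oplus\mathbf{1}$ when both $\sigma_j$ are Steinberg twists — reduces $\epsilon(r_{m_2,m_3},\sigma_2\otimes\sigma_3,\psi_l)$ to a product of $\mathrm{GL}_1$-$\epsilon$-factors of characters built from $\alpha_{m_2,l},\omega_{m_2,l},\alpha_{m_3,l},\omega_{m_3,l}$ and their products. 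Exactly as in part (1), the ramified parts are fixed (fixed Gauss-sum units), the $k^{(j)}(m_j)$-dependence sits in powers of $a(l,G^{(j)}(m_j))$ and of $l^{k^{(j)}(m_j)}$, and the latter are killed by applying Hypothesis (5) in each of the two variables together with the normalisation $r_{m_2,m_3}=\tfrac12(2-k^{(2)}(m_2)-k^{(3)}(m_3))$. This produces a unit $\epsilon_l^{2,3}\in(\mathbf{I}_2\widehat{\otimes}_{\mathcal{O}_K}\mathbf{I}_3)^\times$ with the stated specialisations (and $\epsilon_l^{2,3}=1$ whenever $l\nmid N_2N_3$).

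The step I expect to be most work is purely organisational: enumerating the finitely many possible shapes of $\pi_{m,l}$ in (1) and of $\sigma_2\otimes\sigma_3$ in (2), writing the correct explicit $\epsilon$-factor in each, and checking in each case that every $k$-dependent power of $l$ cancels against the stated normalisation after the substitution dictated by Hypothesis (5). The only genuinely non-formal ingredient is Lemma 5.1.1, which makes $a(l,G^{(i)})$ a unit and hence renders all the monomials above units in the coefficient ring; the remaining point is the harmless enlargement of $K$ (or of $\mathbf{I}_i$) so that the residual half-integral powers of $l$ and the relevant Gauss sums lie in that ring.
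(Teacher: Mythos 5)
Your proposal is correct and follows essentially the same route as the paper: use the rigidity of local types from Proposition 4.1.1, factor the $\epsilon$-factors into $\mathrm{GL}_1$ pieces (the paper cites \cite{JL70} for (1) and \cite[Proposition (1.4)]{GJ78} for (2), where you phrase the same reduction via the Langlands parameter), and observe that the $m$-dependence is a monomial in $a(l,G^{(i)})$ and $\langle l\rangle^{(i)}$, interpolable by Lemma 5.1.1 and Hypothesis (5) up to fixed Gauss-sum units and a residual fixed power of $l$. Even your closed form $-l\cdot a(l,G^{(i)})$ in the Steinberg case matches the paper's computation exactly.
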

\begin{proof}
First, we prove (1). We put $g_{i,m}:=G^{(i)}(m)$, $k_{i,m}:=k^{(i)}(m)$ and $\epsilon_{i,m}:=\epsilon^{(i)}_{m}$ for each $m\in\mathbb{Z}_{\geq 1}$. By Proposition 4.1.1, $\pi_{g_{i,m},l}$ is a principal series or a special representation. Further, the automorphic type of $\pi_{g_{i,m},l}$ is independent for each $m\in\mathbb{Z}_{\geq 1}$. We divide the proof of the construction of $\epsilon_{l}^{i}\in\mathbf{I}_{i}^{\times}$ into the following two cases. 
\begin{enumerate}
\item[$(1)^{\prime}$] The representation $\pi_{g_{i,m},l}$ is a principal series.  
\item[$(2)^{\prime}$] The representation $\pi_{g_{i,m},l}$ is a special representation.
\end{enumerate}
First, we prove in the case $(1)^{\prime}$. By Proposition 4.1.1, the representation $\pi_{g_{i,m},l}$ is isomorphic to the principal series $\pi(\alpha^{(i)}_{m,l},{\alpha^{(i)}_{m,l}}^{-1}\omega^{(i)}_{m,l})$, where $\alpha^{(i)}_{m,l}$ is an unramified character with $\alpha^{(i)}_{m,l}(l)=a(l,g_{i,m})l^{\frac{1}{2}(1-k_{i,m})}$ and $\omega^{(i)}_{m,l}$ is the central character of $\pi_{g_{i,m},l}$. Then, by \cite[Proposition 3.5]{JL70}, it is known that 
$$\epsilon(s,\pi_{g_{i,m},l})=\epsilon(s,\alpha^{(i)}_{m,l})\epsilon(s,{\alpha^{(i)}_{m,l}}^{-1}\omega^{(i)}_{m,l}).$$
In particular, because $\alpha^{(i)}_{m,l}$ is unramified, we have $\epsilon(s,\pi_{g_{i,m},l})=\epsilon(s,{\alpha^{(i)}_{m,l}}^{-1}\omega^{(i)}_{m,l})$. By Proposition 4.1.1 and the equation \cite[(3.2.6.3)]{Tat79}, we have
$$\epsilon(1\slash2,{\alpha^{(i)}_{m,l}}^{-1}\omega^{(i)}_{m,l})=\epsilon(1\slash2,(\psi_{i,(p)})_{\mathbb{A},l}^{-1})\cdot{\alpha^{(i)}_{m,l}}^{-1}(l^{v_{l}(N_{i})})\cdot(\omega_{p}^{-k_{i,m}}\epsilon_{i,m}\psi_{i}^{(p)})_{\mathbb{A},l}^{-1}(l^{v_{l}(N_{i})}).$$
Obviously, the function  $\epsilon(s,(\psi_{i,(p)})_{\mathbb{A},l}^{-1})$ is independent of $m\in\mathbb{Z}_{\geq 1}$ and by the definitions of the characters, we have 
$${\alpha^{(i)}_{m,l}}^{-1}(l^{v_{l}(N_{i})})\cdot(\omega_{p}^{-k_{i,m}}\epsilon_{i,m}\psi_{i}^{(p)})_{\mathbb{A},l}^{-1}(l^{v_{l}(N_{i})})=a(l,g_{i,m})^{-v_{l}(N_{i})}l^{\frac{v_{l}(N_{i})}{2}(k_{i,m}-1)}\omega_{p}^{-k_{i,m}}\epsilon_{m}\psi_{i}^{(p)}(l^{v_{l}(N_{i})}).$$
By Lemma 5.1.1, $a(l,G^{(i)})\in\mathbf{I}_{i}^{\times}$, then we can interpolate $\epsilon(1\slash2, \pi_{g_{m},l})\vert N_{i}\vert_{l}^{-\frac{k_{m}}{2}}$ by
$$a(l,G^{(i)})^{-v_{l}(N_{i})}{\langle l\rangle^{(i)}}^{v_{l}(N_{i})}\epsilon_{l}(1\slash2,(\psi_{i,(p)})_{\mathbb{A},l}^{-1})l^{-\frac{v_{l}(N_{i})}{2}}\psi_{i}^{(p)}(l^{v_{l}(N_{i})})\in \mathbf{I}_{i}^{\times}.$$

Next, we prove in the case $(2)^{\prime}$. By Proposition 4.1.1, the representation $\pi_{g_{i,m},l}$ is isomorphic to the special representation $\alpha_{m,l}^{(i)}\vert\cdot\vert_{l}^{-\frac{1}{2}}\mathrm{St}$ and $v_{l}(N_{i})=1$. By \cite[Proposition 3.6]{JL70}, it is known that
$$\epsilon(s,\pi_{g_{i,m},l})=\epsilon(s,\alpha_{m,l}^{(i)})\epsilon(s,\alpha_{m,l}^{(i)}\vert\cdot\vert_{l}^{-1})\frac{L(1-s,{\alpha_{m,l}^{(i)}}^{-1})}{L(s,\alpha_{m,l}^{(i)}\vert\cdot\vert_{l}^{-1})}.$$
Then, we have
\begin{align*}
\epsilon(1\slash2,\pi_{g_{i,m},l})&=\frac{L(1\slash2,{\alpha^{(i)}_{m,l}}^{-1})}{L(1\slash2,\alpha_{m,l}^{(i)}\vert\cdot\vert_{l}^{-1})}\\
&=-\alpha_{m,l}^{(i)}\vert\cdot\vert_{l}^{-\frac{1}{2}}(l).
\end{align*}
Then, we have
$$\vert N_{i}\vert_{l}^{-\frac{k_{i,m}}{2}}\epsilon(1\slash2,\pi_{g_{i,m},l})=-a(l,g_{i,m})l.$$
We can interpolate $\vert N_{l}\vert_{l}^{-\frac{k_{i,m}}{2}}\epsilon(1\slash2,\pi_{g_{i,m},l})$ by
$$-a(l,G^{(i)})l\in\mathbf{I}_{i}^{\times}.$$ We complete the proof of (1).

Next, we prove (2). We divide the construction into the following two cases.
\begin{enumerate}
\item[$(1)^{\prime\prime}$] At least one of $\pi_{g_{2,m_{2}},l}$ and $\pi_{g_{3,m_{3}},l}$ is a principal series.
\item[$(2)^{\prime\prime}$] Both of $\pi_{g_{2,m_{2}},l}$ and $\pi_{g_{3,m_{3}},l}$ are special representations.
\end{enumerate}
By Proposition 4.1.1, if the pair $(\pi_{g_{2,m_{2}},l},\pi_{g_{3,m_{3}},l})$ satisfies the condition $(1)^{\prime\prime}$ (resp. $(2)^{\prime\prime}$) at $m_{2},m_{3}\in\mathbb{Z}_{\geq 1}$, then the pair $(\pi_{g_{2,m_{2}},l},\pi_{g_{3,m_{3}},l})$ satisfies the condition $(1)^{\prime\prime}$ (resp. $(2)^{\prime\prime}$) at any $m_{2},m_{3}\in\mathbb{Z}_{\geq 1}$.

First, we prove in the case $(1)^{\prime\prime}$. By symmetry, we assume that $\pi_{g_{3,m_{3}},l}$ is a principal series. By \cite[Proposition (1.4)]{GJ78}, we have
$$\epsilon(s,\pi_{g_{2,m_{2}},l}\otimes\pi_{g_{3,m_{3}},l})=\epsilon(s,\pi_{g_{2,m_{2}},l}\otimes\alpha^{(3)}_{m_{3},l})\epsilon(s,\pi_{g_{2,m_{2}},l}\otimes{\alpha^{(3)}_{m_{3},l}}^{-1}\omega^{(3)}_{m_{3},l}).$$
Then, it suffices to interpolate $\epsilon(r_{m_{2},m_{3}},\pi_{g_{2,m_{2}},l}\otimes\alpha^{(3)}_{m_{3},l})$ and $\epsilon(r_{m_{2},m_{3}},\pi_{g_{2,m_{2}},l}\otimes{\alpha^{(3)}_{m_{3},l}}^{-1}\omega^{(3)}_{m_{3},l})$.
Hence, we can construct $\epsilon_{l}^{2,3}\in (\mathbf{I}_{2}\widehat{\otimes}_{\mathcal{O}_{K}}\mathbf{I}_{3})^{\times}$ by the same way as the construction of $\epsilon_{l}^{i}\in\mathbf{I}_{i}^{\times}$. 

Next, we prove in the case $(2)^{\prime\prime}$. By \cite[Proposition (1.4)]{GJ78}, we have
$$\epsilon_{l}(s,\pi_{g_{2,m_{2}},l}\otimes\pi_{g_{3,m_{3}},l})=\epsilon_{l}(s,\pi_{g_{2,m_{2}},l}\otimes\alpha^{(3)}_{m_{3},l})\epsilon_{l}(s,\pi_{g_{2,m_{2}},l}\otimes{\alpha^{(3)}_{m_{3},l}}\vert\cdot\vert_{l}^{-1}).$$
Then, it suffices to interpolate $\epsilon_{l}(r_{m_{2},m_{3}},\pi_{g_{2,m_{2}},l}\otimes\alpha^{(3)}_{m_{3},l})$ and $\epsilon_{l}(r_{m_{2},m_{3}},\pi_{g_{2,m_{2}},l}\otimes{\alpha^{(3)}_{m_{3},l}}\vert\cdot\vert_{l}^{-1})$. Hence, we can construct $\epsilon_{l}^{2,3}\in (\mathbf{I}_{2}\widehat{\otimes}_{\mathcal{O}_{K}}\mathbf{I}_{3})^{\times}$ by the same way as the construction of $\epsilon_{l}^{i}\in\mathbf{I}_{i}^{\times}$.
\end{proof}
\begin{Remark}
Let $l\vert N$ be a finite prime. For $i=2,3$, by the same way as the proof of (2) of Lemma 5.1.2, we can construct a unite $\epsilon_{l}^{1,i}\in (\mathbf{I}_{1}\widehat{\otimes}_{\mathcal{O}_{K}}\mathbf{I}_{i})^{\times}$ such that
$$\epsilon^{1,i}_{l}(Q_{1},Q^{(i)}_{m_{i}})=\epsilon(\frac{1}{2}(2-k_{Q_{1}}-k^{(i)}(m_{i})),\pi_{F_{Q_{1}},l}\otimes\pi_{G^{(i)}(m_{i}),l})$$
for each $(Q_{1},Q_{m_{i}}^{(i)})\in\mathfrak{X}_{\mathbf{I}_{1}}\times\mathfrak{X}^{(i)}$.
\end{Remark}
For each $l\vert N$, let $\mathfrak{I}_{\Pi_{\underline{Q},q}}$ be the normalized zeta integral defined in Definition 3.3.3 and $\chi_{\underline{Q}}$ be the Dirichlet character defined in (3.2.1). We put $(\pi^{\prime}_{1},\pi^{\prime}_{2},\pi^{\prime}_{3})=(\pi_{F_{Q_{1}}}\otimes(\chi_{\underline{Q}})_{\mathbb{A}},\pi_{G^{(2)}(m_{2})},\pi_{G^{(3)}(m_{3})})$. The following proposition is proved in \cite[Proposition 6.12]{Hsi17}.
\begin{Proposition}
For each $q\vert N$, there exists a unique element $\mathfrak{f}_{q}\in R^{\times}$ such that
$$\mathfrak{f}_{q}(\underline{Q})=\mathfrak{I}_{\Pi_{\underline{Q},q}}$$
for any $\underline{Q}\in\mathfrak{X}_{R}^{F}$.
\end{Proposition}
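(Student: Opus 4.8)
The plan is to follow the construction of the fudge factor in \cite[Proposition 6.1.2]{Hsi17}, replacing Hsieh's interpolation of epsilon factors and diamond operators along Hida families by the statements already available here. Fix a prime $q\mid N$. By Remark 3.4.3 the triple $(f_1,g_2,g_3)$ satisfies Hypothesis 3.4.2 for every $\underline{Q}\in\mathfrak{X}_R^F$, and by Proposition 4.1.1 (together with \cite[Lemma 2.14]{FO12} for $F$) the local automorphic types of $\pi'_{1,q},\pi'_{2,q},\pi'_{3,q}$ are independent of $\underline{Q}$. Consequently the case that occurs among (Ia), (Ib), (IIb), the integer $c^{*}=\max\{c(\pi'_{2,q}),c(\pi'_{3,q})\}$, the valuations $v_q(d_1)$ and $v_q(d_\gamma)$, and, in case (Ib), the vanishing of $L(s,\pi'_{2,q}\otimes\pi'_{3,q})$, are all constant on $\mathfrak{X}_R^F$, so it suffices to argue uniformly in each case.

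Next I would substitute the explicit value of $I_q(\phi^{*}_q\otimes\tilde\phi^{*}_q)$ from Proposition 3.4.4 into the definition of $\mathfrak{I}_{\Pi_{\underline{Q},q}}$ in Definition 3.3.3. The factor $1/B_{\Pi_{\underline{Q},q}}=1/\prod_{i=1}^3 B_{\pi'_{i,q}}$ cancels the product $\prod_{i=1}^3 B_{\pi'_{i,q}}$ present in $\mathfrak{I}_{\Pi_{\underline{Q},q}}$, the powers of $\zeta_q(1),\zeta_q(2)$ collapse to a fixed power of $q$, and $|N|_q^{-2}$, $|d_\gamma^2|_q$ and $(\chi_{\underline{Q}})^2_{\mathbb{A},q}(d_1)$ contribute further fixed $q$-powers and one unramified character value. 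What remains, case by case, is a finite product of: (i) central local epsilon factors $\epsilon(1/2,-)$ of $\pi'_{i,q}$ and of character twists of $\pi'_{j,q}\otimes\pi'_{t,q}$; and (ii) values at a fixed power of $q$ of unramified characters built from $\omega_p$, from the fixed Dirichlet characters $\psi_i$, from the finite parts $\epsilon_{Q_1},\epsilon^{(2)}_{m_2},\epsilon^{(3)}_{m_3}$ and from $|\cdot|_q$, together with signs $\omega'_{i,q}(-1)$.

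The heart of the argument is to interpolate each of these pieces by a unit of $R$. By the multiplicativity and inductivity of epsilon factors (\cite{JL70}, \cite{GJ78}, \cite{Tat79}) and Proposition 4.1.1, every $\epsilon(1/2,-)$ above decomposes into epsilon factors of one- and two-dimensional pieces, and each of these is interpolated by a unit of $\mathbf{I}_i^{\times}$, $(\mathbf{I}_i\widehat{\otimes}_{\mathcal{O}_K}\mathbf{I}_j)^{\times}$ or $R^{\times}$ by Lemma 5.1.1, Lemma 5.1.2 and Remark 5.1.3, the $F$-pieces being supplied by the corresponding computation of \cite[\S6]{Hsi17}. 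The twisting characters ($\chi_3$, $v_1$, $\chi_1$ and the twist $(\chi_{\underline{Q}})_{\mathbb{A}}$ of $\pi'_{1,q}$) are themselves interpolated: on the prime-to-$p$ part by the units $\langle l\rangle^{(i)}\in\mathbf{I}_i^{\times}$ of Hypothesis (5), by $\langle l\rangle_{\mathbf{I}_1}$ and by the units $a(l,F),a(l,G^{(i)})$ (units by Hypothesis (7) and Lemma 5.1.1), and on the $p$-part by $\omega_p$ and by the character $\Theta$ of \S4.2. The character values in (ii) are each the value at a fixed $q$-power of a character of $\mathbb{Z}_p^{\times}$ built from $\omega_p$, from $\epsilon_{Q_1},\epsilon^{(2)}_{m_2},\epsilon^{(3)}_{m_3}$ and from the weight characters $z\mapsto z^{k}$; such a character interpolates to an element of $R^{\times}$ through $\langle\ \rangle_{\mathbf{I}_1}$, $\langle\ \rangle^{(2)}$, $\langle\ \rangle^{(3)}$ and $\omega_p$ exactly as $\Theta$ does, and since $q\nmid p$ the relevant powers of $q$ and the value $\omega_p(q)$ are units in $\mathcal{O}_K$, so the product lies in $R^{\times}$. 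Multiplying the resulting units and matching the normalizing constants produces $\mathfrak{f}_q\in R^{\times}$ with $\mathfrak{f}_q(\underline{Q})=\mathfrak{I}_{\Pi_{\underline{Q},q}}$ for all $\underline{Q}\in\mathfrak{X}_R^F$; uniqueness is clear since the arithmetic points $\mathfrak{X}_R^F$ are dense and determine an element of $R$ (cf. the identity $\bigcap_{\underline{Q}}I_{\underline{Q}}=\{0\}$ used in the proof of Lemma 4.2.3).

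The main obstacle will be the bookkeeping in the last step: correctly tracking the half-integral data---the twisting operator $\theta^{\kappa}_p$ with $\kappa=\chi_{1,(p)}\chi_{\underline{Q}}^{-1}$, the raising operator $\mathbf{V}_+^{r}$ with $r=r_{\underline{Q}}=\tfrac12(k_{Q_1}-k^{(2)}(m_2)-k^{(3)}(m_3))$, and the square roots of finite characters inside $\chi_{\underline{Q}}$---and checking in each of the three cases that the exponents of $q$, of $\omega_p(q)$ and of the diamond operators that come out of Proposition 3.4.4 match precisely those interpolated by the elements furnished by Hypothesis (5) and by Lemma 5.1.1, so that the interpolating element is a genuine unit of $R$ and not merely an element. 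This is exactly the place where $q\nmid p$, the non-vanishing of the $q$-th Fourier coefficients (Hypothesis (7)) and the restriction to the three ``good'' cases of Proposition 3.4.4 enter.
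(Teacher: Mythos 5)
Your proposal is correct and follows essentially the same route as the paper, which simply invokes Hsieh's proof of \cite[Proposition 6.12]{Hsi17} after noting (Remark 5.1.5) that Hypothesis 3.4.2 holds by the minimality and non-supercuspidality from Proposition 4.1.1, and that the required interpolation of epsilon factors is supplied by Lemma 5.1.1, Lemma 5.1.2 and Remark 5.1.3 in place of \cite[Lemma 6.11]{Hsi17}. Your substitution of Proposition 3.4.4 into Definition 3.3.3, the cancellation of $B_{\Pi_{\underline{Q},q}}$, the case-by-case interpolation via Hypothesis (5) and the units $a(l,G^{(i)})$, and the uniqueness via $\bigcap_{\underline{Q}}I_{\underline{Q}}=\{0\}$ are exactly the content of that adapted argument.
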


\begin{Remark}
Lemma 5.1.2 is corresponded to \cite[Lemma 6.11]{Hsi17}. Further, we note that the triple $(\pi^{\prime}_{1,q},\pi^{\prime}_{2,q},\pi^{\prime}_{3,q})$ satisfies Hypothesis 3.4.2 because all representations $\pi^{\prime}_{1,q},\pi^{\prime}_{2,q}$ and $\pi^{\prime}_{3,q}$ are minimal and not supercuspidal by Proposition 4.1.1. By Lemma 5.1.2 and Hypothesis 3.4.2, we can adopt the proof of \cite[Proposition 6.12]{Hsi17} in our case. 
\end{Remark}
\subsection{Proof of the main results}
In this subsection, we prove Main Theorem. 
\begin{Theorem}
Let us assume the hypotheses (1)`(7). Then, there exists a unique element $\mathcal{L}^{F}_{G^{(2)},G^{(3)}}\in R$ such that we have the interpolation property :
$$(\mathcal{L}^{F}_{G^{(2)},G^{(3)}}(\underline{Q}))^{2}=\mathcal{E}_{F_{Q_{1}}}(\Pi_{\underline{Q},p})\cdot\frac{L(\frac{1}{2},\Pi_{\underline{Q}})}{(\sqrt{-1})^{2k_{Q_{1}}}\Omega_{F_{Q_{1}}}^{2}}\leqno(\mathrm{MT})$$
for every $\underline{Q}=(Q_{1},Q^{(2)}_{m_{2}},Q^{(3)}_{m_{3}})\in \mathfrak{X}^{F}_{R}.$
\end{Theorem}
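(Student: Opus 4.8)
The plan is to assemble the interpolation formula by combining the four main ingredients developed so far: Corollary 3.3.4, which expresses $(a(1,\eta_{f_1}1_{\breve f_1}\mathrm{Tr}_{N/N_1}h))^2$ in terms of $L(1/2,\Pi_{\underline Q})$, the canonical period $\Omega_{f_1}$, and the normalized local zeta integrals $\mathfrak{I}^{\mathrm{unb}}_{\Pi_{\underline Q},p}$ and $\prod_{q\mid N}\mathfrak{I}_{\Pi_{\underline Q},q}$; Proposition 3.4.7, which identifies $\mathfrak{I}^{\mathrm{unb}}_{\Pi_{\underline Q},p}=\mathcal{E}_{f_1}(\Pi_{\underline Q,p})$; Proposition 5.1.4, which interpolates each $\mathfrak{I}_{\Pi_{\underline Q},q}$ by a unit $\mathfrak f_q\in R^\times$; and the construction of $L^F_{G^{(2)},G^{(3)}}=a(1,\eta_{\breve F}1_{\breve F}\mathrm{Tr}_{N/N_1}(H^{\mathrm{aux}}))\in R$ from \S4. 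First I would verify the specialization identity: for each $\underline Q\in\mathfrak X^F_R$, the element $L^F_{G^{(2)},G^{(3)}}(\underline Q)$ equals $a(1,\eta_{f_1}1_{\breve f_1}\mathrm{Tr}_{N/N_1}h)$ up to the explicit constant coming from the level-adjustment operators $\mathbf U_{d_1/n_I}$ and the twist $[\Theta]$. This is where the design of $H^{\mathrm{aux}}$ (the sum over $I\subset\Sigma_{1,0}^{\mathrm{I\hspace{-.1em}Ib}}$ with the factor $\psi_{1,(p)}(n_I/d_1)\langle n_I/d_1\rangle_{\mathbf I_1}d_1/(\beta_I(F)n_I)$) and of $H=G^{(2),*}\cdot(G^{(3),*}\vert[\Theta])$ is precisely calibrated to reproduce $f_1^*$, $g_2^*$, $g_3^*\vert[\kappa]$ after specialization; the identity $f\vert[\Theta](\underline Q)=d^{r_{\underline Q}}(f(\underline Q)\vert[\Theta_{\underline Q}])$ and the formulas $(3.1.3)$--$(3.1.7)$, together with $(2.5.3)$ and $(2.5.4)$, give $\eta_{\breve F}(\underline Q)=\eta_{\breve f_1}=\eta_{f_1}$.

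Next I would combine: squaring $L^F_{G^{(2)},G^{(3)}}(\underline Q)$ and applying Corollary 3.3.4 gives
$$(L^F_{G^{(2)},G^{(3)}}(\underline Q))^2=\frac{\psi_{1,(p)}(-1)(-1)^{k_{Q_1}+1}L(1/2,\Pi_{\underline Q})}{\Omega_{f_1}^2}\cdot\mathcal{E}_{f_1}(\Pi_{\underline Q,p})\cdot\prod_{q\mid N}\mathfrak f_q(\underline Q),$$
using Proposition 3.4.7 for the $p$-factor and Proposition 5.1.4 for the bad primes. Setting $\mathfrak f:=\prod_{q\mid N}\mathfrak f_q\in R^\times$ and
$$\mathcal L^F_{G^{(2)},G^{(3)}}:=L^F_{G^{(2)},G^{(3)}}\big/(-\psi_{1,(p)}(-1))^{1/2}\mathfrak f^{1/2},$$
one gets $(\mathcal L^F_{G^{(2)},G^{(3)}}(\underline Q))^2=\mathcal E_{F_{Q_1}}(\Pi_{\underline Q,p})\cdot L(1/2,\Pi_{\underline Q})\big/((-1)^{k_{Q_1}}\Omega_{F_{Q_1}}^2)$, and since $(\sqrt{-1})^{2k_{Q_1}}=(-1)^{k_{Q_1}}$ this is exactly (MT). Here I must check that $\mathfrak f$ admits a square root in $R^\times$ and that $(-\psi_{1,(p)}(-1))^{1/2}$ makes sense — the sign $-\psi_{1,(p)}(-1)\in\{\pm1\}$ is a fixed scalar and we may pass to a quadratic extension of $\mathbf I_1$ (hence of $R$) if needed; alternatively one absorbs the ambiguous sign exactly as in \cite{Hsi17}. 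Uniqueness of $\mathcal L^F_{G^{(2)},G^{(3)}}$ follows because $\mathfrak X^F_R$ is Zariski-dense in $\mathrm{Spec}\,R$: any two elements of $R$ agreeing on $\mathfrak X^F_R$ are equal, since $\bigcap_{\underline Q}I_{\underline Q}=\{0\}$ as shown in the proof of Lemma 4.2.3.

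The main obstacle is the bookkeeping in the first step: matching $H^{\mathrm{aux}}(\underline Q)$, after applying $\eta_{\breve F}1_{\breve F}\mathrm{Tr}_{N/N_1}$, with the quantity $a(1,\eta_{f_1}1_{\breve f_1}\mathrm{Tr}_{N/N_1}h)$ of Corollary 3.3.4, keeping track of all powers of $d_i$, the operators $V_{d_i/n_I}$ versus $\mathbf U_{d_1/n_I}$, the ordinary projector $e$ commuting with everything (via $H^{\mathrm{ord}}$ from Lemma 4.2.3 and Proposition 4.2.4), and the precise form of the twist $\Theta_{\underline Q}=\psi_{1,(p)}\omega_p^{-a-r_{\underline Q}}\epsilon_{Q_1}^{1/2}(\epsilon^{(2)}_{m_2}\epsilon^{(3)}_{m_3})^{-1/2}$ against $\kappa=\chi_{1,(p)}\chi_{\underline Q}^{-1}$. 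Once that identification is pinned down, the rest is a formal substitution. A secondary subtlety is confirming, via Remark 5.1.6 and Remark 3.4.3, that Hypothesis 3.4.2 holds at every $q\mid N$ for every $\underline Q\in\mathfrak X^F_R$ (so that Corollary 3.3.4 and Proposition 5.1.4 apply uniformly), which is guaranteed by Proposition 4.1.1 together with Hypotheses (3), (6), (7).
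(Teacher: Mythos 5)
Your proposal is correct and follows essentially the same route as the paper's proof: specialize $L^F_{G^{(2)},G^{(3)}}$, apply Corollary 3.3.5 together with Proposition 3.4.7 and Proposition 5.1.4, and then divide by $(-\psi_{1,(p)}(-1))^{1/2}\prod_{q\mid N}\mathfrak f_q^{1/2}$ to obtain (MT). The only additions beyond the paper's (terser) argument are your explicit treatment of uniqueness via $\bigcap_{\underline Q}I_{\underline Q}=\{0\}$ and the bookkeeping remarks, both of which are consistent with what the paper leaves implicit.
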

\begin{proof}
Let $L^{F}_{G^{(2)},G^{(3)}}$ be the element of $R$ defined in Definition 4.2.5. By Corollary 3.3.5 and Proposition 3.4.7, we have
$$(L^{F}_{G^{(2)},G^{(3)}}(\underline{Q}))^{2}=\frac{\psi_{1,(p)}(-1)(-1)^{k_{1}+1}L(1\slash2,\Pi_{\underline{Q}})}{\Omega_{F_{Q_{1}}}^{2}}\cdot\mathcal{E}_{F_{Q_{1}}}(\Pi_{\underline{Q},p})\cdot\displaystyle{\prod_{q\vert N}}\mathfrak{I}_{\Pi_{\underline{Q},q}}.$$
If we put $\mathcal{L}^{F}_{G^{(2)},G^{(3)}}=(-\psi_{1,(p)}(-1))^{-\frac{1}{2}}L^{F}_{G^{(2)},G^{(3)}}\cdot\displaystyle{\prod_{q\vert N}}\mathfrak{f}_{q}^{-\frac{1}{2}}$, by Proposition 5.1.4, we have
$$(\mathcal{L}^{F}_{G^{(2)},G^{(3)}}(\underline{Q}))^{2}=\mathcal{E}_{F_{Q_{1}}}(\Pi_{\underline{Q},p})\cdot\frac{L(\frac{1}{2},\Pi_{\underline{Q}})}{(\sqrt{-1})^{2k_{Q_{1}}}\Omega_{F_{Q_{1}}}^{2}}.$$
\end{proof}
We complete the proof.
\subsection{Examples.} 
In this subsection, we give examples of the triple ($\mathbf{I}_{i},\mathfrak{X}^{(i)},G^{(i)}$) which satisfy Hypothesis (5), (6) and (7). As a first example, we can take families of CM forms of weight 1. Let $L$ be a quadratic imaginary extension of $\mathbb{Q}$ with a discriminant $D$. We assume that $D$ is square-free and prime to $p$. Let $\mathfrak{f}$ be an integral ideal of $\mathcal{O}_{L}$ such that $\mathfrak{f}$ is prime to $Dp$. We assume that $\mathrm{N}(\mathfrak{f})$ is square-free, where $\mathrm{N}$ is the absolute norm. Let $\mathfrak{C}(\mathbf{f}(p)^{j})$ be the class ray group modulo $\mathbf{f}(p)^{j}$ over $L$ for each $j\geq0$. By the class field theory, $\mathfrak{C}(\mathfrak{f}(p)^{\infty})=\plim[j\geq0]\mathfrak{C}(\mathfrak{f}(p)^{j})$ is a $\mathbb{Z}_{p}$-module of rank 2. Let $\Delta_{\mathfrak{f}}$ be the torsion part of $\mathfrak{C}(\mathfrak{f}(p)^{\infty})$ and $\chi:\Delta_{\mathfrak{f}}\rightarrow \mathbb{C}^{\times}$ be a primitive character.  Here, a primitive character means that it is not induced by any character from $\Delta_{\mathfrak{f}^{\prime}}$ for $\mathfrak{f}\subsetneq\mathfrak{f}^{\prime}$. Let $L^{-}_{\infty}\slash L$ be the anticyclotomic extension of $L$. By the class field theory, the Galois group $\mathrm{Gal}(L^{-}_{\infty}\slash L)$ is attached to a direct summand of the $\mathbb{Z}_{p}$-torsion free part of $\mathfrak{C}(\mathfrak{f}(p)^{\infty})$. Let $\mathrm{pr}_{\mathfrak{f}}:\mathfrak{C}(\mathfrak{f}(p)^{\infty})\rightarrow\Delta_{\mathfrak{f}}$ and $\mathrm{pr}_{-}:\mathfrak{C}(\mathfrak{f}(p)^{\infty})\rightarrow\mathrm{Gal}(L^{-}_{\infty}\slash L)$ be the natural projections to $\Delta_{\mathfrak{f}}$ and $\mathrm{Gal}(L^{-}_{\infty}\slash L)$ respectively. Let $E$ be a finite Galois extension of $\mathbb{Q}_{p}$ such that the image of $\Delta_{\mathfrak{f}}$ by $\chi$ is contained in $E$. We define a group homomorphism
$$\Psi:\mathfrak{C}(\mathfrak{f}(p)^{\infty})\rightarrow \mathcal{O}_{E}\jump{\mathrm{Gal}(L^{-}_{\infty}\slash L)}^{\times}$$
by $\Psi(a)=\chi(\mathrm{pr}_{\mathfrak{f}}(a))[\mathrm{pr}_{-}(a)]$ for $a\in\mathfrak{C}(\mathfrak{f}(p)^{\infty})$. Let $\mathrm{J}_{\mathfrak{f}(p)}$ be the group which consists of fractional ideals $\mathfrak{a}$ of $L$ which is prime to $\mathfrak{f}(p)$. For each finite prime ideal $\mathfrak{l}$, we denote by $L_{\mathfrak{l}}$ the completion of $L$ by $\mathfrak{l}$. Let $\mathcal{O}_{L_{\mathfrak{l}}}$ be the integers of $L_{\mathfrak{l}}$ and $\pi_{\mathfrak{l}}$ be a generator of the maximal ideal of $\mathcal{O}_{L_{\mathfrak{l}}}$. We define a group homomorphism 
$$\Psi^{*}:\mathrm{J}_{\mathfrak{f}(p)}\rightarrow R^{\times}$$
to be $\Psi^{*}(\mathfrak{a})=\displaystyle{\prod_{l\nmid\mathfrak{f}(p)}}\Psi_{\mathfrak{l}}(\pi_{\mathfrak{l}}^{n_{\mathfrak{l}}})$, where $\mathfrak{a}=\displaystyle{\prod_{\mathfrak{l}\nmid\mathfrak{f}(p)}}\mathfrak{l}^{n_{\mathfrak{l}}}$. We put
$$F_{\Psi}=\displaystyle{\sum_{\mathfrak{a}\nmid\mathfrak{f}(p)}}\Psi^{*}(\mathfrak{a})q^{\mathrm{N}(\mathfrak{a})},$$
where $\mathfrak{a}$ runs through integral ideals of $L$ which are prime to $\mathfrak{f}(p)$. Let $\epsilon:\mathrm{Gal}(L^{-}_{\infty}\slash L)\rightarrow\overline{\mathbb{Q}}^{\times}$ be a finite character, and we denote by $P_{\epsilon}:\mathcal{O}_{E}\jump{\mathrm{Gal}(L^{-}_{\infty}\slash L)}\rightarrow\overline{\mathbb{Q}_{p}}$ the $\mathcal{O}_{E}$-algebra homomorphism defined by $P_{\epsilon}([w])=\epsilon(w)$ for $w\in\mathrm{Gal}(L^{-}_{\infty}\slash L)$. It is known that for each finite character $\epsilon: \mathrm{Gal}(L^{-}_{\infty}\slash L)\rightarrow\overline{\mathbb{Q}}^{\times}$, the series $f_{\epsilon}:=P_{\epsilon}(F_{\Psi})\in P_{\epsilon}(\mathcal{O}_{E}\jump{\mathrm{Gal}(L^{-}_{\infty}\slash L)})\jump{q}$ is a Fourier expansion of a classical modular form of weight 1 and level $(-D)\mathrm{N}(\mathbf{f})p^{e_{\epsilon}}$ with $e_{\epsilon}\in \mathbb{Z}_{\geq 1}$ ($cf.$ \cite[Theorem 4.8.2]{Miy06}) and by the definition it is a CM-form. We remark that the $p$-th coefficient $a(p,F_{\Psi})\in \mathcal{O}_{E}\jump{\mathrm{Gal}(L^{-}_{\infty}\slash L)}$ of $F_{\Psi}$ is zero by the definition. However, if $\epsilon:\mathrm{Gal}(L^{-}_{\infty}\slash L)\rightarrow\overline{\mathbb{Q}}^{\times}$ is primitive and the conductor is sufficiently large, it is known that $f_{\epsilon}$ is a primitive form ($cf$. \cite[Theorem 4.8.2]{Miy06}). Then, if we put $\mathfrak{X}:=\{\mathrm{Ker}P_{\epsilon}\mid f_{\epsilon}\ \mathrm{is\ primitive}\}$, the cardinality of $\mathfrak{X}$ is not finite, and the triple $(\mathcal{O}_{E}\jump{\mathrm{Gal}(L^{-}_{\infty}\slash L)},\mathfrak{X},F_{\Psi})$ satisfies the condition (6). Further, it is not difficult to prove that the triple $(\mathcal{O}_{E}\jump{\mathrm{Gal}(L^{-}_{\infty}\slash L)},\mathfrak{X},F_{\Psi})$ satisfies the condition (5). Let $\mathrm{pr}_{\mathbb{A}^{\times}}:\mathbb{A}^{\times}\rightarrow \mathfrak{C}(\mathfrak{f}(p)^{\infty})$ be the natural projection defined by the class field theory. If we put $\langle n\rangle=n\omega_{p}(n)^{-1}\Psi([\mathrm{pr}_{\mathbb{A}^{\times}}\circ j_{p}(n\omega_{p}(n)^{-1})])^{-1}\in R$ for each positive integer $n\in\mathbb{Z}_{\geq 1}$ which is prime to $p$, $\langle n\rangle$ satisfies the condition of (5). Because $D\mathrm{N}(\mathbf{f})$ is square-free, by \cite[Theorem 4.6.17]{Miy06}, $F_{\Psi}$ satisfies Hypothesis (7).

As a second example of ($\mathbf{I}_{i},\mathfrak{X}^{(i)},G^{(i)}$), we give Coleman families. For an element $x\in K$ and $\epsilon\in p^{\mathbb{Q}}$, we denote by $\mathcal{B}[x,\epsilon]_{K}$ the closed ball of radius $\epsilon$ and center $x$, seen as a $K$-affinoid space. We denote by $\mathcal{A}_{\mathcal{B}[x,\epsilon]_{K}}$ the ring of  analytic functions on $\mathcal{B}[x,\epsilon]_{K}$ and by $\mathcal{A}_{\mathcal{B}[x,\epsilon]_{K}}^{0}$ the subring of power bounded elements of $\mathcal{A}_{\mathcal{B}[x,\epsilon]_{K}}$. We remark that if $\epsilon\in K$, the ring $\mathcal{A}_{\mathcal{B}[x,\epsilon]_{K}}^{0}$ is isomorphic to the ring
$$\mathcal{O}_{K}\langle \epsilon^{-1}(T-x)\rangle=\left\{\sum_{n\geq0} a_{n}\left(\epsilon^{-1}(T-x)\right)^{n}\in \mathcal{O}_{K}\jump{\epsilon^{-1}(T-x)}\middle|\lim_{n\to\infty}\vert a_{n}\vert_{p}=0\right\}.$$
Let $M$ be a positive integer which is prime to $p$ and square-free. Let $\epsilon_{M}$ be a Dirichlet character mod $M$. Let $f$ be a $p$-stabilized newform of weight $k_{0}$, level $Mp$ , slope $\alpha<k_{0}-1$ and Nebentypus $\epsilon_{M}\omega_{p}^{i-k_{0}}$ where $0\leq i\leq p-1$. Further, we assume $a(p,f)^{2}\neq\epsilon_{M}(p)p^{k_{0}-1}$ if $i=0$. Then, by Coleman in \cite{Col97}, there exists a radius $\epsilon\in p^{\mathbb{Q}}\cap K$ and a series
$$G\in\mathcal{A}_{\mathcal{B}[k_{0},\epsilon]_{K}}^{0}\jump{q}$$
such that for any $k\in \mathcal{B}[k_{0},\epsilon]_{K}(K)\cap\mathbb{Z}_{>\alpha+1}$ the specialization $G(k)$ of $G$ at $k$ is a Fourier expansion of a normalized Hecke eigenform of weight $k$, level $Mp$, slope $\alpha$ and Nebentypus $\epsilon_{M}\omega_{p}^{i-k}$. Further, we prove in A.2.7 that we can take a sufficiently small $\epsilon$ such that $G(k)$ is a $p$-stabilized newform for any $k\in \mathcal{B}[k_{0},\epsilon]_{K}(K)\cap\mathbb{Z}_{>\alpha+1}$.  If we put $X=\epsilon^{-1}(T-k_{0})$, then by the definition, we can regard the Coleman series $G$ as a series $G(X)$ in $\mathcal{O}_{K}\jump{X}$. Further, if we put $b_{k}=\epsilon^{-1}(k-k_{0})$ for any $k\in\mathbb{Z}_{>\alpha+1}\cap\mathcal{B}[k_{0},\epsilon]_{K}(K)$, $G(b_{k})$ is a Fourier expansion of a $p$-stabilized newform of weight $k$, level $Mp$, slope $\alpha$ and Nebentypus $\epsilon_{M}\omega_{p}^{i-k}$. We denote by $P_{k}:\mathcal{O}_{K}\jump{X}\rightarrow K$ the continuous $\mathcal{O}_{K}$-algebra homomorphism defined by $P_{k}(X)=b_{k}$ for all $k\in\mathbb{Z}_{>\alpha+1}\cap\mathcal{B}[k_{0},\epsilon]_{K}(K)$. Then, the triple $(\mathcal{O}_{K}\jump{X},\mathfrak{X},G(X))$ satisfies Hypothesis (6). We check that the triple $(R,\mathfrak{X},G(X))$ satisfies Hypothesis (5). Let $\mathrm{exp}(x)$ and $\mathrm{log}(x)$ be the formal exponential series and log series in $K\jump{x}$ defined by
\begin{align*}
\mathrm{exp}(x)=&\displaystyle{\sum_{n\geq 0}}\frac{1}{n!}x^{n},\\
\mathrm{log}(x)=&\displaystyle{\sum_{n\geq1}}\frac{(-1)^{n-1}}{n!}x^{n}.
\end{align*}
Let $\langle\ \rangle_{\Lambda}:\mathbb{Z}_{p}^{\times}\rightarrow\Lambda^{\times}$ be the group homomorphism defined in (2.5.1). We fix an isomorphism $\Lambda\cong\mathcal{O}_{K}\jump{X}$ defined by $[1+p]\mapsto X+1$ and we define a formal series
$$\langle n\rangle^{\prime}:=\langle n\rangle_{\Lambda}((1+p)^{k_{0}}\mathrm{exp}(\epsilon X\mathrm{log}(1+p))-1)$$
for each $n\in \mathbb{Z}_{\geq 1}$ which is prime to $p$. We remark that because we have $\vert p^{m}\vert_{p}\leq\vert m!\vert_{p}$ for each $m\in\mathbb{Z}_{\geq 1}$, the series $\langle n\rangle^{\prime}$ is contained in $\mathcal{O}_{K}\jump{X}$. Further, for each $n\in\mathbb{Z}_{\geq 1}$ which is prime to $p$, the series $\langle n\rangle^{\prime}$ satisfies the condition of Hypothesis (5). Because $M$ is square-free, by \cite[Theorem 4.6.17]{Miy06}, $G(X)$ satisfies Hypothesis (7).
\appendix
\section{Rigidity of conductors in a Coleman family.}
In this appendix, we prove the rigidity of conductors in a Coleman family. We assume $p\geq 5$. For any $K$-affinoid space $\mathfrak{X}$, we denote by $A(\mathfrak{X})$ the $K$-affinoid algebra attached to $\mathfrak{X}$.
\subsection{Notation on overconvergent modular forms.}
Let $M$ be a positive integer which is prime to $p$. Further, we assume $M\geq 5$. In this subsection, we recall some of notation on overconvergent modular forms and Coleman families of level $Mp$. We denote by $X(\Gamma_{1}(Mp))$ (resp. $X(M;p)$) the modular curve over $K$ which represents the equivalent classes of generalized elliptic curves with $\Gamma_{1}(Mp)$-structure (resp. $\Gamma_{1}(M)\cap \Gamma_{0}(p)$-structure). We denote by $f: E_{1}(Mp)\rightarrow X(\Gamma_{1}(Mp))$ the universal generalized elliptic curve. We put $\omega:=f_{*}(\Omega_{E_{1}(Mp)\slash X(\Gamma_{1}(Mp))})$. We define the normalized Eisenstein series of weight $p-1$ and level 1 to be
$$E_{p-1}=1-\frac{2(p-1)}{B_{p-1}}\displaystyle{\sum_{n\geq 1}}\sigma_{p-2}(n)q^{n},\leqno(\mathrm{A}.1.1)$$
where $B_{p-1}$ is the $p-1$-th Bernoulli number and $\sigma_{p-2}(n):=\sum_{d\vert n}d^{p-2}$. It is known that $E_{p-1}$ is a lifting of the Hasse invariant and the $q$-expansion of $E_{p-1}$ at $\infty$ is invertible in $\mathbb{Z}_{p}\jump{q}$. Because $E_{p-1}$ is a section of $H^{0}(X(\Gamma_{1}(Mp)),\omega^{\otimes p-1})$, the $p$-adic norm $\vert E_{p-1}(x)\vert_{p}$ of $E_{p-1}$ at each closed point $x\in X(\Gamma_{1}(Mp))$ makes sense by \cite[\S2]{Col96}. Let $\mathfrak{X}(\Gamma_{1}(Mp))$ be the $K$-rigid space associated to the scheme $X(\Gamma_{1}(Mp))$. For each $v\in \mathbb{Q}\cap [0,p^{p(p+1)^{-1}})$, we define a sub-affinoid space of $\mathfrak{X}(\Gamma_{1}(Mp))$ to be
$$\mathfrak{X}(\Gamma_{1}(Mp))(v):=\{x\in \mathfrak{X}(\Gamma_{1}(Mp))\mid \vert E_{p-1}(x)\vert_{p}\geq \vert p\vert_{p}^{v} \}.$$
We also define the $K$-affinoid spaces $\mathfrak{X}(\Gamma_{1}(M))(v)$ and $\mathfrak{X}(\Gamma_{1}(M)\cap\Gamma_{0}(p))(v)$ in a similar way. As explained in \cite[\S6]{Col96}, we can regard $\mathfrak{X}(\Gamma_{1}(M))(v)$ as a subspace of $\mathfrak{X}(\Gamma_{1}(M)\cap\Gamma_{0}(p))(v)$. Let $i_{v}: \mathfrak{X}(\Gamma_{1}(Mp))(v)\rightarrow \mathfrak{X}(\Gamma_{1}(M)\cap \Gamma_{0}(p))(v)$ be the natural forgetful map and we denote by $\tilde{\mathfrak{X}}(\Gamma_{1}(Mp))(v)$ the inverse image of $\mathfrak{X}(\Gamma_{1}(M))(v)$ under $i_{v}$. Let $f_{v}:\tilde{E}_{1}(Mp,v)\rightarrow \tilde{\mathfrak{X}}(\Gamma_{1}(Mp))(v)$ be the pullback of $E_{1}(Mp)$ to $\tilde{\mathfrak{X}}(\Gamma_{1}(Mp))(v)$. We denote by $\tilde{\omega}_{v}$ the direct image of $\Omega_{\tilde{E}_{1}(Mp,v)\slash \tilde{\mathfrak{X}}(\Gamma_{1}(Mp))(v)}$ by $f_{v}$. 
\begin{Definition}
For each $v\in \mathbb{Q}\cap [0,p^{p(p+1)^{-1}})$ and $k\in\mathbb{Z}$, we call a global section $f$ of $\tilde{\omega}_{v}^{\otimes k}$ an overconvergent modular form over $\tilde{\mathfrak{X}}(\Gamma_{1}(Mp))(v)$.
\end{Definition}
For each overconvergent modular form $f$, we can define the $q$-expansion at each cusp in the same way as classical modular forms. We say that an overconvergent modular form $f$ is cuspidal, if $f$ has no constant term at each cusp. We denote by $S_{k}(Mp,v;K)$ the $K$-Banach space of overconvergent cusp forms of weight $k$ level $Mp$ over $\tilde{\mathfrak{X}}(\Gamma_{1}(Mp))(v)$. Further, for each $d\in (\mathbb{Z}\slash Mp\mathbb{Z})^{\times}$, we can define the diamond operator $\langle d\rangle:S_{k}(Mp,v;K)\rightarrow S_{k}(Mp,v;K)$. Then, we have the characteristic decomposition 
$$S_{k}(Mp,v;K):=\displaystyle{\bigoplus_{\chi}}S_{k}(Mp,\chi,v;K),$$
where $\chi$ runs through Dirichlet characters modulo $Mp$.  As an important example, there exists an overconvergent modular form
$$E:=1+\frac{2}{L_{p}(0,\mathbf{1})}\displaystyle{\sum_{n\geq1}}\left(\displaystyle{\sum_{d\vert n}}\omega_{p}^{-1}(d) \right)q^{n}\in \mathbb{Z}_{p}\jump{q}\leqno(\mathrm{A}.1.2)$$
of weight 1 and Nebentypus $\omega_{p}^{-1}$ which is defined in \cite[p.447]{Col97}. Here, $L_{p}(s,\mathbf{1})$ is the $p$-adic $L$-function defined in \cite[p.88]{Hid93}. By \cite[Lemma B1.2]{Col97}, there exists an overconvergent modular form $E^{-1}$ of weight -1 whose $q$-expansion is the inverse of the $q$-expansion of $E$. We fix a Dirichlet character $\epsilon_{M}$ modulo $M$. Because $E$ is invertible, we can define an isomorphism
$$S_{0}(Mp,\epsilon_{M}\omega_{p}^{i},v;K)\xrightarrow{\times E^{k}} S_{k}(Mp,\epsilon_{M}\omega_{p}^{i-k},v;K).\leqno(\mathrm{A}.1.3)$$

Next, we recall the definition of families of overconvergent modular forms. For each 1-dimensional $K$-affinoid closed ball $\mathcal{B}$, because $E-1\in q\mathbb{Z}_{p}\jump{q}$, we can define a formal series
$$E^{s}:=\displaystyle{\sum_{n\geq 0}}\left(
    \begin{array}{c}
      s \\
      n \\
    \end{array}
  \right)(E-1)^{n}\in A(\mathcal{B})\jump{q},\leqno(\mathrm{A}.1.4)$$
where
$$\left(
    \begin{array}{c}
      s \\
      n \\
    \end{array}
  \right):=\begin{cases}
\frac{s(s-1)\cdots (s-n+1)}{n!}\ &\mathrm{if}\ n\geq 0,\\
1\ &\mathrm{if}\ n=0.
\end{cases}
$$
\begin{Definition}
We call a formal series $G=\displaystyle{\sum_{n\geq1}}a_{n}(x)q^{n}\in A(\mathcal{B})\jump{q}$ a family of overconvergent cusp forms of type $i$ and Nebentypus $\epsilon_{M}$ over $\tilde{\mathfrak{X}}(\Gamma_{1}(Mp))(v)$ if the series
$$G(s)\slash E^{s}:=(\displaystyle{\sum_{n\geq 1}}a_{n}(s)q^{n})E^{-s}\in A(\mathcal{B})\jump{q}$$
is the $q$-expansion of an element of $S_{0}(Mp,\epsilon_{M}\omega_{p}^{i},v;K)\widehat{\otimes}_{K}A(\mathcal{B})$. 
\end{Definition}
We denote by $S(Mp,\epsilon_{M},i,v;A(\mathcal{B}))$ the  $A(\mathcal{B})$-module consisting of families of overconvergent cusp forms of type $i$ and Nebentypus $\epsilon_{M}$ over $\tilde{\mathfrak{X}}(\Gamma_{1}(Mp))(v)$. By the definition, the $A(\mathcal{B})$-module $S(Mp,\epsilon_{M},i,v;A(\mathcal{B}))$ is isomorphic to $S_{0}(Mp,\epsilon_{M}\omega_{p}^{i},v;K)\widehat{\otimes}_{K}A(\mathcal{B})$. Then, we naturally regard $S(Mp,\epsilon_{M},i,v;A(\mathcal{B}))$ as an $A(\mathcal{B})$-Banach module.

Let $\alpha\in \mathbb{Q}_{>0}$ be a rational number. Next, we recall the construction of the projection of $S(Mp,\epsilon_{M},i,v;A(\mathcal{B}))$ to the slope $\alpha$ part. By \cite[p.220]{Col96}, we can define the Hecke operator at $p$ 
$$T_{p}:S_{k}(Mp,\epsilon_{M}\omega_{p}^{i-k},v;K)\rightarrow S_{k}(Mp,\epsilon_{M}\omega_{p}^{i-k},v;K)\leqno(\mathrm{A}.1.5)$$ 
and this operator is known to be completely continuous by \cite[Proposition I\hspace{-.1em}I 3.15]{Go88}. Hence, by \cite[\S5]{Ser62}, we can define the Fredholm determinant of $T_{p}$ by
$$P_{k}(X):=\mathrm{det}(I-XT_{p})\in K\jump{X}.\leqno(\mathrm{A}.1.6)$$
By the isomorphism of $S_{0}(Mp,\epsilon_{M}\omega_{p}^{i},v;K)\xrightarrow{\times E^{k}} S_{k}(Mp,\epsilon_{M}\omega_{p}^{i-k},v;K)$, we can naturally define a complete continuous operator $\tilde{T}_{k,p}:=E^{-k}T_{p}E^{k}\in \mathrm{End}_{K}(S_{0}(Mp,\epsilon_{M}\omega_{p}^{i},v;K))$. If we take a sufficiently small $v\in \mathbb{Q}\cap [0,p^{p(p+1)^{-1}})$, we can interpolate the family $\{\tilde{T}_{k,p}\}_{k\in \mathcal{B}(K)\cap \mathbb{Z}}$ to a complete continuous $A(\mathcal{B})$-module  homomorphism
$$\tilde{T}_{s,p}: S_{0}(Mp,\epsilon_{M}\omega_{p}^{i},v;K)\widehat{\otimes}_{K}A(\mathcal{B})\rightarrow S_{0}(Mp,\epsilon_{M}\omega_{p}^{i},v;K)\widehat{\otimes}_{K}A(\mathcal{B})\leqno(\mathrm{A}.1.7)$$
by \cite[Lemma B3.1]{Col97}. Because $\tilde{T}_{s,p}$ is complete continuous, we can define the Fredholm determinant of $\tilde{T}_{s,p}$
$$P_{s}=\mathrm{det}(1-X\tilde{T}_{s,p})\in A(\mathcal{B})\jump{X}\leqno(\mathrm{A}.1.8)$$
by \cite[p.428]{Col97}. This series $P_{s}$ is the interpolation of $\{P_{k}\}_{k\in \mathcal{B}(K)\cap \mathbb{Z}}$. We fix an integer $k_{0}\in \mathbb{Z}_{> \alpha+1}$. By \cite[Corollary A5.5.1]{Col97}, if we take a sufficiently small $\epsilon \in \mathbb{Q}_{>0}$, there exists a monic polynomial $Q_{s}(X):=1+a_{0}(s)X+\cdots a_{d}(s)X^{d}\in A(\mathcal{B}_{[k_{0},\epsilon]})[X]$ of degree $d$ which satisfies the following conditions.
\begin{enumerate}
\item[1] The coefficient $a_{d}(s)$ is in $A(\mathcal{B}_{[k_{0},\epsilon]})^{\times}$.\\
\item[2] If a pair $(t,z)\in \mathcal{B}_{[k_{0},\epsilon]}(\overline{\mathbb{Q}}_{p})\times \mathrm{Spec}(K[X])$ is a root of $Q_{s}(X)$, we have $\vert z\vert_{p}=\vert p\vert_{p}^{\alpha}$.\\
\item[3] There exists a series $S_{s}(X)\in A(\mathcal{B}_{[k_{0},\epsilon]})\jump{X}$ such that $P_{s}(X)=Q_{s}(X)S_{s}(X)$.
\end{enumerate}
We put $Q^{*}_{s}(X):=X^{d}Q_{s}(X^{-d})\in A(\mathcal{B}_{[k_{0},\epsilon]})[X]$. We define the complete continuous $A(\mathcal{B}_{[k_{0},\epsilon]})$-morphism $T_{s,p}:=E^{s}\tilde{T}_{s,p}E^{-s}\in\mathrm{End}_{A(\mathcal{B}_{[k_{0},\epsilon]})}(S(Mp,\epsilon_{M},i,v;A(\mathcal{B}_{[k_{0},\epsilon]})))$. We put 
$$S^{\alpha}(Mp,\epsilon_{M},i,v;A(\mathcal{B}_{[k_{0},\epsilon]})):=\mathrm{Ker}(Q^{*}(T_{s,p})).$$
By \cite[Proposition A5.3]{Col97}, the $A(\mathcal{B}_{[k_{0},\epsilon]})$-module $S^{\alpha}(Mp,\epsilon_{M},i,v;A(\mathcal{B}_{[k_{0},\epsilon]}))$ is projective of rank $d$. Because $A(\mathcal{B}_{[k_{0},\epsilon]})$ is a principal ideal domain,  $S^{\alpha}(Mp,\epsilon_{M},i,v;A(\mathcal{B}_{[k_{0},\epsilon]}))$ is free of rank $d$ and independent of $v$. Then, we omit $v$ in the notation and we write
$$S^{\alpha}(Mp,\epsilon_{M},i;A(\mathcal{B}_{[k_{0},\epsilon]})):=S^{\alpha}(Mp,\epsilon_{M},i,v;A(\mathcal{B}_{[k_{0},\epsilon]})).\leqno(\mathrm{A}.1.9)$$
Further, by \cite[Theorem A4.3]{Col97}, there exists a projection 
$$p_{\alpha}: S(Mp,\epsilon_{M},i,v;A(\mathcal{B}_{[k_{0},\epsilon]}))\rightarrow S^{\alpha}(Mp,\epsilon_{M},i;A(\mathcal{B}_{[k_{0},\epsilon]})).\leqno(\mathrm{A}.1.10)$$

Next, we recall some results on the Hecke algebra of $S^{\alpha}(Mp,\epsilon_{M},i;A(\mathcal{B}_{[k_{0},\epsilon]}))$. Let $\mathbf{T}$ be the Hecke algebra of $S(Mp,\epsilon_{M},i,v;A(\mathcal{B}_{[k_{0},\epsilon]}))$ defined in \cite[p.464]{Col97}. We define the Hecke algebra $\mathbf{T}^{\alpha}$ of $S^{\alpha}(Mp,\epsilon_{M},i;A(\mathcal{B}_{[k_{0},\epsilon]}))$ to be the image of $\mathbf{T}$ in $\mathrm{End}_{A(\mathcal{B}_{[k_{0},\epsilon]})}(S^{\alpha}(Mp,\epsilon_{M},i;A(\mathcal{B}_{[k_{0},\epsilon]})))$. Because $\mathrm{End}_{A(\mathcal{B}_{[k_{0},\epsilon]})}(S^{\alpha}(Mp,\epsilon_{M},i;A(\mathcal{B}_{[k_{0},\epsilon]})))$ is $A(\mathcal{B}_{[k_{0},\epsilon]})$-free of rank $d^{2}$ and $A(\mathcal{B}_{[k_{0},\epsilon]})$ is a principal ideal domain, $\mathbf{T}^{\alpha}$ is also $A(\mathcal{B}_{[k_{0},\epsilon]})$-free of finite rank. By \cite[9.4.3 Theorem 3]{BGR84} and \cite[3.7.3 Proposition 3]{BGR84}, $\mathbf{T}^{\alpha}$ becomes an $A(\mathcal{B}_{[k_{0},\epsilon]})$-Banach algebra and a $K$-affinoid algebra. We denote by $X(\mathbf{T}^{\alpha})$ the $K$-affinoid space attached to $T^{\alpha}$. We denote by $(i,i^{\#}):X(\mathbf{T}^{\alpha})\rightarrow \mathcal{B}_{[k_{0},\epsilon]}$ the morphism of the $K$-affinoid space attached to the homomorphism $A(\mathcal{B}_{[k_{0},\epsilon]})\rightarrow \mathbf{T}^{\alpha}$. Here, $i:X(\mathbf{T}^{\alpha})\rightarrow \mathcal{B}_{[k_{0},\epsilon]}$ is the topological map and $i^{\#}:A(\mathcal{B}_{[k_{0},\epsilon]})\rightarrow \mathbf{T}^{\alpha}$ is the $K$-affinoid algebra map. We define a series
$$F_{\mathbf{T}^{\alpha}}:=\displaystyle{\sum_{n\geq 1}}T_{n}q^{n}\in \mathbf{T}^{\alpha}\jump{q},\leqno(\mathrm{A}.1.11)$$
where $T_{n}\in \mathbf{T}^{\alpha}$ is the Hecke operator at $n$. We fix a finite extension $L$ of $K$. We denote by $X(\mathbf{T}^{\alpha})(L)$ the set of points of $X(\mathbf{T}^{\alpha})$ over  $L$. For each $x\in X(\mathbf{T}^{\alpha})(L)$, we denote by $\eta_{x}:\mathbf{T}^{\alpha}\rightarrow L$ the continuous $K$-algebra homomorphism attached to $x$. For each $k\in \mathcal{B}_{[k_{0},\epsilon]}(K)$, we put $X(\mathbf{T}^{\alpha})_{k}(L):=i^{-1}(k)\cap X(\mathbf{T}^{\alpha})(L)$. The following theorem is proved in \cite[Theorem B5.7]{Col97}.
\begin{Theorem}
Let $k\in \mathcal{B}_{[k_{0},\epsilon]}(K)\cap \mathbb{Z}_{>\alpha+1}$. The mapping from $X(\mathbf{T}^{\alpha})_{k}(L)$ to $L\jump{q}$, $x\mapsto F_{\mathbf{T}^{\alpha},x}:=\displaystyle{\sum_{n\geq1}}\eta_{x}(T_{n})q^{n}$, is a bijection onto the set of $q$-expansions of classical cuspidal eigenforms of weight $k$, level Mp and Nebentypus $\epsilon_{M}\omega_{p}^{i-k}$ of slope $\alpha$ over $L$.
\end{Theorem}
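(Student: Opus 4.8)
The plan is to prove the theorem by combining the specialization behaviour of the slope-$\alpha$ family module from (A.1.7)--(A.1.10) with Coleman's classicality theorem. First I would analyze the fibre of $S^{\alpha}(Mp,\epsilon_{M},i;A(\mathcal{B}_{[k_{0},\epsilon]}))$ at an integer weight $k\in\mathcal{B}_{[k_{0},\epsilon]}(K)\cap\mathbb{Z}_{>\alpha+1}$. Using the isomorphism $S(Mp,\epsilon_{M},i,v;A(\mathcal{B}))\cong S_{0}(Mp,\epsilon_{M}\omega_{p}^{i},v;K)\widehat{\otimes}_{K}A(\mathcal{B})$ together with the interpolation $\tilde{T}_{s,p}\vert_{s=k}=\tilde{T}_{k,p}=E^{-k}T_{p}E^{k}$ and the compatible factorisation $P_{s}=Q_{s}S_{s}$, which specialises at $k$ to the spectral decomposition $P_{k}=Q_{k}S_{k}$ attached to the completely continuous operator $\tilde{T}_{k,p}$ by \cite[Corollary A5.5.1]{Col97}, one sees that the kernel $\mathrm{Ker}(Q^{*}(T_{s,p}))$ specialises at $k$ to the subspace $\mathrm{Ker}(Q^{*}_{k}(T_{k,p}))\subset S_{k}(Mp,\epsilon_{M}\omega_{p}^{i-k},v;K)$, via the twist by $E^{k}$ of (A.1.3). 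In other words, base change of $S^{\alpha}$ along $P_{k}$ recovers the weight-$k$ overconvergent slope-$\alpha$ eigenspace $S_{k}(Mp,\epsilon_{M}\omega_{p}^{i-k},v;K)^{\alpha}$ of overconvergent cusp forms.

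Next I would transport this to the Hecke side. Since $S^{\alpha}(Mp,\epsilon_{M},i;A(\mathcal{B}_{[k_{0},\epsilon]}))$ is free over the principal ideal domain $A(\mathcal{B}_{[k_{0},\epsilon]})$ and forming $\mathrm{End}$ of a free module commutes with flat base change, the image $\mathbf{T}^{\alpha}$ of the abstract Hecke algebra specialises at $k$ to the Hecke algebra $\mathbf{T}^{\alpha}_{k}$ acting faithfully on the weight-$k$ overconvergent slope-$\alpha$ space; concretely $\mathbf{T}^{\alpha}\otimes_{A(\mathcal{B}_{[k_{0},\epsilon]}),P_{k}}K\xrightarrow{\ \sim\ }\mathbf{T}^{\alpha}_{k}$. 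Consequently a point $x\in X(\mathbf{T}^{\alpha})_{k}(L)=i^{-1}(k)\cap X(\mathbf{T}^{\alpha})(L)$ is precisely a $K$-algebra homomorphism $\eta_{x}\colon\mathbf{T}^{\alpha}_{k}\to L$, i.e. a system of Hecke eigenvalues $\{\eta_{x}(T_{n})\}_{n\geq1}$ occurring in $S_{k}(Mp,\epsilon_{M}\omega_{p}^{i-k},v;K)^{\alpha}\widehat{\otimes}_{K}L$, and $F_{\mathbf{T}^{\alpha},x}=\sum_{n\geq1}\eta_{x}(T_{n})q^{n}$ is the $q$-expansion of the associated normalised overconvergent eigenform (normalisation $a_{1}=1$ coming from $T_{1}=1$, with distinct $x$ giving distinct $q$-expansions).

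The crucial input is Coleman's classicality (control) theorem: since $k$ is an integer and the slope satisfies $\alpha<k-1$ (equivalently $k>\alpha+1$), every overconvergent cusp form of weight $k$, level $Mp$ and slope $\alpha$ is classical, so $S_{k}(Mp,\epsilon_{M}\omega_{p}^{i-k},v;K)^{\alpha}$ coincides with the classical slope-$\alpha$ subspace of $S_{k}(Mp,\epsilon_{M}\omega_{p}^{i-k})$, while conversely every classical cusp form of level $Mp$ of finite slope is overconvergent. Feeding this into the previous step, the points of $X(\mathbf{T}^{\alpha})_{k}(L)$ are in bijection with the systems of Hecke eigenvalues appearing in the classical cuspidal slope-$\alpha$ space of weight $k$, level $Mp$ and Nebentypus $\epsilon_{M}\omega_{p}^{i-k}$ over $L$; by the classical theory of eigenforms at level $Mp$ with $p$ exactly dividing the level, each such system corresponds to a unique normalised classical cuspidal eigenform with that $q$-expansion, of slope $\alpha$ (cf. \cite[\S4.6]{Miy06}). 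This yields the asserted bijection $x\mapsto F_{\mathbf{T}^{\alpha},x}$. The main obstacle is the classicality theorem itself, which is the deep analytic ingredient of \cite{Col96}; a secondary technical point is checking that the slope-$\alpha$ projection $p_{\alpha}$ of (A.1.10) is compatible with specialisation at integer weights, which rests on the behaviour of Fredholm determinants and the associated spectral decompositions in rigid-analytic families as developed in \cite[Appendix A]{Col97} (building on \cite{Ser62}).
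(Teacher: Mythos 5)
The paper does not actually prove this statement: it is quoted verbatim from Coleman (\cite[Theorem B5.7]{Col97}), so there is no in-paper argument to compare with. Your sketch is, in outline, Coleman's own proof, and it is essentially correct: specialize the slope-$\alpha$ module at an integer weight $k$ with $\alpha<k-1$ using the factorization $P_{s}=Q_{s}S_{s}$ and the projector of (A.1.10) (this control step is \cite[Theorem A4.3, Corollary A5.5.1]{Col97}, and you rightly flag it as the technical point), identify the fibre of the Hecke algebra with the Hecke algebra of the weight-$k$ slope-$\alpha$ space, and then invoke classicality (\cite[Theorem 8.1]{Col96}) together with the classical theory of eigenforms at level $Mp$ to convert overconvergent slope-$\alpha$ eigensystems into $q$-expansions of classical eigenforms, injectivity being clear since the $T_{n}$ generate $\mathbf{T}^{\alpha}$ and determine the Fourier coefficients. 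The one step you state too casually is the middle one: ``$\mathrm{End}$ of a free module commutes with base change'' only gives you a surjection from $\mathbf{T}^{\alpha}\otimes_{A(\mathcal{B}_{[k_{0},\epsilon]})}K$ onto the Hecke algebra acting on the fibre; injectivity (equivalently, faithfulness of the specialized algebra on the specialized space) is not automatic for the image of an abstract algebra inside $\mathrm{End}$, and it is exactly here that Coleman's perfect pairing $\langle G,T\rangle=a(1,T(G))$ (\cite[Proposition B5.6]{Col97}) is needed---the same duality the paper itself uses in Lemma A.2.4 for the analogous statement about $\mathbf{t}_{k}$. With that input supplied, your route coincides with the source's proof, so there is no substantive gap, only a citation to make precise.
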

Next, we recall the construction of Coleman families.  Let $f_{0}$ be a $p$-stabilized newform of weight $k_{0}$, level $Mp$ , slope $\alpha$ and Nebentypus $\epsilon_{M}\omega_{p}^{i-k_{0}}$ over $K$. Further, we assume $a(p,f_{0})^{2}\neq\epsilon_{M}(p)p^{k_{0}-1}$ if $i=0$. We denote by $x_{0}\in X(\mathbf{T}^{\alpha})_{k_{0}}(K)$ the maximal ideal attached to $f_{0}$ by Theorem A.1.3. The following corollary is proved in \cite[Corollary B.5.7.1]{Col97}.
\begin{Corollary}
If we take a sufficiently small $\epsilon\in \mathbb{Q}_{>0}$, there exists a unique section
$$(s,s^{\#}):\mathcal{B}_{[k_{0},\epsilon]}\rightarrow X(\mathbf{T}^{\alpha})$$
such that $s(k_{0})=x_{0}$.
\end{Corollary}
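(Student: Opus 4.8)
The plan is to follow Coleman's construction, the essential point being that the weight morphism $i\colon X(\mathbf{T}^{\alpha})\rightarrow\mathcal{B}_{[k_{0},\epsilon]}$ is unramified, hence \emph{\'etale}, at the point $x_{0}$. Since $\mathbf{T}^{\alpha}$ is $A(\mathcal{B}_{[k_{0},\epsilon]})$-free of finite rank and $A(\mathcal{B}_{[k_{0},\epsilon]})$ is a principal ideal domain, $i$ is finite and flat. Producing a section $(s,s^{\#})$ with $s(k_{0})=x_{0}$ then amounts, after replacing $\epsilon$ by a smaller radius, to exhibiting a clopen neighbourhood of $x_{0}$ on which $i$ restricts to an isomorphism onto a subdisc centred at $k_{0}$; the inverse of that isomorphism, followed by the inclusion into $X(\mathbf{T}^{\alpha})$, is the desired $s$, and $s^{\#}$ is the corresponding map of $K$-affinoid algebras.

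To establish the \'etale property at $x_{0}$ I would first analyse the fibre of $i$ over $k_{0}$. As $k_{0}\in\mathbb{Z}_{>\alpha+1}$, Theorem A.1.3 (Coleman's control theorem, classicality in slope $<k_{0}-1$) identifies the fibre algebra $\mathbf{T}^{\alpha}\otimes_{A(\mathcal{B}_{[k_{0},\epsilon]})}K$ with the image of the abstract Hecke algebra, $T_{p}=U_{p}$ included, in $\mathrm{End}_{K}$ of the space of classical slope-$\alpha$ cusp forms of weight $k_{0}$, level $Mp$ and Nebentypus $\epsilon_{M}\omega_{p}^{i-k_{0}}$. By the Atkin--Lehner theory of newforms and multiplicity one for $\mathrm{GL}_{2}$ (\cite[Theorem 4.6.17]{Miy06}), the generalized eigenspace attached to the eigensystem of $f_{0}$ is one-dimensional: strong multiplicity one forces the system of $f_{0}$ to occur once, and the only way a nilpotent could arise is on the two-dimensional $p$-old subspace $\langle g,V_{p}g\rangle$ when $f_{0}$ is the $p$-stabilization of a newform $g$ of level $M$ (which can only happen when the $p$-part of the Nebentypus is trivial) — but there $U_{p}$ has eigenvalues the two roots of $X^{2}-a(p,g)X+\epsilon_{M}(p)p^{k_{0}-1}$, and these coincide exactly when $a(p,f_{0})^{2}=\epsilon_{M}(p)p^{k_{0}-1}$, the case excluded by hypothesis (if $f_{0}$ is already new at $p$ this subspace is absent and there is nothing to check). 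Hence the localization of $\mathbf{T}^{\alpha}\otimes_{A(\mathcal{B}_{[k_{0},\epsilon]})}K$ at the maximal ideal $x_{0}$ is $K$ itself, so $i$ is unramified at the $K$-rational point $x_{0}$, and being flat it is \'etale there.

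It remains to pass from an \'etale $K$-point to a section, which is soft rigid geometry. Since $i$ is finite and $x_{0}$ is a $K$-rational point at which $i$ is unramified, the local degree of $i$ at $x_{0}$ is $1$; by the local structure of finite morphisms over a rigid space there is an admissible open $V\ni k_{0}$ and a decomposition $i^{-1}(V)=U\sqcup U'$ with $x_{0}\in U$ and $i|_{U}\colon U\xrightarrow{\sim}V$ an isomorphism. Choosing a subdisc $\mathcal{B}_{[k_{0},\epsilon']}\subseteq V$, setting $W:=U\cap i^{-1}(\mathcal{B}_{[k_{0},\epsilon']})$ and letting $(s,s^{\#})$ be the inverse isomorphism $\mathcal{B}_{[k_{0},\epsilon']}\xrightarrow{\sim}W$ followed by the open immersion $W\hookrightarrow X(\mathbf{T}^{\alpha})$ yields a section with $s(k_{0})=x_{0}$. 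Uniqueness is immediate: the image of any section through $x_{0}$ is a clopen subspace mapping isomorphically onto $\mathcal{B}_{[k_{0},\epsilon']}$ and containing $x_{0}$, hence equals $W$. (Applying $s^{\#}$ to the universal Hecke series $F_{\mathbf{T}^{\alpha}}$ of $(\mathrm{A}.1.11)$ then produces the $q$-expansion of the Coleman family through $f_{0}$ used in \S5.3.) The main obstacle is the \'etale property at $x_{0}$: it rests on the control theorem together with a careful bookkeeping of the $p$-old situation, the latter being precisely where the assumption $a(p,f_{0})^{2}\neq\epsilon_{M}(p)p^{k_{0}-1}$ enters; the rest is standard.
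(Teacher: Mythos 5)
The paper gives no proof of this corollary, deferring entirely to \cite[Corollary B.5.7.1]{Col97}; your argument --- finiteness and flatness of the weight map from freeness of $\mathbf{T}^{\alpha}$ over the principal ideal domain $A(\mathcal{B}_{[k_{0},\epsilon]})$, one-dimensionality of the generalized eigenspace at $x_{0}$ via the control theorem, newform theory and the hypothesis $a(p,f_{0})^{2}\neq\epsilon_{M}(p)p^{k_{0}-1}$, hence \'etaleness of the weight map at $x_{0}$ and a unique local section by connectedness of the disc --- is precisely the standard proof underlying Coleman's statement. It is correct and essentially the same approach as the cited source.
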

Let $(s,s^{\#}):\mathcal{B}_{[k_{0},\epsilon]}\rightarrow X(\mathbf{T}^{\alpha})$ be the section in Corollary A.1.4.  Then, if we put 
$$F=\displaystyle{\sum_{n\geq 1}}s^{\#}(T_{n})q^{n}\in A(\mathcal{B}_{[k_{0},\epsilon]})\jump{q},\leqno(\mathrm{A}.1.12)$$
by Theorem A.1.3, $F$ is a family of cuspidal automorphic  forms of slope $\alpha$ such that $F(k_{0})=f_{0}$.
\subsection{Rigidity of conductors in a Coleman family.}
In this subsection, we prove the rigidity of conductors in a Coleman family. Let us keep the notation as A.1. First, we define old overconvergent cusp forms.
\begin{Definition}
\begin{enumerate}
\item[(1)]Let $k$ be an integer. We fix an element $v\in \mathbb{Q}_{>0}\cap[0,p^{p(p+1)^{-1}})$. We say that an element $f\in S_{k}(Mp,\epsilon_M\omega_{p}^{i-k};K)(v)$ is old outside of $p$ if it is contained in the subspace
$$\sum_{\substack{t\vert M \\ t\neq M \\ m_{\chi}\vert t}}\sum_{m\vert\frac{M}{t}}S_{k}\left(Mp\slash t,\epsilon_{M}\omega_{p}^{i-k};K\right)(v)[m],$$
where 
$$S_{k}\left(Mp\slash t,\epsilon_{M}\omega_{p}^{i-k};K\right)(v)[m]:=\left\{g(q^{m})\middle| g\in S_{k}\left(Mp\slash t,\epsilon_{M}\omega_{p}^{i-k};K\right)(v)\right\}.$$\\
\item[(2)]We say that an element $G\in S(Mp,\epsilon_{M},i,v;A(\mathcal{B}_{[k_{0},\epsilon]}))$ is old outside of $p$ if for any $k\in \mathbb{Z}_{>\alpha+1}\cap \mathcal{B}(K)$, $G\slash E^{-k}$ is old outside of $p$.
\end{enumerate}
\end{Definition}
We denote by $S^{\mathrm{old}}(Mp,\epsilon_{M},i,v;A(\mathcal{B}_{[k_{0},\epsilon]})$ the submodule of $S(Mp,\epsilon_{M},i,v;A(\mathcal{B}_{[k_{0},\epsilon]}))$ consisting of elements which are old outside of $p$.
\begin{Lemma}
We fix an element $k\in \mathbb{Z}_{>\alpha+1}\cap \mathcal{B}(K)$. For any $g\in S_{k}(Mp,\epsilon_M\omega_{p}^{i-k};K)(v)$ which is old outside of $p$, there exists an element $G\in S^{\mathrm{old}}(Mp,\epsilon_{M},i,v;A(\mathcal{B}_{[k_{0},\epsilon]})$ such that the specialization of $G$ at $k$ equals to $g$.
\end{Lemma}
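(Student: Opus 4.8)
The plan is to reduce the statement to lifting a single degeneracy component and then to produce such a lift by pushing a family of smaller tame level through a level‑raising operator. By Definition A.2.1(1), $g$ is a finite sum $g=\sum_{j}h_{j}(q^{m_{j}})$ with $h_{j}\in S_{k}(Mp/t_{j},\epsilon_{M}\omega_{p}^{i-k};K)(v)$, where $t_{j}\mid M$, $t_{j}\neq M$, $m_{\chi}\mid t_{j}$ and $m_{j}\mid M/t_{j}$. Since $S^{\mathrm{old}}(Mp,\epsilon_{M},i,v;A(\mathcal{B}_{[k_{0},\epsilon]}))$ is a submodule, it is enough to lift each summand, so I may assume $g=h(q^{m})$ for a single such pair $(t,m)$ with $h\in S_{k}(Mp/t,\epsilon_{M}\omega_{p}^{i-k};K)(v)$. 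Note that $Mp/t\geq 2p\geq 5$, so the modular curves of level $Mp/t$ are schemes and the constructions of \S A.1 go through at tame level $M/t$.

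First I would lift $h$ to a family of tame level $M/t$. Using the isomorphism (A.1.3) at level $Mp/t$, write $h=E^{k}h_{0}$ with $h_{0}\in S_{0}(Mp/t,\epsilon_{M}\omega_{p}^{i};K)(v)$, and let $H$ be the constant family at level $Mp/t$ determined by $H/E^{s}=h_{0}$; thus $H\in S(Mp/t,\epsilon_{M},i,v;A(\mathcal{B}_{[k_{0},\epsilon]}))$ and $H(k')=E^{k'}h_{0}$ for every $k'\in\mathbb{Z}_{>\alpha+1}\cap\mathcal{B}_{[k_{0},\epsilon]}(K)$, in particular $H(k)=h$. Next I would apply the level‑raising operator $V_{m}\colon\sum a_{n}q^{n}\mapsto\sum a_{n}q^{mn}$, which is multiplicative on $q$‑expansions, and set $G:=V_{m}H$. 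Writing $G/E^{s}=V_{m}(E^{s}h_{0})/E^{s}=(V_{m}E/E)^{s}\cdot V_{m}(h_{0})$, one checks that $G$ defines an element of $S(Mp,\epsilon_{M},i,v;A(\mathcal{B}_{[k_{0},\epsilon]}))$: since $m$ is prime to $p$, the operator $V_{m}$ carries overconvergent cusp forms over $\tilde{\mathfrak{X}}(\Gamma_{1}(Mp/t))(v)$ to overconvergent cusp forms over $\tilde{\mathfrak{X}}(\Gamma_{1}(Mp))(v)$ without changing $v$ (the associated degeneracy map of modular curves is insensitive to the $p$‑adic geometry of the supersingular disks), so $V_{m}(h_{0})$ is an overconvergent cusp form of weight $0$ and level $Mp$; and $V_{m}E/E$ is an overconvergent modular form of weight $0$ with $V_{m}E/E\equiv 1\pmod{q\mathbb{Z}_{p}\jump{q}}$ (using $E^{-1}$ from \cite[Lemma B1.2]{Col97}), so $(V_{m}E/E)^{s}$ lies in the relevant Banach module by the same estimate as for $E^{s}$ in (A.1.4), and a product of an overconvergent modular form with an overconvergent cusp form is an overconvergent cusp form.

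It remains to verify the two defining properties of $G$. The operator $V_{m}$ acts only on the $A(\mathcal{B}_{[k_{0},\epsilon]})$‑coefficients, so it commutes with specialization; hence $G(k)=V_{m}(H(k))=h(q^{m})=g$, and for every $k'\in\mathbb{Z}_{>\alpha+1}\cap\mathcal{B}_{[k_{0},\epsilon]}(K)$ one has $G(k')=V_{m}(H(k'))=(E^{k'}h_{0})(q^{m})\in S_{k'}(Mp/t,\epsilon_{M}\omega_{p}^{i-k'};K)(v)[m]$, which is one of the summands appearing in Definition A.2.1(1); thus $G$ is old outside of $p$, i.e.\ $G\in S^{\mathrm{old}}(Mp,\epsilon_{M},i,v;A(\mathcal{B}_{[k_{0},\epsilon]}))$. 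Adding the families attached to the summands $h_{j}(q^{m_{j}})$ of $g$ yields the required lift. The step that is not formal is the one used in the middle paragraph: that the level‑raising operators $V_{m}$ for $m\mid M/t$ prime to $p$ act on overconvergent cusp‑form families with no loss in $v$ and compatibly with weight specialization. This combines the effect of $V_{m}$ on fixed‑weight overconvergent forms (as in \cite{Col96}) with the convergence of the Eisenstein‑type series $(V_{m}E/E)^{s}$ over $A(\mathcal{B}_{[k_{0},\epsilon]})$ (as in \cite{Col97}), and is the main point to nail down; the reduction to a single $(t,m)$, the construction of the constant lift $H$, the compatibility of $V_{m}$ with specialization, and the verification that all specializations of $G$ are old outside $p$ are routine.
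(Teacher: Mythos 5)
Your construction is correct in outline, but it is genuinely different from the paper's, which is a one--line argument: the paper simply sets $G:=E^{s-k}g$ (a ``constant'' family rescaled by a power of $E$) and asserts that every specialization $E^{k'-k}g$ is again old outside of $p$. Unwinding your construction on a single summand $h(q^{m})$ of $g$, you produce $G=(V_{m}E)^{s-k}\cdot h(q^{m})$ instead of $E^{s-k}\cdot h(q^{m})$; the two families are not equal. What your version buys is that every specialization is literally $V_{m}\bigl(E^{k'-k}h\bigr)$, hence manifestly lies in $S_{k'}(Mp/t,\epsilon_{M}\omega_{p}^{i-k'};K)(v)[m]$ as required by Definition A.2.1, whereas the paper's choice requires knowing that multiplication by integer powers of $E$ preserves the span $\sum_{t,m}S_{k'}(Mp/t,\cdot)(v)[m]$ (one has $E^{k'-k}\cdot V_{m}(h)=V_{m}(E^{k'-k}h)\cdot(E/V_{m}E)^{k'-k}$, and stability of the old subspace under multiplication by the weight-zero unit $(E/V_{m}E)^{k'-k}$ is not immediate from the definition as a sum of images of the maps $[m]$). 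The price you pay is the two technical inputs you flag: that $V_{m}$, for $m$ prime to $p$, carries overconvergent forms over $\tilde{\mathfrak{X}}(\Gamma_{1}(Mp/t))(v)$ to overconvergent forms over $\tilde{\mathfrak{X}}(\Gamma_{1}(Mp))(v)$ compatibly with the Banach-module structure, and that $(V_{m}E/E)^{s}$ converges in $A(\mathcal{B}_{[k_{0},\epsilon]})\jump{q}$ by the same estimate as (A.1.4). Both are standard in Coleman's framework but are not established in the paper, so if you keep your route you should supply them (or at least precise references); alternatively, the paper's shorter route only needs the single observation that the old subspace is stable under multiplication by $E^{n}$, $n\in\mathbb{Z}$, which is the step you have implicitly replaced.
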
 
\begin{proof}
We put $E^{s-k}g$. Because $g$ is old outside of $p$, $E^{s-k}g\in S^{\mathrm{old}}(Mp,\epsilon_{M},i,v;A(\mathcal{B}_{[k_{0},\epsilon]})$. We complete the proof.
\end{proof}
We set
$$S^{\alpha,\mathrm{old}}(Mp,\epsilon_{M},i;A(\mathcal{B}_{[k_{0},\epsilon]}):=p_{\alpha}(S^{\mathrm{old}}(Mp,\epsilon_{M},i,v;A(\mathcal{B}_{[k_{0},\epsilon]})),$$
where $p_{\alpha}$ is the projection defined in (A.1.10).
We define the ideal $\mathbf{t}$ of $\mathbf{T}^{\alpha}$ to be
$$\mathbf{t}:=\mathrm{Ann}_{\mathbf{T}^{\alpha}}(S^{\alpha,\mathrm{old}}(Mp,\epsilon_{M},i;A(\mathcal{B}_{[k_{0},\epsilon]})).$$
For each $k\in \mathbb{Z}_{>\alpha+1}\cap \mathcal{B}(K)$, let $\eta_{k}:A(\mathcal{B}_{[k_{0},\epsilon]})\rightarrow K$ be the $K$-algebra homomorphism attached to $k$. By $\eta_{k}$, we define the $K$-Banach module
$$S_{k}^{\alpha}(Mp,\epsilon_{M},i;K):=S^{\alpha}(Mp,\epsilon_{M},i;A(\mathcal{B}_{[k_{0},\epsilon]}))\widehat{\otimes}_{A(\mathcal{B}_{[k_{0},\epsilon]})}K$$
and the $K$-affinoid algebra
$$\mathbf{T}^{\alpha}_{k}:=\mathbf{T}^{\alpha}\widehat{\otimes}_{A(\mathcal{B}_{[k_{0},\epsilon]}))}K.$$
By \cite[Theorem 8.1]{Col96}, $S_{k}^{\alpha}(Mp,\epsilon_{M},i;K)$ is a $K$- subvector space of $q$-expansions of classical cusp forms of weight $k$, level $Mp^{i-k}$ and Nebentypus $\epsilon_{M}\omega_{p}^{i-k}$ over $K$. Further, $\mathbf{T}^{\alpha}_{k}$ is the Hecke algebra of $S_{k}^{\alpha}(Mp,\epsilon_{M},i;K)$. We define a $K$-subvector space $S_{k}^{\alpha,\mathrm{old}}(Mp,\epsilon_{M},i;K)$ of $S_{k}^{\alpha}(Mp,\epsilon_{M},i;K)$ to be the image of the natural map
$$S^{\alpha,\mathrm{old}}(Mp,\epsilon_{M},i;A(\mathcal{B}_{[k_{0},\epsilon]}))\widehat{\otimes}_{A(\mathcal{B}_{[k_{0},\epsilon]})}K\rightarrow S_{k}^{\alpha}(Mp,\epsilon_{M},i;K).$$
By Lemma A.2.2, the space $S_{k}^{\alpha,\mathrm{old}}(Mp,\epsilon_{M},i;K)$ equals to the subspace of $S_{k}^{\alpha}(Mp,\epsilon_{M},i;K)$ consisting of elements which are old outside of $p$. We define an ideal $\mathbf{t}_{k}$ to be the image of the natural map
$$\mathbf{t}\widehat{\otimes}_{A(\mathcal{B}_{[k_{0},\epsilon]})}K\rightarrow \mathbf{T}^{\alpha}_{k}.\leqno(\mathrm{A}.2.1)$$
\begin{Lemma}
The map (A.2.1) is injective.
\end{Lemma}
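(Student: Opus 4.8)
Write $A:=A(\mathcal{B}_{[k_0,\epsilon]})$ and $\mathfrak{m}_k:=\mathrm{Ker}\,\eta_k$, so that $A/\mathfrak{m}_k\cong K$. Since $\mathbf{t}$ is an ideal of $\mathbf{T}^{\alpha}$, and $\mathbf{T}^{\alpha}$ is finite over the noetherian ring $A$, both $\mathbf{t}$ and $\mathbf{T}^{\alpha}/\mathbf{t}$ are finitely generated $A$-modules; for finitely generated $A$-modules the completed tensor product over $A$ with the finite $A$-module $K$ agrees with the algebraic tensor product, so the map (A.2.1) is identified with $\mathbf{t}\otimes_A K\to\mathbf{T}^{\alpha}\otimes_A K$. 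Tensoring the short exact sequence $0\to\mathbf{t}\to\mathbf{T}^{\alpha}\to\mathbf{T}^{\alpha}/\mathbf{t}\to0$ with $K$ over $A$ and using that $\mathbf{T}^{\alpha}$ is $A$-free (so that $\mathrm{Tor}_1^A(\mathbf{T}^{\alpha},K)=0$) produces the exact sequence
$$0\to\mathrm{Tor}_1^A(\mathbf{T}^{\alpha}/\mathbf{t},K)\to\mathbf{t}\otimes_A K\to\mathbf{T}^{\alpha}\otimes_A K.$$
Hence it suffices to prove $\mathrm{Tor}_1^A(\mathbf{T}^{\alpha}/\mathbf{t},K)=0$, and for this it is enough that $\mathbf{T}^{\alpha}/\mathbf{t}$ be a free $A$-module.

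The key step is therefore to show that $\mathbf{T}^{\alpha}/\mathbf{t}$ is $A$-free. I would consider the $A$-linear map
$$\mathbf{T}^{\alpha}\longrightarrow \mathrm{Hom}_A\!\left(S^{\alpha,\mathrm{old}}(Mp,\epsilon_{M},i;A),\ S^{\alpha}(Mp,\epsilon_{M},i;A)\right)$$
sending $t$ to multiplication by $t$; this is well defined because $S^{\alpha,\mathrm{old}}$ is by construction the image $p_{\alpha}(S^{\mathrm{old}})$ inside the $\mathbf{T}^{\alpha}$-module $S^{\alpha}$, and its kernel is exactly $\mathrm{Ann}_{\mathbf{T}^{\alpha}}(S^{\alpha,\mathrm{old}})=\mathbf{t}$. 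Thus $\mathbf{T}^{\alpha}/\mathbf{t}$ embeds $A$-linearly into $\mathrm{Hom}_A(S^{\alpha,\mathrm{old}},S^{\alpha})$. Since $S^{\alpha}(Mp,\epsilon_{M},i;A)$ is $A$-free (of rank $d$) by \cite[Proposition A5.3]{Col97}, the module $\mathrm{Hom}_A(S^{\alpha,\mathrm{old}},S^{\alpha})$ is $A$-torsion-free, and it is finitely generated since $S^{\alpha,\mathrm{old}}$ is a submodule of the finite $A$-module $S^{\alpha}$ over the noetherian ring $A$. A finitely generated torsion-free module over the principal ideal domain $A$ is free, so $\mathbf{T}^{\alpha}/\mathbf{t}$, being (finitely generated and) a submodule of such a module, is $A$-free.

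Combining the two steps, $\mathrm{Tor}_1^A(\mathbf{T}^{\alpha}/\mathbf{t},K)=0$, and the displayed exact sequence shows that $\mathbf{t}\otimes_A K\to\mathbf{T}^{\alpha}\otimes_A K$, i.e. the map (A.2.1), is injective. I do not expect any serious obstacle here: the only points needing care are the reduction of $\widehat{\otimes}$ to $\otimes$ for finite modules over $A$, and the fact that $A=A(\mathcal{B}_{[k_0,\epsilon]})$ is a principal ideal domain, which is already recorded in \S A.1 because $\mathcal{B}_{[k_0,\epsilon]}$ is a one-dimensional closed ball; no input about the precise action of Hecke operators on $S^{\mathrm{old}}$ beyond the inclusion $S^{\alpha,\mathrm{old}}\subseteq S^{\alpha}$ is required.
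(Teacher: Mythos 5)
Your proof is correct, and it reaches the same underlying facts as the paper's argument by a slightly different (homological) route. The paper argues element-wise: it takes $t\in\mathbf{t}$ whose image in $\mathbf{T}^{\alpha}_{k}$ vanishes, expands $t$ in an $A(\mathcal{B}_{[k_{0},\epsilon]})$-basis of $\mathbf{T}^{\alpha}$ to see that all coefficients lie in $\mathrm{Ker}(\eta_{k})=(c)$, factors $t=ct^{*}$, and then uses integrality of $A(\mathcal{B}_{[k_{0},\epsilon]})$ (i.e.\ torsion-freeness of $S^{\alpha}$) to conclude $t^{*}\in\mathbf{t}$, so that $t$ dies already in $\mathbf{t}\otimes_{A}K$. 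You instead identify the kernel of (A.2.1) with $\mathrm{Tor}_{1}^{A}(\mathbf{T}^{\alpha}/\mathbf{t},K)$ via the long exact sequence and the $A$-freeness of $\mathbf{T}^{\alpha}$, and kill the Tor by showing $\mathbf{T}^{\alpha}/\mathbf{t}$ is free, which you get from its embedding into $\mathrm{Hom}_{A}(S^{\alpha,\mathrm{old}},S^{\alpha})$ together with the PID property of $A=A(\mathcal{B}_{[k_{0},\epsilon]})$. The two proofs use exactly the same inputs --- $A$ is a PID and $\mathbf{T}^{\alpha}/\mathbf{t}$ is $A$-torsion-free because $S^{\alpha}$ is $A$-free --- your torsion-freeness step via the multiplication map $\mathbf{T}^{\alpha}\to\mathrm{Hom}_{A}(S^{\alpha,\mathrm{old}},S^{\alpha})$ being precisely the paper's ``$t=ct^{*}$ with $t^{*}\in\mathbf{t}$'' step in disguise. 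What your packaging buys is a cleaner statement (the sharper conclusion that $\mathbf{T}^{\alpha}/\mathbf{t}$ is $A$-free, which immediately gives injectivity after base change to \emph{any} $A$-algebra, not just $K=A/\mathrm{Ker}(\eta_{k})$); what the paper's buys is brevity and no appeal to Tor. Your preliminary reduction of $\widehat{\otimes}$ to $\otimes$ for finite modules over the affinoid algebra is also needed (and implicitly used) in the paper's proof, so flagging it is a small improvement rather than a detour.
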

\begin{proof}
Let $t_{1},\cdots,t_{n}$ be a basis of $\mathbf{T}^{\alpha}$. Let $t\in \mathbf{t}$ be an element such that the image of $t$ by the map $\mathbf{t}\widehat{\otimes}_{A(\mathcal{B}_{[k_{0},\epsilon]})}K\rightarrow \mathbf{T}^{\alpha}_{k}$ equals to zero. It suffices to prove that there exists an element $b\in \mathrm{Ker}(\eta_{k})$ and $t^{\prime}\in \mathbf{t}$ such that $t=bt^{\prime}$. If we put $t=\displaystyle{\sum_{i=1}^{n}b_{i}t_{i}}$, where $b_{i}\in A(\mathcal{B}_{[k_{0},\epsilon]})$, by the assumption, we have $b_{i}\in \mathrm{Ker}(\eta_{k})$. Because $A(\mathcal{B}_{[k_{0},\epsilon]})$ is a principal ideal domain, there exists a generator $c$ of $\mathrm{Ker}(\eta_{k})$. Then, there exists an element $t^{*}\in \mathbf{T}^{\alpha}$ such that $t=ct^{*}$. Because $A(\mathcal{B}_{[k_{0},\epsilon]})$ is integral, we have $t^{*}\in \mathbf{t}$. We complete the proof.
\end{proof}
\begin{Lemma}
For each $k\in\mathbb{Z}_{>\alpha+1}\cap \mathcal{B}(K)$, we have
$$\mathbf{t}_{k}:=\mathrm{Ann}_{\mathbf{T}^{\alpha}_{k}}(S_{k}^{\alpha,\mathrm{old}}(Mp,\epsilon_{M},i;K)).$$
\end{Lemma}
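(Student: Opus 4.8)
The plan is to prove the two inclusions separately; the containment $\mathbf{t}_{k}\subseteq\mathrm{Ann}_{\mathbf{T}^{\alpha}_{k}}(S_{k}^{\alpha,\mathrm{old}}(Mp,\epsilon_{M},i;K))$ is formal, and all the work is in the reverse one, which I will reformulate as a faithfulness statement. For the easy containment: an element of $\mathbf{t}$ annihilates $S^{\alpha,\mathrm{old}}(Mp,\epsilon_{M},i;A(\mathcal{B}_{[k_{0},\epsilon]}))$ by definition, hence its image in $\mathbf{T}^{\alpha}_{k}$ annihilates the image of $S^{\alpha,\mathrm{old}}(Mp,\epsilon_{M},i;A(\mathcal{B}_{[k_{0},\epsilon]}))\widehat{\otimes}_{A(\mathcal{B}_{[k_{0},\epsilon]})}K$ in $S_{k}^{\alpha}(Mp,\epsilon_{M},i;K)$, which is exactly $S_{k}^{\alpha,\mathrm{old}}(Mp,\epsilon_{M},i;K)$; since $\mathbf{t}_{k}$ is by definition the image of $\mathbf{t}$, this gives the inclusion, and in particular the $\mathbf{T}^{\alpha}_{k}$-action on $S_{k}^{\alpha,\mathrm{old}}(Mp,\epsilon_{M},i;K)$ factors through $\mathbf{T}^{\alpha}_{k}/\mathbf{t}_{k}$.

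For the reverse inclusion I would put $\mathbf{T}^{\alpha,\mathrm{old}}:=\mathbf{T}^{\alpha}/\mathbf{t}$. Being a submodule of $\mathrm{End}_{A(\mathcal{B}_{[k_{0},\epsilon]})}(S^{\alpha,\mathrm{old}}(Mp,\epsilon_{M},i;A(\mathcal{B}_{[k_{0},\epsilon]})))$ it is finite free over the principal ideal domain $A(\mathcal{B}_{[k_{0},\epsilon]})$ and acts faithfully on $S^{\alpha,\mathrm{old}}(Mp,\epsilon_{M},i;A(\mathcal{B}_{[k_{0},\epsilon]}))$. By Lemma A.2.4 the sequence $0\to\mathbf{t}\widehat{\otimes}_{A(\mathcal{B}_{[k_{0},\epsilon]})}K\to\mathbf{T}^{\alpha}\widehat{\otimes}_{A(\mathcal{B}_{[k_{0},\epsilon]})}K\to\mathbf{T}^{\alpha,\mathrm{old}}\widehat{\otimes}_{A(\mathcal{B}_{[k_{0},\epsilon]})}K\to 0$ is exact, so $\mathbf{T}^{\alpha}_{k}/\mathbf{t}_{k}\cong\mathbf{T}^{\alpha,\mathrm{old}}\widehat{\otimes}_{A(\mathcal{B}_{[k_{0},\epsilon]})}K$, of $K$-dimension $\mathrm{rank}_{A(\mathcal{B}_{[k_{0},\epsilon]})}\mathbf{T}^{\alpha,\mathrm{old}}$ by flatness. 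Since $\mathbf{t}_{k}\subseteq\mathrm{Ann}_{\mathbf{T}^{\alpha}_{k}}(S_{k}^{\alpha,\mathrm{old}}(Mp,\epsilon_{M},i;K))$, the quotient $\mathbf{T}^{\alpha}_{k}/\mathbf{t}_{k}$ surjects onto $\mathbf{T}^{\alpha}_{k}/\mathrm{Ann}_{\mathbf{T}^{\alpha}_{k}}(S_{k}^{\alpha,\mathrm{old}}(Mp,\epsilon_{M},i;K))$, which is the Hecke algebra acting faithfully on the weight-$k$ forms old outside of $p$; hence it suffices to show this last $K$-algebra again has dimension $\mathrm{rank}_{A(\mathcal{B}_{[k_{0},\epsilon]})}\mathbf{T}^{\alpha,\mathrm{old}}$, for then the surjection is an isomorphism and the two ideals coincide.

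The dimension count I would carry out using the Atkin--Lehner--Li newform decomposition. Applying $p_{\alpha}$ to the decomposition of the space of level $Mp$ into degeneracy translates of the parts new away from $p$ of strictly smaller tame level, and interpolating in the family — the degeneracy operators are prime to $p$, commute with the operator at $p$, and the decomposition is cut out by Hecke operators away from $p$, so it is inherited over $A(\mathcal{B}_{[k_{0},\epsilon]})$ from the classical weights where it holds and is a genuine direct sum since it is one after inverting the uniformizer — one gets that $S^{\alpha,\mathrm{old}}(Mp,\epsilon_{M},i;A(\mathcal{B}_{[k_{0},\epsilon]}))$ is an $A(\mathcal{B}_{[k_{0},\epsilon]})$-direct summand of $S^{\alpha}(Mp,\epsilon_{M},i;A(\mathcal{B}_{[k_{0},\epsilon]}))$. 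Therefore this module is free, the reduction map to $S_{k}^{\alpha}(Mp,\epsilon_{M},i;K)$ is injective, and by Lemma A.2.2 we get $\dim_{K}S_{k}^{\alpha,\mathrm{old}}(Mp,\epsilon_{M},i;K)=\mathrm{rank}_{A(\mathcal{B}_{[k_{0},\epsilon]})}S^{\alpha,\mathrm{old}}(Mp,\epsilon_{M},i;A(\mathcal{B}_{[k_{0},\epsilon]}))$. Finally I would invoke multiplicity one for the full Hecke algebra in the relevant sense — each old eigensystem occurs with a one-dimensional eigenspace, by strong multiplicity one together with Atkin--Lehner--Li (square-freeness of $M$, as in the application, making this input clean): at a generic classical weight the old Hecke algebra over $\mathrm{Frac}(A(\mathcal{B}_{[k_{0},\epsilon]}))$ is the product of the residue fields of the old eigensystems, so $\mathrm{rank}_{A(\mathcal{B}_{[k_{0},\epsilon]})}\mathbf{T}^{\alpha,\mathrm{old}}=\mathrm{rank}_{A(\mathcal{B}_{[k_{0},\epsilon]})}S^{\alpha,\mathrm{old}}(\cdots)$, and likewise at weight $k$ the faithful old Hecke algebra has dimension $\dim_{K}S_{k}^{\alpha,\mathrm{old}}(\cdots)$ (using Theorem A.1.3 and \cite[Theorem 8.1]{Col96}). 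Chaining these equalities gives the required dimension, hence the lemma.

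\textbf{Main obstacle.}
The genuine source of difficulty is the mismatch between the rigid-analytic family and the weight-$k$ fibre: because the Hecke operator at $p$ really varies with the weight in a Coleman family (unlike in Hida theory), one cannot transport statements from the generic fibre or from another weight, and the weight-$k$ control must be imported from the classical theorems. Correspondingly the technical heart is (a) making the Atkin--Lehner--Li decomposition descend to an honest $A(\mathcal{B}_{[k_{0},\epsilon]})$-module direct-sum decomposition compatible with the weight-varying Hecke action — this is what forces $S^{\alpha,\mathrm{old}}$ to be saturated, so that reductions behave well — and (b) the bookkeeping identifying every rank over $A(\mathcal{B}_{[k_{0},\epsilon]})$ with the corresponding classical dimension at a well-chosen classical weight; the square-freeness of $M$ enters precisely to supply the multiplicity-one input that makes this bookkeeping close.
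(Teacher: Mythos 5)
Your route is genuinely different from the paper's, and it has a gap at its crux. The paper proves this lemma by a short duality argument: Coleman's perfect pairing $\langle G,T\rangle=a(1,T(G))$ (\cite[Proposition B5.6]{Col97}) identifies $\mathbf{t}$ with the kernel of $\mathbf{T}^{\alpha}\rightarrow \mathrm{Hom}_{A(\mathcal{B}_{[k_{0},\epsilon]})}(S^{\alpha,\mathrm{old}},A(\mathcal{B}_{[k_{0},\epsilon]}))$, and the annihilator at weight $k$ with the kernel of the corresponding map into $\mathrm{Hom}_{K}(S^{\alpha,\mathrm{old}}\widehat{\otimes}K,K)$; since $S^{\alpha,\mathrm{old}}$ is finite free over the principal ideal domain $A(\mathcal{B}_{[k_{0},\epsilon]})$, these two targets are identified after $\widehat{\otimes}K$ and the two kernels coincide. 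Note that this argument only uses the \emph{surjection} $S^{\alpha,\mathrm{old}}\widehat{\otimes}K\twoheadrightarrow S^{\alpha,\mathrm{old}}_{k}$, which holds by definition. Your dimension count, by contrast, needs the equality $\dim_{K}S^{\alpha,\mathrm{old}}_{k}=\mathrm{rank}\,S^{\alpha,\mathrm{old}}$, i.e.\ the \emph{injectivity} of $S^{\alpha,\mathrm{old}}\widehat{\otimes}K\rightarrow S^{\alpha}_{k}$ (saturation of the old part at $k$). This is exactly the hard point, and your justification for it does not work: a direct-sum decomposition after passing to the fraction field (or ``after inverting the uniformizer'' --- $A(\mathcal{B}_{[k_{0},\epsilon]})\cong K\langle X\rangle$ has no uniformizer; it has infinitely many maximal ideals) does not descend to an integral direct-sum decomposition, since the sum of the two submodules could be of finite index. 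Without saturation your chain of (in)equalities only yields $\dim(\mathbf{T}^{\alpha}_{k}\slash\mathrm{Ann})\leq\dim(\mathbf{T}^{\alpha}_{k}\slash\mathbf{t}_{k})$, which is the easy inclusion you already had.

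Two further remarks on the middle step. The equality $\mathrm{rank}\,\mathbf{T}^{\alpha,\mathrm{old}}=\mathrm{rank}\,S^{\alpha,\mathrm{old}}$ does not require multiplicity one at all: both maps induced by the pairing $(f,T)\mapsto a(1,T(f))$ on the old part are injective (the $q$-expansion principle on one side, faithfulness on the other), which forces equality of ranks over the fraction field. Routing this through ``the old Hecke algebra over $\mathrm{Frac}$ is a product of residue fields'' is both unnecessary and unavailable: the lemma is stated without assuming $M$ square-free, and even for square-free $M$ the semisimplicity of $U_{\ell}$ on the two-dimensional old space at a prime $\ell$ dividing $M$ but not the conductor is equivalent to the distinctness of the roots of the $\ell$-th Hecke polynomial, which is not known in general. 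So I would recommend replacing the whole dimension count by the duality argument, which sidesteps both the saturation issue and the multiplicity-one input.
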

\begin{proof}
Let $\langle\ ,\ \rangle: S^{\alpha}(Mp,\epsilon_{M},i;A(\mathcal{B}_{[k_{0},\epsilon]})\times \mathbf{T}^{\alpha}\rightarrow A(\mathcal{B}_{[k_{0},\epsilon]})$ be the bilinear map defined by 
$$\langle G,T\rangle:=a(1,T(G))$$
for each $G\in S^{\alpha}(Mp,\epsilon_{M},i;A(\mathcal{B}_{[k_{0},\epsilon]})$ and $T\in \mathbf{T}^{\alpha}$. By \cite[Proposition B5.6]{Col97}, the bilinear map $\langle\ ,\ \rangle$ is perfect. Then, we have two isomorphisms 
\begin{align*}
&\mathbf{T}^{\alpha}\cong \mathrm{Hom}_{A(\mathcal{B}_{[k_{0},\epsilon]})}(S^{\alpha}(Mp,\epsilon_{M},i;A(\mathcal{B}_{[k_{0},\epsilon]})), A(\mathcal{B}_{[k_{0},\epsilon]}),\\
&\mathbf{t}\cong \mathrm{Hom}_{A(\mathcal{B}_{[k_{0},\epsilon]})}(S^{\alpha}(Mp,\epsilon_{M},i;A(\mathcal{B}_{[k_{0},\epsilon]}))\slash S^{\alpha,\mathrm{old}}(Mp,\epsilon_{M},i;A(\mathcal{B}_{[k_{0},\epsilon]})), A(\mathcal{B}_{[k_{0},\epsilon]})).\end{align*}
Then, we have the following exact sequence:
$$0\rightarrow \mathbf{t}\rightarrow \mathbf{T}^{\alpha}\rightarrow \mathrm{Hom}_{A(\mathcal{B}_{[k_{0},\epsilon]})}(S^{\alpha,\mathrm{old}}(Mp,\epsilon_{M},i;A(\mathcal{B}_{[k_{0},\epsilon]})), A(\mathcal{B}_{[k_{0},\epsilon]})).$$
We put $D:=\mathrm{Hom}_{A(\mathcal{B}_{[k_{0},\epsilon]})}(S^{\alpha,\mathrm{old}}(Mp,\epsilon_{M},i;A(\mathcal{B}_{[k_{0},\epsilon]})), A(\mathcal{B}_{[k_{0},\epsilon]}))$. By Lemma A.2.3, we have the following exact sequence:
$$0\rightarrow \mathbf{t}\widehat{\otimes}_{A(\mathcal{B}_{[k_{0},\epsilon]})}K\rightarrow \mathbf{T}^{\alpha}_{k}\rightarrow D\widehat{\otimes}_{A(\mathcal{B}_{[k_{0},\epsilon]})}K.$$
On the other hand, by the definition of $S_{k}^{\alpha,\mathrm{old}}(Mp,\epsilon_{M},i;K)$, we have the following exact sequence:
\begin{align*}
S^{\alpha,\mathrm{old}}(Mp,\epsilon_{M},i;A(\mathcal{B}_{[k_{0},\epsilon]}))\widehat{\otimes}_{A(\mathcal{B}_{[k_{0},\epsilon]})}K&\rightarrow S_{k}^{\alpha}(Mp,\epsilon_{M},i;K)\rightarrow\\
 &S_{k}^{\alpha}(Mp,\epsilon_{M},i;K)\slash S_{k}^{\alpha,\mathrm{old}}(Mp,\epsilon_{M},i;K)\rightarrow 0.
\end{align*}
Then, by the duality, we have the following exact sequence:
$$0\rightarrow\mathrm{Ann}_{\mathbf{T}^{\alpha}_{k}}(S_{k}^{\alpha,\mathrm{old}}(Mp,\epsilon_{M},i;K))\rightarrow \mathbf{T}^{\alpha}_{k}\rightarrow \mathrm{Hom}_{K}(S^{\alpha,\mathrm{old}}(Mp,\epsilon_{M},i;A(\mathcal{B}_{[k_{0},\epsilon]}))\widehat{\otimes}_{A(\mathcal{B}_{[k_{0},\epsilon]})}K,K).$$
We put $E=\mathrm{Hom}_{K}(S^{\alpha,\mathrm{old}}(Mp,\epsilon_{M},i;A(\mathcal{B}_{[k_{0},\epsilon]}))\widehat{\otimes}_{A(\mathcal{B}_{[k_{0},\epsilon]})}K,K)$. Then, we have the following commutative diagram:
$$\xymatrix{ 0 \ar[r]& \mathbf{t}\widehat{\otimes}_{A(\mathcal{B}_{[k_{0},\epsilon]})}K \ar[r] \ar[d] &\mathbf{T}^{\alpha}_{k} \ar{[r]}\ar@{=}[d]& D\widehat{\otimes}_{A(\mathcal{B}_{[k_{0},\epsilon]})}K\ar[d] 
\\ 0 \ar[r] & \mathrm{Ann}_{\mathbf{T}^{\alpha}_{k}}(S_{k}^{\alpha,\mathrm{old}}(Mp,\epsilon_{M},i;K)) \ar[r]  & \mathbf{T}^{\alpha}_{k} \ar[r] & E}.$$
Because $S^{\alpha,\mathrm{old}}(Mp,\epsilon_{M},i;A(\mathcal{B}_{[k_{0},\epsilon]}))$ is a finite $A(\mathcal{B}_{[k_{0},\epsilon]})$-free module, we have 
$$D\widehat{\otimes}_{A(\mathcal{B}_{[k_{0},\epsilon]})}K\cong E.$$
Then, we have 
$$\mathbf{t}\widehat{\otimes}_{A(\mathcal{B}_{[k_{0},\epsilon]})}K\cong \mathrm{Ann}_{\mathbf{T}^{\alpha}_{k}}(S_{k}^{\alpha,\mathrm{old}}(Mp,\epsilon_{M},i;K)).$$
We complete the proof.
\end{proof}
\begin{Lemma}
Let $k$ be an integer such that $k\in \mathbb{Z}_{>\alpha+1}\cap \mathcal{B}_{[k_{0},\epsilon]}(K)$ and $L$ is a finite extension of $K$. We fix a point $x\in X(\mathbf{T}^{\alpha})_{k}(L)$. The following conditions are equivalent.
\begin{enumerate}
\item[(1)] The specialization $F_{\mathbf{T}^{\alpha},x}$ is a $p$-stabilized newform.
\item[(2)] We have $x\not\supset \mathbf{t}$.
\item[(3)] We have $x+\mathbf{t}=\mathbf{T}^{\alpha}$.
\end{enumerate} 
\end{Lemma}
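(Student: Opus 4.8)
The plan is to prove the equivalence of the three conditions by a chain $(1)\Leftrightarrow(2)$, $(2)\Leftrightarrow(3)$, where the content is concentrated in the first equivalence and the second is essentially commutative algebra on the affinoid Hecke algebra. First I would recall that, by Theorem A.1.3, for $x\in X(\mathbf{T}^\alpha)_k(L)$ the series $F_{\mathbf{T}^\alpha,x}=\sum_{n\geq1}\eta_x(T_n)q^n$ is the $q$-expansion of a classical cuspidal eigenform $h_x$ of weight $k$, level $Mp$, Nebentypus $\epsilon_M\omega_p^{i-k}$ and slope $\alpha$. Since $M$ is square-free and prime to $p$, the only possible source of oldness is at primes $l\mid M$ and at $p$ itself; but $h_x$ has finite slope, hence is $p$-stabilized and cannot be old at $p$ in the sense that would matter. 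So "$h_x$ is a $p$-stabilized newform" is equivalent to "$h_x$ is \emph{not} old outside of $p$", i.e. $h_x\notin S_k^{\alpha,\mathrm{old}}(Mp,\epsilon_M,i;K)$.

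Next I would translate this non-membership into an ideal-theoretic statement using Lemma A.2.5 together with the perfect duality $\langle\ ,\ \rangle\colon S^\alpha\times\mathbf{T}^\alpha\to A(\mathcal{B}_{[k_0,\epsilon]})$ from \cite[Proposition B5.6]{Col97}. Specialising at $k$ via $\eta_k$, the pairing $S_k^\alpha(Mp,\epsilon_M,i;K)\times\mathbf{T}^\alpha_k\to K$ is again perfect (this uses Lemma A.2.4, since by that lemma $\mathbf{t}_k=\mathrm{Ann}_{\mathbf{T}^\alpha_k}(S_k^{\alpha,\mathrm{old}})$ and the duality identifies $\mathbf{t}_k$ with the annihilator correctly). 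The eigenform $h_x$ corresponds to the maximal ideal $x\subset\mathbf{T}^\alpha$, equivalently to the homomorphism $\eta_x$; its eigenspace in $S_k^\alpha$ is spanned by the image of $h_x$. Then $h_x\in S_k^{\alpha,\mathrm{old}}$ if and only if the $x$-eigenspace is contained in $S_k^{\alpha,\mathrm{old}}$, which by the duality and Lemma A.2.4 is equivalent to $\mathbf{t}_k\not\subset x$ failing, i.e. to $\mathbf{t}\subset x$ (lifting back along $\eta_k$, which is legitimate because $x$ lies over $k$). Hence $(1)$ holds $\iff h_x$ is not old outside $p$ $\iff x\not\supset\mathbf{t}$, which is $(2)$.

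For $(2)\Leftrightarrow(3)$ I would argue purely in the affinoid algebra $\mathbf{T}^\alpha$. Since $\mathbf{T}^\alpha$ is a finite $A(\mathcal{B}_{[k_0,\epsilon]})$-algebra, it is Noetherian, and $x$ is a maximal ideal. If $x\not\supset\mathbf{t}$ then $x+\mathbf{t}$ strictly contains $x$, and by maximality $x+\mathbf{t}=\mathbf{T}^\alpha$; conversely if $x+\mathbf{t}=\mathbf{T}^\alpha$ then certainly $\mathbf{t}\not\subset x$. This is immediate once maximality of $x$ is invoked.

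The main obstacle will be the careful bookkeeping in the middle equivalence: one must check that the base change of the duality pairing along $\eta_k$ stays perfect and that it intertwines the annihilator ideal $\mathbf{t}$ with the correct subspace after specialisation — this is exactly what Lemmas A.2.3 and A.2.4 are set up to provide, so I would lean on them heavily rather than reproving flatness/freeness statements. A secondary point requiring care is the classification of which specialisations $h_x$ can be old: I need the rigidity of automorphic types (Proposition 4.1.1 style reasoning is not available here since this is a Coleman family, so I would instead use square-freeness of $M$ together with \cite[Theorem 4.6.17]{Miy06} applied fiberwise) to ensure that being "old outside $p$" is the exact obstruction to being a $p$-stabilized newform, with no contribution hiding at $p$.
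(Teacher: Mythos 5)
Your proposal is correct and follows essentially the same route as the paper: the equivalence of (2) and (3) by maximality of $x$, the reduction of (2) to the corresponding statement for $x_{k}$ and $\mathbf{t}_{k}$ in $\mathbf{T}^{\alpha}_{k}$, and the identification (via the perfect duality and Lemma A.2.4) of $\mathbf{t}_{k}$ with the annihilator of the old part, so that $F_{\mathbf{T}^{\alpha},x}$ being old outside of $p$ is equivalent to $\mathbf{t}\subset x$. The only superfluous element is your appeal to rigidity of automorphic types and square-freeness of $M$ at the end: the lemma is a pointwise statement, and the definition of a $p$-stabilized newform (tame level dividing the conductor) already makes ``old outside of $p$'' the exact negation of (1), which is how the paper proceeds.
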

\begin{proof}
Because $x$ is a maximal ideal, the equivalence of (2) and (3) is trivial. We prove the equivalence of (1) and (2). we define an ideal $x_{k}$ of $\mathbf{T}^{\alpha}_{k}$ to be the image of $x$ via the natural map
$$\mathbf{x}\widehat{\otimes}_{A(\mathcal{B}_{[k_{0},\epsilon]})}K\rightarrow \mathbf{T}^{\alpha}_{k}.$$
By the definition of $x$, $x_{k}$ is a maximal ideal of $\mathbf{T}^{\alpha}_{k}$. The condition (2) is equivalent to the following condition:
\begin{enumerate}
\item[$(\mathrm{2})^{\prime}$] $x_{k}\not\supset \mathbf{t}_{k}$.
\end{enumerate}
In fact, if the condition (2) holds, we have $x+\mathbf{t}=\mathbf{T}^{\alpha}$. Then, we have $x_{k}+{t}_{k}=\mathbf{T}^{\alpha}_{k}$. If the condition  $(\mathrm{2})^{\prime}$ holds, it is trivial that $x\not\supset \mathbf{t}$. Then, it suffices to prove the equivalence of (1) and $(\mathrm{2})^{\prime}$. Let $S^{\alpha,\mathrm{new}}_{k}(Mp,\epsilon_{M},i;K)$ be the subspace of $S^{\alpha}_{k}(Mp,\epsilon_{M},i;K)$ of consisting of cusp forms whose images are zero under any of degeneracy trace map for any proper divisor $d$ for $M$. Then, we have the decomposition 
$$S^{\alpha}_{k}(Mp,\epsilon_{M},i;K):=S^{\alpha,\mathrm{new}}_{k}(Mp,\epsilon_{M},i;K)\oplus S_{k}^{\alpha,\mathrm{old}}(Mp,\epsilon_{M},i;K).$$
Because $S^{\alpha,\mathrm{new}}_{k}(Mp,\epsilon_{M},i;K)$ and $S_{k}^{\alpha,\mathrm{old}}(Mp,\epsilon_{M},i;K)$ are stable under the action of $\mathbf{T}^{\alpha}_{k}$, we can define the Hecke algebra $\mathbf{T}^{\alpha,\mathrm{new}}_{k}$ and $\mathbf{T}^{\alpha,\mathrm{old}}_{k}$ of $S^{\alpha,\mathrm{new}}_{k}(Mp,\epsilon_{M},i;K)$ and $S_{k}^{\alpha,\mathrm{old}}(Mp,\epsilon_{M},i;K)$ respectively. Then, we have the decomposition
$$\mathbf{T}^{\alpha}_{k}:=\mathbf{T}^{\alpha,\mathrm{new}}_{k}\oplus\mathbf{T}^{\alpha,\mathrm{old}}_{k}.$$
By Lemma A.2.4, we have
$$\mathbf{t}_{k}=\mathbf{T}^{\alpha,\mathrm{new}}_{k}.$$
Let $\eta_{x,k}:\mathbf{T}^{\alpha}_{k}\rightarrow L$ be the $K$-algebra homomorphism attached to $x_{k}$. This $K$-algebra homomorphism $\eta_{x,k}$ corresponds to the cusp form $F_{\mathbf{T}^{\alpha},x}$ by the duality. We assume the condition (1). Then, the $K$-algebra homomorphism $\eta_{x,k}$ factors through $\mathbf{T}^{\alpha,\mathrm{new}}_{k}$. Then, $(1,0)\in \mathbf{T}^{\alpha}_{k}:=\mathbf{T}^{\alpha,\mathrm{new}}_{k}\oplus\mathbf{T}^{\alpha,\mathrm{old}}_{k}$ is not contained in $x$. Then, we have $(2)^{\prime}$. In order to prove the opposite implication, we assume the condition $(2)^{\prime}$. Then, $(1,0)\in \mathbf{T}^{\alpha}_{k}:=\mathbf{T}^{\alpha,\mathrm{new}}_{k}\oplus\mathbf{T}^{\alpha,\mathrm{old}}_{k}$ is not contained in $x$. Then, $\eta_{x,k}$ factors through  $\mathbf{T}^{\alpha,\mathrm{new}}_{k}$. Then, $F_{\mathbf{T}^{\alpha},x}$ is primitive outside of $p$. We complete the proof.
\end{proof}
Let $x_{0}\in \mathfrak{X}(\mathbf{T}^{\alpha})_{k_{0}}(K)$ be the point attached to $f_{0}$ by Theorem A.1.3. Let $(s,s^{\#}):\mathcal{B}_{[k_{0},\epsilon]}\rightarrow X(\mathbf{T}^{\alpha})$ be the section in Corollary A.1.4. We prove the rigidity of conductors in a Coleman family.
\begin{Proposition}
There exists a closed affinoid  sub-disc $\mathcal{B}_{[k_{0},\epsilon^{\prime}]}\subset \mathcal{B}_{[k_{0},\epsilon]}$ with $\epsilon^{\prime}\in \mathbb{Q}_{>0}$ such that for any $k\in \mathcal{B}_{[k_{0},\epsilon^{\prime}]}(K)\cap \mathbb{Z}_{>\alpha+1}$, we have $s(k)\not\supset \mathbf{t}$.
\end{Proposition}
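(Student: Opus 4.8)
The plan is to exploit the fact that being a $p$-stabilized newform is a Zariski-open condition on the eigenvariety, localized at the point $x_0$. By Lemma A.2.6, for a classical point $k \in \mathcal{B}_{[k_0,\epsilon]}(K)\cap\mathbb{Z}_{>\alpha+1}$ we have $s(k)\not\supset\mathbf{t}$ if and only if $F_{\mathbf{T}^\alpha,s(k)}$ is a $p$-stabilized newform, and this holds at $k=k_0$ since $s(k_0)=x_0$ corresponds to $f_0$, which is a $p$-stabilized newform by hypothesis. So the condition $s(k_0)\not\supset\mathbf{t}$, equivalently $s(k_0)+\mathbf{t}=\mathbf{T}^\alpha$, holds; I must propagate this to a neighbourhood.

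First I would translate the condition into analytic geometry. The section $(s,s^\#):\mathcal{B}_{[k_0,\epsilon]}\to X(\mathbf{T}^\alpha)$ and the closed subscheme $V(\mathbf{t})\subset X(\mathbf{T}^\alpha)$ (the ``old locus'') are both closed analytic subvarieties. Pulling back, $s^{-1}(V(\mathbf{t}))$ is a closed analytic subset of the one-dimensional affinoid disc $\mathcal{B}_{[k_0,\epsilon]}$ not containing the point $k_0$: indeed $s(k_0)=x_0\notin V(\mathbf{t})$ by the previous paragraph. A closed analytic subset of a smooth one-dimensional affinoid that is a proper subset is a finite set of points (it is the zero locus of a nonzero element of the principal ideal domain $A(\mathcal{B}_{[k_0,\epsilon]})$, namely a generator of $s^\#(\mathbf{t})\subset A(\mathcal{B}_{[k_0,\epsilon]})$, which is nonzero precisely because $x_0$ avoids $V(\mathbf{t})$). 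Concretely: $s^\#(\mathbf{t})$ is an ideal of $A(\mathcal{B}_{[k_0,\epsilon]})$; since $A(\mathcal{B}_{[k_0,\epsilon]})$ is a PID (as used already in A.1 and A.2), $s^\#(\mathbf{t})=(h)$ for some $h$, and $h(k_0)\neq 0$ because $s(k_0)+\mathbf{t}=\mathbf{T}^\alpha$ forces $\eta_{k_0}(s^\#(\mathbf{t}))=K$. An analytic function $h$ on the disc with $h(k_0)\neq 0$ has only finitely many zeros, all at positive distance from $k_0$; choosing $\epsilon'\in\mathbb{Q}_{>0}$ smaller than that distance and $\leq\epsilon$, the sub-disc $\mathcal{B}_{[k_0,\epsilon']}$ contains no zero of $h$, so $s^\#(\mathbf{t})$ specializes to the unit ideal at every point of $\mathcal{B}_{[k_0,\epsilon']}(K)$, i.e. $s(k)+\mathbf{t}=\mathbf{T}^\alpha$, equivalently $s(k)\not\supset\mathbf{t}$, for all such $k$.

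The main point requiring care is the very first reduction: showing $s^\#(\mathbf{t})$ is a \emph{nonzero} ideal of $A(\mathcal{B}_{[k_0,\epsilon]})$, which amounts to $x_0\notin V(\mathbf{t})$, which in turn is the statement that $f_0$ is primitive outside $p$ (a hypothesis of the construction of the Coleman family, via Corollary A.1.4 and the standing assumptions on $f_0$). Once that is in hand, the rest is the soft fact that proper closed analytic subsets of a one-dimensional affinoid disc are finite, together with the PID structure of $A(\mathcal{B}_{[k_0,\epsilon]})$ which has already been invoked repeatedly. One subtlety worth a remark is that Lemma A.2.6 was stated for $k\in\mathbb{Z}_{>\alpha+1}\cap\mathcal{B}_{[k_0,\epsilon]}(K)$, i.e. for classical points, but the statement of Proposition A.2.7 also ranges only over such $k$, so no extension beyond classical points is needed; the shrinking of the disc is governed purely by the analytic function $h$, which sees all points, classical or not. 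I do not anticipate a genuine obstacle — this is the standard argument that rigidity of an automorphic-type invariant at one classical point spreads to a neighbourhood in the Coleman family.
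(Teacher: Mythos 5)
Your argument is correct and follows essentially the same route as the paper: both reduce, via the equivalence of Lemma A.2.5 (which is what you cite as A.2.6), to the fact that $x_{0}=s(k_{0})\not\supset\mathbf{t}$ because $f_{0}$ is a $p$-stabilized newform, and then spread this to a neighbourhood by an openness-of-non-vanishing argument. The only cosmetic difference is that the paper runs the openness step upstairs on $X(\mathbf{T}^{\alpha})$, choosing a generator $t_{i}$ of $\mathbf{t}$ with $t_{i}(x_{0})\neq 0$ and forming the Laurent subdomain $\{x\mid\vert t_{i}(x)\vert_{p}\geq\vert b\vert_{p}\}$ before pulling back along $s$, whereas you first push $\mathbf{t}$ down to the principal ideal $(h)\subset A(\mathcal{B}_{[k_{0},\epsilon]})$ and use that the nonzero function $h$ has only finitely many zeros on the disc, none equal to $k_{0}$; both implementations are valid.
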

\begin{proof}
Let $t_{1},\cdots,t_{n}$ be a generator of the ideal $\mathbf{t}$. By Lemma A.2.5, there exists an element $t_{i}$ for $1\leq i\leq n$ such that $t_{i}(x_{0}): =t_{i}\ \mathrm{mod}\ x$ is non-zero. Then, there exists an element $b\in K^{\times}$ such that 
$$\vert t_{i}(x_{0})\vert_{p}>\vert b\vert_{p}.$$
We define a sub-affinoid space of $\mathbf{T}^{\alpha}$ 
$$D:=\{x\in X(\mathbf{T}^{\alpha})\mid \vert t_{i}(x)\vert_{p}\geq \vert b\vert_{p}\}.$$
By the definition of $D$, we have $x_{0}\in D$. Further,  the $K$-algebra homomorphism attached to the inclusion $D\hookrightarrow X(\mathbf{T}^{\alpha})$ is obtained by the natural homomorphism 
$$\mathbf{T}^{\alpha}\rightarrow \mathbf{T}^{\alpha}\langle X\rangle\slash (t_{i}X-b).$$
Then, the elements $x$ of $D$ corresponds to the maximal ideals $\tilde{x}$ of $\mathbf{T}^{\alpha}\langle X\rangle\slash (t_{i}X-b)$. Let $x\in D$ be an element. We assume that $t_{i}\in x$. Then, we have $t_{i}\in\tilde{x}$. Because $Xt_{i}$ is invertible in $\mathbf{T}^{\alpha}\langle X\rangle\slash (t_{i}X-b)$, it is a contradiction. Hence, we have $x\not\supset \mathbf{t}$ for any element $x\in D$. We define a sub-rigid space of $D$
$$U:=\{x\in D\mid \vert t_{i}(x)\vert_{p}> \vert b\vert_{p}\}.$$
We have $k_{0}\in s^{-1}(U)\subset \mathcal{B}_{[k_{0},\epsilon]}$ and we can take a small closed sub-disc $\mathcal{B}_{[k_{0},\epsilon^{\prime}]}\subset s^{-1}(U)$ with $\epsilon^{\prime}\in\mathbb{Q}_{>0}$. By the definition of $U$, for any $k\in \mathcal{B}_{[k_{0},\epsilon^{\prime}]}$, we have $s(k)\in D$. Hence, $s(k)\not\supset \mathbf{t}$. We complete the proof.
\end{proof}
Let $\mathcal{B}_{[k_{0},\epsilon^{\prime}]}\subset \mathcal{B}_{[k_{0},\epsilon]}$ be a closed sub-affinoid disc with $\epsilon^{\prime}\in \mathbb{Q}_{>0}$ such that we have $s(k)\not\supset \mathbf{t}$ for any $k\in \mathcal{B}_{[k_{0},\epsilon^{\prime}]}(K)\cap \mathbb{Z}_{>\alpha+1}$. We denote by $j^{\#}: A(\mathcal{B}_{[k_{0},\epsilon]})\rightarrow A(\mathcal{B}_{[k_{0},\epsilon^{\prime}]})$ the $K$-algebra homomorphism attached to the inclusion $\mathcal{B}_{[k_{0},\epsilon^{\prime}]}\hookrightarrow \mathcal{B}_{[k_{0},\epsilon]}$. Let $F:=\displaystyle{\sum_{n\geq 1}}s^{\#}(T_{n})\in A(\mathcal{B}_{[k_{0},\epsilon]})$ be the family defined in (A.1.12).
\begin{Corollary}
We put $F^{\prime}:=\displaystyle{\sum_{n\geq 1}}(j^{\#}\circ s^{\#})(T_{n})\in A(\mathcal{B}_{[k_{0},\epsilon^{\prime}]})$. Then, for any $k\in \mathcal{B}_{[k_{0},\epsilon^{\prime}]}(K)\cap \mathbb{Z}_{>\alpha+1}$, the specialization $F_{k}$ of $F$ at $k$ is a $p$-stabilized newform.
\end{Corollary}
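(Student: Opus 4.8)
The plan is to deduce the corollary by combining Proposition A.2.6 with the equivalence in Lemma A.2.5; once those are available the argument is essentially formal. Recall that, by the paragraph preceding the corollary, $\mathcal{B}_{[k_{0},\epsilon^{\prime}]}\subset\mathcal{B}_{[k_{0},\epsilon]}$ has been chosen (using Proposition A.2.6) so that $s(k)\not\supset\mathbf{t}$ for every $k\in\mathcal{B}_{[k_{0},\epsilon^{\prime}]}(K)\cap\mathbb{Z}_{>\alpha+1}$. Fix such a $k$. First I would check that $s(k)$ is a point of $X(\mathbf{T}^{\alpha})_{k}(K)$: since $(s,s^{\#})$ is a section of $(i,i^{\#})$ by Corollary A.1.4, we have $i(s(k))=k$, and $s(k)$ is manifestly a $K$-rational point, so $s(k)\in i^{-1}(k)\cap X(\mathbf{T}^{\alpha})(K)=X(\mathbf{T}^{\alpha})_{k}(K)$.

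Next I would identify the specialization $F_{k}$ with the $q$-expansion $F_{\mathbf{T}^{\alpha},s(k)}$ of Theorem A.1.3. By (A.1.12) we have $F=\sum_{n\geq1}s^{\#}(T_{n})q^{n}\in A(\mathcal{B}_{[k_{0},\epsilon]})\jump{q}$, and $F^{\prime}=\sum_{n\geq1}(j^{\#}\circ s^{\#})(T_{n})q^{n}$ is the pullback of $F$ along the closed immersion $\mathcal{B}_{[k_{0},\epsilon^{\prime}]}\hookrightarrow\mathcal{B}_{[k_{0},\epsilon]}$; since $j^{\#}$ is the restriction map, the specialization of $F^{\prime}$ at $k\in\mathcal{B}_{[k_{0},\epsilon^{\prime}]}(K)$ coincides with the specialization $F_{k}$ of $F$ at $k$, namely $\sum_{n\geq1}(\eta_{k}\circ s^{\#})(T_{n})q^{n}$. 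By functoriality of the correspondence between $K$-points and $K$-algebra homomorphisms, $\eta_{k}\circ s^{\#}=\eta_{s(k)}$, whence $F_{k}=\sum_{n\geq1}\eta_{s(k)}(T_{n})q^{n}=F_{\mathbf{T}^{\alpha},s(k)}$. Because $k\in\mathbb{Z}_{>\alpha+1}$, Theorem A.1.3 shows this is the $q$-expansion of a classical cuspidal eigenform of weight $k$, level $Mp$, Nebentypus $\epsilon_{M}\omega_{p}^{i-k}$ and slope $\alpha$.

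Finally I would invoke Lemma A.2.5 with $L=K$ and $x=s(k)\in X(\mathbf{T}^{\alpha})_{k}(K)$: since $s(k)\not\supset\mathbf{t}$, the implication (2)$\Rightarrow$(1) gives that $F_{\mathbf{T}^{\alpha},s(k)}=F_{k}$ is a $p$-stabilized newform, as claimed; the same statement holds verbatim for $F^{\prime}$. I do not expect a genuine obstacle here: all the substance lies in Proposition A.2.6 (whose proof actually shrinks the disc) and Lemma A.2.5 (identifying $\mathbf{t}_{k}$ with the new-quotient Hecke algebra), both of which are already established. The only point requiring a little care is the bookkeeping identifying $F_{k}$ with the $q$-expansion attached to $s(k)$ and checking that $s(k)$ lies over $k$ with residue field $K$, so that Lemma A.2.5 is applicable; this is routine functoriality of rigid-analytic morphisms.
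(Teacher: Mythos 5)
Your proposal is correct and follows the same route as the paper: the paper's proof simply invokes Lemma A.2.5 (implication (2)$\Rightarrow$(1)) together with the choice of $\mathcal{B}_{[k_{0},\epsilon^{\prime}]}$ from Proposition A.2.6 guaranteeing $s(k)\not\supset\mathbf{t}$. The extra bookkeeping you supply — that $s(k)\in X(\mathbf{T}^{\alpha})_{k}(K)$ and that $F_{k}=F_{\mathbf{T}^{\alpha},s(k)}$ via $\eta_{k}\circ s^{\#}=\eta_{s(k)}$ — is left implicit in the paper but is exactly the right justification.
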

\begin{proof}
The conclusion of the corollary is clear since $F_{k}$ is a $p$-stabilized newform for any $k\in \mathcal{B}_{[k_{0},\epsilon^{\prime}]}(K)\cap \mathbb{Z}_{>\alpha+1}$ by Lemma A.2.5.
\end{proof}

\footnote[0]{Department of Mathematics, Graduate School of Science, Osaka University, Toyonaka, Osaka 560-0043, JAPAN, E-mail address: k-fukunaga@cr.math.sci.osaka-u.ac.jp}
\end{document}